\documentclass[reqno]{amsart}
\usepackage{amsmath,amssymb,verbatim,hyperref,mathrsfs,graphicx}

\renewcommand{\geq}{\geqslant}
\renewcommand{\leq}{\leqslant}

\newcommand{\new}[1]{\emph{#1}}

\newcommand{\mesname}{\nu}
\makeatletter
\long\def\tlist@if@empty@nTF #1{%
\expandafter\ifx\expandafter\\\detokenize{#1}\\%
\expandafter\@firstoftwo
\else
\expandafter\@secondoftwo
\fi
}

\newcommand{\mes}[2][]{
\mesname{\tlist@if@empty@nTF{#1}{}{_{#1}}(#2)
}
}
\newcommand{\essp}[2][]{
\rho{
\tlist@if@empty@nTF{#1}{}{_{#1}}
}(#2
)
}

\newcommand{\norm}[2][]{%
\|#2\|\tlist@if@empty@nTF{#1}{}{_{#1}}
}

\makeatother
\chardef \atcode = \the \catcode `\@
\catcode `\@ = 11
\catcode246=13  \def^^f6{\"o}  
\catcode `\@ = \the \atcode
\newcommand{\citespectral}[2]{{\cite[{{#1}~\ref{#2}}]{AGN-collatz}}}
\newlabel{sp-normalempty}{{2.4}{5}{Preliminary results about Hilbert's and Thompson's metric\relax }{theorem.2.4}{}}
\newlabel{sp-lem-ksetcontract}{{4.1}{9}{A nonlinear Fredholm-type property\relax }{theorem.4.1}{}}
\newlabel{sp-prop1}{{4.2}{9}{A nonlinear Fredholm-type property\relax }{theorem.4.2}{}}
\newlabel{sp-lem-add5}{{7.3}{16}{Spectral radius and Collatz-Wielandt number\relax }{theorem.7.3}{}}

\includeonly{}

\newenvironment{rajout}{}{}

\newcommand{\br}{\begin{rajout}}
\newcommand{\er}{\end{rajout}}
%
\newenvironment{mycomment}{\footnotesize \bf Comments: }{}
\newcommand{\bc}{\begin{mycomment}}
\newcommand{\ec}{\end{mycomment}}

\numberwithin{equation}{section}

\newcommand{\der}[3]{(#1)'_{#2}(#3)}
\newcommand{\derz}[1]{#1'(x)}
\newcommand{\dom}{\operatorname{dom}}
\newtheorem{theorem}{Theorem}[section]
\newtheorem{lemma}[theorem]{Lemma}
\newtheorem{proposition}[theorem]{Proposition}
\newtheorem{corollary}[theorem]{Corollary}
\theoremstyle{definition}

\theoremstyle{remark}
\newtheorem{remark}[theorem]{Remark}

\newcommand{\tho}{\bar d}

\newcommand{\continuous}{\mathscr{C}}

\newcommand{\actionsa}{A}

\newcommand{\R}{\mathbb{R}}

\newcommand{\Z}{\mathbb{Z}}
\newcommand{\leqc}{\leq_{C}}
\newcommand{\NEW}[1]{{\em #1}\index{#1}}
\newcommand{\interior}{\mrm{int}\,}
\newcommand{\Cint}{\interior C}

\newcommand{\mrm}[1]{\text{\rm #1}}
\newcommand{\set}[2]{\{#1\mid\,#2\}}
\newcommand{\comp}{\circ}
\newcommand{\con}{\operatorname{conv}}
\newcommand{\clo}[1]{\operatorname{clo}#1}

\newcommand{\PF}{\mrm{(F)}}
\newcommand{\WKR}{\mrm{WKR}}
\newcommand{\id}{\mrm{Id}}
\newcommand{\seq}[2]{\langle #1\mid #2\rangle}
\newcommand{\stf}[2]{M(#2\, /\,#1)}
\newcommand{\sbf}[2]{m(#2\, /\,#1)}
\def\M(#1/#2){M(#1\,/\,#2)}
\def\m(#1/#2){m(#1\,/\,#2)}

\newcommand{\bonsall}[1]{\tilde{r}_{#1}}
\newcommand{\supeigen}[1]{\operatorname{cw}_{#1}}

\newcommand{\ant}[1]{#1^-}

\newcommand{\Ev}{E}
\newcommand{\nmv}{\nmvf{\cdot}}
\newcommand{\nmvf}[1]
{{\left\vert\kern-0.3ex\left\vert\kern-0.3ex%
\left\vert #1 \right\vert\kern-0.3ex\right\vert\kern-0.3ex\right\vert}}

\makeindex
\begin{document}
\title[Fixed points of nonexpansive semidifferentiable maps]{Uniqueness of the fixed point of nonexpansive semidifferentiable maps}
\author{Marianne Akian}
\address{Marianne Akian, 
INRIA and CMAP. Address: CMAP, \'Ecole Polytechnique, 91128 Palaiseau Cedex, France}
\email{marianne.akian@inria.fr}
\author{St\'ephane Gaubert}
\thanks{The first two authors were partially supported by the Arpege  programme of the French National Agency of Research (ANR), project ``ASOPT'', number ANR-08-SEGI-005}
\address{St\'ephane Gaubert, 
INRIA and CMAP. Address: CMAP, \'Ecole Polytechnique, 91128 Palaiseau Cedex, France}
\email{stephane.gaubert@inria.fr}
\author{Roger Nussbaum}
\address{Roger Nussbaum, Mathematics Department, Hill Center,
Rutgers University, 110 Frelinghuysen Road,
Piscataway, New Jersey, U.S.A. 08854-8019}
\thanks{The third author was partially supported by NSFDMS 0701171 and by NSFDMS 1201328.}
\email{nussbaum@math.rutgers.edu}
\date{\today}
\keywords{Nonlinear eigenvector, 
Hilbert's metric, Thompson's metric, nonexpansive maps, AM-space with unit,
semidifferentiability, nonlinear spectral radius, geometric convergence, zero-sum stochastic games, value iteration}

\begin{abstract}
We consider semidifferentiable (possibly nonsmooth)
maps, acting on a subset of a Banach space, that
are nonexpansive either in the norm of the space or in the Hilbert's or
Thompson's metric inherited from a convex cone. 
We show that the global
uniqueness of the fixed point of the map, as well as the geometric
convergence of every orbit to this fixed point, can be inferred
from the semidifferential of the map at this point. In particular,
we show that
the geometric convergence rate of the orbits to the fixed point can be bounded
in terms of Bonsall's nonlinear spectral radius of the semidifferential.
We derive similar results concerning the uniqueness of the eigenline
and the geometric convergence of the orbits to it,
in the case of positively homogeneous maps acting on the interior of a cone,
or of additively homogeneous maps acting on an AM-space with unit.
This is motivated
in particular by the analysis of dynamic programming operators
(Shapley operators) of zero-sum stochastic games.
\end{abstract}
\maketitle

\section{Introduction}
Nonlinear maps acting on a subset of a Banach space,
that are nonexpansive either
in the norm of the space, or in metrics inherited from 
a convex cone, like Hilbert's or Thompson's metric,
arise in a number of fields,
including population dynamics~\cite{perthame},
entropy maximization and scaling problems~\cite{menonSchneider69,borweinlewisnussbaum}, renormalization operators and fractal diffusions~\cite{sabot,metz}, mathematical economy~\cite{morishima}, 
mathematical biology~\cite{angeli08}, optimal filtering~\cite{bougerol}, 
optimal control~\cite{crandall,spectral2} and 
zero-sum games~\cite{kolokoltsov92,FilarVrieze,rosenbergsorin,neymansurv,AGGut10}.
In particular, the dynamic programming operators, known
as Bellman operators in control theory, or as 
Shapley operators in game theory,
turn out to be, under standard assumptions,
sup-norm nonexpansive maps defined on a space
of continuous functions.

Such nonexpansive maps include as a special case the nonnegative matrices and positive
linear operators arising in Perron-Frobenius theory. A number
of works, including~\cite{krut48,birkhoff57,birkhoff62,hopf,krasnoselskii,birkhoff67,potter,bushell73,bushell86,nussbaum88,nussbaummemoir89,krause,nussbaumlunelmemoir,arxiv1,AGLN,hujiang10a,hujiang10b,GV10,lemmensnussbaum}, have dealt with the extension of Perron-Frobenius theory to the nonlinear case. 

In particular, a central problem is to give conditions allowing one to check whether a given fixed point is globally unique,
and whether all the orbits of the maps converge to it.

This problem was considered by Nussbaum in~\cite{nussbaum88},
who gave general conditions in the case of differentiable
maps. Several results in~\cite{nussbaum88},
show, in various settings, that
a given fixed point of the map is globally unique
as soon as the differential of the map at this fixed point
does not have any nontrivial fixed point, provided some mild compactness
condition is satisfied. Similarly, the global geometric convergence
of all the orbits of the map to this fixed point is guaranteed
if the spectral radius of the same differential is strictly less than
one. Analogous results are derived in~\cite{nussbaum88}
for nonlinear eigenvectors. 

In a number of applications, including the
ones arising from control and games, differentiability assumptions
turn out to be too restrictive, since Bellman
or Shapley operators are typically nonsmooth
(smoothness being related to the uniqueness
of the optimal action). However,
such operators often satisfy a weaker condition, {\em semidifferentiability},
which was first introduced by Penot~\cite{penot}, and which has become a basic
notion in variational analysis, 
see in particular the book by Rockafellar and Wets~\cite{rock98}.

The case of nondifferentiable convex Shapley operators
(corresponding to one player stochastic games) was studied by Akian and Gaubert
in~\cite{spectral2}. It was shown there that the dimension
of the fixed point set can be bounded 
by considering the subdifferential of the operator
at any fixed point (in particular, the uniqueness
of this fixed point, perhaps up to an additive
constant, can be guaranteed by considering
this subdifferential), and that the
asymptotic behavior of the orbits, could also
be inferred from this subdifferential.

In this paper, we show that the general principle,
developed in~\cite{nussbaum88} in the differentiable
case and in~\cite{spectral2} in the convex nondifferentiable
case carries over to the nonconvex semidifferentiable case:
the global uniqueness of the fixed point of a nonexpansive map, and the geometric convergence of the orbits to it, can be guaranteed
by considering only the infinitesimal behavior of the map at the fixed
point.

Our main results are Theorem~\ref{theo0}, dealing with the uniqueness
of the fixed point, and Theorem~\ref{th-geom}, dealing with the geometric
convergence of the orbits. We also derive in~\S\ref{sec-eig-nonexp}
similar results for the uniqueness of the normalized eigenvector, and
for the convergence of the orbits, in the case of homogeneous maps
acting on the interior of a normal cone and nonexpansive in Hilbert's or Thompson's metric. Note that the spectral
radius arising in the differentiable case
is replaced by Bonsall's cone spectral radius in the semidifferentiable case.
Moreover, to state the uniqueness result, a nonlinear Fredholm-type compactness
is required. In this way, we generalize to the case
of semidifferentiable maps 
(Theorem~\ref{theo01}, and Corollaries~\ref{theo1} and~\ref{cor-add})
some of the main results of~\cite{nussbaum88}, including Theorem~2.5 there.
Note that the present results are already new
in the finite dimensional case. Then, the technical compactness conditions
are trivially satisfied.

We remark in passing that there are interesting classes of cone maps $f:C\to C$
 which are differentiable on the interior of the cone $C$ but only semidifferentiable on the boundary of $C$.  See, for example, the class $\mathcal{M}_{-}$
treated in~\cite{nussbaummemoir89}. Properties equivalent to semidifferentiability (see e.g.\ Theorem~3.1 there) have been used there to prove existence of eigenvectors in the interior of $C$ for some such maps $f$. However, we shall not pursue this line of applications here.

As an illustration of our results, we analyze a simple zero-sum
stochastic repeated game (\S\ref{subsec-games}). (The application
to zero-sum games will be discussed in more detail 
in a further work.) We note in this respect that the
spectral radius of the semidifferential of a Shapley
operator can be computed using the results of the companion work~\cite{AGN-collatz}, which yields explicit formul\ae\ for the geometric contraction rate.

The paper is organized as follows. Some basic definitions and results concerning
convex cones, Hilbert's and Thompson's metric, are recalled in~\S\ref{sec-intro-hilbert}. Preliminary results concerning semidifferentiable
maps are established in~\S\ref{sec-semidifferentiable}.
Then, in~\S\ref{sect-semider}, we examine the semidifferentials
of order preserving or nonexpansive maps. The needed
results concerning the different notions of nonlinear spectral radius, 
and in particular Bonsall's cone spectral radius, are recalled
in~\S\ref{sec-spec}
(these results are
essentially taken from~\cite{Nuss-Mallet} and~\cite{AGN-collatz}).
Then, the main results of the paper, concerning
the uniqueness of the fixed point, and the geometric convergence
of the orbits, are established in~\S\ref{sec-main}. The corollaries of these
results concerning nonlinear eigenvectors of homogeneous
maps leaving invariant the interior of a cone
are derived in~\S\ref{sec-eig-nonexp}.
Note that the proofs in~\S\ref{sec-main} and~\ref{sec-eig-nonexp} are based on
metric fixed point technique (approximation of the map by a strictly contracting map). In particular, in the
case of Hilbert's or Thompson's metric, the need to work with a complete
metric space limits the scope of these results to {\em normal} cones. 
Hence, alternative uniqueness results, using degree theory arguments, 
which are valid more generally in proper (closed, convex, and pointed) cones, 
are presented in~\S\ref{sec-fix-proper}. Then, in~\S\ref{sec-eig-nonexp-diff}, the results are
specialized to differentiable maps, and compared with the ones
of~\cite{nussbaum88}. Finally, in~\S\ref{sec-order}, the present results are adapted
to the additive setting (order preserving and additively homogeneous
maps acting on an AM-space with unit, or equivalently, on a space of
continuous functions $\continuous(K)$ for some
compact set $K$). They are illustrated in \S\ref{subsec-games} by the analysis
of an example of zero-sum stochastic game (a variant of the Richman~\cite{loeb} or stochastic tug-of-war games~\cite{peres}).

\section{Preliminary results about Hilbert's and Thompson's metric}
\label{sec-intro-hilbert}
In this section, we recall classical notions
about cones, and recall or establish
results which will be used in the following sections.
See \cite[Chapter 1]{nussbaum88} and \cite[Section 1]{nussbaum94}
for more background.

\subsection{Hilbert's and Thompson's metric}\label{sec21}
In this paper,
a subset $C$ of a real vector space $X$ 
is called a \NEW{cone} (with vertex 0) if 
$tC:=\set{tx}{x\in C} \subset C$ for all $t\geq 0$. 
If $f$ is a map from a cone $C$ of a vector space $X$ to a cone
$C'$ of a vector space $Y$,
we shall say that $f$ is (positively) \NEW{homogeneous} (of degree $1$)
if $f(t y)=t f(y)$, for all $t>0$ and $y\in C$.
We say that the cone $C$ is \NEW{pointed}
if $C \cap (-C) = \{0\}$.
A convex pointed cone $C$ of $X$
induces on $X$ a partial ordering $\leqc$, which
is defined by $x\leqc y$ iff $y - x \in C$. 
If $C$ is obvious, we shall write $\leq$ instead of $\leqc$.
When $X$ is a topological vector space, we say that
$C$ is \NEW{proper} if it is closed convex and pointed.
Note that in \cite{nussbaum88}, 
a {\em cone} is by definition what we call here a proper cone.
We next recall the definition of Hilbert's and Thompson's
metrics associated to a proper cone $C$ of 
a topological vector space $X$.

Let $x\in C\setminus\{0\}$ and $y\in X$. 
We define
$\M(y/x)$ by
\begin{align}\label{def-M}
\M(y/x) := \inf \set{b \in \R}{ y \leq bx }\enspace ,
\end{align}
where the infimum of the emptyset is by definition equal to
$+\infty$. 
Similarly, 
we define $\m(y/x)$ by
\begin{align}\label{def-m}
\m(y/x) := \sup \set{a \in \R}{ax \leq y }\enspace,
\end{align}
where the supremum of the emptyset is by definition equal to
$-\infty$. 
We have $\m(y/x)=-\M(-y/x)$ and if in addition 
$y\in C\setminus\{0\}$, $\m(y/x)=1/ \M(x/y)$ (with $1/(+\infty)=0$).
Since $C$ is pointed and closed, we have $\M(y/x)\in \R\cup\{+\infty\}$, and
$y \leq \M(y/x) x$ as soon as $\M(y/x)<+\infty$. 
Symmetrically $m(y/x)\in \R\cup\{-\infty\}$ and $\m(y/x) x \leq y$, as soon as 
$m(y/x)>-\infty$. 

We shall say that two elements $x$ and $y$ in $C$ are \NEW{comparable}
and write $x \backsim y$ 
if there exist positive constants $a > 0$ and $b > 0$ such that
$ax \leq y \leq bx$. 
If $x,y \in C\setminus \{0\}$ are comparable,
we define
\begin{align*}
d(x,y) = &  \log \M(y/x) -\log \m(y/x) \enspace ,
\\
\tho(x,y) = &  \log \M(y/x) \vee -\log \m(y/x)\enspace .
\end{align*}
We also define $d(0,0)=\tho(0,0)=0$.

If $u \in C$, we define
\[
C_u = \{x \in C \mid x \backsim u\}\enspace .
\]
If $C$ has nonempty interior $\Cint$, and $u \in \Cint$, 
then $C_u = \Cint$. To see this,
take a neighborhood $V$ of $0$
such that $u+V\subset C$. If $x\in \Cint$, we
can find a neighborhood $W$ of $0$ such that $x+W\subset C$.
Let $a,b>0$ be such that $-a u \in W$
and $-b x\in V$. Then, $u-b x \in C$ and $x-a u\in C$,
and so $b x\leq u\leq a^{-1} x$, showing
that $\Cint \subset C_u$. Conversely, if $x\in C_u$,
we have $u\leq bx$ for some $b>0$.
It follows that $x+b^{-1}V= (x-b^{-1}u)+b^{-1}(u+V)\subset
C+C=C$, showing that $x\in \Cint$.

In general, $C_u \cup \{0\}$ is a pointed convex cone
but $C_u \cup \{0\}$ is not closed. 
The map $\tho$ 
is a metric on $C_u$,
called \NEW{Thompson's metric}.
The map $d$ 
is called the \NEW{Hilbert's projective metric}
on $C_u$. The term ``projective metric''
is justified by the following properties:
for all $x,y,z \in C_u$, $d(x,z) \leq d(x,y) + d(y,z)$, 
$d(x,y) = d(y,x)\geq 0$ and $d(x,y) = 0$
iff $y = \lambda x$ for some $\lambda > 0$.

{From} now, we will assume that $X=(X,\|\cdot\|)$ is a Banach space.
We denote by $X^*$ the space of continuous linear forms over $X$,
and by $C^*:=\set{\psi\in X^*}{\psi(x)\geq 0\; \forall x\in C}$ 
the \NEW{dual cone} of $C$.
If $f$ is a map between two ordered sets $(D,\leq)$ and $(D',\leq)$, 
we shall say that $f$ is \NEW{order preserving} if $f(x)\leq f(y)$ for all   
$x,y\in D$ such that $x\leq y$. Then, any element of $C^*$ is a
homogeneous and order preserving  map from $(X,\leq_C)$ to $[0,+\infty)$.
Since $C$ is proper, the Hahn-Banach theorem implies 
that for all $u\in C\setminus\{0\}$, 
there exists $\psi\in C^*$ such that $\psi(u)>0$.
For such a $\psi$, we have $\psi(x)>0$ for all $x\in C_u$.
More generally, if $q:C_u\to (0,+\infty)$ is homogeneous and
order preserving, we denote
\begin{align}
\label{e-def-sigmau}
\Sigma_u=\set{x\in C_u}{ q(x)=q(u)}
\enspace .
\end{align}
Then, $d$ and $\tho$ are equivalent metrics on $\Sigma_u$.
Indeed, as shown in~\cite[Remark~1.3, p. 15]{nussbaum88}:
\begin{align}
\frac 1 2 d(x,y)\leq \tho(x,y) \leq d(x,y),\; \forall x,y\in \Sigma_u
\enspace.
\label{e-equivm}
\end{align}
More precisely:
\begin{align}
1\leq \M(y/x) \leq e^{d(x,y)}\quad \forall x,y\in \Sigma_u\enspace.
\label{mleqd}
\end{align}
To see this, let us apply $q$ to the inequality $y\leq \M(y/x)\; x$.
Using that $q$ is order preserving and homogeneous, and
that $q(x)=q(y)$, we get $\M(y/x)\geq 1$.
By symmetry, $\M(x/y)\geq 1$, hence 
$\log \M(y/x)= d(x,y)-\log\M(x/y)\leq d(x,y)$.

We say that a cone $C$ is 
\NEW{normal} if $C$ is proper and 
there exists a constant $M$ such that $\|x\| \leq M \|y\|$ 
whenever $0 \leq x \leq y$. Every proper cone $C$ in a
finite dimensional Banach space $(X,\|\cdot\|)$ is necessarily normal.
We shall need the following result of Thompson.
\begin{proposition}[{\cite[Lemma~3]{thompson}}]\label{prop-thompson}
Let $C$ be a normal cone in a Banach space $(X,\|\cdot\|)$.
For all $u\in C\setminus\{0\}$, $(C_u,\tho)$ 
is a complete metric space.
\end{proposition}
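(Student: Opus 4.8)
The plan is to take a sequence $(x_n)$ in $C_u$ that is Cauchy for $\tho$ and show it $\tho$-converges to a point of $C_u$, using normality of $C$ to pass back and forth between $\tho$ and the norm of $X$, and completeness of $X$ to produce the limit.

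First I would record that $\tho(x,y)\le\epsilon$ is equivalent to the order sandwiching $e^{-\epsilon}x\le y\le e^{\epsilon}x$; this is immediate from the definitions of $M(y/x)$, $m(y/x)$ and $\tho$, together with the fact that $C$ is closed, so that the infimum and supremum defining $M$ and $m$ are attained (as already noted in the text, $y\le M(y/x)x$ and $m(y/x)x\le y$). The one quantitative ingredient, and the only place where normality really enters, is a norm estimate: if $e^{-\epsilon}x\le y\le e^{\epsilon}x$, set $w:=y-e^{-\epsilon}x$, so that $0\le w\le (e^{\epsilon}-e^{-\epsilon})x$ and hence $\|w\|\le M(e^{\epsilon}-e^{-\epsilon})\|x\|$ by normality; writing $y-x=w-(1-e^{-\epsilon})x$ then gives
\[
\|y-x\|\le \big(M(e^{\epsilon}-e^{-\epsilon})+(1-e^{-\epsilon})\big)\|x\| =: g(\epsilon)\|x\|,
\]
where $g(\epsilon)\to 0$ as $\epsilon\to 0$.

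Given a $\tho$-Cauchy sequence $(x_n)$ in $C_u$, I would first bound it in norm: choosing $N$ with $\tho(x_n,x_N)\le 1$ for $n\ge N$ yields $0\le x_n\le e\,x_N$, so $\|x_n\|\le Me\|x_N\|$ by normality and $K:=\sup_n\|x_n\|<\infty$. The norm estimate then shows $(x_n)$ is norm-Cauchy, since $\|x_n-x_m\|\le g(\tho(x_n,x_m))\,K$ for all $n,m$. As $X$ is Banach, $x_n\to x$ in norm for some $x\in X$, and $x\in C$ because $C$ is closed. To finish I would pass the order inequalities to the norm limit: fixing $\epsilon>0$ and $N_\epsilon$ with $\tho(x_n,x_m)\le\epsilon$ for $n,m\ge N_\epsilon$, the sandwiching $e^{-\epsilon}x_m\le x_n\le e^{\epsilon}x_m$ holds; letting $m\to\infty$ and using that the order intervals $(a+C)\cap(b-C)$ are closed gives $e^{-\epsilon}x\le x_n\le e^{\epsilon}x$, that is, $\tho(x,x_n)\le\epsilon$ for every $n\ge N_\epsilon$. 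Taking $\epsilon=1$ shows $x$ is comparable to $x_n\sim u$, and pointedness of $C$ forbids $x=0$, so $x\in C_u$; the same inequalities show $x_n\to x$ for $\tho$.

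The only genuine content is the norm estimate and its use of normality; everything else is bookkeeping. The points I expect to require care are that the order inequalities survive the norm limit (this is exactly the closedness of the order intervals $(a+C)\cap(b-C)$, inherited from closedness of $C$) and that the limit satisfies $x\ne 0$, which relies on pointedness of $C$.
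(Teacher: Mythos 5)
Your proof is correct. Note that the paper does not actually prove this proposition: it is stated with a citation to Thompson's Lemma~3 and no argument is given, so there is no internal proof to compare against. Your argument is essentially the classical one behind that citation: a $\tho$-Cauchy sequence is norm-bounded (one application of normality), hence norm-Cauchy via the estimate $\|y-x\|\leq \bigl(M(e^{\epsilon}-e^{-\epsilon})+(1-e^{-\epsilon})\bigr)\|x\|$ (the second, essential application of normality), so it has a norm limit $x\in C$ by completeness of $X$ and closedness of $C$; the order sandwich $e^{-\epsilon}x_m\leq x_n\leq e^{\epsilon}x_m$ then survives the limit $m\to\infty$ because $C$ is norm-closed, which simultaneously shows $\tho(x_n,x)\leq\epsilon$, shows $x\sim x_n\sim u$, and (with pointedness) rules out $x=0$, since $x=0$ would force $x_n=0\notin C_u$. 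All the individual steps check out, including the attainment of the infimum and supremum defining $M$ and $m$ (which the paper records as a consequence of $C$ being closed and pointed), so your write-up is a complete, self-contained proof of the cited result.
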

The following result follows from a general result
of Zabre{\u\i}ko, Krasnosel{$'$}ski{\u\i} and Pokorny{\u\i}~%
\cite{zabreiko} (see~\cite[Theorem~1.2 and Remarks~1.1 and 1.3]{nussbaum88}
and a previous result of Birkhoff~\cite{birkhoff62}).
When $q\in C^*$, it follows from 
Proposition~\ref{prop-thompson},~\eqref{e-equivm} and the property that
$\Sigma_u$ is closed in the topology of the Thompson's metric $\tho$.
\begin{proposition}\label{prop-birkhoff}
Let $C$ be a normal cone in a Banach space $(X,\|\cdot\|)$.
Let $u\in C\setminus\{0\}$ and $q:C_u\to (0,+\infty)$,
which is homogeneous and order preserving with respect to $C$. 
Let $\Sigma_u=\set{x\in C_u}{ q(x)=1}$. Then,  
$(\Sigma_u,d)$ and $(\Sigma_u,\tho)$ are complete metric spaces. 
\end{proposition}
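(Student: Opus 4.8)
The plan is to deduce both completeness statements from Proposition~\ref{prop-thompson}, which provides the completeness of $(C_u,\tho)$. First I would note that on $\Sigma_u$ the inequalities \eqref{e-equivm} make $d$ and $\tho$ bi-Lipschitz equivalent, since they give $\tho(x,y)\leq d(x,y)\leq 2\,\tho(x,y)$ for $x,y\in\Sigma_u$. Consequently a sequence in $\Sigma_u$ is $d$-Cauchy if and only if it is $\tho$-Cauchy, and it $d$-converges to a limit precisely when it $\tho$-converges to the same limit. Hence $(\Sigma_u,d)$ is complete if and only if $(\Sigma_u,\tho)$ is complete, and it suffices to prove the latter.

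Since a closed subset of a complete metric space is complete, and $(C_u,\tho)$ is complete by Proposition~\ref{prop-thompson}, I would then show that $\Sigma_u$ is closed in $(C_u,\tho)$. The cleanest route is to prove that $q$ is continuous on $(C_u,\tho)$, for then $\Sigma_u=\set{x\in C_u}{q(x)=1}$ is the preimage of a point and is closed. So let $x_n\to x$ in $(C_u,\tho)$. As all elements of $C_u$ are comparable, the scalars $\m(x_n/x)$ and $\M(x_n/x)$ are positive and finite, and $\tho$-convergence is exactly the statement that $\M(x_n/x)\to 1$ and $\m(x_n/x)\to 1$. Applying the order preserving and positively homogeneous map $q$ to the defining inequalities $\m(x_n/x)\,x\leq x_n\leq \M(x_n/x)\,x$ gives $\m(x_n/x)\,q(x)\leq q(x_n)\leq \M(x_n/x)\,q(x)$, so letting $n\to\infty$ squeezes $q(x_n)\to q(x)$. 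This establishes the continuity of $q$ and therefore the closedness of $\Sigma_u$, completing the proof.

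The only point requiring care is the bookkeeping with the scalars $\m(x_n/x)$ and $\M(x_n/x)$: one must ensure that they are genuinely positive and finite, so that homogeneity applies to the products $\m(x_n/x)\,x$ and $\M(x_n/x)\,x$, and that the two-sided bound on $q(x_n)$ survives the passage to the limit. Both are guaranteed by the fact that $x_n$ and $x$ lie in $C_u$ and are thus comparable, so that $0<\m(x_n/x)\leq \M(x_n/x)<+\infty$ throughout. Note that this squeezing argument only invokes the order preservation and homogeneity of $q$, with no continuity or linearity assumed beforehand; in particular it covers the case $q\in C^*$ without recourse to the general theorem of Zabre{\u\i}ko, Krasnosel{$'$}ski{\u\i} and Pokorny{\u\i}.
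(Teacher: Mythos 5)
Your proof is correct, and it is genuinely more self-contained than the paper's. The paper establishes the general statement essentially by citation: it invokes a theorem of Zabre{\u\i}ko, Krasnosel{$'$}ski{\u\i} and Pokorny{\u\i} (through \cite[Theorem~1.2 and Remarks~1.1 and 1.3]{nussbaum88}), and only in the special case $q\in C^*$ does it indicate the direct route you take, namely completeness of $(C_u,\tho)$ from Proposition~\ref{prop-thompson}, the equivalence~\eqref{e-equivm}, and the closedness of $\Sigma_u$ in the $\tho$-topology; for linear continuous $q$ that closedness is immediate because $q$ is norm-continuous and $\tho$ induces the topology of $\|\cdot\|_u$ on $C_u$ (Proposition~\ref{prop-normequiv}). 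What you do differently is make the closedness step work for an arbitrary homogeneous order-preserving $q:C_u\to(0,+\infty)$: the squeeze $\m(x_n/x)\,q(x)\leq q(x_n)\leq \M(x_n/x)\,q(x)$ shows that such a $q$ is automatically $\tho$-continuous, with no linearity or a priori continuity hypothesis, so the appeal to Zabre{\u\i}ko et al.\ can be dispensed with entirely. Two small points in your argument are indeed valid and worth recording: first, $\tho$-convergence is equivalent to $\M(x_n/x)\to 1$ and $\m(x_n/x)\to 1$, because $\m(y/x)\leq\M(y/x)$ for comparable $x,y$ gives $\tho(x,y)=\max\bigl(|\log \M(y/x)|,\,|\log \m(y/x)|\bigr)$; second, the inequality~\eqref{e-equivm} is stated in the paper for the level set $\set{x\in C_u}{q(x)=q(u)}$, but its derivation uses only $q(x)=q(y)$, so it applies verbatim on $\Sigma_u=\set{x\in C_u}{q(x)=1}$, which is what your bi-Lipschitz step needs. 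In short, the paper's citation buys brevity and a link to the classical literature, while your argument buys an elementary, uniform proof of the general case resting on Proposition~\ref{prop-thompson} alone.
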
 

\subsection{The local Banach space $X_u$}
Given $u \in C\setminus \{0\}$, we define the linear space 
$X_u=\set{x \in X}{\exists a > 0,\; -au \leq x \leq au}$.
We will show (in Lemma~\ref{lem-allineq} and Proposition~\ref{prop-normequiv} below) 
that $X_u$ is equipped with
a norm $\|\cdot\|_u$, and a seminorm $\omega_u$,
such that $\tho$ and $d$ behave locally, near $u$,
as $\|\cdot\|_u$ and $\omega_u$, respectively.

Let $M$ and $m$ be defined as in~\eqref{def-M} and \eqref{def-m}.
We equip $X_u$ with the norm:
\begin{equation}\label{eq6}
\|x\|_u = \stf{u}{x}\vee -\sbf{u}{x} = 
\inf \set{a > 0}{- au \leq x \leq au} \enspace .
\end{equation}
We also define the {\em oscillation} of $x\in X_u$:
\begin{align}
\label{def-omega}
\omega_u(x) = \stf{u}{x}-\sbf{u}{x} = \inf\set{b-a}{au\leq x\leq bu} \enspace.
\end{align}
The map $\omega_u: x\mapsto \omega_u(x)$ is a 
seminorm on $X_u$, which satisfies 
$\omega_u(x)=0\iff x\in \R u$. Note also
that $\omega_u(x+au)=\omega_u(x)$ for all $x\in X_u$ and $a\in\mathbb{R}$.
Since
$\stf{u}{x}\vee -\sbf{u}{x} \geq \frac 1 2 (\stf{u}{x}-\sbf{u}{x})$,
we get
\begin{align}
\label{equiv-omega0}
\frac{1}{2} \omega_u(x)\leq \|x\|_u,
\qquad \forall x\in X_u \enspace .
\end{align}
Moreover, when $\psi$ is an element of $C^*$ and $\psi(u)=1$,
$\psi(x)=0$ and $x\leq au $ implies $0\leq a$, 
so that $\stf{u}{x} \geq 0$, and dually, $\sbf{u}{x} \leq 0$.
Hence, $\stf{u}{x} \vee -\sbf{u}{x}\leq \stf{u}{x}- \sbf{u}{x}$, 
which gives
\begin{align}
\label{equiv-omega1}
\|x\|_u \leq \omega_u(x)\quad \forall x\in 
X_u,\text{ such that }\psi(x)=0\enspace .
\end{align}
Gathering~\eqref{equiv-omega0}
and~\eqref{equiv-omega1}, 
we see that the restriction of $\omega_u$ to the subspace
$\set{x\in X_u}{\psi(x)=0}$
of $X_u$ is a norm equivalent to $\|\cdot\|_u$.

We shall need the following variants or refinements of some inequalities
of~\cite[Proposition~1.1]{nussbaum94}. 
\begin{lemma}\label{lem-allineq}
Let $C$ be a proper cone, and $u\in C\setminus \{0\}$. For 
all $x,y\in C_u$, we have:
\begin{align}
\|x -y\|_u &\leq (e^{\tho(x,y)}-1)e^{\tho(x,u)\wedge \tho(y,u)} \enspace,\label{e-e1}\\
\tho(x,y) &\leq \log (1+ \|x-y\|_u\,  e^{\tho(x,u)\vee \tho(y,u)}) \enspace. \label{e-e2}
\end{align}
Moreover, 
defining $\Sigma_u$ as in~\eqref{e-def-sigmau},
with $q:=\psi\in C^*$ and $\psi(u)=1$, we get that
for all $x,y\in \Sigma_u$, 
\begin{align}
\label{e-f1}
\omega_u(x-y) &\leq
(e^{d(x,y)}-1)e^{\tho(x,u)\wedge \tho(y,u)} \enspace,
 \\\label{e-f2}
d(x,y)&\leq \omega_u(x-y) e^{\tho(x,u)\vee \tho(y,u)}
\enspace .
\end{align}
\end{lemma}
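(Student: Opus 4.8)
The plan is to work throughout with the extremal constants $\alpha := M(y/x)$ and $\beta := m(y/x)$, which satisfy $\beta x \le y \le \alpha x$ and (since $x,y,u$ are pairwise comparable in $C_u$) are finite and positive, together with the two elementary identities $e^{\tho(x,y)} = \alpha \vee \beta^{-1}$ and $e^{d(x,y)} = \alpha/\beta$. I would also record once the two facts $M(x/u) \le e^{\tho(x,u)}$ and $M(u/x) = 1/m(x/u) \le e^{\tho(x,u)}$, both of which follow from the symmetric rewriting $e^{\tho(x,u)} = M(x/u)\vee M(u/x)$. The four inequalities then split into two pairs: the two Thompson/norm estimates \eqref{e-e1}--\eqref{e-e2}, proved directly from $C_u$, and the two Hilbert/oscillation estimates \eqref{e-f1}--\eqref{e-f2}, where the hypothesis $x,y\in\Sigma_u$ (i.e.\ the linear constraint $\psi(x)=\psi(y)=\psi(u)$) enters decisively.

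For \eqref{e-e1}, I would set $t := e^{\tho(x,y)}$, so $t^{-1}x \le y \le t x$; sandwiching $y-x$ between $(t^{-1}-1)x$ and $(t-1)x$ and using $0 \le x \le M(x/u)\,u$ yields $-(t-1)M(x/u)\,u \le y-x \le (t-1)M(x/u)\,u$, whence $\|x-y\|_u \le (t-1)M(x/u) \le (e^{\tho(x,y)}-1)e^{\tho(x,u)}$; interchanging the roles of $x$ and $y$ and keeping the smaller bound produces the $\wedge$ on the right. For \eqref{e-e2}, I would set $s := \|x-y\|_u$, so $-su \le y-x \le su$; from $y \le x + su \le (1 + s\,M(u/x))\,x$ I obtain $M(y/x) \le 1 + s\,e^{\tho(x,u)}$, and symmetrically $M(x/y) \le 1 + s\,e^{\tho(y,u)}$, so $e^{\tho(x,y)} = M(y/x)\vee M(x/y) \le 1 + s\,e^{\tho(x,u)\vee\tho(y,u)}$, which is \eqref{e-e2} after taking logarithms.

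The oscillation pair uses two consequences of $\psi$ being linear with $\psi(x)=\psi(y)=\psi(u)$: applying $\psi$ to $\beta x \le y \le \alpha x$ forces $\beta \le 1 \le \alpha$, and applying it to the defining inequalities of $M((y-x)/u)$ and $m((y-x)/u)$ forces $m((y-x)/u) \le 0 \le M((y-x)/u)$ (the same mechanism that produced \eqref{equiv-omega1}). For \eqref{e-f1}, the pinch $\beta \le 1 \le \alpha$ fixes the signs of $\alpha-1$ and $\beta-1$, so I can sandwich $y-x$ between $(\beta-1)M(x/u)\,u$ and $(\alpha-1)M(x/u)\,u$, giving $\omega_u(x-y) \le (\alpha-\beta)M(x/u)$; since $\beta\le 1$ one has $\alpha - \beta \le \alpha/\beta - 1 = e^{d(x,y)}-1$, and $M(x/u)\le e^{\tho(x,u)}$, so \eqref{e-f1} follows after again taking the better of the references $x,y$. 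For \eqref{e-f2}, I would pick $a := m((y-x)/u) \le 0 \le b := M((y-x)/u)$ with $b - a = \omega_u(x-y)$; from $y \le x + bu \le (1 + b/m(x/u))\,x$ and $\log(1+\cdot)\le(\cdot)$ I bound $\log\alpha \le b/m(x/u)$, while the reference $y$ gives $-\log\beta = \log M(x/y) \le (-a)/m(y/u)$, and adding yields $d(x,y) \le \tfrac{b}{m(x/u)} + \tfrac{-a}{m(y/u)} \le \tfrac{b+(-a)}{m(x/u)\wedge m(y/u)} = \tfrac{\omega_u(x-y)}{m(x/u)\wedge m(y/u)}$, after which $1/(m(x/u)\wedge m(y/u)) = M(u/x)\vee M(u/y) \le e^{\tho(x,u)\vee\tho(y,u)}$ closes the estimate.

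The step I expect to be the crux is the last one, \eqref{e-f2}. The naive route—bounding $b$ and $-a$ separately by $\omega_u(x-y)$—loses a factor of two and yields only $d(x,y) \le \omega_u(x-y)\,(e^{\tho(x,u)}+e^{\tho(y,u)})$. The gain comes from splitting the two logarithmic contributions $\log\alpha$ and $-\log\beta$ across the two reference points $x$ and $y$, and then using that it is the \emph{sum} $b+(-a)$, not $b$ and $-a$ individually, that equals $\omega_u(x-y)$. This in turn rests on the opposite signs $m((y-x)/u)\le 0 \le M((y-x)/u)$, which is exactly the information supplied by the linear constraint $\psi(x)=\psi(y)$ defining $\Sigma_u$, and explains why this pair of inequalities, unlike \eqref{e-e1}--\eqref{e-e2}, is stated only on $\Sigma_u$.
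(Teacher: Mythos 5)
Your proof is correct and follows essentially the same route as the paper's: the same sandwich arguments for \eqref{e-e1}--\eqref{e-e2}, and for \eqref{e-f1}--\eqref{e-f2} the same use of $\psi$ to pin down signs ($\beta\leq 1\leq\alpha$ and $a\leq 0\leq b$), followed by the same splitting of $\log\alpha$ and $-\log\beta$ across the two reference points via $\log(1+t)\leq t$. The only differences are notational (extremal constants $M(\cdot/\cdot)$, $m(\cdot/\cdot)$ in place of $e^{\pm\tho}$, and which of $x,y$ serves as the comparison point before symmetrizing).
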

\begin{proof}
We first prove~\eqref{e-e1}.
By definition of $\tho$, we can write
\begin{align}
\label{ed2} e^{-\tho(y,u)}u&\leq y\leq e^{\tho(y,u)}u\\
\label{ed3} e^{-\tho(x,y)}y&\leq x\leq e^{\tho(x,y)}y \enspace .
\end{align}
Using~\eqref{ed3} and the second inequality in~\eqref{ed2}, 
we get
\begin{align*}
& (e^{-\tho(x,y)}-1)e^{\tho(y,u)}u
\leq (e^{-\tho(x,y)}-1)y  \\
& \qquad \leq x-y 
\leq (e^{\tho(x,y)}-1)y\leq 
(e^{\tho(x,y)}-1)e^{\tho(y,u)}u \enspace. \nonumber
\end{align*}
Hence, 
\begin{align*}
\|x-y\|_u\leq
\big((e^{\tho(x,y)}-1)\vee (1-e^{-\tho(x,y)})\big)e^{\tho(y,u)} 
=(e^{\tho(x,y)}-1)e^{\tho(y,u)} 
\enspace .
\end{align*}
Together with 
the symmetrical
inequality obtained by exchanging $x$ and $y$, 
this yields~\eqref{e-e1}.

We next prove~\eqref{e-e2}. By definition of $\|\cdot \|_u$, we can write
\begin{align*}
 x -y\leq \|x-y\|_u u 
\end{align*}
for all $x,y\in X_u$. If $x,y\in C_u$, using the first inequality in~\eqref{ed2}, we deduce
\begin{align*}
x\leq y+ \|x-y\|_u u\leq (1+\|x-y\|_u e^{\tho(y,u)})y \enspace,
\end{align*}
and by symmetry,
\begin{align*}
y\leq  (1+\|x-y\|_u e^{\tho(x,u)})x \enspace.
\end{align*}
It follows that 
\begin{align*}
\tho(x,y)&\leq 
\log(1+\|x-y\|_u  e^{\tho(y,u)})
\vee 
\log(1+\|x-y\|_u  e^{\tho(x,u)})\\
&= \log(1+ \|x-y\|_u e^{\tho(y,u)\vee \tho(x,u)}) \enspace,
\end{align*}
which shows~\eqref{e-e2}. 

We now prove~\eqref{e-f1}. By definition of $d$, we can write
\begin{align}
d(x,y) &= \log \beta-\log \alpha, \;\mrm{with}\;
\alpha,\beta>0 \;\mrm{and}\\
\label{ed10}
\alpha y&\leq x\leq \beta y\enspace .
\end{align}
When $x,y\in \Sigma_u$, applying $\psi$ to~\eqref{ed10},
we get
\(\alpha \leq 1\leq \beta\).
Combining the second inequality in~\eqref{ed10} 
with $\beta-1\geq 0$ and the second 
inequality in~\eqref{ed2}, we get
\begin{align}\label{ed12}
x-y\leq (\beta-1)y \leq (\beta-1)e^{\tho(y,u)} u \enspace. 
\end{align}
Combining the first inequality in~\eqref{ed10}
with $\alpha-1\leq 0$ and the second inequality in~\eqref{ed2},
we get
\begin{align}\label{ed13}
x-y\geq (\alpha-1)y \geq (\alpha-1)e^{\tho(y,u)} u \enspace. 
\end{align}
Gathering~\eqref{ed12} and~\eqref{ed13}, we
get 
\[
\omega_u (x-y)\leq (\beta-\alpha) e^{\tho(y,u)} \enspace .
\]
By symmetry, 
$\omega_u (x-y)\leq (\beta-\alpha) e^{\tho(x,u)}$,
so that 
\begin{align*}
\omega_u (x-y)\leq (\beta-\alpha) e^{\tho(x,u)\wedge \tho(y,u)} \enspace .
\end{align*}
Since 
$\beta\geq 1\geq \alpha$, we get
\begin{align*}
\omega_u (x-y)\leq (\frac{\beta}{\alpha}-1)
 e^{\tho(x,u)\wedge \tho(y,u)} = (e^{d(x,y)}-1)
 e^{\tho(x,u)\wedge \tho(y,u)} \enspace ,
\end{align*}
which shows~\eqref{e-f1}.

We finally prove~\eqref{e-f2}. By definition of $\omega_u$, 
we have
\begin{align}
\label{ed16}
au\leq x-y\leq bu
\end{align}
for some $a,b\in \R$, with $b-a=\omega_u(x-y)$.
Applying $\psi$ to~\eqref{ed16}, we get $a\leq 0\leq b$.
Combining the second inequality in~\eqref{ed16} with
the first inequality in~\eqref{ed2}, we get
\begin{align}\label{ed17}
x\leq y+bu\leq (1+be^{\tho(y,u)}) y
\end{align}
and by symmetry (\eqref{ed16} is equivalent to $-bu\leq y-x\leq -au$)
\begin{align}\label{ed18}
y\leq x-au\leq (1-ae^{\tho(x,u)}) x
\end{align}
Gathering~\eqref{ed17} and~\eqref{ed18}, we get
\begin{align*}
d(x,y)\leq \log (1+be^{\tho(y,u)})+ \log(1- ae^{\tho(x,u)}) \enspace .
\end{align*}
Since $\log (1+x)\leq x$ for all $x\geq 0$ and $b\geq 0\geq a$,
we get
\begin{align*}
d(x,y)\leq be^{\tho(y,u)}- ae^{\tho(x,u)}
\leq (b-a) e^{\tho(x,u)\vee \tho(y,u)} \enspace ,
\end{align*}
which shows~\eqref{e-f2}.
\end{proof}
\begin{proposition}\label{prop-normequiv}
Let $C$ be a normal cone in a Banach space $(X, \| \cdot \|)$.
Consider $u\in C\setminus\{0\}$, and $\psi\in C^*$
with $\psi(u)>0$.
Then $(X_u, \| \cdot \|_u)$ is a Banach space.
Moreover, $\|\cdot\|_u$ and $\tho$ induce
the same topology on $C_u$, and
\begin{align}
\|x-y\|_u\thicksim \tho(x,y),\;\mrm{when}\; x,y\to u
\;\mrm{in}\; (C_u,\tho) \enspace ,\label{e-normequivdbar}
\end{align}
where by~\eqref{e-normequivdbar}, we mean that for all $\lambda>1$, there
exists a neighborhood $U$ of $u$ in $(C_u,\tho)$ such that 
\[
\lambda^{-1} \tho(x,y)\leq \|x-y\|_u \leq \lambda \tho(x,y),\;\mrm{for all}\; x,y\in U \enspace.
\]
We also have
\begin{align}
\omega_u(x-y) \thicksim d(x,y),\; \mrm{when}\; x,y\to u
\;\mrm{in}\; (\Sigma_u,\tho) \enspace .
\label{e-normequivosc}
\end{align}
Also $C \cap X_u$ is a normal cone in $(X_u, \| \cdot \|_u)$ and 
has nonempty interior given by $C_u$. 
If $C$ has nonempty interior in $(X, \| \cdot \|)$ and $u \in \Cint$, 
then
$X_u = X$ and $\| \cdot \|$ and $\| \cdot \|_u$ are equivalent norms on $X$.
\end{proposition}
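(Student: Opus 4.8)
The plan is to prove the four assertions in turn. I would begin by checking that $\|\cdot\|_u$ is indeed a norm on $X_u$: finiteness is the very definition of $X_u$, absolute homogeneity and the triangle inequality are immediate from the definition as an infimum, and $\|x\|_u=0$ forces $-\epsilon u\leq x\leq\epsilon u$ for every $\epsilon>0$, whence $x\in C\cap(-C)=\{0\}$ since $C$ is closed and pointed. The substantive point is \emph{completeness}, and here normality is the crucial ingredient. If $-au\leq x\leq au$ then $0\leq x+au\leq 2au$, so normality (with constant $M$) gives $\|x+au\|\leq 2aM\|u\|$ and hence $\|x\|\leq a(2M+1)\|u\|$; taking the infimum over admissible $a$ yields $\|x\|\leq(2M+1)\|u\|\,\|x\|_u$, i.e.\ the inclusion $(X_u,\|\cdot\|_u)\hookrightarrow(X,\|\cdot\|)$ is continuous. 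Consequently a $\|\cdot\|_u$-Cauchy sequence $(x_n)$ is $\|\cdot\|$-Cauchy and converges to some $x\in X$. Given $\epsilon>0$ and $n,m$ large we have $-\epsilon u\leq x_n-x_m\leq\epsilon u$; fixing $n$ and letting $m\to\infty$, closedness of $C$ lets these order inequalities pass to the limit, giving $-\epsilon u\leq x_n-x\leq\epsilon u$. This shows $x\in X_u$ and $\|x_n-x\|_u\to 0$.

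For the topological statement I would read everything off Lemma~\ref{lem-allineq}. Inequality \eqref{e-e1} shows that $\tho(x,y)\to 0$ (with $\tho(y,u)$ then bounded near $\tho(x,u)$) forces $\|x-y\|_u\to 0$, so $\id\colon(C_u,\tho)\to(C_u,\|\cdot\|_u)$ is continuous. For the reverse continuity, if $\|x-y\|_u$ is small and $x\in C_u$, then $y\geq(e^{-\tho(x,u)}-\|x-y\|_u)u$ and $y\leq(e^{\tho(x,u)}+\|x-y\|_u)u$, which keeps $\tho(y,u)$ bounded, and then \eqref{e-e2} gives $\tho(x,y)\to 0$. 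Hence the two metrics induce the same topology on $C_u$. The asymptotic equivalences follow by sandwiching: as $x,y\to u$ in $\tho$, both $\tho(x,u),\tho(y,u)\to 0$, so \eqref{e-e1} and \eqref{e-e2} together with $e^t-1\sim t$ and $\log(1+t)\sim t$ force $\|x-y\|_u/\tho(x,y)\to 1$, which is \eqref{e-normequivdbar}; the same argument applied to \eqref{e-f1} and \eqref{e-f2}, using $d(x,y)\leq 2\tho(x,y)$ from \eqref{e-equivm} to guarantee $d(x,y)\to 0$, yields \eqref{e-normequivosc}.

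For the third assertion, $C\cap X_u$ is a convex pointed cone in $X_u$, and it is closed there because the inclusion into $X$ is continuous and $C$ is closed. It is normal with constant $1$: for $0\leq x\leq y$ in $C\cap X_u$ one has $\|x\|_u=M(x/u)\leq M(y/u)=\|y\|_u$, since for elements of $C$ the norm reduces to $M(\cdot/u)$ and $M(\cdot/u)$ is monotone. To identify the interior with $C_u$, I would argue both inclusions directly. If $x\in C_u$ with $au\leq x$ for some $a>0$, then $\|y-x\|_u<a$ gives $y\geq(a-\|y-x\|_u)u>0$, so the $\|\cdot\|_u$-ball $B_u(x,a)$ lies in $C_u$; thus $C_u\subseteq\interior(C\cap X_u)$. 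Conversely, if $x\in\interior(C\cap X_u)$ with $B_u(x,r)\subseteq C$, then $x-\tfrac r2 u\in C$ gives $x\geq\tfrac r2 u$, while $x\in X_u$ gives $x\leq a u$ for some $a$, so $x\sim u$ and $x\in C_u$.

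Finally, when $u\in\Cint$ there is $\epsilon>0$ with the ambient ball $B(u,\epsilon)\subseteq C$. For any nonzero $x\in X$ the points $u\pm\tfrac\epsilon2\tfrac{x}{\|x\|}$ lie in $C$, which rearranges to $-\tfrac{2\|x\|}{\epsilon}u\leq x\leq\tfrac{2\|x\|}{\epsilon}u$; hence $X_u=X$ and $\|x\|_u\leq\tfrac2\epsilon\|x\|$. Combined with the bound $\|x\|\leq(2M+1)\|u\|\,\|x\|_u$ from the first paragraph, this gives equivalence of the two norms. The step I expect to require the most care is the completeness of $(X_u,\|\cdot\|_u)$: it is precisely where normality is indispensable, since normality is exactly what makes $\|\cdot\|_u$ dominate $\|\cdot\|$ so that a $\|\cdot\|_u$-Cauchy sequence acquires an ambient limit, and closedness of $C$ is what transfers the order bounds to that limit. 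Everything else is a fairly mechanical consequence of Lemma~\ref{lem-allineq} and the monotonicity of $M(\cdot/u)$.
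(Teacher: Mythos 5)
Your proof is correct, and it is genuinely more self-contained than the paper's. The paper disposes of most of the proposition by citation: the topological equivalence of $\|\cdot\|_u$ and $\tho$ on $C_u$ and the norm equivalence when $u\in\Cint$ are quoted from Nussbaum's earlier work, and the completeness of $(X_u,\|\cdot\|_u)$ is attributed to Zabre\u{\i}ko--Krasnosel'ski\u{\i}--Pokorny\u{\i}, with an alternative derivation sketched via the completeness of $(C_u,\tho)$ (Proposition~\ref{prop-thompson}) combined with the local equivalence~\eqref{e-normequivdbar}. You instead prove everything from scratch, and for completeness you take a different route from the paper's sketch: rather than passing through the Thompson metric, you establish the embedding estimate $\|x\|\leq(2M+1)\|u\|\,\|x\|_u$ directly from normality, transfer Cauchyness to the ambient space, and use closedness of $C$ to push the order bounds to the limit --- which is precisely the content of the cited ZKP theorem, reproved. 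What your version buys is transparency about where each hypothesis enters (normality only through the embedding estimate, closedness only in the limit-passing step, as you note), and it avoids the mild subtlety in the paper's alternative route, namely that one must identify a small $\|\cdot\|_u$-ball with a $\tho$-closed subset of $C_u$ before Thompson completeness can be invoked. What the paper's route buys is economy and reuse: Lemma~\ref{lem-allineq} and Proposition~\ref{prop-thompson} are already in place, so the local equivalence does double duty for both the topology statement and completeness. The remaining parts of your argument (normality of $C\cap X_u$ via monotonicity of $M(\cdot/u)$, the two inclusions identifying $\interior(C\cap X_u)$ with $C_u$, and the elementary ball argument for $u\in\Cint$) coincide with, or correctly fill in, what the paper does or cites.
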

\begin{proof}
It is proved in~\cite[Proposition~1.1]{nussbaum94}
that $\|\cdot\|_u$ and $\tho$ induce the same
topology on $C_u$ (this indeed follows from~\eqref{e-e1} and~\eqref{e-e2}).
The fact that $X_u$ is a complete
metric space when $C$ is normal is a result of 
Zabre{\u\i}ko, Krasnosel{$'$}ski{\u\i}, and Pokorny{\u\i}~\cite{zabreiko}
(see also Theorem~1.2 and Remark~1.1 in ~\cite{nussbaum88}). 
This can also be derived from the completeness
of $(C_u,\tho)$ (Proposition~\ref{prop-thompson} above)
and of the equivalence~\eqref{e-normequivdbar}, which
follows readily from~\eqref{e-e1} and~\eqref{e-e2}.
The equivalence~\eqref{e-normequivosc} follows
readily from~\eqref{e-f1} and~\eqref{e-f2}.
The fact that $C\cap X_u$ is a normal cone in $(X_u, \| \cdot \|_u)$
and has $C_u$ as interior is remarked
in~\cite[Remark~1.1]{nussbaum94}. 
Indeed, since $C$ is a proper cone and
$X_u$ is a vector space, $C\cap X_u$ is a proper cone.
Moreover, from~\eqref{eq6}, $\|\cdot\|_u$ is order preserving on
$C\cap X_u$, hence $C\cap X_u$ is normal.
Also, by definition of $C_u$ and \eqref{eq6}, we get that $C_u$
is open (if $a u\leq x\leq b u$ with $a,b>0$, then 
$B(x,\frac{a}{2})\subset C_u$) and equal to the interior of
$C\cap X_u$ (if $B(x,\epsilon)\subset C\cap X_u$, then 
$\epsilon u \leq x\leq \|x\|_u u$, and $x\in C_u$).
Finally, it is proved in~\cite[Proposition~1.1]{nussbaum94}
that $\|\cdot\|_u$ and $\|\cdot\|$
are equivalent norms on $X=X_u$ when $u$ is in the
interior of $C$.
\end{proof}

We say that a cone $C$ of a Banach space $(X,\|\cdot\|)$ is
\NEW{reproducing} if $X=C-C:=\set{x-y}{x, y\in C}$.
It is known that a cone $C$ in a Banach space $(X, \| \cdot \|)$
with nonempty interior is reproducing (see for 
instance~\citespectral{Proposition}{sp-normalempty}).

\begin{remark}
It is shown in~\cite{nussbaum94} that the Thompson's metric 
coincides with the Finsler metric
arising when considering the interior of the cone $C$ as a manifold
and equipping the tangent space at point $u$ with the local norm $\|\cdot \|_u$.
The Hilbert's metric arises in a similar way, when equipping
the tangent space with the seminorm $\omega_u$.
\end{remark}
\subsection{AM-spaces with units}
\label{am-spaces-def}
Recall that an ordered set $(X,\leq)$ is a \NEW{sup-semilattice} 
(resp.\ \NEW{inf-semilattice}) if 
any nonempty finite subset $F$ of $X$ admits a least upper bound 
(resp.\ greatest lower bound) in $X$,
denoted $\vee F$ (resp.\ $\wedge F$). We shall also use the infix notation
$x\vee y= \vee\{x,y\}$ and $x\wedge y=\wedge\{x,y\}$. 
We say that $X$ is a \NEW{lattice}
when it is both a sup-semilattice and an inf-semilattice.
A \NEW{Banach lattice}
is a Banach space $(X,\|\cdot\|)$ 
equipped with an order relation, $\leq$, 
such that $(X,\leq)$ is a lattice, $x\leq y\Rightarrow x+z\leq y+z$ 
and $\lambda x\leq \lambda y$, for all  $x,y,z\in X$ and $\lambda\geq 0$,
and $|x|\leq |y|\Rightarrow \|x\|\leq \|y\|$, 
for all $x,y\in X$, where $|x|:=x\vee (-x)$.
Note that if $X$ is a Banach lattice,
then the lattice operations are continuous
in the norm topology~\cite[Prop.~1.1.6]{meyernieberg}. Moreover,
$X^+:=\set{x\in X}{0\leq x}$ is closed
and it is a reproducing normal cone.
An \NEW{AM-space with unit}
is a Banach lattice $(X,\|\cdot\|)$
such that $\|x\vee y\|=\|x\|\vee \|y\|$ for all
$x,y\in X^+$,
and such that the closed unit ball of $X$ has a maximal element $e$,
which is called the {\em  unit}
(see for instance~\cite{schaefer} or \cite{aliprantis} for definitions 
and results about AM-spaces).
Equivalently, $X$ is an AM-space with unit, if $X$ is
a Banach lattice equipped with a distinguished
element $e\geq 0$ (the unit), which is such that 
$\|x\|=\inf\set{a\geq 0}{ -a e \leq x\leq a e }$
(see~\cite[Section 8.4]{aliprantis}, note however that in this
reference, a slightly different definition of a unit is used, but it
is shown that there exists another norm equivalent to $\|\cdot\|$
satisfying all the above conditions).
The fundamental example of AM-space with unit is given by the
space $\continuous(K)$  of continuous functions on a compact set $K$,
equipped with the sup-norm, $\|\cdot\|_\infty$, the
pointwise order $\leq$ and the unit $\mathbf{1}$, where
$\mathbf{1}$ is the constant function $K\to \R,\, t\mapsto 1$.
In fact, the Kakutani-Krein theorem (\cite[Theorem~8.29]{aliprantis}
or \cite[Chapter II, Theorem~7.4]{schaefer}) shows 
that an AM-space with unit is isomorphic as an AM-space with unit
(that is lattice isomorphic and isometric)
to $\continuous(K)$, for some compact, Hausdorff set $K$.

Any AM-space with unit $X$ can be put in isometric correspondence with
the interior of a normal cone equipped with Thompson's metric, 
thanks to the following construction.
Let $\imath: X\to \continuous(K)$ be
the isomorphism (of AM-spaces with unit, where $K$ is  a compact set) 
given by Kakutani-Krein theorem. Consider $C=\continuous^+(K)$ the set of
 nonnegative continuous functions on $K$.
Then, $C$ is a normal cone, and the order $\leq_C$ associated to $C$
is nothing but the pointwise order in $\continuous(K)$. The interior $\Cint$ of
$C$ is the set of continuous functions $K\to \R$ that are bounded from below
by a positive constant, or equivalently (since $K$ is compact),
the set of continuous functions $K\to \R$ that are positive everywhere.
Consider the map $\log: \Cint \to \continuous(K)$, 
which sends the continuous positive map $t\in K \mapsto y(t)$ 
to the continuous map $t\in K \mapsto \log y(t)$, and denote by
$\exp=\log^{-1}$ its inverse. Then, the map
$\imath^{-1} \comp \log : \Cint\to X$ is an isometry between
$\Cint$ endowed with the Thompson's metric and $X$ endowed with the
distance associated with the norm $\|\cdot\|$.

\section{Semidifferentiable maps}\label{sec-semidifferentiable}
In this section, we consider (with a slight modification) the notion
of semidifferentiability introduced by Penot in~\cite{penot} in the case
of maps defined on Banach spaces. We also
refer the reader to~\cite{rock98} for the case of maps defined on $\R^n$.
We establish here some properties of semidifferentiable maps defined on
normed vector  spaces,
which will be needed in the proof of the main results
and in the applications.
\subsection{Semidifferentiable maps on normed vector spaces}
\label{sec-semidiff}

Let $(X,\|\cdot\|)$ and $(Y,\|\cdot\|)$ be normed vector spaces,
$G$ be a subset of $X$, and $v\in G$.
Recall that a map $h$ from a cone $C$ of $X$ to $Y$ 
is \new{homogeneous} if $f(t y)=t f(y)$, for all $t>0$ and $y\in C$.
We say that a map $f : G \rightarrow Y$ is 
\NEW{semidifferentiable} at $v$ with respect to 
a cone $C\subset X$ if 
$G$ contains a neighborhood of $v$ in
$v+C:=\set{v+x}{x\in C}$ (that is, if there exists
$\epsilon>0$ such that all the elements of the
form $v+x$ with $x\in C$ and  $\|x\|\leq \epsilon$ belong to $G$),
and if there exists a continuous (positively) homogeneous map 
$h : C \rightarrow Y$ such that
\begin{equation}\label{eq1'}
f(v+x) = f(v) + h(x) + o(\|x\|), \text{ when } x \rightarrow 0, \; x \in C
\enspace .
\end{equation}  
If $C=X$ (and $v$ is in the interior of $G$), we say shortly that
$f$ is semidifferentiable at $v$.
The map $h$, if it exists, is unique, since 
then, for all $x\in C$, the classical (one sided) \NEW{directional derivative}:
\begin{equation}\label{eq2'}
f'_v(x) :=\lim_{t \rightarrow 0^+} 
\frac{f(v + tx) - f(v)}{t} 
\end{equation}
exists and coincides with $h(x)$. In fact, 
the definition \eqref{eq2'} of directional derivatives is obtained by 
specializing \eqref{eq1'} to a cone of the form $C=\set{ tx}{t \geq 0}$
 and the classical definition of Frechet derivatives is obtained by 
specializing~\eqref{eq1'} to $C = X$, and requiring $h$ to be linear.
We call $h$ the \NEW{semidifferential} of $f$ at $v$ with respect to $C$,
and we denote it by $f'_v$.
Note that in general the set $D$ of $x\in X$ such that $f'_v(x)$ exists
is a cone which may contain strictly $C$, for the map
$h:D\to Y,\; x\mapsto f'_v(x)$ is not necessarily continuous on $D$
or it does not satisfy Equation~\eqref{eq1'} on $D$.
However we shall only use the notation $f'_v$ for the restriction
of $x\mapsto f'_v(x)$ to a cone $C$ satisfying the above conditions.

The following characterization of 
semidifferentiable maps, which is a mere
rephrasing of property~\eqref{eq1'}, 
illuminates the requirements
that semidifferentiability adds to the existence
of directional derivatives.
\begin{lemma}\label{lem2}
Let $(X,\|\cdot\|)$ and $(Y,\|\cdot\|)$ be normed vector spaces, 
$G$ be a subset of $X$, $v\in G$, 
and $C$ be a cone of $X$ such that $G$ contains a neighborhood of
$v$ in $v+C$. 
Then, a map $f:G\to Y$ is semidifferentiable at $v$ with respect to $C$ if,
and only if,
\begin{subequations}
\label{eq3'}
\begin{align}
\label{eq3'a}
 \frac{f(v + tx) - f(v)}{t} \rightarrow f'_v(x) \text{ when } t \rightarrow 0^+, \\
 \text{uniformly for $x$ in bounded sets of $C$, } \label{eq3'aa}\\
\nonumber
\text{and the map $C \rightarrow Y,\; x\mapsto f'_v(x)$ is continuous.}
\end{align}
\end{subequations}
\end{lemma}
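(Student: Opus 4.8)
The plan is to exploit the positive homogeneity of the directional derivative to pass back and forth between the two formulations by the rescaling $x=t\xi$ with $\|\xi\|=1$. Both implications amount to the same elementary computation read in opposite directions, so, in keeping with the remark that the lemma is a \emph{mere rephrasing} of~\eqref{eq1'}, I expect no genuine obstacle beyond keeping track of the uniformity.

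For the forward implication, suppose $f$ is semidifferentiable at $v$ with respect to $C$, with continuous homogeneous semiderivative $h=f'_v$ satisfying~\eqref{eq1'}. The continuity of $f'_v$ is then immediate, so it remains only to produce the uniform convergence. Fixing $\varepsilon>0$, the $o(\|x\|)$ estimate supplies $\delta>0$ with $\|f(v+x)-f(v)-h(x)\|\le \varepsilon\|x\|$ for every $x\in C$ with $\|x\|\le\delta$. Given a bounded set $B\subset C$, say with $\|x\|\le R$ on $B$, I would apply this estimate to $tx$ for $x\in B$ and $0<t\le \delta/R$; invoking $h(tx)=t\,h(x)$ and dividing by $t$ gives $\|t^{-1}(f(v+tx)-f(v))-f'_v(x)\|\le \varepsilon\|x\|\le \varepsilon R$, which is exactly~\eqref{eq3'a} uniformly on $B$.

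For the converse, assume the difference quotients converge to $f'_v(x)$ uniformly on bounded subsets of $C$ and that $f'_v$ is continuous. I would first note that $f'_v$ is positively homogeneous: for $s>0$, a change of variable in the limit~\eqref{eq2'} yields $f'_v(sx)=s\,f'_v(x)$, so $h:=f'_v$ is an admissible candidate for~\eqref{eq1'}. To verify the remainder estimate, I normalize: for $x\in C\setminus\{0\}$ set $\xi=x/\|x\|\in C$ and $t=\|x\|$, so that $(f(v+x)-f(v))/\|x\|=t^{-1}(f(v+t\xi)-f(v))$. Applying the uniform convergence on the bounded set $\{\xi\in C:\|\xi\|=1\}$, for each $\varepsilon>0$ there is $\delta>0$ with $\|t^{-1}(f(v+t\xi)-f(v))-f'_v(\xi)\|\le\varepsilon$ whenever $\|\xi\|=1$ and $0<t\le\delta$; since homogeneity gives $f'_v(\xi)=f'_v(x)/\|x\|$, multiplying by $\|x\|$ produces $\|f(v+x)-f(v)-f'_v(x)\|\le\varepsilon\|x\|$ for $0<\|x\|\le\delta$ (the case $x=0$ being trivial, as $h(0)=0$). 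This is precisely~\eqref{eq1'}, so $f$ is semidifferentiable with semiderivative $f'_v$.

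The only point that needs a little care is that the uniform-convergence hypothesis be applied to a bounded set carrying the full unit sphere of $C$, namely $\{\xi\in C:\|\xi\|=1\}$, which is legitimate since that set is bounded. The homogeneity of $f'_v$ is what renders the two rescalings compatible and is thus the linchpin of both directions.
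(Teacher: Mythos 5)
Your proof is correct and is precisely the routine rescaling argument (substituting $x=t\xi$ with $\|\xi\|=1$ and invoking the positive homogeneity of $f'_v$, inherited from the limit~\eqref{eq2'}) that the paper has in mind when it calls the lemma a mere rephrasing of~\eqref{eq1'}, which is why the paper states it without proof. Both directions, including your check that the unit sphere of $C$ is a legitimate bounded set on which to apply the uniform convergence, match the intended argument.
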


The following lemma shows in particular
that when $f$ is locally Lipschitz continuous
and $X$ is finite dimensional, the last two
properties are implied by the existence of directional
derivatives.
\begin{lemma}\label{lem2b}
Let $(X,\|\cdot\|)$ and $(Y,\|\cdot\|)$ be normed vector spaces,
$G$ be a subset of $X$, $v\in G$, 
and $C$ be a cone of $X$ such that $G$ contains a neighborhood of
$v$ in $v+C$. 
Assume that $f : G \rightarrow Y$ is Lipschitz continuous in a
neighborhood of $v$,
and that $f$ has directional derivatives $f'_v(x)$
at $v$ with respect to all $x \in C$.
Then $C \rightarrow Y,\; x\mapsto f'_v(x)$ is Lipschitz continuous
with same Lipschitz constant as $f$. Moreover,
\begin{subequations}\label{eq4'}
\begin{align}
\frac{f(v + tx) - f(v)}{t} \rightarrow f'_v (x) \text{ when } t \rightarrow 0^+, \\
\text{uniformly for $x$ in compact sets of $C$,}
\end{align}
\end{subequations}
and
\begin{equation}
\label{eq5'}
\lim_{\substack{t \rightarrow 0^+\\ x' \rightarrow x ,\; x' \in C}}
\frac{f(v + tx') - f(v)}{t} = f'_v (x) \quad\forall x\in C\enspace.
\end{equation}
In particular, if $X$ is finite dimensional and $C$ is closed in $X$,
then $f$ is semidifferentiable at $v$ with respect to $C$.
\end{lemma}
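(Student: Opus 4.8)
The plan is to extract everything from a single \emph{equi-Lipschitz} estimate on the difference quotients, after which all four assertions follow by soft arguments. Fix $\delta>0$ on which $f$ is $L$-Lipschitz, and $\varepsilon>0$ such that $v+x\in G$ whenever $x\in C$ and $\|x\|\le\varepsilon$; then for $x\in C$ and $t>0$ small enough that $t\|x\|\le\varepsilon\wedge\delta$, the point $v+tx$ lies in $G$ and in the ball of $L$-Lipschitz continuity, and the quotient $g_t(x):=(f(v+tx)-f(v))/t$ is well defined. The crucial observation is that for $x,y\in C$ and all $t>0$ small enough (the threshold depending only on a bound for $\|x\|$ and $\|y\|$),
\[
\|g_t(x)-g_t(y)\|=\tfrac1t\|f(v+tx)-f(v+ty)\|\le\tfrac1t\,L\,\|tx-ty\|=L\|x-y\|.
\]
Letting $t\to0^+$ and using $g_t(x)\to f'_v(x)$, $g_t(y)\to f'_v(y)$ yields $\|f'_v(x)-f'_v(y)\|\le L\|x-y\|$; thus $f'_v$ is $L$-Lipschitz, which is the first claim.

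Next I would prove the uniform convergence~\eqref{eq4'} on a compact set $K\subset C$ by the standard Arzel\`a--Ascoli-type argument, powered by the displayed equicontinuity of the family $(g_t)$ and the Lipschitz continuity of $f'_v$. Given $\eta>0$, cover $K$ by finitely many balls of radius $\eta/(3L)$ centred at points $x_1,\dots,x_N\in K$; use the pointwise convergence $g_t(x_i)\to f'_v(x_i)$ to find $t_0>0$ with $\|g_t(x_i)-f'_v(x_i)\|<\eta/3$ for every $i$ once $t<t_0$; then for arbitrary $x\in K$, interpolating through the nearest centre and bounding the two Lipschitz terms by $\eta/3$ each gives $\|g_t(x)-f'_v(x)\|\le\eta$. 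Hence $\sup_{x\in K}\|g_t(x)-f'_v(x)\|\to0$.

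For the joint limit~\eqref{eq5'} the uniform statement is not even needed: for $x'\in C$ near $x$ and $t$ small, the triangle inequality and the equi-Lipschitz estimate give
\[
\|g_t(x')-f'_v(x)\|\le\|g_t(x')-g_t(x)\|+\|g_t(x)-f'_v(x)\|\le L\|x'-x\|+\|g_t(x)-f'_v(x)\|,
\]
and both terms are made small by first shrinking $\|x'-x\|$ and then $t$. Finally, in the finite-dimensional case with $C$ closed, any bounded $B\subset C$ has compact closure $\overline B$, and $\overline B\subset C$ because $C$ is closed; the uniform convergence on the compact set $\overline B$ therefore gives uniform convergence on $B$. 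This is exactly the uniform convergence required in~\eqref{eq3'} of Lemma~\ref{lem2}, while the continuity of $f'_v$ demanded there is the Lipschitz property already established, so Lemma~\ref{lem2} delivers semidifferentiability.

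I do not expect a genuine obstacle here: the whole proof rests on the one equi-Lipschitz bound, and the only point requiring care is the bookkeeping of the domain, namely checking that $v+tx$ remains in $G$ and in the ball of Lipschitz continuity for the relevant $x$ and all sufficiently small $t$. This is precisely where the hypotheses that $G$ contains a neighborhood of $v$ in $v+C$ and that $f$ is locally Lipschitz enter; once this inclusion is arranged uniformly over the bounded or compact set under consideration, the estimates above are immediate.
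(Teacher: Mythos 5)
Your proof is correct, and it rests on exactly the same key estimate as the paper's: the equi-Lipschitz bound $\|g_t(x)-g_t(y)\|\leq L\|x-y\|$ on the difference quotients, valid for $t$ small depending only on a bound for $\|x\|,\|y\|$. The differences are in how the consequences are extracted. Where the paper deduces the uniform convergence \eqref{eq4'} from this equicontinuity via Ascoli's theorem (which needs the extra observation that each value set $\set{g_{v,t}(x)}{0<t\leq R^{-1}\varepsilon}$ is relatively compact in $Y$ — nontrivial when $Y$ is infinite dimensional, though it follows from continuity of $t\mapsto g_{v,t}(x)$ up to $t=0$), you run the explicit $\eta/3$-net argument, having first established the Lipschitz continuity of $f'_v$ by passing to the limit in the equi-Lipschitz bound; this is more elementary and self-contained. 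Likewise, the paper obtains \eqref{eq5'} by applying \eqref{eq4'} to the compact set $\{x_k\mid k\geq 1\}\cup\{x\}$ along sequences, whereas you get it directly from the equi-Lipschitz bound plus pointwise convergence at $x$ alone, which shows that \eqref{eq5'} does not actually require \eqref{eq4'}. Finally, you spell out the finite-dimensional conclusion (bounded subsets of the closed cone $C$ have compact closure inside $C$, so uniform convergence on compact sets upgrades to uniform convergence on bounded sets, and Lemma~\ref{lem2} applies), a step the paper's proof leaves implicit. The domain bookkeeping you flag at the end is handled the same way in both arguments and is done correctly in yours.
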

\begin{proof}
Since $G$ contains a neighborhood of $v$ in $v+C$ and $f$ is
Lipschitz continuous in a neighborhood of $v$, we can find $\epsilon > 0$ and 
$M \geq 0$ such that
$$(z,z'\in C\mrm{ and } \|z\|,\|z'\|\leq \epsilon)
\implies \|f(v+z) - f(v + z')\| \leq M \|z - z'\|
\enspace .$$
Let $K$ denote a compact subset of $C$, let 
$R = \max \set{\|x\|}{x \in K},$ 
and, for $0 < t \leq R^{-1} \epsilon$,
consider the map $g_{v,t}:\, K \rightarrow Y$,
$$g_{v,t}(x) = \frac{f(v + tx) - f(v)}{t}\enspace .$$
For all $x,x'\in K$ and $0 < t \leq R^{-1} \epsilon$, we have
\begin{equation}\label{eq7'}
\|g_{v,t} (x) - g_{v,t}(x')\| \leq M \|x - x'\|.
\end{equation}
The family $\{g_{v,t}\}_{0<t \leq R^{-1} \epsilon}$ is an
equicontinuous family of maps converging pointwise to the map
$K\to Y,\; x\mapsto f'_v(x)$, hence for all $x\in K$, the set
$\set{g_{v,t}(x)}{0<t \leq R^{-1} \epsilon}$ is relatively compact.
This implies, by Ascoli's theorem, that $g_{v,t}(x)$ converges 
to $f'_v(x)$ as  $t \rightarrow 0^+$, uniformly in $x\in K$, which 
shows~\eqref{eq4'}. 
Of course, by~\eqref{eq7'}, $x\mapsto f'_v(x)$ is $M$-Lipschitz on $K$, 
and since this holds for all compact subsets $K \subset C$, 
$x\mapsto f'_v(x)$ is $M$-Lipschitz
 on $C$. Finally, to prove \eqref{eq5'}, it is enough to show that
\begin{align*}
\lim_{k \rightarrow \infty} \frac{f(v + t_k x_k) - f(v)}{t_k} = f'_v(x)
\end{align*}
holds for all sequences $\{x_k\}_{k \geq 1}$, $x_k \in C$, $x_k \rightarrow x$, 
and $\{t_k\}_{k \geq 1}$, $t_k > 0$, $t_k \rightarrow 0$. Since $\{x_k \mid k 
\geq 1\} \cup \{x \}$ is compact, it follows from \eqref{eq4'} that
$$\lim_{k \rightarrow \infty} \left(\frac{f(v + t_k x_k) - f(v)}{t_k} - f'_v(x_k)
\right) = 0.$$

Since $x\mapsto f'_v(x)$ is continuous on $C$,
$f'_v(x_k) \rightarrow f'_v(x)$, and we 
get~\eqref{eq5'}.
\end{proof}
\begin{remark}\label{rem2}
When $G= X = C = \R^n$ and $Y=\R$,
Rockafellar and Wets~\cite[Chap. 7, \S D]{rock98}
define semidifferentiable maps $f : \R^n \rightarrow \R$ at $v$ by requiring 
that~\eqref{eq5'} holds, for all $x \in \R^n$. Assume now that $X$ and
$Y$ are arbitrary normed vector spaces,
 that $C \subset X$ is a cone, that $G \subset X$,
 and $f : G \rightarrow X$. Then \eqref{eq5'} holds if, and only if,
$x\mapsto f'_v(x)$ is continuous and satisfies \eqref{eq4'}. 
(Indeed, we showed the ``if'' part in the proof of Lemma \ref{lem2b},
and the ``only if'' part of the result is not difficult.)
This equivalence is a special case of a general property,
saying that a sequence of functions $f_n$ (defined on a metric space $X$) converges
 {\em continuously} to a function $f$, i.e.\ satisfies  
$f_n(x_n) \rightarrow f(x)$, for all convergent sequences 
$x_n \rightarrow x$, if, and only if, $f_n$ converges to $f$ uniformly 
on compact sets and $f$ is continuous, see~\cite[Theorem 7.14]{rock98}. 
(The theorem of \cite{rock98} is stated when $X = \R^n$
and $Y=\R$, but the property holds
 for arbitrary metric spaces $X$ and $Y$.)
When $X= C = \R^n$ (and $Y=\R$), Rockafellar and Wets show 
that \eqref{eq5'} is equivalent to \eqref{eq1'}~\cite[Theorem 7.21]{rock98},
so that our definition of semidifferentiable maps is consistent with the 
one of \cite{rock98}. 
By comparing \eqref{eq3'}, which is equivalent to \eqref{eq1'},
with \eqref{eq4'}  together with the requirement that $x\mapsto f'_v(x)$ is continuous, 
which is equivalent to \eqref{eq5'}, we see that for general normed
vector spaces,
our definition \eqref{eq1'} of semidifferentiable maps becomes stronger
than the one we would obtain by taking 
the definition \eqref{eq5'} of \cite{rock98}.
\end{remark}
We shall need the notion
of \NEW{norm} of a continuous homogeneous map
$h$ from a cone $C$ of a normed vector space $(X,\|\cdot\|)$ to 
a normed vector space $(Y,\|\cdot\|)$:
\begin{equation}\label{definormh}
\|h\|_C:=\sup_{x\in C\setminus\{0\}} \frac{\|h(x)\|}{\|x\|} 
<+\infty  \enspace .
\end{equation}
When $C$ is obvious, and in particular when $C=X$,
we will simply write $\|h\|$ instead of $\|h\|_C$.
Since $h(0)=0$
and $h$ is continuous,
there exists $\delta>0$ such that $\|h(x)\|\leq 1$ for all 
$x\in C$ such that $\|x\|\leq \delta$. Hence, by homogeneity of $h$,
$\|h\|_C\leq 1/\delta<+\infty $, as claimed in~\eqref{definormh}.
In particular, when $f$ is  semidifferentiable at $v$ with respect to $C$,
there exists $\gamma\geq 0$ such that
\begin{equation}\label{fpvbound}
\|f'_v(x)\|\leq \gamma \|x\|\quad \forall x\in C\enspace,
\end{equation}
since by definition, $f'_v$ is homogeneous and continuous.
When $f'_v$ is a linear map on $X$,
that is, when $f$ is differentiable,
\eqref{fpvbound} implies that $f'_v$ is  Lipschitz continuous.
Another condition which guarantees
the Lipschitz continuity of $f'_v$ was given in Lemma~\ref{lem2b}.

We next prove a chain rule for semidifferentiable maps:
\begin{lemma}[Chain rule]\label{lemma-chain}
Let $(X,\|\cdot\|)$, $(Y,\|\cdot\|)$ and $(Z,\|\cdot\|)$
be normed vector spaces, 
let $G_1$ and $G_2$ be subsets of $X$ and $Y$ respectively, 
let $f:G_1\to Y$, $g:G_2\to Z$ be two maps such that
$f(G_1)\subset G_2$, and let $v\in G_1$.
Assume that
\begin{enumerate}
\renewcommand{\theenumi}{\rm (A\arabic{enumi})}
\renewcommand{\labelenumi}{\theenumi}
\item\label{as1} $f$  is semidifferentiable at $v$ 
with respect to a cone $C_1$;
\item\label{as2} $g$  is semidifferentiable at $f(v)$ 
with respect to a cone $C_2$;
\item\label{as3} $f'_v(C_1)\subset C_2$ and $f(G_1\cap (v+C_1))\subset f(v)+C_2$;
\item\label{as4} $g'_{f(v)}$ is uniformly continuous
on bounded sets of $C_2$.
\end{enumerate}
Then, $g\comp f:G_1\to Z$ is semidifferentiable at $v$ 
with respect to $C_1$, and
\begin{align*}
(g\comp f)'_v=g'_{f(v)}\comp f'_v \enspace .
\end{align*}
\end{lemma}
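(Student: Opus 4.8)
The plan is to verify the defining expansion~\eqref{eq1'} directly, taking $h:=g'_{f(v)}\comp f'_v$ as the candidate semiderivative. First I would record that $h$ is well defined as a map $C_1\to Z$: by the first inclusion in~\ref{as3} we have $f'_v(C_1)\subset C_2$, so the composition makes sense, and since both $f'_v$ and $g'_{f(v)}$ are continuous and positively homogeneous, so is $h$. The neighborhood requirement in the definition of semidifferentiability holds for $g\comp f$ by~\ref{as1}. Thus it suffices to establish $(g\comp f)(v+x)=(g\comp f)(v)+h(x)+o(\|x\|)$ as $x\to 0$ in $C_1$.

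Write $w:=f(v)$. From the semidifferentiability of $f$ (assumption~\ref{as1}) I would set $y:=f(v+x)-w=f'_v(x)+r(x)$, where $r(x)=o(\|x\|)$. By the second inclusion in~\ref{as3} we have $f(v+x)\in w+C_2$, hence $y\in C_2$; and since $\|f'_v(x)\|\leq \gamma\|x\|$ by~\eqref{fpvbound}, we have $\|y\|=O(\|x\|)$, so $y\to 0$ in $C_2$ and $w+y=f(v+x)\in G_2$ lies in the relevant neighborhood for small $\|x\|$. Applying the semidifferentiability of $g$ at $w$ (assumption~\ref{as2}) then gives
\[
(g\comp f)(v+x)=g(w)+g'_w(y)+o(\|y\|)=(g\comp f)(v)+g'_w\big(f'_v(x)+r(x)\big)+o(\|x\|),
\]
where I have used $o(\|y\|)=o(\|x\|)$, valid because $\|y\|=O(\|x\|)$. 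It remains to replace $g'_w(f'_v(x)+r(x))$ by $g'_w(f'_v(x))=h(x)$ up to an $o(\|x\|)$ error.

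This last replacement is the crux, and it is where assumption~\ref{as4} enters. The difficulty is that $g'_w$ is only homogeneous, not additive, so one cannot split $g'_w(f'_v(x)+r(x))$. Instead I would rescale: for $x\neq 0$ set $t=\|x\|$, $u=x/t$, and $s=r(x)/t$, so $\|u\|=1$ and $s\to 0$ as $x\to 0$. By homogeneity of $g'_w$,
\[
\frac{g'_w(f'_v(x)+r(x))-g'_w(f'_v(x))}{t}=g'_w\big(f'_v(u)+s\big)-g'_w\big(f'_v(u)\big).
\]
Both arguments lie in $C_2$ (the first because $f'_v(u)+s=(f(v+x)-w)/t\in C_2$ as $C_2$ is a cone, the second by~\ref{as3}), and since $\|f'_v(u)\|\leq\gamma$ and $\|s\|\to 0$ they stay in the bounded set $\{z\in C_2\mid \|z\|\leq \gamma+1\}$ for $\|x\|$ small. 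As $\|f'_v(u)+s-f'_v(u)\|=\|s\|=\|r(x)\|/\|x\|\to 0$ independently of $u$, the uniform continuity of $g'_w$ on bounded subsets of $C_2$ (assumption~\ref{as4}) forces the right-hand side to tend to $0$; that is, $g'_w(f'_v(x)+r(x))-g'_w(f'_v(x))=o(\|x\|)$. Substituting back yields $(g\comp f)(v+x)=(g\comp f)(v)+h(x)+o(\|x\|)$, which is~\eqref{eq1'} with semiderivative $h=g'_{f(v)}\comp f'_v$. The main obstacle is precisely this passage through homogeneity and~\ref{as4}: without uniform continuity the substitution can fail, since a merely continuous homogeneous map need not be uniformly continuous on bounded sets in infinite dimensions.
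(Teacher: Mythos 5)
Your proof is correct and follows essentially the same route as the paper's: expand $f$ and $g$, compose using the inclusions in \ref{as3}, absorb the $o(\|y\|)$ term via the bound $\|f'_v(x)\|\leq\gamma\|x\|$, and then handle the non-additivity of $g'_{f(v)}$ by the rescaling $u=x/\|x\|$, $s=r(x)/\|x\|$ combined with uniform continuity on bounded sets of $C_2$ (\ref{as4}). Your explicit verification that both rescaled arguments lie in a bounded subset of $C_2$ is the same mechanism the paper uses, just spelled out a bit more carefully.
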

\begin{proof}
Using Assumptions~\ref{as1} and~\ref{as2}, we can write:
\begin{align}
\label{sdif1} f(v+x)    &= f(v) + f'_v(x)+ \|x\|\epsilon_1(x)\\
\label{sdif2} g(f(v)+y) &= g(f(v))+ g'_{f(v)}(y) + \|y\|\epsilon_2(y)
\end{align}
where $\epsilon_1$ (resp.\ $\epsilon_2$) is a map defined
on a neighborhood of $0$ in $C_1$ (resp.\ $C_2$),
with $\epsilon_1(x)\to 0$ when $\|x\|\to 0$
(resp.\ $\epsilon_2(y)\to 0$ when $\|y\|\to 0$).
Using~\eqref{sdif1},~\eqref{sdif2} and Assumption~\ref{as3},
we get, 
\begin{align}
\label{sdif3}
g\comp f(v+x) = g\comp f(v)
+g'_{f(v)}\big[f'_v(x)+\|x\|\epsilon_1(x)\big]+ \eta(x)\enspace,
\end{align}
where 
$\eta(x)=\|f'_v(x)+\|x\|\epsilon_1(x)\|\epsilon_2
\big[f'_v(x)+\|x\|\epsilon_1(x)\big]$.
Using \eqref{fpvbound}, we can write 
\begin{align}
\eta(x) = \|x\|\epsilon_3(x) \enspace,
\label{sdif4}
\end{align}
where $\epsilon_3$ is a map defined on a 
neighborhood of $0$ in $C_1$, such that
$\epsilon_3(x)\to 0$ when $\|x\|\to 0$. 
Using the homogeneity of $g'_{f(v)}$ and
$f'_v$, we get:
\[
g'_{f(v)}\big[f'_v(x)+\|x\|\epsilon_1(x)\big]= \|x\|
g'_{f(v)}\big[f'_v(\|x\|^{-1} x)+\epsilon_1(x)\big]
\enspace. 
\]
Hence, the uniform continuity assumption for
$g'_{f(v)}$ (Assumption~\ref{as4}) implies that there is 
a map $\epsilon_4$ defined on a neighborhood
of $0$ in $C_1$ such that
\begin{align}
g'_{f(v)}\big[f'_v(x)+\|x\|\epsilon_1(x)\big]&= \|x\|
\big[g'_{f(v)}\comp f'_v(\|x\|^{-1} x)+\epsilon_4(x)\big]\nonumber\\
&= g'_{f(v)}\comp f'_v(x)+\|x\|\epsilon_4(x) \enspace ,
\label{sdif5}
\end{align}
with $\epsilon_4(x)\to 0$ when $\|x\|\to 0$.
Gathering~\eqref{sdif3},\eqref{sdif4}, and~\eqref{sdif5},
we get
\[
g\comp f(v+x)= g'_{f(v)}\comp f'_v(x) + \|x\|\epsilon_5(x) \enspace,
\]
where $\epsilon_5=\epsilon_3+ \epsilon_4$, which
concludes the proof of the lemma.
\end{proof}
\begin{remark}
When $C_2$ is closed, Assumption~\ref{as3} reduces to the condition
$f(G_1\cap (v+C_1))\subset f(v)+C_2$. 
Also, in this condition, $G_1$ can be replaced
by a neighborhood of $v$ in $G_1$.
\end{remark} 
\begin{remark}
By homogeneity of $g'_{f(v)}$, Assumption~\ref{as4}
of Lemma~\ref{lemma-chain}
is equivalent to the uniform continuity of $g'_{f(v)}$
on the intersection of $C_2$ with the unit ball of $Y$.
\end{remark} 
\begin{remark}
Since a continuous map on a compact subset 
of a metric space is uniformly continuous,
Assumption~\ref{as4} of Lemma~\ref{lemma-chain}
automatically holds when $C_2$
is closed and $Y$ is finite dimensional. Therefore,
Lemma~\ref{lemma-chain} extends the result
stated in~\cite[Exercise 10.27,(b)]{rock98}
in the case of finite dimensional vector spaces. 
\end{remark}

\subsection{Semidifferentiability of sups}
In applications to control and game theory, one
needs to consider maps from $\R^n$ to $\R$ of the form
\begin{align}
f=\sup_{a\in \actionsa} f_a \enspace,
\label{e-d-sup}
\end{align}
where $(f_a)_{a \in\actionsa}$
is a family of maps $\R^n\to \R$, and the supremum is taken for
the pointwise ordering of functions. Dually, $f$ may be defined as the infimum
of a family of maps.

We next give conditions which guarantee that a map of the
form~\eqref{e-d-sup} has directional derivatives
or that it is semidifferentiable.
Recall that a real valued map $g:\actionsa\to\R$ is \NEW{upper-semicontinuous}
(resp.\ \NEW{sup-compact}) if 
for all $\lambda\in\R$, the \NEW{upper level set}
$S_\lambda(g)=\set{a\in\actionsa}{g(a)\geq \lambda}$
is closed (resp.\ compact). The following theorem gives a general version,
for semidifferentiable maps, of the rule of ``differentiation'' of a supremum,
which has appeared in the literature in various guises, see Remarks~\ref{rk-convex},~\ref{rk-finite} below.
\begin{theorem}\label{th-semi}
Let $X$ denote a normed vector space, let $v,x\in X$, let $V$
denote a neighborhood of $v$, and let $f:V\to \R$
be given by~\eqref{e-d-sup},
where $(f_a)_{a\in\actionsa}$ is a family
of maps $V\to\R$ and $\actionsa$ is a Hausdorff topological space.
Make the following assumptions:
\begin{enumerate}
\renewcommand{\theenumi}{\rm (A\arabic{enumi})}
\renewcommand{\labelenumi}{\theenumi}
\item\label{i-attained} There exists $b\in\actionsa$ such that
$f(v)=f_b(v)$;
\item\label{i-exists} For all $a\in\actionsa$, the directional derivative
of $f$ at $v$ in the direction $x$, $\der{f_a}vx$, exists;
\item\label{i-usc} The maps $a\mapsto f_a(v)$
and $a \mapsto (f_a)'_v(x)$ are upper-semicontinuous;
\item\label{i-newcompact} There exists a real number $t_0>0$ such that the map
\[
a \mapsto f_a(v)+t_0\der{f_a}vx 
\]
is sup-compact.
\item\label{i-above} We have
\begin{align*}
f_a(v+tx)\leq f_a(v)+t\der{f_a}vx + t\epsilon_x(t)
\quad\forall a\in\actionsa\;\forall t\in [0,t_1],
\end{align*}
for some $t_1>0$ and some
function $\epsilon_x:[0,t_1]\to \R$ independent of $a$,
such that $\epsilon_x(t)\to 0$ when $t\to 0^+$;
\end{enumerate}
Then, the directional derivative of $f$ at $v$ in the direction $x$
exists, and:
\begin{align}
f'_v(x)= \max_{a\in \actionsa,\; f_a(v)=f(v)} (f_a)'_v(x) \enspace .
\label{e-derivativeofsup}
\end{align}
Moreover, if the previous assumptions are satisfied for all $x\in X$,
if $X$ is finite dimensional and if $f$ is Lipschitz continuous 
in a neighborhood of $v$, then, $f$ is semidifferentiable at point $v$.
\end{theorem}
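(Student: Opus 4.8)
The plan is to establish the directional-derivative formula~\eqref{e-derivativeofsup} first, and then to read off semidifferentiability from Lemma~\ref{lem2b}. Fix the direction $x$ and abbreviate $\mu:=\sup\set{\der{f_a}vx}{a\in\actionsa,\ f_a(v)=f(v)}$, a supremum over a nonempty set by~\ref{i-attained}. The device I would use is to set, for $t>0$,
\[
\phi_a(t):=\frac{f_a(v)-f(v)}{t}+\der{f_a}vx,\qquad \Phi(t):=\sup_{a\in\actionsa}\phi_a(t),
\]
and to exploit the elementary but crucial fact that, since $f_a(v)\leq f(v)$ for every $a$, each map $t\mapsto\phi_a(t)$ is non-decreasing, hence so is $\Phi$; thus $L_\Phi:=\lim_{t\to0^+}\Phi(t)=\inf_{t>0}\Phi(t)$ exists. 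The goal is then to sandwich $t^{-1}(f(v+tx)-f(v))$ between quantities both tending to $\mu$.

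The lower bound is the easy direction: for any $a$ with $f_a(v)=f(v)$, the pointwise order gives $f(v+tx)\geq f_a(v+tx)$, so $\liminf_{t\to0^+}t^{-1}(f(v+tx)-f(v))\geq\der{f_a}vx$ by~\ref{i-exists}; taking the supremum yields $\geq\mu$, and the same computation shows $\Phi(t)\geq\mu$, i.e.\ $L_\Phi\geq\mu$. For the upper bound I would apply assumption~\ref{i-above} termwise to obtain $t^{-1}(f(v+tx)-f(v))\leq\Phi(t)+\epsilon_x(t)$ for small $t$, which reduces everything to proving $L_\Phi\leq\mu$.

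This last inequality is where all the hypotheses enter, and I expect it to be the main obstacle. Writing $g(a):=f_a(v)+t_0\der{f_a}vx$, which is upper-semicontinuous by~\ref{i-usc} and sup-compact by~\ref{i-newcompact}, one first checks $L_\Phi<+\infty$ (since $\Phi(t_0)=t_0^{-1}(\max_a g(a)-f(v))$ and $\Phi$ is non-decreasing), so $\mu$ is finite as well. The whole point of the monotonicity of $\phi_a$ is that, for $0<t\leq t_0$, any $a$ with $\phi_a(t)\geq\mu-1$ automatically satisfies $\phi_a(t_0)=t_0^{-1}(g(a)-f(v))\geq\mu-1$, i.e.\ lies in the single compact upper level set $S_\lambda(g)$ with $\lambda:=f(v)+t_0(\mu-1)$; since $\Phi(t)\geq\mu$, only such $a$ matter, and as $a\mapsto\phi_a(t)$ is upper-semicontinuous the supremum defining $\Phi(t)$ is attained on $S_\lambda(g)$, say at $a_t$. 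I would then regard $(a_t)$ as a net on $(0,t_0]$ directed towards $0$, extract a convergent subnet $a_{t_\beta}\to a_*$ in the compact set $S_\lambda(g)$ (nets are needed since $\actionsa$ is merely Hausdorff), and verify that $a_*$ is a genuine maximizer: boundedness of the upper-semicontinuous map $a\mapsto\der{f_a}vx$ on $S_\lambda(g)$, together with $f_{a_{t_\beta}}(v)-f(v)=t_\beta(\phi_{a_{t_\beta}}(t_\beta)-\der{f_{a_{t_\beta}}}vx)$, forces $f_{a_{t_\beta}}(v)\to f(v)$, whence $f_{a_*}(v)=f(v)$ by upper-semicontinuity of $a\mapsto f_a(v)$; and $\der{f_{a_{t_\beta}}}vx\geq\phi_{a_{t_\beta}}(t_\beta)\to L_\Phi$ gives $\der{f_{a_*}}vx\geq L_\Phi$ by upper-semicontinuity. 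As $a_*$ is a maximizer, $L_\Phi\leq\der{f_{a_*}}vx\leq\mu$. Combined with $L_\Phi\geq\mu$ this gives $L_\Phi=\mu$, the maximum being attained at $a_*$, which proves both the existence of $f'_v(x)$ and formula~\eqref{e-derivativeofsup}.

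For the final assertion there is nothing left to do analytically: the first part provides the one-sided directional derivative $f'_v(x)$ at $v$ in every direction $x\in X$, and then, $f$ being Lipschitz continuous near $v$ with $X$ finite dimensional, Lemma~\ref{lem2b} applied with $C=X$ upgrades the existence of directional derivatives to semidifferentiability at $v$.
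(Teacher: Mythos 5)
Your proposal is correct, and its overall skeleton coincides with the paper's proof: the same monotone quantity ($\phi_a(t)$, the paper's $h_a(t)$, nondecreasing in $t$ because $f_a(v)\leq f(v)$), the same sandwich ($\liminf\geq\mu$ from the easy direction, $\limsup\leq L_\Phi=\inf_{t>0}\Phi(t)$ from~\ref{i-above}), and the same reduction of everything to exhibiting one index $a_*$ with $f_{a_*}(v)=f(v)$ and $\der{f_{a_*}}vx\geq L_\Phi$. Where you genuinely differ is in how compactness produces $a_*$. The paper upgrades~\ref{i-newcompact} via Lemma~\ref{lem-t0-t1} to sup-compactness of $a\mapsto\phi_a(t)$ for every $t\in(0,t_0)$, so each upper level set $S_{L_\Phi}(a\mapsto\phi_a(t))$ is nonempty and compact; monotonicity in $t$ makes these sets nested, the finite intersection property gives a common point $b$, and the single inequality $\phi_b(t)\geq L_\Phi$ for all small $t$ yields, on multiplying by $t$ and letting $t\to0^+$, both $f_b(v)=f(v)$ and $\der{f_b}vx\geq L_\Phi$. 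You instead trap the attained maximizers $a_t$, $0<t\leq t_0$, inside the single compact set $S_\lambda(g)$ attached to the time $t_0$, extract a convergent subnet $a_{t_\beta}\to a_*$, and recover feasibility and optimality of $a_*$ by upper semicontinuity along the net; this obliges you to check attainment of each sup on $S_\lambda(g)$, the cofinality fact $\Phi(t_\beta)\to L_\Phi$, and a bound for $a\mapsto\der{f_a}vx$ on $S_\lambda(g)$ --- note that upper semicontinuity on a compact set gives boundedness \emph{above} only, which is indeed all your squeeze forcing $f_{a_{t_\beta}}(v)\to f(v)$ uses, so the word ``boundedness'' in your write-up is a harmless imprecision rather than a gap. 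Both arguments thus rest on the same two ingredients (monotonicity of $\phi_a$ in $t$ and sup-compactness at the single time $t_0$); the paper's nested-intersection finish is the leaner of the two --- no nets, no passage to limit points along u.s.c.\ maps --- while yours is a natural ``follow the maximizers'' variant that remains valid in a merely Hausdorff $\actionsa$, where nets are indeed unavoidable. Your final step, invoking Lemma~\ref{lem2b} with $C=X$ in the Lipschitz, finite-dimensional case, is exactly the paper's.
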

We shall see in Remarks~\ref{rk-i-exists}--\ref{rk-finite}
below that Assumptions~\ref{i-exists},\ref{i-above}, or~\ref{i-newcompact}
are implied by several standard assumptions, and that 
Theorem~\ref{th-semi} extends several known results.

Before proving Theorem~\ref{th-semi}, we make the following observation.
\begin{lemma}\label{lem-t0-t1}
Let~\ref{i-newcompact} and~\ref{i-usc} be as in Theorem~\ref{th-semi}.
If Assumption~\ref{i-newcompact} is satisfied for some $t_0>0$,
and if Assumption~\ref{i-usc} is satisfied, then
Assumption~\ref{i-newcompact} is still satisfied if we replace $t_0$ by any $t\in (0,t_0)$.
\end{lemma}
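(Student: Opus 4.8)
The plan is to exploit the affine dependence of the relevant map on the parameter $t$, realizing the map for $t\in(0,t_0)$ as a convex combination of the one for $t_0$ and the one for $t=0$. Set $\phi(a):=f_a(v)$, $\psi(a):=\der{f_a}{v}{x}$, and $g_s(a):=\phi(a)+s\psi(a)$, so that $g_{t_0}$ is the map appearing in Assumption~\ref{i-newcompact}. Writing $\theta:=t/t_0\in(0,1)$, the identity
\[
g_t=(1-\theta)\,\phi+\theta\,g_{t_0}
\]
holds. The one nontrivial input is that $\phi$ is bounded above by a finite constant: since $f=\sup_{a\in\actionsa}f_a$ takes values in $\R$, we have $\phi(a)\leq f(v)=:K<+\infty$ for all $a\in\actionsa$.

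I would then check the two ingredients of sup-compactness for $g_t$, namely that every upper level set $S_\lambda(g_t)=\set{a\in\actionsa}{g_t(a)\geq\lambda}$ is closed and is contained in a compact set. Closedness follows from Assumption~\ref{i-usc}: both $\phi$ and $\psi$ are upper-semicontinuous, hence so is $g_t=\phi+t\psi$ (a positive combination of upper-semicontinuous maps), and upper level sets of an upper-semicontinuous map are closed. For the containment, on $S_\lambda(g_t)$ the identity above together with $\phi\leq K$ and $1-\theta>0$ gives
\[
\lambda\leq g_t(a)\leq(1-\theta)K+\theta\, g_{t_0}(a)\enspace,
\]
so that $g_{t_0}(a)\geq\mu$, where $\mu:=(\lambda-(1-\theta)K)/\theta$. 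Hence $S_\lambda(g_t)\subseteq S_\mu(g_{t_0})$, and the right-hand set is compact because $g_{t_0}$ is sup-compact by Assumption~\ref{i-newcompact}.

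Finally, I would conclude that $S_\lambda(g_t)$ is a closed subset of the compact set $S_\mu(g_{t_0})$, hence compact; since $\lambda$ is arbitrary, $g_t$ is sup-compact, which is precisely Assumption~\ref{i-newcompact} with $t$ in place of $t_0$. The argument is elementary once the convex-combination identity is written down; the only place demanding care — and the step I view as the crux — is the use of $\phi\leq K<+\infty$ with the correct signs ($1-\theta>0$, $\theta>0$) to pass from an upper bound on $\phi$ to a lower bound on $g_{t_0}$, thereby trapping the level set of $g_t$ inside a compact level set of $g_{t_0}$. I do not anticipate any deeper difficulty beyond this bookkeeping.
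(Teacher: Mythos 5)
Your proof is correct and takes essentially the same route as the paper's: the paper's proof uses the identical convex-combination identity ($h_1(a)=\frac{t}{t_0}h_0(a)+(1-\frac{t}{t_0})f_a(v)$), the same bound $f_a(v)\leq f(v)<+\infty$, and the same principle that an upper-semicontinuous map bounded above by a sup-compact map is sup-compact. The only cosmetic difference is that the paper isolates that principle as a preliminary remark, whereas you verify it in-line via the level-set inclusion $S_\lambda(g_t)\subseteq S_\mu(g_{t_0})$; the content is the same.
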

\begin{proof}
We first remark that if $g,h$ are two
maps $\actionsa\to \R$, such that $g$ is sup-compact,
$h$ is upper-semicontinuous, and $h\leq g$, then
$h$ is also sup-compact. Indeed, for all $\lambda\in\R$,
$S_\lambda(h)$ is included in $S_\lambda(g)$,
$S_\lambda(h)$ is closed since $h$ is upper-semicontinuous,
and $S_\lambda(g)$  is compact because $g$
is sup-compact, and thus, $S_\lambda(h)$ is compact.

Consider now $h_0:a\mapsto f_a(v)+t_0\derz{(f_a)}$.
By Assumption~\ref{i-newcompact}, $h_0$ is sup-compact.
Take $t\in (0,t_0)$,
and define $h_1: a\mapsto f_a(v)+t\derz{(f_a)}$,
which is upper-semicontinuous thanks to Assumption~\ref{i-usc}.
We have $h_1(a)=
\frac{t}{t_0}h_0(a)+(1-\frac{t}{t_0})f_a(v)
\leq \frac{t}{t_0}h_0(a)+(1-\frac{t}{t_0})f(v)$
which shows that $h_1$ is bounded from above by 
a sup-compact map. Thus, $h_1$ is sup-compact.
\end{proof}
\begin{proof}[Proof of Theorem~\ref{th-semi}]
The first part of the theorem is a property of the
one real variable functions $g(t)=f(v+tx)$
and $g_a(t)=f_a(v+t x)$, that
is one can assume without loss of generality that
$X=\R$, $v=0$ and $x=1$, $g=f$ and $g_a=f_a$, and omit $x$
when possible.

By Assumption~\ref{i-above}, we have for all $t\in (0,t_1]$
and for all $a\in\actionsa$,
\begin{align}
\frac{g_a(t)-g(0)}t
&=\frac{g_a(t)-g_a(0)+g_a(0)-g(0)}{t}
\leq h_a(t)+ \epsilon(t)
\label{ineq-falpha}
\end{align}
where
\[
h_a(t):=\frac{t\der{g_a}01+
g_a(0)-g(0)}t 
\enspace .
\]
Taking the sup of the inequalities~\eqref{ineq-falpha},
we get
\begin{align*}
\frac{g(t)-g(0)}t
&\leq 
\sup_{a\in\actionsa} 
h_a(t) +\epsilon(t)\enspace .
\end{align*}
Since $g_a(0)\leq g(0)$,
$h_a(t)$ is a nondecreasing function of $t$, 
so that 
\begin{align}
\limsup_{t\to 0^+}\frac{g(t)-g(0)}t 
&\leq 
\lim_{t\to 0^+}
\sup_{a\in\actionsa}
h_a(t) +\epsilon(t)
=\inf_{t>0} \sup_{a\in\actionsa}
h_a(t)\enspace .
\label{ineq-falpha3}
\end{align}
For all $t\in (0,t_0)$,
the map $a\mapsto h_a(t)$, 
which is sup-compact by Lemma~\ref{lem-t0-t1},
attains its sup at some point $a_t\in\actionsa$.
Let $\lambda$ denote the value of the right hand side of~\eqref{ineq-falpha3}.
We have, $h_{a_t}(t)\geq \lambda$, so that the upper level
set $S_\lambda(a \mapsto h_a(t))$ is nonempty. Since a nonincreasing
intersection of nonempty compact sets is nonempty,
we can find $c\in \cap_{t\in (0,t_0)}S_\lambda(a \mapsto h_a(t))$.
Then, for all $t\in(0,t_0)$, 
\begin{align}
\frac{t\der{g_c}01 + g_c(0)-g(0)}t&=
\der{g_c}01 +\frac{g_c(0)-g(0)}t
\geq \lambda \enspace.
\label{e-conclusion}
\end{align}
Observe that $\lambda$ is finite, because $\lambda\geq \der{g_b}01$
with $g_b(0)=g(0)$ as in Assumption~\ref{i-attained}.
Thus, multiplying~\eqref{e-conclusion}
by $t$ and letting $t\to 0^+$, we get 
$g_c(0)-g(0)\geq 0$, and since
the other inequality is obvious, $g_c(0)=g(0)$,
which shows that $c\in  \set{a\in A}{g_a(0)=g(0)}$.
Combining~\eqref{e-conclusion} and~\eqref{ineq-falpha3}, 
we get
\begin{align}
\limsup_{t\to 0^+}\frac{g(t)-g(0)}t 
\leq 
\inf_{t>0} \sup_{a\in\actionsa}
h_a(t)
=\lambda\leq \der{g_c}01 \leq \sup_{a:\;g_a(0)=g(0)}\der{g_a}01 \enspace .
\label{ineq-falpha4}
\end{align}

Conversely, for all $a$ such that $g_a(0)=g(0)$, we have
$g(t)-g(0)=g(t)-g_a(0)\geq g_a(t)-g_a(0)$,
since $g\geq g_a$, so that
\begin{align*}
\liminf_{t\to 0^+} \frac{g(t)-g(0)}t
\geq \liminf_{t\to 0^+} \frac{g_a(t)-g_a(0)}t
=\der{g_a}01 \enspace.
\end{align*}
Thus,
\begin{align}\label{e-theboundfff}
\liminf_{t\to 0^+} \frac{g(t)-g(0)}t\geq
\sup_{a\in\actionsa,\;g_a(0)=g(0)}\der{g_a}01 \enspace.
\end{align}
Gathering~\eqref{ineq-falpha4} and~\eqref{e-theboundfff},
we get that the directional derivative of $f$ at $v$ in the direction
$x$, $f'_v(x)$, exists, is finite, and is given by~\eqref{e-derivativeofsup}.
Moreover, the sup in~\eqref{e-derivativeofsup} is a max since
it is attained by taking $a=b$.

Now the last assertion of Theorem~\ref{th-semi} follows from Lemma~\ref{lem2b}.
\end{proof}

\begin{remark}\label{rk-i-newcompact}
Assumption~\ref{i-newcompact} is implied by
standard assumptions. 

First, if $a \mapsto \der{f_a}vx$ is sup-compact,
then Assumption~\ref{i-newcompact} follows
from Assumption~\ref{i-usc}.
Indeed, for all $t_0>0$, $f_a(v) +t_0 \der{f_a}vx
\leq f(v)+t_0\der{f_a}vx$. Hence,
$a\mapsto f_a(v) +t_0 \der{f_a}vx$
is sup-compact since it is upper-semicontinuous
and bounded from above by a sup-compact map.

Symmetrically, if  $a \mapsto f_a(v)$ is sup-compact and
$a \mapsto \der{f_a}vx$ is upper-semi\-con\-ti\-nuous and 
bounded from above, then again Assumption~\ref{i-newcompact} is satisfied
for all $t_0>0$, since $f_a(v) +t_0 \der{f_a}vx
\leq f_a(v) +t_0\lambda $ for some $\lambda\in \R$.

Other assumptions which imply Assumption~\ref{i-newcompact}
are the following. Assume that $a \mapsto f_a(v)$
is sup-compact, that Assumption~\ref{i-usc} is satisfied,
and that $a \mapsto f_a(v)+t_1\der{f_a}vx$
is bounded from above, for some $t_1>0$. 
Then, we claim that 
$a \mapsto f_a(v)+t_0\der{f_a}vx$ is sup-compact,
for all $t_0\in (0,t_1)$. Indeed, let 
$\lambda=\sup_{a\in \actionsa}f_a(v)+t_1\der{f_a}vx$.
Then, $f_a(v)+t_0\der{f_a}vx 
= (1-\frac{t_0}{t_1}) f_a(v)+ \frac{t_0}{t_1} (f_a(v)+
t_1\der{f_a}vx)
\leq (1-\frac{t_0}{t_1})f_a(v)+\frac{t_0}{t_1}\lambda$, and since
an upper-semicontinuous
map bounded from above by a sup-compact map is sup-compact,
the claim is proved.
\end{remark}

\begin{remark}\label{rk-i-affine}
If $X=\R^n$, and if for all $a\in\actionsa$,
$f_a: x\mapsto p_a \cdot x +r_a$ 
for some $p_a\in\R^n$ and $r_a\in \R$, then $f_a$
is affine, thus differentiable at any $v\in\R^n$
with $(f_a)'_v(x)=p_a \cdot x$ for all $x\in\R^n$. 
Hence, $f$ is convex, Assumption~\ref{i-exists} is satisfied, 
Assumption~\ref{i-above} is satisfied with $\epsilon\equiv 0$
and Formula~\eqref{e-derivativeofsup} becomes
\[
f'_v(x)=\max_{a\in \actionsa,\; f_a(v)=f(v)} p_a \cdot x \enspace.
\]
Assumption~\ref{i-usc} is satisfied when $a\mapsto p_a$ is 
continuous and $a\mapsto r_a$ is upper-semicontinuous.
Finally, Assumption~\ref{i-newcompact} is satisfied if, and only if, 
$a \mapsto f_a(v+t_0 x)$ is sup-compact. These assumptions
are thus satisfied for instance when $a\mapsto r_a$ is 
sup-compact and $a\mapsto p_a$ is bounded and continuous. 
\end{remark}

\begin{remark}\label{rk-convex}
{From} Remark~\ref{rk-i-affine}, one can see that
Theorem~\ref{th-semi} extends the classical rule
for the directional derivative of a convex function.
When $f:\R^n\to\R\cup\{+\infty\}$ is convex,
we can write, by Legendre-Fenchel
duality~\cite[Th.~12.2]{rock}:
\[
f(x)=\sup_{p\in \dom f^*} p\cdot x-f^*(p) \enspace ,
\]
where $f^*(p)=\sup_{x\in\R^n}p\cdot x-f(x)$ denotes the Legendre-Fenchel
transform of $f$, evaluated at $p\in\R^n$, and 
for any map $g:\R^n\to\R\cup\{+\infty\}$,
$\dom g=\set{x\in\R^n}{g(x)<\infty}$ denotes the effective domain of $g$.
So, the restriction of $f$ to $\dom f$
can be written as~\eqref{e-d-sup} with
$\actionsa=\dom f^*$, $a=p$, and $f_a(x)=f_p(x)=p\cdot x-f^*(p)$.
Let us denote by $V$ the interior of $\dom f$, assume that $v\in V$.
Assumption~\ref{i-attained} is satisfied, because
$f(v)=p\cdot v-f^*(p)=f_p(v)$ holds for all
$p$ in the subdifferential $\partial f(v)$ of $f$ at $v$,
and $\partial f(v)$ is nonempty because $f$ is convex and 
$v$ is in $V$ (see again~\cite[Th.~23.4]{rock}).
We are in the conditions of Remark~\ref{rk-i-affine},
with $p_a=a$ and $r_a=-f^*(a)$. 
Hence, Assumptions~\ref{i-exists} and~\ref{i-above} are satisfied.
Since $a\mapsto p_a$ is continuous and $a\mapsto
r_a$ is upper-semicontinuous, Assumption~\ref{i-usc} is satisfied.
Moreover, Assumption~\ref{i-newcompact} is satisfied since for any 
$w\in V$, $a \mapsto f_a(w)$ is sup-compact and
since $v+t_0 x\in V$ when $v\in V$, $x\in \R^n$ and $t_0>0$ is small enough. 
Indeed, $f_p(w)=p\cdot w-f^*(p)=p\cdot w-\sup_{x\in\R^n}(p\cdot x-f(x))
\leq p\cdot (w-x)+f(x)$, for all $x\in \R^n$.
Taking $\epsilon>0$ and $x=w+\epsilon \frac{p}{\|p\|}$, we get $f_p(w)
\leq  - \epsilon p\cdot \frac{p}{\|p\|}+f(w+\epsilon\frac{p}{\|p\|})=
-\epsilon\|p\|+f(w+\epsilon\frac{p}{\|p\|})$. Since 
$f$ is convex and $w\in V$, $f$ is continuous on a neighborhood of 
$w$~\cite[Th.\ 10.1]{rock} and thus for $\epsilon>0$ small enough
$f(w+\epsilon\frac{p}{\|p\|})$
can be bounded independently of $p$, which implies that
the upper level sets of $p\mapsto f_p(w)$
are bounded. Since these level sets are closed subsets
of $\R^n$, they are compact, so $p\mapsto  f_p(w)$
is sup-compact. Hence all the assumptions of Theorem~\ref{th-semi} are
satisfied. Finally, Formula~\eqref{e-derivativeofsup} becomes
\[
f'_v(x)=\max_{p\in\partial f(v)} p\cdot x \enspace,
\]
which coincides with the classical formula
of Theorem~23.4 of~\cite{rock}. 
\end{remark}
\begin{remark}\label{rk-i-exists}
When, in Theorem~\ref{th-semi}, $X=\R^n$, and $f_a:V\to \R$ is concave,
the directional derivative
\begin{align*}
\der{f_a}vx =\lim_{t\to 0^+}
\frac{f_a(v+tx)-f_a(v)}t =\sup_{t>0} 
\frac{f_a(v+tx)-f_a(v)}t
\end{align*}
exists and is finite~\cite[Th~23.1 and~23.4]{rock}
(see also Remark~\ref{rk-convex}),
so that Assumption~\ref{i-exists} is satisfied.
Then, Assumption~\ref{i-above} is satisfied with $\epsilon\equiv 0$.
\end{remark}
\begin{remark}\label{rk-finite}
When $\actionsa$ is finite, only Assumption~\ref{i-exists} has to be checked
in the first part of Theorem~\ref{th-semi} and the second part of 
Theorem~\ref{th-semi}
shows that semidifferentiable maps $\R^n\to \R$ are
stable by max, a well known fact~\cite[Exercise~10.27]{rock98}.
More generally, the part of~\cite[Theorem~10.31]{rock98}
which asserts that lower-$\continuous_1$ maps are semidifferentiable
may be recovered from Theorem~\ref{th-semi} 
(in~\cite{rock98},
a map $f:\R^n\to\R$ is said to be \NEW{lower-$\continuous_1$} if it can
be written as~\eqref{e-d-sup} with $\actionsa$ compact, $x\mapsto f_a(x)$
of class $\continuous_1$, and $(a,x)\mapsto (f_a(x),(f_a)'_x)$
continuous.)
\end{remark}
\begin{remark}
The following counter example shows that
the upper-semicontinuity assumptions are useful
in Theorem~\ref{th-semi}.
Take any function $f:\R\to\R$ of Lipschitz constant $1$,
which is not semidifferentiable (for instance, 
$f(x)=\frac x2\sin (\log|x|)$, which is not semidifferentiable
at $0$). Then,
$f:]-1,1[\to \R$ can be written as~\eqref{e-d-sup} with $\actionsa=[-1,1]$ and
\[
f_a(x)=f(a)-|x-a| \enspace. 
\]
All the assumptions of Theorem~\ref{th-semi} are satisfied,
except the requirement in~\ref{i-usc} and \ref{i-newcompact}
that $a\mapsto \der{f_a}vx$ and $a \mapsto f_a(v)+t_0\der{f_a}vx$
be upper semicontinuous.
Indeed, 
\begin{align*}
\der{f_a}vx=\begin{cases} -x & \mrm{if $v>a$}\\
x&\mrm{if $v<a$}\\
-|x| &\mrm{if $v=a$,}
\end{cases}\end{align*}
so that $\der{f_v}vx=-|x|<\limsup_{a\to x} \der{f_a}vx=+|x|$,
unless $x=0$.
\end{remark}

\section{Semidifferentials of order preserving and nonexpansive maps}
\label{sect-semider}
In this section, we consider various classes of
maps on cones, and establish auxiliary results concerning
their semidifferentials.

Let $(X,\|\cdot\|)$ be a Banach space endowed with a partial 
ordering $\leq$, and $f$ be a map 
from a subset $D\subset X$, to $X$.
Recall that $f$ is \NEW{order-preserving} if for all $x,y\in D$, 
$x\leq y\implies f(x)\leq f(y)$.
We shall say that $f$ is \NEW{convex} 
if $f((1-t)x +t y)\leq (1-t) f(x)+t f(y)$ for all $0\leq t\leq 1$ and
 $x,y\in D$, such that $(1-t) x+ty\in D$.
We shall say that $f$ is \NEW{subhomogeneous} 
if $t f(y)\leq f(t y)$ for all $0\leq t\leq 1$ and $y\in D$,
such that $ty\in D$.
Also, we shall say that $f$ satisfies a property (for instance is 
order preserving,
or homogeneous,\ldots) in a  neighborhood of a point $v$ of $D$,
if there exists a neighborhood $V$ of $v$ in $D$ such that 
$f|_V$ satisfies this property. We shall use 
systematically the following well known elementary properties
(see for instance~\cite{nussbaum88,thompson,bushell73,potter}):

\begin{lemma}\label{lemma-nonexpan}
Let $C$ be a proper cone, $u\in C\setminus\{0\}$,
$\psi\in C^*\setminus\{0\}$ such that $\psi(u)>0$ and let
$\Sigma_u=\set{x\in C_u}{\psi(x)=\psi(u)}$.
\begin{enumerate}
\renewcommand{\theenumi}{\rm (\roman{enumi})}
\renewcommand{\labelenumi}{\theenumi}
\item \label{lemma-nonexpan-1}
If $f:C_u\to C$ is order-preserving and homogeneous,
then $f(C_u)\subset C_{f(u)}$ and 
$f$ is nonexpansive with respect to $d$ and $\tho$.
\item 
If $f:C_u\to C$ is order preserving and subhomogeneous, then
$f(C_u)\subset C_{f(u)}$ and 
$f$ is nonexpansive with respect to $\tho$, and the restriction
$f|_{\Sigma_u}$ of $f$ to $\Sigma_u$ is nonexpansive with respect 
to $d$.
\end{enumerate}
\end{lemma}

We now introduce additive analogues of (sub-) homogeneity:
we shall say that a self map $h$ of a Banach space $(X,\|\cdot\|)$
is \NEW{additively homogeneous} with respect to 
$v\in X$ if,  for all $x\in X$ and $t\in\R$, $h(x+tv)=h(x)+t v$;
similarly, we shall say that $h$ is \NEW{additively subhomogeneous} with 
respect to $v$ if,  for all $x\in X$ and $t\geq 0$, $h(x+tv)\leq h(x)+t v$.
The following lemma relates the properties of $f$ with
those of $f'$. 
\begin{lemma}\label{preli-fpv}
Let $C$ be a proper cone with nonempty interior  in 
a Banach space $(X,\|\cdot\|)$.
Let $G$ be an open subset of $(X,\|\cdot\|)$ included in $C$ and $f: G\rightarrow \Cint$.
Let $v\in  G$ be a fixed point of $f$: $f(v)= v$. 
Let $\psi\in C^*\setminus\{0\}$ be such that $\psi(v)=1$ and denote
$\Sigma=\set{x\in \Cint}{\psi(x)=1}$.
Assume that $f$ is semidifferentiable at $v$.
The following implications hold:
\begin{enumerate}
\renewcommand{\theenumi}{\rm (\roman{enumi})}
\renewcommand{\labelenumi}{\theenumi}
\item \label{preli-fpv1} If $f$ is order preserving in a neighborhood of $v$,
then $f'_v:X\to X$ is order preserving.
\item \label{preli-fpv12} If $f$ is convex in a neighborhood of $v$,
then $f'_v:X\to X$ is convex.
\item  \label{preli-fpv2} If $f$ is homogeneous in a neighborhood of $v$,
 then $f'_v$ is additively homogeneous with respect to  $v$.
\item  \label{preli-fpv3} If $f$ is subhomogeneous in a neighborhood of $v$,
 then $f'_v$ is additively subhomogeneous with 
respect to  $v$.
\item  \label{preli-fpv34} Assume that there exists $\delta>0$  such that 
 $f(tv)\leq tf(v)$ for all $1\leq t \leq 1+\delta$.
Then, $f'_v(v)\leq v$.
\item  \label{preli-fpv32} Assume that there exists $\delta>0$  such that 
 $\delta\leq 1$ and  $t f(v)\leq f(tv)$ for all $1-\delta\leq  t\leq 1$.
Then, $f'_v(-v)\geq -v$.
\item \label{preli-fpv4} If $f$ is nonexpansive with respect to $\tho$
in a neighborhood of $v$, then $f'_v$ is nonexpansive with respect to 
$\|\cdot \|_v$.
\item \label{preli-fpv5} If $f|_{G\cap \Sigma}$ is nonexpansive with 
respect to $d$ in a neighborhood of $v$, then $f'_v|_{\psi^{-1}(0)}$  
is nonexpansive with respect to $\omega_v$.
\end{enumerate}
\end{lemma}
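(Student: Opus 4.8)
\emph{Overall strategy.} The plan is to treat all eight implications by a single device: express the semiderivative through the difference quotients $t^{-1}(f(v+tx)-v)$ (recall $f(v)=v$), feed into them the property of $f$ that holds in a neighborhood of $v$ — hence for $t$ small — and pass to the limit, using that $C$ is closed so that $\leq$ survives the limit. At the outset I note that $v=f(v)\in\Cint$, so Proposition~\ref{prop-normequiv} gives $X_v=X$, makes $f'_v$ a self-map of all of $X$, and makes $\|\cdot\|_v$ equivalent to $\|\cdot\|$ and topologically equivalent to $\tho$ on $C_v=\Cint$.

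\emph{The order/convexity/homogeneity parts.} Parts~\ref{preli-fpv1}--\ref{preli-fpv32} are then immediate. For~\ref{preli-fpv1}, $x\leq y$ gives $v+tx\leq v+ty$, so order-preservation near $v$ passes to $f(v+tx)\leq f(v+ty)$; divide by $t$ and let $t\to 0^+$. For~\ref{preli-fpv12} I would use $v+t((1-s)x+sy)=(1-s)(v+tx)+s(v+ty)$ so that convexity transfers to the quotients verbatim. For~\ref{preli-fpv2} and~\ref{preli-fpv3} the algebraic heart is
\[
v+t(x+sv)=(1+ts)\bigl(v+\tfrac{t}{1+ts}x\bigr)\enspace,
\]
so homogeneity near $v$ gives $f(v+t(x+sv))=(1+ts)f(v+\tfrac{t}{1+ts}x)$; expanding the right side by semidifferentiability as $(1+ts)v+tf'_v(x)+o(t)$ yields $f'_v(x+sv)=f'_v(x)+sv$, and in the subhomogeneous case the same factorization with $f(\lambda z)\leq\lambda f(z)$ for $\lambda=1+ts\geq 1$ gives $f'_v(x+sv)\leq f'_v(x)+sv$ for $s\geq 0$. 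Parts~\ref{preli-fpv34} and~\ref{preli-fpv32} are the directions $x=\pm v$: from $f(tv)\leq tv$ on $[1,1+\delta]$ I get $t^{-1}(f((1+t)v)-v)\leq v$, hence $f'_v(v)\leq v$, and symmetrically $f'_v(-v)\geq -v$.

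\emph{The Thompson-metric part.} The real work is in~\ref{preli-fpv4} and~\ref{preli-fpv5}. For~\ref{preli-fpv4} I would set $a_t=v+tx$, $b_t=v+ty\in\Cint$ (for $t$ small) and use $\tho$-nonexpansiveness, $\tho(f(a_t),f(b_t))\leq\tho(a_t,b_t)$. Chaining~\eqref{e-e1} (applied to the images) with~\eqref{e-e2} (applied to $a_t,b_t$) gives
\[
\|f(a_t)-f(b_t)\|_v\leq\|a_t-b_t\|_v\,
e^{\tho(a_t,v)\vee\tho(b_t,v)}\,
e^{\tho(f(a_t),v)\wedge\tho(f(b_t),v)}\enspace .
\]
Since $\|a_t-b_t\|_v=t\|x-y\|_v$, since $t^{-1}(f(a_t)-f(b_t))\to f'_v(x)-f'_v(y)$ in $\|\cdot\|_v$, and since $a_t,b_t,f(a_t),f(b_t)\to v$ forces both exponential factors to $1$, dividing by $t$ and letting $t\to 0^+$ produces $\|f'_v(x)-f'_v(y)\|_v\leq\|x-y\|_v$.

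\emph{The Hilbert-metric part and the main obstacle.} Part~\ref{preli-fpv5} is the hardest and runs along the same lines, but with two extra wrinkles. First, for $x,y\in\psi^{-1}(0)$ the points $a_t=v+tx,b_t=v+ty$ lie on $\Sigma$, whereas their images need not; since $d$ is projective I would replace $f(a_t),f(b_t)$ by their $\psi$-normalizations $a_t'=f(a_t)/\psi(f(a_t))$, $b_t'=f(b_t)/\psi(f(b_t))\in\Sigma$, with $d(a_t',b_t')=d(f(a_t),f(b_t))\leq d(a_t,b_t)$, so that~\eqref{e-f1} and~\eqref{e-f2} apply. Second, the normalization introduces a $v$-component: from $\psi(f(v+tx))=1+t\psi(f'_v(x))+o(t)$ one finds $a_t'=v+t(f'_v(x)-\psi(f'_v(x))v)+o(t)$, and precisely because $\omega_v$ annihilates $\R v$ (and is dominated by $2\|\cdot\|_v$), that component is discarded and $t^{-1}\omega_v(a_t'-b_t')\to\omega_v(f'_v(x)-f'_v(y))$. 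On the other side $t^{-1}(e^{d(a_t,b_t)}-1)\to\omega_v(x-y)$ by~\eqref{e-normequivosc}, while $e^{\tho(a_t',v)\wedge\tho(b_t',v)}\to 1$, giving $\omega_v(f'_v(x)-f'_v(y))\leq\omega_v(x-y)$. I expect the main obstacle to be exactly this last step: correctly normalizing onto $\Sigma$, checking that the oscillation seminorm is what kills the $v$-direction created by the normalization, and verifying that every exponential and comparison factor coming from Lemma~\ref{lem-allineq} and~\eqref{e-normequivosc} tends to $1$ as $t\to 0^+$.
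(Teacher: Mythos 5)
Your proposal is correct and follows essentially the same route as the paper's own proof: each algebraic property is fed into the difference quotients $t^{-1}\bigl(f(v+tx)-f(v)\bigr)$ and passed to the limit (using that the proper cone $C$ is closed), while the two metric statements are reduced to the local equivalences between $\tho$, $d$ and $\|\cdot\|_v$, $\omega_v$ coming from Lemma~\ref{lem-allineq} and Proposition~\ref{prop-normequiv}. If anything, your treatment of~\ref{preli-fpv5} is more careful than the paper's, which applies the equivalence~\eqref{e-normequivosc} directly to $f(v+tx)$ and $f(v+ty)$ even though these images need not lie in $\Sigma$; your explicit $\psi$-normalization onto $\Sigma$ (harmless for $d$ by projectivity, and discarded afterwards because $\omega_v$ annihilates $\R v$) is exactly the step needed to justify that application.
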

\begin{proof} We shall only need the definition~\eqref{eq2'} of $f'_v$.

\ref{preli-fpv1}: If $f$ is order preserving in a neighborhood of $v$,
then for all $x,y\in X$ such that $x\leq y$, we have
$f(v+tx)\leq f(v+ty)$ for all $t>0$ small enough, hence,
from~\eqref{eq2'}, $f'_v(x)\leq f'_v(y)$,
 which shows that $f'_v$ is order preserving.

\ref{preli-fpv12}: If $f$ is convex in a neighborhood of $v$,
then for all $x,y\in X$ and $s\in [0,1]$, we have
$f(v+t((1-s)x+s y))=f((1-s)(v+t x)+s (v+ty))\leq (1-s)f(v+tx)+
s f(v+ty)$ for all $t>0$ small enough, hence,
from~\eqref{eq2'}, $f'_v((1-s)x+s y)\leq (1-s) f'_v(x)+ sf'_v(y)$,
 which shows that $f'_v$ is convex.

\ref{preli-fpv2}: If $f$ is homogeneous in a neighborhood of $v$,
then for all $x\in X$, $s\in\R$, 
\[ f(v+t(x+sv))=f((1+ts) (v+\frac{t}{1+ts} x))=(1+ts) f(v+\frac{t}{1+ts} x)
\] 
for all $t>0$ small enough. Since $f(v)=v$, this leads to
\begin{equation}\label{ine3}
 \frac{f(v+t(x+sv))-f(v)}{t}=\frac{1+ts}{t} \left( f(v+\frac{t}{1+ts} x)
-f(v)\right) +s v\end{equation}
for all $t>0$ small enough. Using~\eqref{eq2'} and \eqref{ine3}, we get
\[ f'_v(x+sv)=f'_v(x)+s v\]
which shows that $f'_v$ is additively homogeneous with respect to  $v$.

\ref{preli-fpv3}: If $f$ is subhomogeneous in a neighborhood of $v$,
then $f(tx)\leq t f(x)$ for all $x\in X$ and $t\geq 1$
such that $x$ and $tx$ are sufficiently close to $v$.
By the same arguments as for \ref{preli-fpv2}, we obtain that 
for all $x\in X$, $s\geq 0$,
\[ f(v+t(x+sv))\leq (1+ts) f(v+\frac{t}{1+ts} x) \] 
and
\begin{equation}\label{ine4}
 \frac{f(v+t(x+sv))-f(v)}{t}\leq \frac{1+ts}{t} \left( f(v+\frac{t}{1+ts} x)
-f(v)\right) +s v\end{equation}
for all $t>0$ small enough. Using~\eqref{eq2'} and \eqref{ine4}, we get
\[ f'_v(x+sv)\leq f'_v(x)+s v\]
for all $s\geq 0$, 
which shows that $f'_v$ is additively subhomogeneous with respect to  $v$.

\ref{preli-fpv34}: Assume that there exists $\delta>0$  such that 
 $f(tv)\leq tf(v)$ for all $1\leq t \leq 1+\delta$.
Then, using $f(v)=v$, we get for all $0< s\leq \delta$
\[ \frac{f(v+sv)-f(v)}{s}\leq f(v)=v\]
and passing to the limit when
$s$ goes to $0$, we obtain $f'_v(v)\leq v$.

\ref{preli-fpv32}: Assume that there exists $\delta>0$  such that 
 $\delta\leq 1$ and  $t f(v)\leq f(tv)$ for all $1-\delta\leq  t\leq 1$.
Then, similarly to case \ref{preli-fpv34}, we get for all $0< s\leq \delta$
\[ \frac{f(v-sv)-f(v)}{s}\geq -f(v)=-v\]
and passing to the limit when
$s$ goes to $0$, we obtain $f'_v(-v)\geq -v$.

\ref{preli-fpv4}: If $f$ is nonexpansive with respect to $\tho$
in a neighborhood of $v$, then for all $x,y\in X$ such that $x\neq y$,
\begin{equation}\label{ine5}
 \frac{\tho(f(v+tx),f(v+ty))}{\tho(v+tx,v+ty)}\leq 1\end{equation}
for all $t>0$ small enough. In particular $f(v+tx)$ and
$f(v+ty)$ tend to $v$ in $(\Cint, \tho)$ when $t\to 0^+$.
Using~\eqref{e-normequivdbar} and \eqref{ine5}, we get
\begin{align}\label{ine1}
\lim_{t\to 0^+}  \frac{\|f(v+tx)-f(v+ty)\|_v}{\|tx-ty\|_v}\leq 1
\enspace .\end{align}
Since for any proper cone $C$ and $v\in \interior C$,
there exists a constant $\kappa$ such that $\|\cdot\|_v\leq \kappa \|\cdot \|$,
the convergence in~\eqref{eq2'} holds
not only for the topology of the norm $\|\cdot \|$
but also for the topology of $\|\cdot\|_v$.
Gathering~\eqref{ine1} with~\eqref{eq2'} for 
the topology of $\|\cdot\|_v$, we deduce
\[ \frac{\|f'_v(x)-f'_v(y)\|_v}{\|x-y\|_v}\leq 1\]
which shows that $f'_v$ is nonexpansive with respect to 
$\|\cdot \|_v$.

\ref{preli-fpv5}: If $f|_{G\cap \Sigma}$ is nonexpansive with 
respect to $d$ in a neighborhood of $v$, then,
for all $x,y\in \psi^{-1}(0)$ such that $x\neq y$,
\[ \frac{d(f(v+tx),f(v+ty))}{d(v+tx,v+ty)}\leq 1\]
for all $t>0$ small enough (since $v+tx$ and $v+ty\in G\cap \Sigma$
for all $t>0$ small enough). Hence,  using~\eqref{e-normequivosc}, we get
\begin{align}\label{ine2}
\lim_{t\to 0^+}  \frac{\omega_v(f(v+tx)-f(v+ty))}{\omega_v(tx-ty)}\leq 1
\enspace .\end{align}
As said before, the convergence in~\eqref{eq2'} holds
for the topology of $\|\cdot\|_v$,
hence by~\eqref{equiv-omega0}, we get that
\[ \lim_{t \rightarrow 0^+} \omega_v( \frac{f(v + tz) - f(v)}{t} -f'_v(z) 
)=0 \enspace, \]
for all $z\in \psi^{-1}(0)$.
Using this convergence together with~\eqref{ine2},
we deduce
\[ \frac{\omega_v(f'_v(x)-f'_v(y))}{\omega_v(x-y)}\leq 1\]
which shows that $f'_v|_{\psi^{-1}(0)}$  
is nonexpansive with respect to $\omega_v$.
\end{proof}
We shall also need:
\begin{lemma}\label{preli-fpv33}
Let $h$ denote a self-map of a Banach space $(X,\|\cdot\|)$
endowed with a partial ordering $\leq$, and $v\in X$.
If $h$ is convex, homogeneous, and if $h(v)\leq v$,  then
$h$ is additively subhomogeneous with respect to  $v$.
\end{lemma}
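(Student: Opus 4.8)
The plan is to first show that any convex, positively homogeneous self-map is subadditive, and then to combine subadditivity with homogeneity and the hypothesis $h(v)\leq v$.

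First I would establish that $h$ is \emph{subadditive}, i.e.\ $h(x+y)\leq h(x)+h(y)$ for all $x,y\in X$. This is the standard computation: by homogeneity, $h(x+y)=2\,h\bigl(\tfrac12 x+\tfrac12 y\bigr)$, while convexity applied with $t=\tfrac12$ gives $h\bigl(\tfrac12 x+\tfrac12 y\bigr)\leq \tfrac12 h(x)+\tfrac12 h(y)$. Multiplying this last inequality by $2$ — which preserves $\leq$, since the order is induced by a cone $C$ and $2(b-a)\in C$ whenever $b-a\in C$ — yields $h(x+y)\leq h(x)+h(y)$.

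Next, I fix $x\in X$ and $t\geq 0$ and aim for $h(x+tv)\leq h(x)+tv$. If $t=0$ both sides equal $h(x)$, so I may assume $t>0$. Subadditivity gives $h(x+tv)\leq h(x)+h(tv)$, and homogeneity gives $h(tv)=t\,h(v)$. From the hypothesis $h(v)\leq v$ and $t\geq 0$, scaling by $t$ preserves the order, so $t\,h(v)\leq tv$; adding $h(x)$ on the left (translation invariance of the cone order) gives $h(x)+h(tv)\leq h(x)+tv$. Chaining these inequalities produces $h(x+tv)\leq h(x)+tv$, which is precisely additive subhomogeneity with respect to $v$.

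I do not anticipate any genuine obstacle: the only substantive point is the passage from convexity together with homogeneity to subadditivity, and the remaining steps are routine manipulations relying on the compatibility of the cone-induced order with nonnegative scaling and with translation. One should merely be careful to invoke homogeneity only for the strictly positive scalar $t$, treating the boundary case $t=0$ separately as above.
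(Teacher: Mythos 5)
Your proof is correct and is essentially the paper's own argument: the paper's one-line computation $h(x+tv)\leq \tfrac12\bigl(h(2x)+h(2tv)\bigr)=h(x)+t\,h(v)\leq h(x)+tv$ is exactly your subadditivity step (midpoint convexity plus homogeneity) specialized to $y=tv$, followed by the same use of $h(v)\leq v$. Factoring it through an explicit subadditivity claim and handling $t=0$ separately are harmless repackagings, not a different route.
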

\begin{proof}
If $h$ is convex and $h(v)\leq v$,  then
using the homogeneity of $h$, we get
for all $t\geq 0$ and $x\in X$,
\[h(x+tv)\leq \frac{1}{2}( h(2x) +h(2t v))=h(x)+t h(v)
\leq h(x)+ t v\enspace ,\]
which shows that $h$ is additively subhomogeneous with respect to  $v$.
\end{proof}
Symmetrically, if $h:X\to X$ is
concave, homogeneous and such that $h(-v)\geq -v$,
then $h$ is additively subhomogeneous with respect to  $v$.
Lemma~\ref{preli-fpv33} covers only a special case:
a homogeneous and additively subhomogeneous map
need not be convex or concave
(consider for instance $h:\R^2\to \R^2, h(x_1,x_2)=
((x_1\vee x_2)\wedge x_1/2, x_2)$, which is homogeneous
and additively subhomogeneous with respect to $(1,1)$).

If $C,G,f$ are as in Lemma~\ref{preli-fpv},
to study the eigenvectors of $f$, 
we shall pick a linear form $\psi\in C^*\setminus\{0\}$,
define the set $\Sigma=\set{x\in \Cint}{\psi(x)=1}$,
and consider the map 
\begin{equation}
\label{defg}
\tilde{f} :G \rightarrow \Sigma,  \quad \tilde{f}(x)=\frac{f(x)}{\psi(f(x))}
\enspace .
\end{equation}
The following lemma states some basic properties of $\tilde{f}$.
\begin{lemma}\label{ftog}
Let $C$ be a proper cone with nonempty interior in 
a Banach space $(X,\|\cdot\|)$.
Let $G$ be an open subset of $(X,\|\cdot\|)$ included in $C$ and  $f: G\rightarrow \Cint$.
Let $\psi\in C^*\setminus\{0\}$, 
$\Sigma=\set{x\in \Cint}{\psi(x)=1}$, let $\tilde{f}$ be defined 
by~\eqref{defg} and $g=\tilde{f}|_{G\cap \Sigma}$.
If $f|_{G\cap \Sigma}$ is nonexpansive with 
respect to $d$, so is $g$.
Let $v\in  G\cap \Sigma$ be a fixed point of $f$:
$f(v)= v$. Assume that $f$ is semidifferentiable at $v$.
Then, $\tilde f$ is semidifferentiable at $v$, and
\begin{equation}\label{gprimv}
\tilde{f}'_v(x)= f'_v(x)-\psi(f'_v(x)) v\quad \forall x\in X\enspace .
\end{equation}
Moreover, if  $f|_{G\cap \Sigma}$ is nonexpansive with 
respect to $d$ in a neighborhood of $v$, then 
$g'_v=\tilde{f}'_v|_{\psi^{-1}(0)}$  is nonexpansive
 with respect to $\omega_v$.
\end{lemma}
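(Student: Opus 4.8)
The plan is to handle the three assertions separately, the crux being the semidifferentiability formula~\eqref{gprimv}; the two nonexpansiveness claims will then follow, the first from the projective invariance of Hilbert's metric and the last from the argument already used in Lemma~\ref{preli-fpv}\,\ref{preli-fpv5}. For the first assertion, note that $\tilde f$ in~\eqref{defg} is well defined on $G$: since $f(G)\subset\Cint$ and every nonzero element of $C^*$ is strictly positive on $\Cint$, we have $\psi(f(x))>0$, so $\tilde f(x)$ is a positive scalar multiple of $f(x)$. From~\eqref{def-M} and~\eqref{def-m}, $M$ and $m$ are jointly homogeneous, $M(\lambda y\,/\,\mu x)=(\lambda/\mu)\,M(y\,/\,x)$ and likewise for $m$, so that $d(\lambda a,\mu b)=d(a,b)$ for all $\lambda,\mu>0$; this is the projective invariance of $d$. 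Hence, for $x,y\in G\cap\Sigma$, $d(g(x),g(y))=d(\tilde f(x),\tilde f(y))=d(f(x),f(y))$, and the $d$-nonexpansiveness of $g$ follows from that of $f|_{G\cap\Sigma}$.

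For the formula, I would expand directly. Writing $f(v+x)=v+f'_v(x)+\|x\|\varepsilon(x)$ with $\varepsilon(x)\to 0$, and applying the continuous linear form $\psi$ together with $\psi(v)=1$, gives $\psi(f(v+x))=1+\psi(f'_v(x))+o(\|x\|)$. The a priori bound~\eqref{fpvbound} forces $\psi(f'_v(x))=O(\|x\|)$, so the reciprocal expands as $1/\psi(f(v+x))=1-\psi(f'_v(x))+o(\|x\|)$. Multiplying the two expansions and absorbing the second-order term $f'_v(x)\,\psi(f'_v(x))=O(\|x\|^2)$ into the remainder yields
\[
\tilde f(v+x)=v+\bigl(f'_v(x)-\psi(f'_v(x))\,v\bigr)+o(\|x\|)\enspace.
\]
Since $\tilde f(v)=v$, this is~\eqref{eq1'} for the candidate map $x\mapsto f'_v(x)-\psi(f'_v(x))v$, which is homogeneous and continuous because $f'_v$ is and $\psi$ is linear and continuous; thus $\tilde f$ is semidifferentiable at $v$ and~\eqref{gprimv} holds. (Equivalently, this is the chain rule of Lemma~\ref{lemma-chain} for $f$ followed by the normalization $N\colon y\mapsto y/\psi(y)$, which is Fr\'echet differentiable at $v$ with $N'_v(w)=w-\psi(w)v$.) The identity $g'_v=\tilde f'_v|_{\psi^{-1}(0)}$ is then immediate from~\eqref{eq2'}: for $x\in\psi^{-1}(0)$ one has $\psi(v+tx)=1$ and $v+tx\in G\cap\Sigma$ for all small $t>0$, so the directional derivatives at $v$ of $g$ and of $\tilde f$ in the direction $x$ coincide.

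For the last assertion, I would re-run the argument of Lemma~\ref{preli-fpv}\,\ref{preli-fpv5} for $g$ itself, which is legitimate precisely because $g$ takes its values in $\Sigma$. Assuming $f|_{G\cap\Sigma}$ is $d$-nonexpansive near $v$, the first assertion makes $g$ $d$-nonexpansive near $v$. Fix $x,y\in\psi^{-1}(0)$ with $x\neq y$; for small $t>0$ we have $v+tx,v+ty\in G\cap\Sigma$, hence $d(g(v+tx),g(v+ty))\leq d(v+tx,v+ty)$, while $g(v+tx),g(v+ty)\in\Sigma$ both tend to $v$ in $(\Sigma,\tho)$. Applying the asymptotic equivalence~\eqref{e-normequivosc} to numerator and denominator (all points lying in $\Sigma$) and letting $t\to 0^+$ with~\eqref{eq2'} gives $\omega_v(g'_v(x)-g'_v(y))\leq\omega_v(x-y)$, the desired nonexpansiveness.

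The routine but essential point to watch is in the second step: one must confirm that the remainder produced by expanding $1/\psi(f(v+x))$ is genuinely $o(\|x\|)$, which rests entirely on~\eqref{fpvbound} giving $\psi(f'_v(x))=O(\|x\|)$, and that the limiting map is continuous and homogeneous, as our notion of semidifferentiability demands. The conceptual subtlety lies in the third step: the equivalence~\eqref{e-normequivosc} is available only for pairs of points in $\Sigma$, and it is exactly the replacement of $f$ by the normalized map $g$, whose image lies in $\Sigma$, that makes it applicable to the image points $g(v+tx)$ and $g(v+ty)$.
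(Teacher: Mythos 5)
Your proposal is correct and takes essentially the same route as the paper: projective invariance of $d$ for the first claim, the formula \eqref{gprimv} via the normalization $y\mapsto y/\psi(y)$, and the nonexpansiveness of $g'_v$ via the argument of Lemma~\ref{preli-fpv},~\ref{preli-fpv5} applied to the normalized map. The only differences are cosmetic: where you expand $f(v+x)/\psi(f(v+x))$ directly to first order (noting the chain-rule formulation parenthetically), the paper invokes Lemma~\ref{lemma-chain} with $R(y)=y/\psi(y)$ and $R'_v(y)=y-\psi(y)v$; and where you re-run the proof of Lemma~\ref{preli-fpv},~\ref{preli-fpv5} for $g$, the paper cites that lemma applied to $\tilde{f}$ --- both applications being legitimate for exactly the reason you identify, namely that the normalized map, unlike $f$, has image in $\Sigma$, so that \eqref{e-normequivosc} applies to the image points.
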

\begin{proof}
By definition of $d$, 
we have $d(\lambda x,\mu y)=d(x,y)$
for all $\lambda,\mu>0$ and $x,y\in \Cint$. Hence, if $f|_{G\cap \Sigma}$
is nonexpansive with respect to $d$, then for all $x,y\in G\cap \Sigma$,
\[ d(g(x),g(y))=d\Big(\frac{f(x)}{\psi(f(x))},\frac{f(y)}{\psi (f(y))}
\Big) =d(f(x),f(y))\leq d(x,y)\enspace ,\]
which shows that $g$ is nonexpansive with respect to $d$.

Consider the map $R:\Cint \to \Sigma,\,
y\mapsto \frac{y}{\psi(y)}$. We have $\tilde{f}=R\circ f$.
Since $\psi$ is linear, thus differentiable at any point
$w\in X$, and $\psi(y)>0$ for all $y\in \Cint$, it follows that
$R$ is differentiable at any $w\in \Cint$, with
\begin{align}\label{diff-R}
 R'_w(y)=\frac{y}{\psi(w)}-\frac{\psi(y) w}{\psi(w)^2}\enspace .
\end{align}
In particular $R$ is differentiable at $v\in\Cint$, thus $R$
is semidifferentiable at $f(v)=v$ and 
$R'_{f(v)}$ is uniformly continuous on all bounded sets.
Moreover $f$ is semidifferentiable at $v$, hence 
applying Lemma~\ref{lemma-chain}, we get 
that $\tilde{f}$ is semidifferentiable  at $v$ and that
$\tilde{f}'_v=R'_{v}\circ f'_v$. From~\eqref{diff-R} and $\psi(v)=1$,
we get~\eqref{gprimv}.

If $f|_{G\cap \Sigma}$ is nonexpansive with respect to $d$
in a neighborhood of $v$, 
then by the same arguments as above, $g=\tilde{f}|_{G\cap \Sigma}$ is
nonexpansive with respect to $d$ 
in a neighborhood of $v$, and by
Lemma~\ref{preli-fpv},~\ref{preli-fpv5}
applied to $\tilde{f}$, $g'_v=\tilde{f}'_v|_{\psi^{-1}(0)}$ is
nonexpansive with respect to $\omega_v$.
\end{proof}
The following result, which 
controls $x-h(x)$ in terms of the fixed point set of an order
preserving additively subhomogeneous map $h$,
will play a key role in the proof of the uniqueness theorem
for eigenvectors (Theorem~\ref{theo01} below),
\begin{lemma}\label{lemma-ess}
Let $C$ be a proper cone with nonempty interior  in 
a Banach space $(X,\|\cdot\|)$ and let $v\in \Cint$.
Let $h:X\to X$ be order preserving and additively subhomogeneous 
with respect to $v$. Assume that the set $S=\set{y\in X}{h(y)=y}$
of fixed points of $h$ is nonempty. We have, for all $x\in X$
\begin{align}
\label{cw1}
\sbf{v}{x-h(x)} &\leq \inf_{y\in S} \sbf{v}{x-y}\vee 0\\
\stf{v}{x-h(x)} &\geq \sup_{y\in S} \stf{v}{x-y}\wedge 0 \enspace .
\label{cw2}
\end{align}
\end{lemma}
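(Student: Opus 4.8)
The plan is to prove the two displayed inequalities separately, by parallel arguments, reducing each to a bound involving a single fixed point $y\in S$ and then taking the infimum (resp.\ supremum) over $y$. Throughout I will use only two elementary features of the functionals $\sbf{v}{\cdot}$ and $\stf{v}{\cdot}$: they are order preserving, and they are additively homogeneous with respect to $v$, i.e.\ $\sbf{v}{z+tv}=\sbf{v}{z}+t$ and $\stf{v}{z+tv}=\stf{v}{z}+t$ for all $z\in X$ and $t\in\R$. Since $C$ is closed and $v\in\Cint$, every element of $X$ is comparable to $v$, so these functionals are finite and the supremum defining $\sbf{v}{x-y}$ and the infimum defining $\stf{v}{x-y}$ are attained; in particular, writing $a=\sbf{v}{x-y}$ I have $y+av\leq x$, and writing $b=\stf{v}{x-y}$ I have $x\leq y+bv$. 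Note also that the right-hand sides can be read as $\big(\inf_{y\in S}\sbf{v}{x-y}\big)\vee 0$ and $\big(\sup_{y\in S}\stf{v}{x-y}\big)\wedge 0$, since the outer $\vee 0$/$\wedge 0$ commutes with the infimum/supremum of terms that are truncated at $0$.

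For \eqref{cw1}, I fix $y\in S$ and prove the per-$y$ bound $\sbf{v}{x-h(x)}\leq \sbf{v}{x-y}\vee 0$; taking the infimum over $y\in S$ then yields the claim. Setting $a=\sbf{v}{x-y}$, so $y+av\leq x$, I apply the order preserving map $h$ to get $h(y+av)\leq h(x)$, and I must bound $h(y+av)$ \emph{from below}; the sign of $a$ dictates which hypothesis to invoke. If $a\geq 0$, then $y+av\geq y$, so order preservation and $h(y)=y$ give $h(y+av)\geq y$, whence $h(x)\geq y$ and $\sbf{v}{x-h(x)}\leq \sbf{v}{x-y}=a=a\vee 0$. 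If $a<0$, I use additive subhomogeneity with the shift $s=-a>0$: from $h(y)=h\big((y+av)+(-a)v\big)\leq h(y+av)+(-a)v$ together with $h(y)=y$ I obtain $h(y+av)\geq y+av$, hence $h(x)\geq y+av$ and, using additive homogeneity of $\sbf{v}{\cdot}$, $\sbf{v}{x-h(x)}\leq \sbf{v}{x-y-av}=\sbf{v}{x-y}-a=0=a\vee 0$. In both cases the per-$y$ bound holds.

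The inequality \eqref{cw2} is proved by the mirror-image argument. Fixing $y\in S$ and setting $b=\stf{v}{x-y}$, so $x\leq y+bv$, I apply $h$ to get $h(x)\leq h(y+bv)$ and now bound $h(y+bv)$ \emph{from above}. If $b\leq 0$, then $y+bv\leq y$, so order preservation gives $h(y+bv)\leq h(y)=y$, hence $h(x)\leq y$ and $\stf{v}{x-h(x)}\geq \stf{v}{x-y}=b=b\wedge 0$. If $b>0$, additive subhomogeneity gives directly $h(y+bv)\leq h(y)+bv=y+bv$, hence $h(x)\leq y+bv$ and $\stf{v}{x-h(x)}\geq \stf{v}{x-y-bv}=0=b\wedge 0$. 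Taking the supremum over $y\in S$ gives \eqref{cw2}.

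I expect the one genuinely delicate point to be the lower bound on $h(y+av)$ in the case $a<0$ of \eqref{cw1} (and, symmetrically, the upper bound on $h(y+bv)$ when $b>0$ of \eqref{cw2}): there order preservation pushes the wrong way, and the resolution is to observe that $y+av$ is then a \emph{sub}-fixed point of $h$, namely $h(y+av)\geq y+av$, which is exactly what additive subhomogeneity supplies when applied to the shift that returns $y+av$ to the fixed point $y$. Once this observation is in place, the two sign cases in each inequality combine precisely into the $\vee 0$ and $\wedge 0$ appearing in the statement, and no compactness, completeness, or convexity is needed — only order preservation and additive subhomogeneity of $h$.
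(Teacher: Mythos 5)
Your proof is correct, and it takes a genuinely different route from the paper's. The paper first normalizes: it proves the inequality $\stf{v}{x-h(x)}\geq \stf{v}{x}\wedge 0$ under the extra hypothesis $h(0)=0$ (i.e.\ the case $y=0$), by bounding $h(x)\leq h((\beta\vee 0)v)\leq h(0)+(\beta\vee 0)v$ with $\beta=\stf{v}{x}$, decomposing $x=(x-h(x))+h(x)$, and invoking the definition of $\stf{v}{x}$ as an infimum; it then obtains the dual inequality by applying this to the involution $\ant{h}(x)=-h(-x)$, and finally handles a general fixed point $y\in S$ by conjugating with the translation $h_y(x)=h(y+x)-y$ and substituting $x\mapsto x-y$. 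You instead work directly with an arbitrary $y\in S$ and argue by a case split on the sign of $a=\sbf{v}{x-y}$ (resp.\ $b=\stf{v}{x-y}$), the key point being your observation that in the ``bad'' sign case $y+av$ is a sub-fixed point, $h(y+av)\geq y+av$, which subhomogeneity yields by shifting back to $y$; monotonicity of the functionals $\sbf{v}{\cdot}$, $\stf{v}{\cdot}$ and their additive homogeneity with respect to $v$ then finish each case. Both arguments rest on the same foundations (attainment $\sbf{v}{z}v\leq z\leq \stf{v}{z}v$, valid because $C$ is closed, and finiteness of these quantities, valid because $v\in\Cint$, both recorded in Section~2 of the paper), so neither is more general; what yours buys is a self-contained, hypothesis-transparent argument with no reduction machinery, while the paper's normalization-plus-involution scheme avoids sign cases entirely and halves the work by symmetry. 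One small point in your favor: your closing remark that the truncation $\vee\, 0$ (resp.\ $\wedge\, 0$) commutes with the infimum (resp.\ supremum) correctly resolves the parsing ambiguity in the displayed right-hand sides, and is consistent with how the paper's own proof (per-$y$ bounds, then $\inf$/$\sup$) reads them.
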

\begin{proof}
We first show that if $h(0)=0$, then
\begin{equation}\label{ess1}
\stf{v}{x-h(x)}\geq \stf{v}{x}\wedge 0 \quad\forall x\in X\enspace .
\end{equation}
Let $x\in X$ and denote $\beta=\stf{v}{x}$ and $z=x-h(x)$.
Then, $x\leq \beta v\leq (\beta\vee 0) v$. Since $h(0)=0$ and
$h$ is order preserving and  additively subhomogeneous 
with respect to $v$, we get
$h(x)\leq h((\beta\vee 0) v)\leq h(0)+ (\beta\vee 0) v=(\beta\vee 0) v$.
Hence,
\begin{equation}\label{ess2}
x=z+h(x)\leq \stf{v}{z} v +(\beta\vee 0) v= (\stf{v}{z} +\beta\vee 0) v
\enspace .
\end{equation}
Since $\beta=\inf\set{a\in \R}{ x\leq a v}$, we deduce from~\eqref{ess2}
that $\beta \leq \stf{v}{z} +\beta\vee 0$, thus $\beta\wedge 0=\beta-
\beta\vee 0\leq \stf{v}{z} $, which shows~\eqref{ess1}.

Applying~\eqref{ess1} to $-x$ and replacing $h$ by $\ant{h}(x)=-h(-x)$,
which is order preserving,  additively subhomogeneous 
with respect to $v$ and satisfies $\ant{h}(0)=0$, we get
$\stf{v}{-x+h(x)}\geq \stf{v}{-x}\wedge 0$. Since $\sbf{v}{x}=-\stf{v}{-x}$,
this shows that 
\begin{equation}\label{ess3}
\sbf{v}{x-h(x)}\leq \sbf{v}{x}\vee 0 \quad\forall x\in X\enspace .
\end{equation}

Let $y\in S$ and consider $h_y:X\to X,\; x\mapsto h_y(x)=h(y+x)-y$.
Then,  $h_y(0)=0$ and $h_y$ is order preserving and additively subhomogeneous 
with respect to $v$: $h_y(x+tv)=h(y+x+tv)-y\leq h(y+x)+tv-y=h_y(x)+tv$
for all $t\geq 0$.
Replacing $h$ by $h_y$ in~\eqref{ess1}, we get
\[ \stf{v}{x+y-h(x+y)}\geq \stf{v}{x}\wedge 0 \quad\forall x\in X\enspace ,\]
then, replacing $x$ by $x-y$, we obtain
\begin{align*}
 \stf{v}{x-h(x)}\geq \stf{v}{x-y}\wedge 0 \quad\forall x\in X\enspace .
\end{align*}
Taking the supremum with respect to $y\in S$ 
leads to~\eqref{cw2}.
Similarly, applying~\eqref{ess3} to $h_y$ and replacing $x$ by $x-y$, 
we obtain~\eqref{cw1}.
\end{proof}

Let $(X,\|\cdot\|)$ be an AM-space with unit, denoted by $e$,
and let $f: X\to X$ be a map.
 We shall say that $f$ is \NEW{additively homogeneous} 
(resp.\ \NEW{additively subhomogeneous})
if $f$ is additively homogeneous (resp.\ subhomogeneous) with respect to
$e$, that is (see above)
$f(x+te)=f(x)+te$ (resp.\ $f(x+te)\leq f(x)+te$), 
for all $x\in X$ and $t\in \R$ (resp.\ $t\geq 0$).
We call an \NEW{additive eigenvector} of $f$ a
vector $x\in X$ such that $f(x)= x+\lambda e$,
for some $\lambda \in \R$. 
We denote by $\omega$ the seminorm $\omega_e$ defined in~\eqref{def-omega}.

Additive versions of some results of
Lemmas~\ref{lemma-nonexpan} and \ref{preli-fpv} are easy to check
and are given without proof
(a variant of Lemma~\ref{lemma-nonexpan-add} can be found in~\cite{crandall}).

\begin{lemma}[Compare with~\cite{crandall}]\label{lemma-nonexpan-add}
Let $(X,\|\cdot\|)$ be an AM-space with unit, denoted by $e$, and
$\omega$ denotes the seminorm $\omega_e$ defined in~\eqref{def-omega}.
Let $\psi\in (X^+)^*\setminus\{0\}$.
\begin{enumerate}
\renewcommand{\theenumi}{\rm (\roman{enumi})}
\renewcommand{\labelenumi}{\theenumi}
\item If $F:X\to X$ is order-preserving and additively homogeneous,
then $F$ is nonexpansive with respect to $\|\cdot\|$ and $\omega$.
\item 
If $F:X\to X$ is order preserving and additively subhomogeneous, then
$F$ is nonexpansive with respect to $\|\cdot\|$, and the restriction
$F|_{\psi^{-1}(0)}$ of $F$ to $\psi^{-1}(0)$ is nonexpansive with respect 
to $\omega$.
\end{enumerate}
\end{lemma}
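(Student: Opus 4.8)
The plan is to transcribe additively the proof of Lemma~\ref{lemma-nonexpan}, exploiting that in an AM-space with unit the ambient norm is exactly the local norm $\|\cdot\|_e$ of~\eqref{eq6}. First I would record the elementary facts I will use. Since $-\|z\|e\leq z\leq \|z\|e$ for every $z\in X$, we have $X=X_e$ and, by~\eqref{eq6} and~\eqref{def-omega}, $\|z\|=\stf{e}{z}\vee -\sbf{e}{z}$ and $\omega(z)=\stf{e}{z}-\sbf{e}{z}$; moreover, since $X^+$ is closed the extremal scalars are attained, i.e.\ $\sbf{e}{z}\,e\leq z\leq \stf{e}{z}\,e$. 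Finally $\psi(e)>0$ (otherwise $|\psi(z)|\leq \|z\|\psi(e)=0$ would force $\psi=0$), so after rescaling I may assume $\psi(e)=1$.

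The whole argument is then a monotone sandwich. Fix $x,y\in X$, put $a=\sbf{e}{x-y}$ and $b=\stf{e}{x-y}$, so that $y+ae\leq x\leq y+be$ and $b-a=\omega(x-y)$. Applying the order-preserving map $F$ gives $F(y+ae)\leq F(x)\leq F(y+be)$. In case~(i), additive homogeneity turns the two outer terms into $F(y)+ae$ and $F(y)+be$, so $ae\leq F(x)-F(y)\leq be$; reading off the $\vee$ and the difference of the extremal scalars yields simultaneously $\|F(x)-F(y)\|\leq\|x-y\|$ and $\omega(F(x)-F(y))\leq\omega(x-y)$, with no restriction on $x,y$. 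For the norm statement in case~(ii), I instead take $a=-\|x-y\|\leq 0$ and $b=\|x-y\|\geq 0$. The upper bound is immediate from additive subhomogeneity, $F(y+be)\leq F(y)+be$; for the lower bound I rewrite $y=(y+ae)+(-a)e$ with $-a\geq 0$ and apply subhomogeneity to get $F(y)\leq F(y+ae)+(-a)e$, i.e.\ $F(y+ae)\geq F(y)+ae$. Combining, $-\|x-y\|e\leq F(x)-F(y)\leq \|x-y\|e$, which is $\|F(x)-F(y)\|\leq\|x-y\|$.

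For the $\omega$-nonexpansiveness of $F|_{\psi^{-1}(0)}$ I would repeat the sandwich with $a=\sbf{e}{x-y}$ and $b=\stf{e}{x-y}$, but now restricted to $x,y\in\psi^{-1}(0)$. The one subtle point---indeed the only place where the restriction to $\psi^{-1}(0)$ is used---is the sign control: applying the positive functional $\psi$ (with $\psi(e)=1$) to $ae\leq x-y\leq be$ gives $a\leq \psi(x-y)=0\leq b$. With $a\leq 0\leq b$ in hand, the two one-sided subhomogeneity estimates of the previous paragraph apply verbatim to the upper and lower bounds, producing $ae\leq F(x)-F(y)\leq be$ and hence $\omega(F(x)-F(y))\leq b-a=\omega(x-y)$. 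The main (and essentially sole) obstacle is this asymmetry of subhomogeneity, which only controls translations by $te$ with $t\geq 0$; the role of the hyperplane $\psi^{-1}(0)$ is precisely to force the defining scalars of $x-y$ to straddle $0$ so that both bounds become usable. Everything else is the order-preserving sandwich together with the identities for $\|\cdot\|$ and $\omega$ recorded at the start, so the lemma is indeed the additive counterpart of Lemma~\ref{lemma-nonexpan}.
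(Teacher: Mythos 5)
Your proof is correct: the paper in fact states this lemma without proof, declaring such additive versions ``easy to check'' as counterparts of Lemma~\ref{lemma-nonexpan}, and your argument is exactly the intended one. The monotone sandwich $F(y+ae)\leq F(x)\leq F(y+be)$ combined with the identities $\|z\|=\stf{e}{z}\vee -\sbf{e}{z}$ and $\omega(z)=\stf{e}{z}-\sbf{e}{z}$, together with the key observation that restricting to $\psi^{-1}(0)$ forces $a\leq 0\leq b$ so that both one-sided subhomogeneity bounds apply, is precisely the standard (Crandall--Tartar type) proof the authors had in mind.
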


\begin{lemma}\label{preli-fpv-add}
Let $(X,\|\cdot\|)$ be an AM-space with unit.
Let $G$ be an open subset of $X$ and  $F: G\rightarrow X$.
Let $v\in  G$ be a fixed point of $F$: $F(v)= v$. 
Assume that $F$ is semidifferentiable at $v$.
The following implications hold:
\begin{enumerate}
\renewcommand{\theenumi}{\rm (\roman{enumi})}
\renewcommand{\labelenumi}{\theenumi}
\item \label{preli-fpv-add1} If $F$ is order preserving in a neighborhood 
of $v$, then $F'_v:X\to X$ is order preserving.
\item \label{preli-fpv-add12} If $F$ is convex in a neighborhood of $v$,
then $F'_v:X\to X$ is convex.
\item  \label{preli-fpv-add2} If $F$ is additively 
homogeneous in a neighborhood of $v$,
 then $F'_v$ is additively homogeneous.
\item  \label{preli-fpv-add3} If $F$ is additively 
subhomogeneous in a neighborhood of $v$,
 then $F'_v$ is additively subhomogeneous.
\item \label{preli-fpv-add4} If $F$ is nonexpansive with respect to 
$\|\cdot\|$ in a neighborhood of $v$, then $F'_v$ is nonexpansive with 
respect to $\|\cdot\|$.
\end{enumerate}
\end{lemma}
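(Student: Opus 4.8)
The plan is to follow the template of Lemma~\ref{preli-fpv}, working entirely from the definition~\eqref{eq2'} of the semiderivative as the one-sided directional derivative $F'_v(x)=\lim_{t\to 0^+}\bigl(F(v+tx)-F(v)\bigr)/t$, which exists by the semidifferentiability hypothesis. For each item the recipe is the same: the assumed property of $F$ in a neighborhood of $v$ yields, for every sufficiently small $t>0$, a corresponding (in)equality between the values of $F$ at points of the form $v+t(\cdot)$; after subtracting $F(v)=v$, dividing by $t$, and letting $t\to 0^+$, that (in)equality is inherited by $F'_v$. Since $G$ is open and $v\in G$, all the shifted points $v+tx$, $v+ty$ (and $v+tx+tse$ below) lie in the relevant neighborhood of $v$ for $t$ small, so the assumptions always apply.

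For \ref{preli-fpv-add1}, if $x\le y$ then $v+tx\le v+ty$, so order-preservation near $v$ gives $F(v+tx)\le F(v+ty)$ for small $t>0$; dividing by $t$ and passing to the limit (the order cone $X^+$ being closed) gives $F'_v(x)\le F'_v(y)$. Item \ref{preli-fpv-add12} is structurally identical to Lemma~\ref{preli-fpv},~\ref{preli-fpv12}: writing $v+t((1-s)x+sy)=(1-s)(v+tx)+s(v+ty)$ and invoking convexity near $v$ gives the convexity inequality for the difference quotients, which survives the limit.

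Items \ref{preli-fpv-add2} and \ref{preli-fpv-add3} are where the additive setting is cleaner than the multiplicative one. Here (sub)homogeneity is taken with respect to the unit $e$, so for $s\in\R$ (resp.\ $s\ge 0$) and small $t>0$ one simply writes $F(v+t(x+se))=F(v+tx+tse)$ and applies additive homogeneity (resp.\ subhomogeneity) with the shift $tse$ to obtain $F(v+tx+tse)=F(v+tx)+tse$ (resp.\ $\le$). The difference quotient then equals (resp.\ is bounded above by) $\bigl(F(v+tx)-F(v)\bigr)/t+se$, and $t\to 0^+$ yields $F'_v(x+se)=F'_v(x)+se$ (resp.\ $\le$), i.e.\ additive (sub)homogeneity of $F'_v$. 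No reparametrization of the type $(1+ts)$ used in Lemma~\ref{preli-fpv} is needed, because a linear shift by a multiple of $e$ passes through the difference quotient unchanged.

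For \ref{preli-fpv-add4}, nonexpansiveness of $F$ near $v$ gives, for $x\ne y$ and small $t>0$, the bound $\|F(v+tx)-F(v+ty)\|\le\|(v+tx)-(v+ty)\|=t\|x-y\|$. Writing $\bigl(F(v+tx)-F(v+ty)\bigr)/t$ as the difference of the two quotients converging to $F'_v(x)$ and $F'_v(y)$ and using continuity of the norm yields $\|F'_v(x)-F'_v(y)\|\le\|x-y\|$. This is even simpler than the Hilbert's/Thompson's counterparts in Lemma~\ref{preli-fpv},~\ref{preli-fpv4}--\ref{preli-fpv5}, since here one never leaves the ambient norm $\|\cdot\|$ and so neither~\eqref{e-normequivdbar} nor~\eqref{e-normequivosc} is required. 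The only point demanding (routine) care, and the closest thing to an obstacle, is the bookkeeping ensuring that the shifted arguments remain in the neighborhood on which the hypothesis holds; once this is granted, every item reduces to passing a pointwise relation, valid for all small $t>0$, to the existing limit.
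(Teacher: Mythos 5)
Your proof is correct and is essentially the paper's approach: the paper states this lemma without proof (describing it as an ``easy to check'' additive analogue of Lemma~\ref{preli-fpv}), and your argument is exactly the intended adaptation --- push the hypothesized (in)equality at the points $v+t(\cdot)$ through the difference quotients of~\eqref{eq2'} and let $t\to 0^+$, using closedness of $X^+$ for the order inequalities and continuity of $\|\cdot\|$ for item~\ref{preli-fpv-add4}. Your observation that the additive shift $tse$ passes through the difference quotient unchanged, so that the $(1+ts)$ reparametrization used in Lemma~\ref{preli-fpv},~\ref{preli-fpv2}--\ref{preli-fpv3} is unnecessary, is precisely the simplification that makes the additive case routine.
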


\section{Spectral radius notions and nonlinear Fredholm property}
\label{sec-spec}
Some of our main results rely on some
mild compactness (nonlinear Fredholm type) condition. In order
to discuss it, we first recall the definition of several notions of nonlinear
spectral radius,
as well as some results of~\cite{Nuss-Mallet,AGN-collatz}, 
on generalized measures of noncompactness.

\subsection{Spectral radius, measures of noncompactness, and essential spectral radius}
Let $C$ be a cone of a Banach space $(X, \|\cdot\|)$
and $h$ be a map, homogeneous (of degree 1) and continuous, from $C$ to $C$. 
Following~\cite{Nuss-Mallet}, we define:
\begin{align}
\bonsall C (h) & =
\lim_{k \rightarrow \infty} \|h^k\|_C^{1/k} 
=\inf_{k\geq 1} \|h^k\|_C^{1/k} \qquad \text{ where } \|h^k\|_C \text{ is defined in \eqref{definormh},} \label{eq31-bonsallcspr}
\end{align}
When $C$ is obvious and in particular when $C=X$, $C$ will be 
omitted in the previous notations.
Since $h$ is continuous at $0$, we have $0\leq \bonsall C (h) <+\infty $.
The equality of the limit and the infimum in~\eqref{eq31-bonsallcspr}
follows from $\|h^{k+\ell}\| \leq \|h^k\| \|h^\ell\|$.
The number $\bonsall C (h)$ is called the
\NEW{Bonsall's cone spectral radius} of $h$.
If now $C$ has nonempty interior $\Cint$, we define another
spectral radius:
\begin{align}\label{def-collatz-number}
\supeigen  C (h) & =\inf\, \set{ \lambda >0}{\exists x \in \Cint, \; h(x) \leq
 \lambda x } \enspace .
\end{align}
In~\cite{Nuss-Mallet} and then in~\cite{AGN-collatz} these spectral radii 
are compared with other notions of spectral radius like the
cone spectral radius and the cone eigenvalue spectral radius.
We recall below some of the results of~\cite{AGN-collatz}
that are needed in the following sections.

\begin{lemma}[\citespectral{Lemma}{sp-lem-add5}] \label{collatz-lem72}
Let $C$ be a proper cone of a Banach space $(X, \norm{\cdot})$, 
with nonempty interior,
and let  $h:C\to C$ be a continuous, homogeneous
and order-preserving map.
Then, 
\[ \bonsall C (h)
\leq\supeigen  C (h) \enspace . \]
\end{lemma}
A map $\nu$ from the set of bounded subsets of $X$ to the set of real
nonnegative numbers is called a \NEW{homogeneous generalized measure of noncompactness} if for all bounded subsets $A,B$ of $X$ and for all
real scalars $\lambda$,
\begin{subequations}\label{gmnc}
\begin{align}
\nu(A) = 0 &\Leftrightarrow \clo{A} \text{ is compact}
 \label{gmnc1}\\
\nu(A+B) &\leq \nu(A) + \nu (B) \label{gmnc2}\\
\nu(\clo{\con(A)}) &= \nu (A) \label{gmnc3} \\
\nu(\lambda A) &= |\lambda| \nu(A) \label{gmnc5}\\
A\subset B&\implies \nu(A)\leq \nu(B) \enspace .
\label{gmnc6}
\end{align}
\end{subequations}
We use the notation $\clo{A}$ for the closure
of a set $A$ and $\con{A}$ for its convex hull.
Note that equations~\eqref{gmnc} imply that $\nu(A\cup K)=\nu(A)$ whenever $A$ is a bounded subset of $X$ and $K$ is a compact subset of $X$,
see e.g.\ Prop.~2.2 of~\cite{MPN11}.
They also imply that $\nu(A + K) = \nu(A)$, in particular
\begin{equation}\label{gmnc7}
\nu(A + x) = \nu(A)\quad \text{for all}\; x\in X\enspace.
\end{equation}

For every bounded subset $A$ of $X$,
let $\alpha(A)$ denote the infimum
of all $\delta>0$ such that there
exists an integer $k$ and
$k$ subsets $S_1,\ldots,S_k\subset A$
of diameter at most $\delta$,
such that $A = S_1\cup\cdots\cup S_k$.
The map $\alpha$, introduced by Kuratowski and further studied by Darbo
(see~\cite{Nuss-Mallet} for references), is a particular case
of a homogeneous generalized measure of noncompactness.

If $h$ : ${\mathcal D} \subset X \rightarrow X$ is a map
sending bounded sets to bounded sets,
$\nu$ is a homogeneous generalized  measure
of noncompactness, and $D\subset {\mathcal D}$,
we define
\[
\nu_D(h) =
\inf\set{\lambda > 0}{\nu(h(A)) \leq \lambda \nu(A),\; \mrm{ for all bounded sets } A \subset D} \enspace .
\]
If in addition $h(D) \subset D$, we define :
\[
\rho_D(h) = \lim_{k \rightarrow \infty} (\nu_D(h^k))^{1/k} = \inf_{k \geq 1} \nu_D(h^k)^{1/k}
\enspace .
\]
If $C$ is a cone, $h$: $C \rightarrow C$
is homogeneous and Lipschitz continuous with constant $\kappa$, 
and $\alpha$ is the Kuratowski-Darbo generalized measure of noncompactness
described above, 
then $\alpha_C(h) \leq \kappa$.
A general map $h$: $D \subset X \rightarrow X$, such that 
$\nu_D(h) \leq k < 1$ is called a \NEW{$k$-set contraction}
(with respect to $\nu$).
If $C$ is a cone and $h$ : $C \rightarrow C$ is homogeneous, $\rho_C(h)$ 
is called the \NEW{cone essential spectral radius} of $h$
associated to the homogeneous generalized measure of noncompactness $\nu$.
(We note that a more refined notion of cone essential spectral
radius, which avoids various pathologies which can occur with the above definition,  has been recently developed in~\cite{nussbaummp10}. See also~\cite{nussbaummp11}.)

We saw in Lemma~\ref{preli-fpv} that several elementary properties (including monotonicity) of a map carry over to its semidifferential. The same turns out to be true for properties involving $k$-set contractions.

\begin{proposition}\label{prop2}
Let $f : G \rightarrow X$ be a map,
and let $v \in G$ be a fixed point of 
$f : f(v) = v$. 
Assume that $f$ is semidifferentiable at $v$ with respect
to a closed cone $C$. Then, for any relative neighborhood $U$ of $v$ in $v+C$ such that $U\subset G$ and for any generalized measure of noncompactness $\nu$ on $X$, we have
\begin{subequations}
\begin{align}
\nu_C(f'_v)&\leq \nu_U(f) \enspace. \label{e-ftofp1}
\end{align}
Moreover, if there exists a neighborhood $U$ of $v$ in $v+C$, 
such that $U\subset G$ and $f(U)\subset U$, then $f'_v$ sends $C$ to itself, 
and if in addition $f'_v$ is uniformly continuous on bounded sets of $C$, 
we have
\begin{align}
\rho_C(f'_v)  &\leq \rho_U(f)  \enspace .
\label{e-ftofp2}
\end{align}
\end{subequations}
\end{proposition}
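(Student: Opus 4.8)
The plan is to establish the two inequalities in turn, deriving the second from the first by an iteration argument based on the chain rule.

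For \eqref{e-ftofp1}, I would fix any $\lambda'>\nu_U(f)$, so that $\nu(f(B))\le \lambda'\,\nu(B)$ for every bounded $B\subset U$, and aim to prove $\nu(f'_v(A))\le\lambda'\,\nu(A)$ for every bounded $A\subset C$; letting $\lambda'\downarrow \nu_U(f)$ then yields the claim. The device is to realize $f'_v$ as a rescaled limit of $f$ near $v$: set $g_t(a)=t^{-1}\big(f(v+ta)-v\big)$ for $a\in A$ and $t>0$ small. Since $U$ contains a relative neighborhood of $v$ in $v+C$ and $A$ is bounded, $v+tA\subset U$ once $t$ is small, so $\nu\big(f(v+tA)\big)\le\lambda'\,\nu(v+tA)$. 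Here I would use that $\nu$ is translation invariant — which follows from the axioms, since singletons are compact, whence $\nu(\{w\})=0$ by \eqref{gmnc1} and then $\nu(A+w)=\nu(A)$ by \eqref{gmnc2} applied in both directions — together with homogeneity \eqref{gmnc5}, to get $\nu(v+tA)=t\,\nu(A)$ and hence $\nu(g_t(A))=t^{-1}\nu(f(v+tA))\le\lambda'\,\nu(A)$.

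The crux is the passage to the limit $t\to0^+$ at the level of $\nu$. By Lemma~\ref{lem2}, semidifferentiability of $f$ at $v$ with respect to $C$ gives that $g_t\to f'_v$ uniformly on the bounded set $A$; hence for any $\eta>0$ and all small $t$, $f'_v(A)\subset g_t(A)+\bar B_\eta$, where $\bar B_\eta=\{x\in X:\|x\|\le\eta\}$. Monotonicity \eqref{gmnc6}, subadditivity \eqref{gmnc2} and homogeneity \eqref{gmnc5} then give $\nu(f'_v(A))\le\nu(g_t(A))+\eta\,\nu(\bar B_1)\le\lambda'\,\nu(A)+\eta\,\nu(\bar B_1)$, and letting $\eta\to0$ yields $\nu(f'_v(A))\le\lambda'\,\nu(A)$. (Boundedness of $f'_v(A)$, needed for $\nu(f'_v(A))$ to be defined, comes from \eqref{fpvbound}.) This proves \eqref{e-ftofp1}.

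For \eqref{e-ftofp2} I would first check that $f'_v(C)\subset C$: since $f(U)\subset U\subset v+C$, for $x\in C$ and $t>0$ small we have $f(v+tx)-v\in C$, so $g_t(x)\in C$; as $C$ is closed and $g_t(x)\to f'_v(x)$, we get $f'_v(x)\in C$. With the extra hypothesis that $f'_v$ is uniformly continuous on bounded sets of $C$, the chain rule (Lemma~\ref{lemma-chain}, applied with $C_1=C_2=C$, using $f(v)=v$, $f'_v(C)\subset C$, $f(U)\subset U\subset v+C$, and the uniform continuity as Assumption (A4)) gives $(f\comp f)'_v=f'_v\comp f'_v$. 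Iterating — each step being a fresh application of the chain rule to $f\comp f^{k-1}$, whose hypotheses persist because $f^{k-1}(U)\subset U$ and $(f'_v)^{k-1}(C)\subset C$ — yields $(f^k)'_v=(f'_v)^k$ for all $k\ge1$. Applying the already-proved \eqref{e-ftofp1} to the map $f^k$ (semidifferentiable at its fixed point $v$, with the same neighborhood $U$ since $f^k(U)\subset U\subset G$) gives $\nu_C\big((f'_v)^k\big)=\nu_C\big((f^k)'_v\big)\le\nu_U(f^k)$; taking $k$-th roots and the infimum over $k$ produces $\rho_C(f'_v)\le\rho_U(f)$. The main obstacle is the limiting step for \eqref{e-ftofp1}, where the uniform convergence $g_t\to f'_v$ must be converted into an inequality for $\nu$ through the $\bar B_\eta$-approximation; the verification that the chain-rule hypotheses persist along the induction is routine once $f'_v(C)\subset C$ is in hand.
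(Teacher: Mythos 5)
Your proof is correct and takes essentially the same route as the paper's: both realize $f'_v$ as the uniform limit on bounded sets of the rescaled maps $t^{-1}\big(f(v+t\,\cdot)-v\big)$ (via Lemma~\ref{lem2}), convert that uniform convergence into an inequality for $\nu$ using the axioms \eqref{gmnc1}, \eqref{gmnc2}, \eqref{gmnc5}, \eqref{gmnc6}, and then deduce \eqref{e-ftofp2} by applying \eqref{e-ftofp1} to $f^k$ after proving $(f^k)'_v=(f'_v)^k$ by an inductive chain-rule argument. The only (harmless) difference is bookkeeping: you write $f^k=f\comp f^{k-1}$ so that Assumption~(A4) of Lemma~\ref{lemma-chain} is needed only for $f'_v$ itself, whereas the paper also notes that the iterates $(f'_v)^n$ remain uniformly continuous on bounded sets.
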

\begin{proof}
Let us prove~\eqref{e-ftofp1}.
Let $S$ be a bounded subset of $C$.
Since $f$ is semidifferentiable at $v$
with respect to $C$, there exists a neighborhood $U$ of $v$ in $v+C$, 
such that $U\subset G$ and for any such a neighborhood $U$,
there exists $t_0>0$ such that $v + tS \subset U$ for $0\leq t\leq t_0$,
since $S$ is bounded in $C$.
Moreover, from Lemma~\ref{lem2} it follows that
\[
\epsilon(t) :=
\sup_{x \in S} t^{-1}\|f(v + tx)- f(v)- tf'_v(x)\|
\to 0\mrm{ when } t \rightarrow 0^+
\enspace .
\]
Then for all $t>0$ such that $t\leq t_0$, we have
\[
tf'_v(x) - f(v + tx) + f(v) \in B(0, t \epsilon(t)),
\mrm{ for all } x \in S\enspace,
\]
which implies that
\[
tf'_v(S) \subset f(v + tS) - f(v) + B(0, t \epsilon (t))
\enspace .\]
Applying $\nu$, and using Properties~\eqref{gmnc5}, \eqref{gmnc6},~\eqref{gmnc7}, \eqref{gmnc2}, 
we get
\begin{align} 
t \nu (f'_v(S))& = 
\nu (tf'_v(S)) \nonumber \\
& \leq  \nu (f(v + tS) - f(v) + B(0, t \epsilon (t))) \nonumber\\
& \leq \nu (f(v + tS)) + 
\nu (B(0, t \epsilon(t))) \nonumber\\
 & \leq \nu (f(v + tS)) + t \epsilon (t)\nu(B(0,1))\enspace .
\label{+1}
\end{align}
Since $v+tS \subset U$ for $t\leq t_0$, we obtain, by definition 
of $\nu_U(f)$,
\[
\nu(f(v + tS)) \leq \nu_U(f) \nu (v + tS)\enspace.
\]
By~\eqref{gmnc7} and~\eqref{gmnc5},
$\nu(v+tS)=\nu(tS)=t\nu(S)$.
We now get from~\eqref{+1}:
\[
\nu(f'_v(S)) \leq \nu_U(f) \nu(S) + \epsilon (t)\nu(B(0,1))
\]
for all $t$ small enough, 
hence $\nu(f'_v(S)) \leq \nu_U(f) \nu(S)$,
which shows~\eqref{e-ftofp1}.
 
Assume now that there exists a neighborhood $U$ of $v$ in $v+C$, 
such that $U\subset G$ and $f(U)\subset U$.
Then, $f(U) \subset U \subset v + C$, and since $C$ is closed,
we deduce that $f'_v(C) \subset C$.
Assume in addition that $f'_v$ is uniformly continuous on bounded sets of $C$
and let us prove~\eqref{e-ftofp2}.
Since $f'_v$ is uniformly continuous on bounded sets of $C$,
so is $(f'_v)^n$, since, by~\eqref{fpvbound},
$f'_v$ sends bounded sets to bounded sets. 
Using the chain rule (Lemma \ref{lemma-chain}),
we obtain by induction that $f^n$ is semidifferentiable at $v$
with respect to $C$, and that $(f^n)'_v = (f'_v)^n$.
Then, applying~\eqref{e-ftofp1} to $f^n$,
we get $\nu_C((f'_v)^n) \leq \nu_U(f^n)$. 
Taking the $1/n$ power and then the infimum over all $n\geq 1$,
we get $\rho_C(f'_v) \leq \rho_U(f)$.
\end{proof}

\subsection{A nonlinear Fredholm-type property}\label{sec-fredholm}

We now introduce a nonlinear Fred\-holm-type property, 
also considered in~\cite{AGN-collatz}, which
will be required to establish the uniqueness
result for fixed points and eigenvectors in 
Sections~\ref{sec-main}--\ref{sec-eig-nonexp}.
If $(X,\|\cdot\|)$ and $(Y,\|\cdot\|)$ are Banach spaces,
$D$ is a subset of $X$, and $g:D\to Y$ is
a map, we shall say that
$g$ has {\em Property~\PF} when
\begin{enumerate}
\item[(F)] any sequence 
$\seq{x_j \in D}{j \geq 1}$, bounded in $X$,
and such that $g(x_j) \rightarrow_{j\to \infty} 0$,
has a convergent subsequence in $X$.
\end{enumerate}
In the point set topology literature, Property~\PF\ corresponds to the 
property that the restriction of $g$ to any closed bounded set of $X$ is
proper at $0$.
If $X$ is finite dimensional, any 
map $g:X\to Y$ has Property~\PF.
When $g$ is linear, $g$ has Property~\PF\ if, and only if,
$g$ is a semi-Fredholm linear operator with index in 
$\Z\cup\{-\infty\}$, which means that $g$ has a finite dimensional
kernel and a closed range,
see for instance~\cite[Proposition~19.1.3]{hormander}
or~\cite[Chapter IV, Theorems~5.10 and~5.11]{kato}.
In the sequel, $\id$ denotes the identity map over any set.

\begin{lemma}[\citespectral{Lemma}{sp-lem-ksetcontract}]
\label{lem-ksetcontract}
If $D$ is a subset of a Banach space $(X,\|\cdot\|)$, and if $h:D\to X$
is a map sending bounded sets to bounded sets and 
such that $\nu_D(h)<1$, then $\id -h$ has
Property~\PF.
\end{lemma}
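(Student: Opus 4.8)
The plan is to deduce Property~\PF\ directly from the contraction inequality satisfied by $\nu$, using only the axioms~\eqref{gmnc1}--\eqref{gmnc6} of a generalized measure of noncompactness. Since $\nu_D(h)<1$, I would first fix a constant $\lambda$ with $\nu_D(h)<\lambda<1$. Because the set of $\mu>0$ for which $\nu(h(A))\leq \mu\,\nu(A)$ holds for all bounded $A\subset D$ is upward closed and has infimum $\nu_D(h)<\lambda$, this $\lambda$ is admissible, i.e.\ $\nu(h(A))\leq \lambda\,\nu(A)$ for every bounded set $A\subset D$.

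Now let $\seq{x_j\in D}{j\geq 1}$ be bounded in $X$ with $(\id-h)(x_j)\to 0$, and set $S=\set{x_j}{j\geq 1}$ and $G=\set{x_j-h(x_j)}{j\geq 1}$. The key (and essentially the only) algebraic observation is the trivial decomposition $x_j=h(x_j)+(x_j-h(x_j))$, which yields the inclusion $S\subset h(S)+G$. Here $S$ is bounded by hypothesis and $h(S)$ is bounded because $h$ sends bounded sets to bounded sets, so all sets involved lie in the domain of $\nu$ and all the quantities below are finite nonnegative reals.

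Next I would estimate $\nu(S)$. Since $G$ is a convergent sequence together with its limit $0$, its closure is compact, so $\nu(G)=0$ by~\eqref{gmnc1}. Combining the inclusion with monotonicity~\eqref{gmnc6}, subadditivity~\eqref{gmnc2}, and the choice of $\lambda$, I get
\[
\nu(S)\leq \nu(h(S)+G)\leq \nu(h(S))+\nu(G)=\nu(h(S))\leq \lambda\,\nu(S)\enspace .
\]
As $0\leq \nu(S)<+\infty$ and $0<\lambda<1$, the inequality $\nu(S)\leq \lambda\,\nu(S)$ forces $\nu(S)=0$. By~\eqref{gmnc1} again, $\clo S$ is compact, so the bounded sequence $\seq{x_j}{j\geq 1}$ admits a convergent subsequence in $X$; this is exactly Property~\PF. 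I do not expect a genuine obstacle here: the argument is a short, self-contained application of the axioms, and the only points deserving a moment's care are the finiteness of $\nu(S)$ (so that one may cancel the factor $1-\lambda$) and the justification $\nu(G)=0$ via relative compactness of a convergent sequence.
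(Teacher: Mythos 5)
Your proof is correct, and it takes essentially the same route as the source of this lemma: the present paper only quotes the statement from the companion work \cite{AGN-collatz}, where the argument is exactly yours --- decompose $x_j=h(x_j)+(x_j-h(x_j))$ to get $S\subset h(S)+G$, observe $\nu(G)=0$ because $G$ has compact closure, and then use monotonicity, subadditivity, and $\nu_D(h)<1$ to force $\nu(S)\leq \nu_D(h)\,\nu(S)$, hence $\nu(S)=0$ and relative compactness of $\{x_j\}$. The two points you single out for care (finiteness of $\nu(S)$, and $\nu(G)=0$ via relative compactness of a convergent sequence) are precisely the right ones, and your justification that the admissible constants form an upward-closed set so that any $\lambda>\nu_D(h)$ works is also needed and correct.
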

In the particular case where the homogeneous
generalized measure of noncompactness $\nu$ is equal to 
the Kuratowski-Darbo generalized measure of noncompactness $\alpha$,
and where $h$ is continuous, 
Lemma~\ref{lem-ksetcontract} is a consequence of Corollary~2 in Section~E
of~\cite{MR0312341}, which says more generally that the restriction of
$\id -h$  to any closed bounded set of $X$ is proper (at any point).

\begin{proposition}[\citespectral{Proposition}{sp-prop1}]
\label{prop1}
If $C$ is a cone of a Banach space $(X,\|\cdot\|)$, 
if $h:C \rightarrow C$ is homogeneous
and uniformly continuous on bounded sets of $C$,
and if either $\rho_C(h)< 1$ or $\bonsall C (h)< 1$,
then $\id-h$ has Property $\PF$ on $C$.
Moreover,
when $\bonsall C (h)< 1$, $0$ is the unique fixed point of $h$ in $C$.
\end{proposition}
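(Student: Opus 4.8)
The plan is to reduce both hypotheses to a single iterate $h^k$ of $h$, taking advantage of the fact that $\rho_C$ and $\bonsall{C}$ are defined as infima over iterates. Since $h$ is homogeneous and uniformly continuous on bounded sets, it is continuous with $h(0)=0$, so the discussion following~\eqref{definormh} gives $\|h\|_C<+\infty$; in particular $h$ sends bounded sets to bounded sets, and by composition each iterate $h^i$ is again uniformly continuous on bounded sets and sends bounded sets to bounded sets. The key preliminary step I would establish is the telescoping identity
\[
x - h^k(x) = \sum_{i=0}^{k-1}\big(h^i(x)-h^i(h(x))\big), \qquad x\in C,
\]
with $h^0=\id$. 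Given a bounded sequence $\seq{x_j\in C}{j\geq 1}$ with $x_j-h(x_j)\to 0$, the sequences $\{x_j\}$ and $\{h(x_j)\}$ lie in a common bounded set on which every $h^i$ is uniformly continuous, so each summand tends to $0$, and hence $x_j-h^k(x_j)\to 0$.

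With this reduction, the case $\rho_C(h)<1$ is immediate: I would choose $k$ with $\nu_C(h^k)<1$, which is possible since $\rho_C(h)=\inf_{k\geq 1}\nu_C(h^k)^{1/k}$. Then $h^k$ sends bounded sets to bounded sets and is a $\nu$-set contraction, so Lemma~\ref{lem-ksetcontract} shows that $\id-h^k$ has Property~\PF. As $(\id-h^k)(x_j)=x_j-h^k(x_j)\to 0$ with $\{x_j\}$ bounded, a convergent subsequence exists, which is exactly Property~\PF\ for $\id-h$.

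For the case $\bonsall{C}(h)<1$, I would choose $k$ with $\gamma:=\|h^k\|_C<1$, possible since $\bonsall{C}(h)=\inf_{k\geq 1}\|h^k\|_C^{1/k}$, and argue directly. Setting $\varepsilon_j:=\|x_j-h^k(x_j)\|\to 0$ and using $\|h^k(x_j)\|\leq\gamma\|x_j\|$, the triangle inequality gives $\|x_j\|\leq \varepsilon_j+\gamma\|x_j\|$, so $\|x_j\|\leq \varepsilon_j/(1-\gamma)\to 0$; thus $x_j\to 0$ and Property~\PF\ holds trivially. The same estimate yields the final assertion: if $h(x)=x$ then $x=h^k(x)$, whence $\|x\|\leq\gamma\|x\|$ and $x=0$.

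The step I expect to be the main obstacle is precisely the reduction from $h$ to $h^k$. Because $h$ is nonlinear, the familiar factorization $\id-h^k=(\id-h)(\id+h+\cdots+h^{k-1})$ is not available, so I cannot pass Property~\PF\ from $\id-h^k$ back to $\id-h$ algebraically; instead I must control each increment $h^i(x_j)-h^{i+1}(x_j)$ via the uniform continuity of $h^i$ on bounded sets. Checking that every iterate inherits uniform continuity on bounded sets — which relies on $h$ sending bounded sets to bounded sets — is the technical heart of the argument, after which the spectral hypotheses serve only to select $k$.
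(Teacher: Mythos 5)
Your proof is correct, and it follows essentially the same approach as the proof the paper relies on (deferred to the companion work~\cite{AGN-collatz}): the telescoping identity $x-h^k(x)=\sum_{i=0}^{k-1}\bigl(h^i(x)-h^i(h(x))\bigr)$, combined with the fact that each iterate $h^i$ is uniformly continuous on bounded sets and maps bounded sets to bounded sets, reduces Property~\PF\ for $\id-h$ to a single well-chosen iterate $h^k$. From there, as in the paper, the case $\rho_C(h)<1$ follows from Lemma~\ref{lem-ksetcontract} applied to $h^k$, and the case $\bonsall{C}(h)<1$, together with the uniqueness of the fixed point, follows from the estimate $\|h^k(x)\|\leq\|h^k\|_C\,\|x\|$ with $\|h^k\|_C<1$.
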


\section{General uniqueness and convergence results}\label{sec-main}
In this section, we establish the main results of this paper, concerning
the uniqueness of the fixed point of a semidifferentiable
map and the convergence of the orbits to it. The results
are already useful in the finite dimensional case, hence, the reader
might wish at the first reading to ignore the technical compactness
assumptions regarding Property \PF, which are trivially satisfied
in finite dimension.
\subsection{Uniqueness of the fixed point}
We study the uniqueness of the fixed point
$v$ of a nonexpansive map defined on a metric space $(V,d)$. 
We shall need the following assumptions:
\begin{enumerate}
\renewcommand{\theenumi}{\rm (B\arabic{enumi})}
\renewcommand{\labelenumi}{\theenumi}
\item\label{h-1} There exists a Banach space $(\Ev,\nmv)$,
such that $V$ is an open subset of $\Ev$ and that for any $U$ contained in $V$,
$U$ is open in the $d$-metric topology iff it is open in the norm topology
of $\Ev$. In addition, we require that 
\begin{equation}\label{eq-h-1}
d(x,y)\thicksim \nmvf{x-y} \text{ when } x,y\to v
\;\mrm{in}\; V\enspace,
\end{equation}
where by \eqref{eq-h-1}, we mean that for all $\lambda>1$, there exists
a neighborhood $U$ of $v$ in $(V,d)$ such that
\begin{equation}\label{equiv0}
 \frac{1}{\lambda} \nmvf{x-y}\leq d(x,y)\leq \lambda  \nmvf{x-y}\quad
\forall x,y\in U \enspace ;
\end{equation}
\item\label{h-2} The balls of $(V,d)$ are convex in $\Ev$;
\item\label{h-3}  For all $w\in V$, and $s\in (0,1)$,
\begin{equation}\label{gammas}
\Gamma_s:=\set{z\in V}{ d(z,v)= s d(v,w),\; d(z,w)= (1-s) d(v,w)}\neq 
\emptyset\enspace . 
\end{equation}
\end{enumerate}
To interpret assumption~\ref{h-3}, 
let us
recall that a metric space $(V,d)$ is \NEW{strongly metrically convex}
or is a \NEW{geodesic space} if for all $x,y\in V$, 
there exists a (minimal) \NEW{geodesic} from $x$ to $y$,
that is, a continuous path, $z:[0,1]\to V$ such that
$z(0)=x$, $z(1)=y$, and $d(z(s),z(t))=|s-t|d(x,y)$
for all $s,t\in [0,1]$.
Assumption~\ref{h-3} holds when $(V,d)$ is strongly metrically
convex. Conversely, if $(V,d)$ is a complete metric space and if 
Assumption~\ref{h-3} holds for all $v\in V$, or more generally,
if $V$ is \NEW{metrically convex}, which means that 
for all $x,y\in V$, there
exists $w\in V$ such that $x\neq w$, $y\neq w$ and
$d(x,y)=d(x,w)+d(w,y)$, then a theorem
of K. Menger (see~\cite[Th.~14.1, page 41]{blumen})
asserts that $(V,d)$ is strongly metrically convex.

Assumptions~\ref{h-1}, \ref{h-2} and \ref{h-3} are trivially satisfied
when $V=X$ and $d(x,y)=\|x-y\|$ for some Banach space $(X,\|\cdot\|)$.

Almost all the uniqueness results of this paper
will be applications of the following general theorem. 
\begin{theorem}\label{theo0}
Let $(V,d)$ be a complete metric space satisfying~\ref{h-1}--\ref{h-3},
$G$ be an open subset of $V$,
$f:G\to V$ be a nonexpansive map
and $v\in G$ be a fixed point of $f$: $f(v)=v$.
Make the following assumptions:
\begin{enumerate}
\renewcommand{\theenumi}{\rm (A\arabic{enumi})}
\renewcommand{\labelenumi}{\theenumi}
\item\label{h-4} $f:G\to \Ev$ is semidifferentiable at $v$;
\item\label{h-5} The map $\id-f'_v: \Ev\to \Ev$ has Property~\PF;
\item\label{h-6} The fixed point of $f'_v:\Ev\to \Ev$ is unique:
$f'_v(x) = x,\, x \in \Ev\Rightarrow x =0$.
\end{enumerate}
Then, the fixed point of $f$ in $G$ is unique: 
$f(w) = w,\, w \in G\Rightarrow w =v$.
\end{theorem}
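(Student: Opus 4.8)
The plan is to argue by contradiction: assume $f(w)=w$ for some $w\in G$ with $w\neq v$, set $D=d(v,w)>0$, and manufacture a contradiction near $v$. The whole strategy rests on two opposing estimates — a coercivity lower bound for $\id-f'_v$ coming from Assumptions~\ref{h-5} and~\ref{h-6}, and the production, out of the distant fixed point $w$, of points that are almost fixed by $f$ yet lie at $d$-distance exactly $sD$ from $v$.

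First I would extract coercivity of $\id-f'_v$. If no constant $c>0$ satisfied $\|(\id-f'_v)(x)\|_v\geq c\|x\|_v$ for all $x$, there would be $\xi_n$ with $\|\xi_n\|_v=1$ and $(\id-f'_v)(\xi_n)\to 0$; Property~\PF\ (Assumption~\ref{h-5}) yields a subsequence $\xi_{n_k}\to\xi$ with $\|\xi\|_v=1$, and continuity of $f'_v$ gives $(\id-f'_v)(\xi)=0$, i.e.\ $f'_v(\xi)=\xi$ with $\xi\neq 0$, contradicting~\ref{h-6}. By homogeneity of $f'_v$ the bound then holds on all of $E_v$. Feeding this into the semidifferentiability expansion~\eqref{eq2'} at $v$ — using $f(v+\eta)=v+f'_v(\eta)+o(\|\eta\|_v)$, so that $f(x)-x=-(\id-f'_v)(\eta)+o(\|\eta\|_v)$ with $\eta=x-v$ — produces a local lower bound: there is $\rho>0$ with
$$\|f(x)-x\|_v\geq \tfrac c2\,\|x-v\|_v,\qquad \|x-v\|_v\leq\rho,\ x\in G .$$
In words, $f$ has no nontrivial almost-fixed point close to $v$.

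Next I would exploit $w$ to contradict this. For $s\in(0,1)$ the set $\Gamma_s$ of~\eqref{gammas} is nonempty by~\ref{h-3}; it is bounded, $d$-closed, and convex in $E_v$ (the intersection of the two $d$-balls is convex by~\ref{h-2}, and the triangle equality $d(\cdot,v)+d(\cdot,w)=D$ forces every point of that intersection into $\Gamma_s$), and $f(\Gamma_s)\subseteq\Gamma_s$: for $x\in\Gamma_s$ nonexpansiveness gives $d(f(x),v)\leq sD$ and $d(f(x),w)\leq(1-s)D$, whose sum is $D=d(v,w)$, forcing equality. Now fix $\lambda>1$ so small that $\lambda^2(\lambda^2-1)<c/4$, then choose $s$ small enough that $\Gamma_s$ lies inside $G$, inside the $\rho$-ball, and inside a neighborhood where~\eqref{equiv0} holds with constant $\lambda$; there $(\Gamma_s,\|\cdot\|_v)$ is complete. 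Pick any $p\in\Gamma_s$ and set $f_r(x)=p+r(f(x)-p)$: convexity keeps $f_r(\Gamma_s)\subseteq\Gamma_s$, and $\|f_r(x)-f_r(y)\|_v=r\|f(x)-f(y)\|_v\leq r\lambda^2\|x-y\|_v$, so $f_r$ is a $\|\cdot\|_v$-contraction whenever $r<\lambda^{-2}$. Its Banach fixed point $x_r$ obeys $f(x_r)-x_r=(\tfrac1r-1)(x_r-p)$, hence $\|f(x_r)-x_r\|_v\leq(\tfrac1r-1)\,2\lambda sD$, which drops below $\tfrac{c\,sD}{2\lambda}$ once $r>4\lambda^2/(4\lambda^2+c)$. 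The choice of $\lambda$ makes the interval $\bigl(4\lambda^2/(4\lambda^2+c),\,\lambda^{-2}\bigr)$ nonempty, so such an $r$ exists. But $x_r\in\Gamma_s$ gives $d(x_r,v)=sD$, whence $\|x_r-v\|_v\geq sD/\lambda$, and the local lower bound then forces $\|f(x_r)-x_r\|_v\geq \tfrac{c\,sD}{2\lambda}$ — a contradiction.

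I expect the main obstacle to be the calibration of the two small parameters: $\lambda$ (controlling how faithfully $\|\cdot\|_v$ reproduces $d$ on $\Gamma_s$, hence how far below $1$ the contraction factor $r$ must sit) against the requirement that $r$ be close enough to $1$ for the residual $\|f(x_r)-x_r\|_v$ to undercut the coercivity floor $\tfrac{c\,sD}{2\lambda}$. Verifying that $\Gamma_s$ is genuinely convex and $\|\cdot\|_v$-complete and that $f$ maps it into itself are the technical linchpins, but the conceptual heart is the use of~\ref{h-3} to route the globally distant fixed point $w$ into a family of almost-fixed points sitting arbitrarily close to $v$, thereby reducing global uniqueness to the infinitesimal estimate at $v$.
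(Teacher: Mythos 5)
Your proof is correct and takes essentially the same route as the paper's: the same sets $\Gamma_s$ (with the same inequality characterization giving convexity and $f$-invariance), the same damped contraction (your $f_r(x)=(1-r)p+rf(x)$ is the paper's $f_{s,t}(x)=(1-t)f(x)+tz_s$ with $t=1-r$), the same Banach-fixed-point production of almost-fixed points at distance $\asymp sD$ from $v$, and the same use of Property~\PF\ together with Assumption~\ref{h-6} to rule them out. The only difference is packaging: the paper extracts a sequence of almost-fixed points and contradicts a sequential local lemma (Lemma~\ref{uni-local}), whereas you first upgrade Assumptions~\ref{h-5}--\ref{h-6} to a uniform coercivity constant $c$ for $\id-f'_v$ (by the same compactness argument) and then finish with a one-shot calibration of $\lambda$, $s$, $r$ instead of a diagonal sequence.
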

Theorem~\ref{theo0} can be remembered
by saying that 
{\em the uniqueness of the fixed point of $f'_v$
implies the uniqueness of the fixed point of $f$},
under assumptions which,
as we will see in Sections~\ref{sec-eig-nonexp} 
and~\ref{sec-eig-nonexp-diff}, are fulfilled
in many situations relative to cones.
In many applications, the map
 $f'_v$ is much ``simpler'' than $f$, and the uniqueness
of the fixed point, \ref{h-6}, can be proved by direct
algebraic or combinatorial means.  
An example of such a situation will be given in~Section~\ref{subsec-games}.

\begin{remark}
Assumption~\ref{h-1} implies that there exists $\epsilon>0$,
such that $U=B(v,\epsilon)$ satisfies~\eqref{equiv0}, 
where in any metric space $(X,d)$, we denote by
$B(x,\rho):=\set{y\in X}{d(y,x)\leq \rho}$ the closed ball 
of $X$ with center $x\in X$ and radius $\rho\geq 0$.
Thus, $B(v,\epsilon)$ is a neighborhood of $v$ in
$(\Ev,\nmv)$, and the completeness of $B(v,\epsilon)$ for $d$ is equivalent to
its completeness for $\nmv$. But for a normed
vector space, the completeness of any closed subset with nonempty interior
is equivalent to the completeness of the space. 
Therefore, the completeness
of $B(v,\epsilon)$ for $d$ is equivalent
to the completeness of $(\Ev,\nmv)$,
and in the assumptions of Theorem~\ref{theo0}, one may either omit
the completeness of $(V,d)$ or that of  $(\Ev,\nmv)$.
\end{remark}

\begin{remark}\label{remark63}
In Theorem~\ref{theo0}, Assumption~\ref{h-5} may be restrictive. Consider
$X=\continuous([0,1])$ the Banach space of continuous functions
from $[0,1]$ to $\R$, endowed with the sup-norm $\|\cdot\|_\infty$,
and let $e$ denote the function identically $1$ on $X$.
Then $X$ is an AM-space with unit $e$ and
pointwise order $\leq$ (see Section~\ref{am-spaces-def}).
Let $F:X\to X$ be defined by $F(x)(t)=x(\frac{t}{2}) \vee (x(t)-1)$.
One checks that $F$ is additively homogenous
($F(x+\alpha e)=F(x)+\alpha e$ for all $\alpha \in\R$), and
order preserving ($F(x)\leq F(y)$ whenever $x\leq y$).
Then, from Lemma~\ref{lemma-nonexpan-add}, $F$ is nonexpansive with respect to
the sup-norm.
Let now $Y=\continuous_0([0,1])\subset X$ be the Banach subspace
of maps $x\in X$ such that $x(0)=0$. The restriction $f:Y\to Y$ 
of $F$ to $Y$ is also nonexpansive with respect to the sup-norm.
Moreover, $v\equiv 0$ is the unique fixed point of $f$,
$f$ is differentiable at $v$ with semidifferential 
$f'_v(x)(t)=x(\frac{t}{2})$, $f'_v$ has $0$ as
a unique fixed point,
whereas $\id -f'_v$ does not have Property~\PF.
Indeed, $x_n(t)=t^{1/n}$ is such that $x_n-f'_v(x_n)$ tends
to $0$ when $n$ goes to infinity, whereas 
$\seq{x_n }{n \geq 1}$ has no convergent subsequence 
(see~\cite{bonsall}).
\end{remark}

As a corollary of Lemma~\ref{lem2b}, we obtain the following
proposition, which gives
in particular a sufficient condition for Assumption~\ref{h-4} of
Theorem~\ref{theo0} to hold.
\begin{proposition}\label{prop-sc-a1hold}
Let $(V,d)$ be a metric space satisfying Assumption~\ref{h-1},
let $G$ be an open subset of $V$, let $v\in V$,
and let $f:G\to V$ be a nonexpansive
map that has directional derivatives at $v$
with respect to all $x\in \Ev$. Then, $\Ev\to \Ev,\; x\mapsto f'_v(x)$
is nonexpansive, and if $\Ev$ is finite
dimensional, $f$ is semidifferentiable at $v$.
\end{proposition}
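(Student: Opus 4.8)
The plan is to derive both assertions by passing to the limit $t\to 0^{+}$ in the nonexpansiveness inequality for $f$, after transporting every estimate from the metric $d$ to the norm $\|\cdot\|_v$ by means of the local comparison~\eqref{equiv0} supplied by Assumption~\ref{h-1}. The starting point is that, for fixed $x,y\in E_v$, the definition~\eqref{eq2'} of the directional derivatives gives
\[
\frac{f(v+tx)-f(v+ty)}{t}\longrightarrow f'_v(x)-f'_v(y)\quad\text{in }(E_v,\|\cdot\|_v),
\]
so that $t^{-1}\|f(v+tx)-f(v+ty)\|_v\to\|f'_v(x)-f'_v(y)\|_v$. This already shows $f'_v$ maps $E_v$ into $E_v$, and reduces the nonexpansiveness claim to estimating the left-hand difference quotient.

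First I would prove nonexpansiveness of $f'_v$. Fix $\lambda>1$ and let $U$ be a neighborhood of $v$ on which~\eqref{equiv0} holds. For $t>0$ small the source points $v+tx,v+ty$ lie in $U$; moreover $f$ is $d$-nonexpansive, hence $d$-continuous, so $f(v+tx),f(v+ty)$ tend to $f(v)=v$ and therefore also lie in $U$ for $t$ small. Applying~\eqref{equiv0} to the image pair, then nonexpansiveness of $f$, then~\eqref{equiv0} to the source pair, yields
\begin{align*}
\tfrac1t\|f(v+tx)-f(v+ty)\|_v
&\le \tfrac{\lambda}{t}\,d\big(f(v+tx),f(v+ty)\big)\\
&\le \tfrac{\lambda}{t}\,d(v+tx,v+ty)
\le \tfrac{\lambda^{2}}{t}\,\|tx-ty\|_v=\lambda^{2}\|x-y\|_v.
\end{align*}
Letting $t\to 0^{+}$ gives $\|f'_v(x)-f'_v(y)\|_v\le\lambda^{2}\|x-y\|_v$, and since $\lambda>1$ was arbitrary we obtain $\|f'_v(x)-f'_v(y)\|_v\le\|x-y\|_v$.

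For the finite-dimensional statement I would first upgrade the same computation to local Lipschitz continuity of $f$ in the norm $\|\cdot\|_v$: for $x,y\in U$ small (so that $f(x),f(y)\in U$ as well), the identical chain gives $\|f(x)-f(y)\|_v\le\lambda\,d(f(x),f(y))\le\lambda\,d(x,y)\le\lambda^{2}\|x-y\|_v$, hence $f$ is Lipschitz in a neighborhood of $v$. Since $f$ is assumed to have directional derivatives at $v$ along every $x\in E_v$, and $C=E_v$ is a closed cone equal to the whole (finite-dimensional) space, Lemma~\ref{lem2b} applies verbatim and shows that $f$ is semidifferentiable at $v$.

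The step that carries the whole argument, and the one I would treat most carefully, is the passage from the $d$-estimate to the $\|\cdot\|_v$-estimate \emph{on the image side}: the comparison~\eqref{equiv0} is available only in a neighborhood of $v$, so I must guarantee that the image points $f(v+tx),f(v+ty)$ land in that neighborhood. This is exactly where continuity of $f$ together with the relation $f(v)=v$ is invoked, placing the images arbitrarily close to $v$ as $t\to 0^{+}$; in the region where $d$ and $\|\cdot\|_v$ are only \emph{topologically} (not metrically) comparable the conclusion can genuinely break down, so this localization is essential rather than cosmetic, and it is precisely the configuration in which the proposition is applied, namely as a sufficient condition for Assumption~\ref{h-4} of Theorem~\ref{theo0}.
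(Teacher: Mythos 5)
Your proof is correct and takes essentially the same route as the paper's: both transfer the $d$-nonexpansiveness of $f$ into a $\lambda^{2}$-Lipschitz bound for $\|\cdot\|_v$ via~\eqref{equiv0} and then let $\lambda\to 1^{+}$, and both settle the finite-dimensional claim by local Lipschitz continuity plus Lemma~\ref{lem2b}. The only difference is presentational: where you pass to the limit in the two-point difference quotient $t^{-1}\big(f(v+tx)-f(v+ty)\big)$ directly, the paper instead cites the estimate~\eqref{eq7'} from the proof of Lemma~\ref{lem2b}, which is the identical computation with $M=\lambda^{2}$.

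One remark on the step you rightly single out as the crux: your localization of the image points invokes $f(v)=v$, which is not literally among the stated hypotheses of the proposition. It is, however, tacitly required by the paper's own proof as well, since its one-line assertion that $f$ is $\lambda^{2}$-Lipschitz on $U$ in the norm $\|\cdot\|_v$ needs $f(U)$ to lie in the region where~\eqref{equiv0} applies; without some such control near $f(v)$ the conclusion can genuinely fail. For instance, take $V=E_v=\R$ with $d(x,y)=|\phi(x)-\phi(y)|$, where $\phi$ is an increasing $C^1$ diffeomorphism of $\R$ with $\phi'(v)=1$, so that Assumption~\ref{h-1} holds at $v$; the $d$-isometry $f=\phi^{-1}\comp(\phi+c)$ then has $f'_v=\big(\phi'(f(v))\big)^{-1}\id$, which is expansive whenever $\phi'(f(v))<1$. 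Since in every use of the proposition in the paper (Remark~\ref{rem-r1} and Theorems~\ref{th-geom} and~\ref{theo62}) the point $v$ is a fixed point of $f$, your explicit appeal to $f(v)=v$ matches the intended statement and is, if anything, more careful than the paper's argument on precisely this point.
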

\begin{proof}
We first prove that the map $x\mapsto f'_v(x)$ 
(defined by~\eqref{eq2'}) is nonexpansive with respect to $\nmv$.
By taking $X=C=\Ev$ and a neighborhood $U$ of $v$ together
with $\lambda>1$ as in~\eqref{equiv0}, 
so that $f$ is Lipschitz of constant
$M=\lambda^2$ in $U$, we get from Lemma~\ref{lem2b}
that $x\mapsto f'_v(x)$ is $\lambda^2$-Lipschitz
$(\Ev,\nmv)\to (\Ev,\nmv)$.
Since this holds for all $\lambda>1$, 
$x\mapsto f'_v(x)$ is nonexpansive $(\Ev,\nmv)\to (\Ev,\nmv)$.
The remaining part of the proposition follows
from Lemma~\ref{lem2b}.
\end{proof}
The proof of Theorem~\ref{theo0} relies on the following
general local uniqueness result, which does not
require $f$ to be nonexpansive.
\begin{lemma}\label{uni-local}
Let $(X,\|\cdot\|)$ be a normed vector space,
$f$ be a map from a subset $G\subset X$
to $X$, and $v$ be a fixed point of $f$ belonging to the
interior of $G$. Make the following assumptions:
\begin{enumerate}
\renewcommand{\theenumi}{\rm (A\arabic{enumi})}
\renewcommand{\labelenumi}{\theenumi}
\item\label{hl4} $f:G\to X$ is semidifferentiable at $v$.
\item\label{hl5} The map $\id-f'_v: {X}\to X$ has Property~\PF;
\item\label{hl6} The fixed point of $f'_v:X\to X$ is unique:
$f'_v(x) = x,\, x \in {X}\Rightarrow x =0$.
\end{enumerate}
Then, there does not exist a sequence $\seq{v_n \in X\setminus\{v\}}{n \geq 1}$
such that
\begin{equation}\label{almost-fixed}
\lim_{n\to \infty}  \|v_n-v\|=0, \mrm{ and } \lim_{n\to \infty}
\frac{\| f(v_n)-v_n\|}{\|v_n-v\|}=0\enspace .
\end{equation}
In particular  $v$ is isolated in the set of fixed points of $f$.
\end{lemma}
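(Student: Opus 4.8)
The plan is to argue by contradiction and reduce the statement to assumption~\ref{hl6} by means of Property~\PF. Suppose such a sequence $\seq{v_n}{n\geq 1}$ exists, and set $x_n = v_n - v$, so that $x_n \to 0$ and $x_n \neq 0$ for all $n$. Using the semidifferentiability assumption~\ref{hl4} together with $f(v)=v$, I would write $f(v+x_n) = v + f'_v(x_n) + \|x_n\|\epsilon(x_n)$, where $\epsilon(x_n)\to 0$ as $n\to\infty$, and hence
\[
f(v_n) - v_n = f'_v(x_n) - x_n + \|x_n\|\epsilon(x_n)\enspace .
\]
Dividing by $\|x_n\|$, the second limit in~\eqref{almost-fixed} then forces
$\|f'_v(x_n) - x_n + \|x_n\|\epsilon(x_n)\|/\|x_n\| \to 0$.

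Next I would normalize. Setting $w_n = x_n/\|x_n\|$, we have $\|w_n\|=1$, and since $f'_v$ is (positively) homogeneous, being a semiderivative, we get $f'_v(x_n) = \|x_n\| f'_v(w_n)$ and $x_n=\|x_n\|w_n$; factoring out $\|x_n\|$ gives
\[
\frac{\|f'_v(x_n) - x_n + \|x_n\|\epsilon(x_n)\|}{\|x_n\|}
= \|f'_v(w_n) - w_n + \epsilon(x_n)\|\longrightarrow 0\enspace .
\]
As $\epsilon(x_n)\to 0$, this yields $(\id - f'_v)(w_n) \to 0$. The sequence $\seq{w_n}{n\geq 1}$ is bounded (it lies on the unit sphere), so Property~\PF\ of $\id - f'_v$ (assumption~\ref{hl5}) furnishes a subsequence $w_{n_k}\to w$ for some $w\in E$; by continuity of the norm, $\|w\|=1$, so $w\neq 0$. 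The semiderivative $f'_v$ is continuous, so passing to the limit in $(\id-f'_v)(w_{n_k})\to 0$ gives $f'_v(w)=w$ with $w\neq 0$, contradicting assumption~\ref{hl6}.

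The final assertion then follows immediately: if $v$ were not isolated among the fixed points of $f$, there would exist fixed points $v_n\neq v$ with $v_n\to v$, and for such a sequence $f(v_n)=v_n$ makes $\|f(v_n)-v_n\|/\|v_n-v\|=0$ for every $n$, so~\eqref{almost-fixed} would hold, contradicting what was just established.

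I do not expect a serious obstacle here, as the heart of the argument is a soft compactness extraction. The one point deserving care is the passage from the remainder term to the normalized quantity: one must invoke the \emph{homogeneity} of $f'_v$ (rather than linearity, which typically fails in the semidifferentiable setting) to extract the factor $\|x_n\|$, and check that the normalized remainder $\epsilon(x_n)$ genuinely tends to $0$ along the sequence, which is exactly what semidifferentiability at $v$ provides. Everything else is a routine application of the continuity of the semiderivative and of Property~\PF.
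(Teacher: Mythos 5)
Your proof is correct and follows essentially the same route as the paper's: assume such a sequence exists, show that the unit vectors $w_n=(v_n-v)/\|v_n-v\|$ satisfy $(\id-f'_v)(w_n)\to 0$, extract a convergent subsequence via Property~\PF, and use continuity of $f'_v$ together with assumption~\ref{hl6} to contradict $\|w\|=1$. The only cosmetic difference is that the paper normalizes at the outset and invokes the uniform difference-quotient characterization of semidifferentiability (Lemma~\ref{lem2}), whereas you work with the raw increment and normalize at the end via positive homogeneity of $f'_v$ — an equivalent rephrasing, and your care on that point (homogeneity rather than linearity) is exactly right.
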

\begin{proof}
Let $\seq{v_n \in X}{n \geq 1}$ be
as in~\eqref{almost-fixed}.
Writing $v_n := v + \epsilon_n x_n$ with $\epsilon_n=\|v_n- v\|$,
we get $\lim_{n\to \infty} \epsilon_n=0$ and $\|x_n\|=1$.
Since the sequence $\seq{x_n}{n\geq 1}$ is bounded, it follows from~\eqref{eq3'a}
and~\eqref{eq3'aa},
which are satisfied thanks to~\ref{hl4}, that
\begin{equation}\label{eq36l}
\lim_{n\to \infty} \left\|\frac{f(v_n) - f(v)}{\epsilon_n} - f'_v(x_n)
 \right\| = 0.
\end{equation}
Recalling that $f(v)=v$ and using the second equation in~\eqref{almost-fixed}
together with~\eqref{eq36l}, we get
\begin{equation}\label{eq37l}
\lim_{n\to \infty} \| x_{n} - f'_v(x_{n}) \| = 0\enspace .
\end{equation}
Since the sequence $\seq{x_n }{n \geq 1}$ is bounded,
Assumption~\ref{hl5} implies that
$x_n$ has a subsequence converging to a point $x\in {X}$.
Using the continuity of the semidifferential $f'_v$,
we obtain, from~\eqref{eq37l},
$x - f'_v(x) = 0$. By Assumption~\ref{hl6}, this implies that $x=0$, 
a contradiction with $\|x\|=1$.
\end{proof}

\begin{remark}
Let us give an example where all the assumptions 
of Lemma~\ref{uni-local} are fulfilled except Assumption~\ref{hl5} and
where the fixed point $v$ is not isolated.
Consider $X=\continuous_0([0,1])$ the Banach space of continuous functions
$x$ from $[0,1]$ to $\R$ such that $x(0)=0$, endowed with the
sup-norm, and $f:X\to X$ defined by
$f(x)(t)=x(\frac{t}{2}) +x(1) x(t)$. 
Note that $f$ is not nonexpansive on $X$.
For all $\gamma\in [0,1]$, let $x_\gamma\in X$ be such that
$x_\gamma(t)= (1-(\frac{1}{2})^\gamma) t^\gamma$ for all $t\in [0,1]$.
Then, $x_\gamma$ is a fixed point of $f$ for all $\gamma\in [0,1]$,
$v:=x_0\equiv 0$ and $x_\gamma$ tends to $v$ when $\gamma$ tends to $0$.
Hence, $v$ is a non isolated fixed point of $f$.
The map $f$ is differentiable at $v$ with semidifferential 
$f'_v(x)(t)=x(\frac{t}{2})$, and we already pointed out above
that $f'_v$ has $0$ as a unique fixed point,
whereas $\id -f'_v$ does not have Property~\PF.
\end{remark}

\begin{proof}[Proof of Theorem~\ref{theo0}]
Suppose, by way of contradiction, that there exists $w\in G$ 
such that $f(w)=w$, $w \neq v$. Let $R=d(v,w)>0$, and
for all $s\in (0,1)$, consider the set $\Gamma_s$ defined by~\eqref{gammas}.
Since $G$ is open,  $v\in G$ and $\Gamma_s\subset B(v,Rs)$,
there exists $\bar{s}\in (0,1]$ such that $\Gamma_s\subset G$,
for all $s\leq \bar s$.
By Assumption~\ref{h-3}, for all $s\in (0,1)$,
there exists $z_s\in \Gamma_s$.

Also, we have:
\begin{equation}\label{gammas2}
\Gamma_s=\set{z\in V}{ d(z,v)\leq s d(v,w),\; d(z,w)\leq  (1-s) d(v,w)}\enspace .
\end{equation}
Indeed, if $z\in V$ is such that
$d(z,v)\leq s d(v,w)$ and $d(z,w)\leq  (1-s) d(v,w)$,
we get
\[d(v,w)\leq d(z,v)+d(z,w)\leq s d(v,w)+(1-s) d(v,w)=d(v,w)\enspace,\]
hence $d(z,v)= s d(v,w)$ and $d(z,w)=  (1-s) d(v,w)$.
{From}~\eqref{gammas2} and Assumption~\ref{h-2}, we deduce that
$\Gamma_s$ is convex. 
Because $f$ is nonexpansive with respect to $d$, $f(v) = v$, $f(w) = w$, 
and $\Gamma_s$ is given by~\eqref{gammas2},
we have that $f(\Gamma_s) \subset \Gamma_s$ (for $s\leq\bar{s}$).
This property, together with the convexity of  $\Gamma_s$, allows us
to consider, for $t\in [0,1]$ and $s\in [0,\bar{s}]$,
 the map $f_{s,t}:\Gamma_s\to \Gamma_s$,
\begin{equation}\label{fst}
f_{s,t}(x) = (1-t) f(x) + t z_s \enspace .
\end{equation}

By Assumption~\ref{h-1}, for all $\lambda>1$, there exists 
$0<s_\lambda\leq\bar{s}$
such that:
\begin{equation}\label{equiv}
 \frac{1}{\lambda} \nmvf{x-y}\leq d(x,y)\leq \lambda  \nmvf{x-y}\quad
\forall x,y\in B(v,s R),\;  0<s\leq s_\lambda \enspace .
\end{equation}
Since $\Gamma_{s}\subset B(v,s R)$,
and $f$ is nonexpansive with respect to $d$, we get (using~\eqref{equiv}) that
for all $x,y\in \Gamma_{s}$ and $0<s\leq s_\lambda$,
\begin{eqnarray*}
d(f_{s,t}(x), f_{s,t}(y))&\leq & \lambda \nmvf{f_{s,t}(x)- f_{s,t}(y)}\\
&=& \lambda (1-t) \nmvf{f(x)- f(y)} \\
&\leq & \lambda^2 (1-t) d(f(x),f(y))\\
&\leq &\lambda^2 (1-t) d(x,y)
\enspace .\end{eqnarray*}
Considering
\[t_\lambda=\min (2(1-\frac{1}{\lambda^2}),1)\in (0,1]\enspace ,\]
we get that 
\begin{equation}
\label{tlambda}
 c_\lambda:=\lambda^2 (1-t_\lambda)<1 \text{ and } \lim_{\lambda\to 1^+} 
t_\lambda=0\enspace .\end{equation}
Hence, for all $0<s\leq s_\lambda$ and $t_\lambda\leq t\leq 1$,
$f_{s,t}$ is a contraction mapping
in $(\Gamma_{s},d)$, with contraction factor $c_\lambda$.
Since $\Gamma_{s}$ is closed in $V$, $(\Gamma_{s},d)$
is a complete metric space, hence the contraction mapping principle implies
that $f_{s,t}$ has a unique fixed point 
$v_{s,t}\in \Gamma_{s}$.
By definition,  $v_{s,t}$ satisfies:
\begin{equation}\label{eq35++}
f(v_{s,t}) - v_{s,t}=t (f(v_{s,t})-z_{s})\enspace ,
\end{equation}
which leads to
\begin{eqnarray}
\nmvf{f(v_{s,t}) - v_{s,t}}& =&t \nmvf{f(v_{s,t})-z_{s}}\nonumber\\
& \leq & t \lambda d(f(v_{s,t}),z_{s})\nonumber\\
& \leq & t \lambda ( d(f(v_{s,t}),v) + d(v,z_{s}))\nonumber\\
&\leq &  2 t\lambda  R s \enspace ,\label{ineq}
\end{eqnarray}
since $f(v_{s,t})$ and $z_{s}\in \Gamma_{s}$.

Since $v_{s,t}\in \Gamma_s$ and $\Gamma_s$ is given by~\eqref{gammas},
we get, using~\eqref{equiv}:
\begin{equation}\label{xbound}
\frac{R s}{\lambda} =\frac{d(v,v_{s,t})}{\lambda}\leq
\nmvf{v_{s,t}-v}\leq \lambda d(v,v_{s,t})=\lambda R s \enspace .\end{equation}
The first inequality in~\eqref{xbound}
together with~\eqref{ineq} yield:
\begin{equation}\label{eq35+}
\frac{\nmvf{f(v_{s,t}) - v_{s,t}}}{\nmvf{v_{s,t}-v}} \leq 2t\lambda^2 \enspace .
\end{equation}
Let us choose a sequence $\seq{\lambda_n>1}{n\geq 1}$, 
such that $\lambda_n\to_{n\to\infty} 1$, together with
a sequence $\seq{s_n>0}{n\geq 1}$, such that
 $s_n\leq s_{\lambda_n}$ and $s_n\to 0$.
We set $t_n:=t_{\lambda_n}$
and $v_n:=v_{s_n,t_n}$, to simplify the notation.
By~\eqref{tlambda}, $t_n\to 0$.
Using~\eqref{xbound} and \eqref{eq35+}, we get
\begin{equation}\label{contra}
 \lim_{n\to \infty}  \nmvf{v_n-v}=0,\quad \lim_{n\to \infty}
\frac{\nmvf{ f(v_n)-v_n}}{\nmvf{v_n-v}}=0\enspace .
\end{equation}
Taking $X=\Ev$, $\|\cdot\|=\nmv$, 
we see that Assumptions
\ref{hl4}--\ref{hl6} of 
Lemma~\ref{uni-local} are satisfied due
to Assumptions~\ref{h-4}--\ref{h-6}
of Theorem~\ref{theo0}. 
Thus, the conclusion of Lemma~\ref{uni-local} 
contradicts~\eqref{contra}.
\end{proof}
\begin{remark}
The introduction of $f_{s,t}$ as in~\eqref{fst} and the derivation
of its properties is closely related to
Lemma 2.1 on page 45 of~\cite{nussbaum88}.
\end{remark}

\subsection{Geometric convergence to the fixed point}
When $\bonsall{}(f'_v)<1$,
we can prove a result more precise than Theorem~\ref{theo0}:
the geometric convergence of the orbits of $f$ towards
the fixed point of $f$.
\begin{theorem}\label{th-geom}
Let $(V,d)$ be a complete metric space, $G$ be a 
connected open subset of $V$,
$f:G\to G$ be a nonexpansive map
and $v\in G$ be a fixed point of $f$: $f(v)=v$.
Assume that Assumptions~\ref{h-1} and \ref{h-4} 
of Theorem~\ref{theo0} hold and that 
$f'_v:\Ev\to \Ev$ satisfies $\bonsall{}(f'_v)<1$, where
$\bonsall{}=\bonsall{\Ev}$ is
defined with respect to the $\nmv$ norm. Then, 
\begin{align*}
\limsup_{k\to \infty} d(f^k(x),v)^{1/k}\leq \bonsall{}(f'_v)\quad
\forall x\in G \enspace .
\end{align*}
In particular, the fixed point of $f$ in $G$ is unique.
\end{theorem}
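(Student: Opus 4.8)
The plan is to pass to a power of $f$ that is a genuine local contraction at $v$, to establish geometric attraction on a small ball, and then to propagate this local attraction to all of $G$ by a connectedness argument in which the equicontinuity of the iterates of a nonexpansive map replaces the compactness hypotheses (Property~\PF) used in Theorem~\ref{theo0}.

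First I would observe that $f'_v$ is itself nonexpansive for $\|\cdot\|_v$. Indeed, \ref{h-1} holds, $f$ is nonexpansive, and by \ref{h-4} it has directional derivatives at $v$ in every direction, so Proposition~\ref{prop-sc-a1hold} applies. In particular $f'_v$ is $1$-Lipschitz, hence uniformly continuous on bounded sets, and the chain rule (Lemma~\ref{lemma-chain} with $C_1=C_2=E_v$) gives by induction that $f^n$ is semidifferentiable at $v$ with $(f^n)'_v=(f'_v)^n$. Writing $r:=\bonsall{}(f'_v)<1$ and fixing $\beta\in(r,1)$, the infimum formula \eqref{eq31-bonsallcspr} yields $k_0$ with $\|(f'_v)^{k_0}\|<\beta^{k_0}$. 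Thus $g:=f^{k_0}$ satisfies $g(v)=v$ and $\|g'_v\|<\beta^{k_0}$, and semidifferentiability gives, for some $\gamma'\in(\|g'_v\|,\beta^{k_0})$ and all sufficiently small $\|x-v\|_v$, the contraction estimate $\|g(x)-v\|_v\leq\gamma'\|x-v\|_v$.

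Next I would turn this into local geometric attraction. Using the equivalence of $\|\cdot\|_v$ and $d$ near $v$ from \ref{h-1}, I choose $\lambda>1$ with $\lambda^2\gamma'<1$ and a radius $\rho>0$ so that $B:=B(v,\rho)\subset G$, the bounds $\lambda^{-1}\|\cdot\|_v\leq d\leq\lambda\|\cdot\|_v$ hold on $B$, and $g$ maps $B$ into itself with the above estimate. Since $d(f(y),v)\leq d(y,v)$, the ball $B$ is forward invariant under $f$; iterating $g$ then gives $d(g^n(y),v)\leq\lambda^2(\gamma')^n d(y,v)$, and filling in the intermediate iterates $f^{k_0n+i}$ by nonexpansiveness shows that every orbit started in $B$ converges to $v$ with $\limsup_k d(f^k(y),v)^{1/k}\leq(\gamma')^{1/k_0}<\beta$.

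The main obstacle is the globalization: showing that the orbit of an arbitrary $x\in G$ eventually enters $B$, without any compactness of orbits. Here I would use connectedness. Since $V$ is open in $E_v$ and $G$ is open and connected, $G$ is path-connected, so I join $v$ to $x$ by a path with compact connected image $P\subset G$; because $f$ is a self-map of $G$, each $f^n$ is defined and continuous on $P$. On $P$ set $\ell(p):=\lim_n d(f^n(p),v)$ (the limit of a non-increasing sequence) and $A:=\{p\in P:\ell(p)<\rho\}$. The set $A$ is open, by continuity of a single iterate $f^n$; and it is closed, because $p\in A$ forces $f^n(p)$ into $B$ eventually, hence $\ell(p)=0$, and if $p_j\to p$ with $\ell(p_j)=0$ then equi-nonexpansiveness gives $d(f^m(p),v)\leq d(p,p_j)+d(f^m(p_j),v)$ for all $m$, so $\ell(p)\leq d(p,p_j)\to0$. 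This uniform $1$-Lipschitz bound on all iterates is exactly what compensates for the absence of Property~\PF. As $v\in A$ and $P$ is connected, $A=P$; thus the orbit of $x$ enters $B$ and converges to $v$. Finally, since every orbit converges to $v$, for each $\beta\in(r,1)$ it eventually enters the corresponding ball, so the local estimate gives $\limsup_k d(f^k(x),v)^{1/k}\leq(\gamma')^{1/k_0}<\beta$; letting $\beta\downarrow r$ yields $\limsup_k d(f^k(x),v)^{1/k}\leq\bonsall{}(f'_v)$. Uniqueness follows at once: any fixed point $w$ has constant orbit, which converges to $v$, so $w=v$.
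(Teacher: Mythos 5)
Your proof is correct and follows essentially the same route as the paper's: a local geometric contraction estimate for a power $f^{k_0}$ of $f$, obtained from the chain rule, $(f^{k_0})'_v=(f'_v)^{k_0}$ and $\|(f'_v)^{k_0}\|_v<\beta^{k_0}$, followed by a connectedness (clopen) argument in which the nonexpansiveness of all iterates makes the basin of attraction of $v$ both open and closed, and finally the local rate estimate applied once the orbit has entered the attracting ball. The only cosmetic difference is that you run the clopen argument on a compact path joining $v$ to $x$ (using path-connectedness of $G$ as an open connected subset of $E_v$), whereas the paper runs it directly on $G$ with the set $\Omega=\set{x\in G}{\lim_k f^k(x)=v}$, using the same triangle-inequality-plus-nonexpansiveness mechanism.
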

\begin{remark}\label{rem-r1}
Under the assumptions of Theorem~\ref{th-geom}, $f'_v$ is nonexpansive
(by Proposition~\ref{prop-sc-a1hold}), and, a fortiori, 
uniformly continuous on bounded sets of $\Ev$.
Hence, using Proposition~\ref{prop1}, with the assumption that
$\bonsall{}(f'_v)<1$, we get that $\id -f'_v$ has
Property~\PF\ and that the fixed point of $f'_v$
is unique. This shows that Assumptions~\ref{h-5} and~\ref{h-6}
of Theorem~\ref{theo0} are satisfied.
Therefore, Theorem~\ref{theo0} shows the uniqueness of the 
fixed point of $f$ without the connectedness of $G$, as soon as
Assumptions~\ref{h-2} and~\ref{h-3} on $(V,d)$ are satisfied.
\end{remark}
To show Theorem~\ref{th-geom},
we first prove that there is a neighborhood
of $v$ in which all orbits of $f$ converge geometrically
to $v$:
\begin{lemma}
Let $V,d,G,f$ and $v$ be as in Theorem~\ref{th-geom},
and $(\Ev, \nmv)$ be as in~\ref{h-1}.
For all $1>\mu>\bonsall{}(f'_v)$, there exists $\eta>0$ and $m>0$ such that 
\begin{align}
\label{contract}
d(f^m(x),v)\leq \mu^m d(x,v)\quad\forall x\in B(v,\eta) \enspace. 
\end{align}
\end{lemma}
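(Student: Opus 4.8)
The plan is to reduce the geometric estimate to a single power $f^m$ whose semiderivative $(f'_v)^m$ has operator norm strictly below $\mu^m$, and then to convert a norm bound for $f^m$ into the desired metric bound by means of the local equivalence~\eqref{equiv0}.

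First I would fix $\mu\in(\bonsall{}(f'_v),1)$ and use the infimum form of Bonsall's cone spectral radius in~\eqref{eq31-bonsallcspr} to choose an integer $m\geq 1$ with $a:=\|(f'_v)^m\|<\mu^m$, a strict inequality that will later absorb the equivalence constants. Next I would show that $f^m$ is semidifferentiable at $v$ with $(f^m)'_v=(f'_v)^m$. The crucial input is that, under the hypotheses of Theorem~\ref{th-geom}, Remark~\ref{rem-r1} (via Proposition~\ref{prop-sc-a1hold}) guarantees that $f'_v$ is nonexpansive on $(E_v,\|\cdot\|_v)$, hence Lipschitz and uniformly continuous on bounded sets; this is exactly the regularity required by assumption~\ref{as4} to iterate the chain rule of Lemma~\ref{lemma-chain} (its assumption~\ref{as3} being vacuous since $C_1=C_2=E_v$). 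An induction identical to the one in the proof of Proposition~\ref{prop2} then yields the composition formula, so that semidifferentiability at $v$ reads
\[
\|f^m(v+z)-v-(f'_v)^m(z)\|_v=o(\|z\|_v)\qquad(z\to 0,\ z\in E_v)\enspace.
\]
In particular, for every $\epsilon>0$ there is $\delta>0$ such that $\|f^m(v+z)-v\|_v\leq(a+\epsilon)\|z\|_v$ whenever $\|z\|_v\leq\delta$.

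Then I would fix the constants. Since $a<\mu^m$, I can choose $\lambda>1$ and $\epsilon>0$ small enough that $\lambda^2(a+\epsilon)\leq\mu^m$, and take a neighborhood $U$ of $v$ on which~\eqref{equiv0} holds for this $\lambda$ (Assumption~\ref{h-1}). Finally I choose $\eta>0$ so small that $B(v,\eta)\subset U\cap G$ and $\lambda\eta\leq\delta$. For $x\in B(v,\eta)$ all iterates stay in $G$ (as $f(G)\subset G$), and nonexpansiveness together with $f(v)=v$ gives $d(f^m(x),v)\leq d(x,v)\leq\eta$, so $f^m(x)\in U$ as well. Writing $z=x-v$, so that $\|z\|_v\leq\lambda d(x,v)\leq\lambda\eta\leq\delta$, and chaining both sides of~\eqref{equiv0} with the semiderivative estimate gives
\[
d(f^m(x),v)\leq\lambda\|f^m(x)-v\|_v\leq\lambda(a+\epsilon)\|z\|_v\leq\lambda^2(a+\epsilon)\,d(x,v)\leq\mu^m d(x,v)\enspace,
\]
which is the asserted inequality.

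I expect the only genuinely delicate step to be the propagation of semidifferentiability through the iterates, that is, checking that assumption~\ref{as4} of the chain rule survives the induction; this works precisely because the outer factor at each stage is $f'_v$, which is nonexpansive, so no regularity is lost. Everything else is bookkeeping: the two factors $\lambda$ coming from~\eqref{equiv0} and the additive slack $\epsilon$ from the $o(\cdot)$ remainder are arranged so that the product $\lambda^2(a+\epsilon)$ stays below $\mu^m$, with the integer $m$ chosen once and for all at the outset.
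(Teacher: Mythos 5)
Your proof is correct and follows essentially the same route as the paper's: pick $m$ with $\|(f'_v)^m\|_v<\mu^m$ from the infimum characterization of $\bonsall{}(f'_v)$, use the chain rule (Lemma~\ref{lemma-chain}) to get $(f^m)'_v=(f'_v)^m$, and convert the resulting norm estimate into the metric estimate via the local equivalence~\eqref{equiv0}. The only difference is that you spell out the bookkeeping (the choice of $\lambda$, $\epsilon$ with $\lambda^2(a+\epsilon)\leq\mu^m$, and the verification via Proposition~\ref{prop-sc-a1hold} that $f'_v$ is nonexpansive, hence uniformly continuous on bounded sets so the chain rule hypotheses iterate) that the paper leaves implicit.
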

\begin{proof}
Since $\mu> \bonsall{}(f'_v)$, and $\bonsall{}(f'_v)$ is defined
in $(\Ev,\nmv)$, we can chose $m$ such that $\nmvf{(f'_v)^m}< \mu^m$. 
The chain rule for semidifferentiable
maps (Lemma~\ref{lemma-chain}), together with $f(v)=v$,
show that $f^m$ is semidifferentiable at $v$, with $(f^m)'_v=(f'_v)^m$:
\begin{align}
f^m(x)-v=f^m(x)-f^m(v)=(f'_v)^m(x-v)+ o(\nmvf{x-v}) \enspace.
\end{align}
Hence,
\[
\nmvf{f^m(x)-v}\leq \nmvf{(f'_v)^m} \nmvf{x-v}+ o(\nmvf{x-v}) \enspace,
\]
and using~\eqref{eq-h-1}, we get 
that there is a ball $B(v,\eta)$, with $\eta>0$,
in which~\eqref{contract} holds.
\end{proof}

\begin{proof}[Proof of Theorem~\ref{th-geom}]
We get from~\eqref{contract}:
\begin{align}
d(f^{mk}(x),v)\leq \mu^{mk} d(x,v)\quad \forall k\geq 0,\;x\in B(v,\eta)
\enspace .\label{gg}
\end{align}
Moreover, by nonexpansiveness of $f$, 
\begin{align}
d(f^{k+1}(x),v)
=d(f^{k+1}(x),f^{k+1}(v))
\leq d(f^k(x),f^k(v))=
d(f^k(x),v) \enspace.
\label{e-dec}
\end{align}
Combining~\eqref{gg} and~\eqref{e-dec}, we see
that
\begin{align}
\limsup_k d(f^{k}(x),v)^{1/k} \leq \mu\quad\forall x\in B(v,\eta)
\enspace .\label{gg2}
\end{align}
Let
\[\Omega=\set{x\in G}{\lim_k f^k(x)=v}
\enspace .
\]
We claim that $\Omega=G$. 
By~\eqref{gg2}, $B(v,\eta)\subset \Omega$, hence $\Omega\neq\emptyset$.
We will show that $\Omega$ is both
open and closed in $G$. We claim that:
\begin{align}
B(x,\eta/2)\cap G \subset \Omega \quad \forall x\in \Omega \enspace .
\label{fixedball}
\end{align}
Indeed, if $x\in \Omega$, we have
$d(f^n(x),v)<\eta/2$ for some $n$, hence,
for all $y\in G$ such that $d(y,x)\leq \eta/2$,
$d(f^n(y),v)\leq d(f^n(y),f^n(x))+d(f^n(x),v)
<\eta$, and by~\eqref{gg2}, $\lim_k f^k(y)=v$,
which shows~\eqref{fixedball}, and, a fortiori,
that $\Omega$ is open. Property~\eqref{fixedball} 
also implies that $\Omega$ is closed. Indeed,
if $\seq{x_n\in \Omega}{n\geq 1}$
is a sequence converging to $x\in G$, we have $d(x_n,x)<\eta/2$
for some $n\geq 1$, hence, $x\in \Omega$ by~\eqref{fixedball}.
We have shown that $\Omega$ is nonempty, closed and open in $G$,
and since $G$ is connected, $\Omega=G$.
Finally, since $\lim_k f^k(x)=v$ for all $x\in G$,
and since~\eqref{gg2} holds, we have 
$\limsup_k d(f^k(x),v)^{1/k}\leq \mu$,
for all $x\in G$. Since this inequality holds
for all $\mu>\bonsall{}(f'_v)$, we have proved the convergence in
Theorem~\ref{th-geom}, which itself implies that $v$ is the unique 
fixed point of $f$ in $G$.
\end{proof}

\section{Fixed points and eigenvectors of semidifferentiable nonexpansive maps
over normal cones}
\label{sec-eig-nonexp}
In this section, we derive from Theorem~\ref{theo0}
some uniqueness results for fixed points or eigenvectors
of maps acting on cones, as well as geometric convergence
results for the orbits.

As a first application, we get the following uniqueness result
for the fixed point
of a nonexpansive map in the Thompson's metric.
\begin{theorem}[Uniqueness of fixed points]\label{theo00d}
Let $C$ be a normal cone with nonempty interior  in 
a Banach space $(X,\|\cdot\|)$. Let $G$ be an open subset of $(X,\|\cdot\|)$ included in $C$ and
$f : G \rightarrow \Cint$ be a nonexpansive map with 
respect to Thompson's metric $\tho$.
Let $v\in  G$ be a fixed point of $f$: $f(v)=v$.
Make the following assumptions:
\begin{enumerate}
\renewcommand{\theenumi}{\rm (A\arabic{enumi})}
\renewcommand{\labelenumi}{\theenumi}
\item\label{h00d4} $f$ is semidifferentiable at $v$;
\item\label{h00d5} The map $\id-f'_v: X\to X$  has Property \PF;
\item\label{h00d6} The fixed point of $f'_v:X\to X$ is unique:
$f'_v(x) = x,\, x \in X\Rightarrow x =0$.
\end{enumerate}
Then, the fixed point of $f$ in $G$ is unique: 
$f(w) = w,\, w \in G\Rightarrow w =v$.
\end{theorem}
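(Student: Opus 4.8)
The plan is to deduce Theorem~\ref{theo00d} from the general uniqueness result Theorem~\ref{theo0}, applied to the metric space $(V,d):=(\Cint,\tho)$ with local Banach space $E_v:=X_v$. First I would reduce to the interior: any $w\in G$ with $f(w)=w$ satisfies $w=f(w)\in\Cint$, so every fixed point of $f$ in $G$ lies in the open set $G\cap\Cint$; it therefore suffices to prove that $f|_{G\cap\Cint}\colon G\cap\Cint\to\Cint$ has $v$ as its unique fixed point. On $\Cint=C_v$ the Thompson metric $\tho$ is a genuine metric, $G\cap\Cint$ is a neighborhood of $v$, and $f|_{G\cap\Cint}$ is $\tho$-nonexpansive by hypothesis, so the setting of Theorem~\ref{theo0} is in force once its assumptions are checked.

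Next I would verify the structural assumptions \ref{h-1}--\ref{h-3} on $(\Cint,\tho)$. Since $C$ has nonempty interior and $v\in\Cint$, Proposition~\ref{prop-normequiv} gives $X_v=X$, the equivalence of $\|\cdot\|_v$ with the ambient norm, the coincidence of the $\|\cdot\|_v$- and $\tho$-topologies on $C_v=\Cint$, and the local equivalence $\|x-y\|_v\sim\tho(x,y)$ as $x,y\to v$ from~\eqref{e-normequivdbar}; this is exactly \ref{h-1}, while Proposition~\ref{prop-thompson} (with $u=v$) provides the completeness of $(\Cint,\tho)$. For \ref{h-2} I would observe that a Thompson ball is an order interval: from the definition of $\tho$ one has $B(w,\rho)=\set{z}{e^{-\rho}w\leq z\leq e^{\rho}w}$, which is convex in $X$ because the order is induced by the convex cone $C$. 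Assumption~\ref{h-3} amounts to saying that $(\Cint,\tho)$ is a geodesic space; I would supply this by exhibiting, for $x,y\in\Cint$ with $\alpha=\m(y/x)$ and $\beta=\M(y/x)$, the explicit minimal geodesic $\gamma(t)=\frac{\beta^{t}-\alpha^{t}}{\beta-\alpha}\,y+\frac{\beta\alpha^{t}-\alpha\beta^{t}}{\beta-\alpha}\,x$ (understood by continuity when $\alpha=\beta$), whose points realize $\Gamma_s\neq\emptyset$ for every $s\in(0,1)$. Alternatively, since $(\Cint,\tho)$ is complete, it suffices by Menger's theorem~\cite{blumen} to check metric convexity.

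Finally I would transport the infinitesimal assumptions. Hypothesis~\ref{h00d4} gives \ref{h-4} (semidifferentiability of $f$ at $v$ as a map into $E_v=X$, a local property unaffected by the restriction to $G\cap\Cint$), and because $\|\cdot\|_v$ and $\|\cdot\|$ are equivalent, the little-$o$ condition, the Property~\PF\ of $\id-f'_v$, and the triviality of the fixed-point set of $f'_v$ are insensitive to the choice of norm; thus \ref{h00d5} yields \ref{h-5} and \ref{h00d6} yields \ref{h-6}. With \ref{h-1}--\ref{h-6} verified, Theorem~\ref{theo0} shows that $v$ is the only fixed point of $f$ in $G\cap\Cint$, and the reduction of the first step upgrades this to uniqueness in $G$. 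The one genuinely nontrivial point is \ref{h-3}: establishing that Thompson's metric is geodesic (equivalently, via Menger, metrically convex) is the crux, whereas \ref{h-1}, \ref{h-2} and the translation of the $A$-assumptions are immediate consequences of Proposition~\ref{prop-normequiv} and the order-interval description of $\tho$-balls.
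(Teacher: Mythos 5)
Your proposal is correct and follows essentially the same route as the paper's proof: apply Theorem~\ref{theo0} to $V=\Cint$ endowed with $\tho$ and $E_v=X_v$, using Proposition~\ref{prop-thompson} for completeness, Proposition~\ref{prop-normequiv} and~\eqref{e-normequivdbar} for~\ref{h-1}, convexity of Thompson balls (order intervals) for~\ref{h-2}, geodesics for~\ref{h-3}, and the norm equivalence $\|\cdot\|_v\sim\|\cdot\|$ to transfer~\ref{h00d4}--\ref{h00d6} into~\ref{h-4}--\ref{h-6}. The only differences are cosmetic: your preliminary reduction to $G\cap\Cint$ is automatic, since an open subset of $(X,\|\cdot\|)$ contained in $C$ already lies in $\Cint$, and where you exhibit (correctly) the explicit Thompson geodesic, the paper simply cites~\cite[Proposition~1.12, page~34]{nussbaum88}.
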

\begin{proof}
We apply Theorem~\ref{theo0}.
Let $V=\Cint$ endowed with $\tho$.
Since $v\in \Cint$, $C_v=\Cint=V$ and by
Proposition~\ref{prop-thompson}, $(V,\tho)$ is a 
complete metric space.
Moreover, by Proposition~\ref{prop-normequiv}, 
$\|\cdot\|$, $\|\cdot\|_v$ and $\tho$ define the same topology on $V$,
hence the set $G$ is open in $(V,\tho)$.
Since the equivalence~\eqref{e-normequivdbar} 
holds, $(V,\tho)$ satisfies~\ref{h-1} with $\Ev=X_v$
and $\nmv=\|\cdot\|_v$.
{From} the definition of $\tho$ and the convexity of $C$,
it follows that the metric space $(V,\tho)$ satisfies Assumption~\ref{h-2}.
Assumption~\ref{h-3} for $(V,\tho)$  follows from the existence
of a (minimal) geodesic for $\tho$
between any two points of $\Cint$, which is
proved in~\cite[Proposition~1.12, page 34]{nussbaum88}.
The map $f$ is nonexpansive with respect to $\tho$
and $v$ is a fixed point of $f$.
Then, Assumptions~\ref{h00d4}--\ref{h00d6}
of Theorem~\ref{theo00d}
correspond to Assumptions~\ref{h-4}--\ref{h-6}
of Theorem~\ref{theo0}, respectively, which
yields the conclusion of the theorem.
\end{proof}
\begin{theorem}[Geometric convergence]\label{theo62}
Let $C$ be a normal cone with nonempty interior
in a Banach space $(X,\|\cdot\|)$,
and let $G$ be an open subset of $(X,\|\cdot\|)$ included in $C$.
Assume that $f: G\to G$ 
is nonexpansive with respect to Thompson's metric $\tho$.
If $f$ has a fixed point $v\in G$, and if $f$ is
semidifferentiable at $v$, with $\bonsall{} (f'_v)<1$, 
where $\bonsall{}$ is defined with respect to the $\|\cdot\|$ or the
$\|\cdot\|_v$ norm, then the fixed point of $f$ in $G$ is unique.
Moreover, if $G$ is connected, we have:
\begin{align*}
\forall x\in G,\;\; \limsup_{k\to \infty} \tho(f^k(x),v)^{1/k}\leq 
\bonsall{} (f'_v) \enspace .
\end{align*}
\end{theorem}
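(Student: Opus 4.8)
The plan is to deduce this theorem from the abstract geometric convergence result, Theorem~\ref{th-geom}, together with Theorem~\ref{theo0} for the uniqueness part, after recasting the situation as a nonexpansive self-map of the complete metric space $(\Cint,\tho)$. The metric-space setup is identical to the one carried out in the proof of Theorem~\ref{theo00d}. First I would observe that, since $G$ is open in $(X,\|\cdot\|)$ and contained in $C$, every point of $G$ is interior to $C$, so $G\subset\Cint$; in particular $v\in\Cint$ and $C_v=\Cint=:V$. Because $C$ is normal, Proposition~\ref{prop-thompson} shows that $(V,\tho)$ is a complete metric space, and $G$ is open in it.

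Next I would verify the structural hypotheses of Theorems~\ref{theo0} and~\ref{th-geom} exactly as in the proof of Theorem~\ref{theo00d}. Assumption~\ref{h-1} holds with $E_v=X_v$ and the norm $\|\cdot\|_v$: by Proposition~\ref{prop-normequiv}, $(X_v,\|\cdot\|_v)$ is a Banach space inducing the same topology as $\tho$ on $C_v$, and the local equivalence~\eqref{e-normequivdbar} is precisely the requirement~\eqref{eq-h-1}. Assumptions~\ref{h-2} and~\ref{h-3} hold for $(V,\tho)$ by convexity of $C$ and by the existence of a minimal geodesic between any two points of $\Cint$ (\cite[Proposition~1.12]{nussbaum88}). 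Assumption~\ref{h-4} (semidifferentiability of $f$ at $v$) is part of the hypotheses.

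The one genuinely new point is to reconcile the two admissible definitions of Bonsall's cone spectral radius. Since $v\in\Cint$, Proposition~\ref{prop-normequiv} gives $X_v=X$ with $\|\cdot\|$ and $\|\cdot\|_v$ equivalent, say $c_1\|\cdot\|_v\leq\|\cdot\|\leq c_2\|\cdot\|_v$. Writing $N_k$ and $N_k'$ for the norms of the homogeneous map $(f'_v)^k$ in the sense of~\eqref{definormh} computed from $\|\cdot\|$ and from $\|\cdot\|_v$ respectively, one checks that $(c_1/c_2)N_k'\leq N_k\leq(c_2/c_1)N_k'$; taking $k$-th roots and letting $k\to\infty$ makes the constant factors disappear, so $\bonsall{}(f'_v)=\lim_k N_k^{1/k}=\lim_k (N_k')^{1/k}$ takes the same value in both norms. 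Hence the hypothesis $\bonsall{}(f'_v)<1$ is unambiguous and, in particular, holds for the $\|\cdot\|_v$ norm required by Theorem~\ref{th-geom}.

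It then remains to invoke the two results. For the geometric convergence, when $G$ is connected all the hypotheses of Theorem~\ref{th-geom} (namely~\ref{h-1},~\ref{h-4}, and $\bonsall{}(f'_v)<1$ in the $\|\cdot\|_v$ norm) are met, and its conclusion gives $\limsup_{k\to\infty}\tho(f^k(x),v)^{1/k}\leq\bonsall{}(f'_v)$ for all $x\in G$. For the uniqueness statement, which is asserted without connectedness, I would argue as in Remark~\ref{rem-r1}: since $\bonsall{}(f'_v)<1$ and $f'_v$ is nonexpansive for $\|\cdot\|_v$ by Proposition~\ref{prop-sc-a1hold}, hence uniformly continuous on bounded sets, Proposition~\ref{prop1} shows that $\id-f'_v$ has Property~\PF\ (Assumption~\ref{h-5}) and that $0$ is the unique fixed point of $f'_v$ (Assumption~\ref{h-6}); together with~\ref{h-1}--\ref{h-4} and the already verified~\ref{h-2},~\ref{h-3}, Theorem~\ref{theo0} yields uniqueness of the fixed point of $f$ in $G$. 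The only mild obstacle is bookkeeping: making sure $f'_v$ is a self-map of all of $X$ (so that Bonsall's radius over $X$ is indeed the relevant quantity) and that its nonexpansiveness supplies the uniform continuity needed by Proposition~\ref{prop1}; both follow from $v\in\Cint$ and Proposition~\ref{prop-sc-a1hold}.
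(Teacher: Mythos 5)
Your proposal is correct and follows essentially the same route as the paper: the paper derives uniqueness by checking the hypotheses of Theorem~\ref{theo00d} (itself an instance of Theorem~\ref{theo0} on $(\Cint,\tho)$, with Assumptions~\ref{h00d5}--\ref{h00d6} supplied by Propositions~\ref{prop-sc-a1hold} and~\ref{prop1}, exactly as in your Remark~\ref{rem-r1} argument), and obtains the convergence rate as a direct corollary of Theorem~\ref{th-geom}. Your only addition is the explicit verification that $\bonsall{}(f'_v)$ is the same computed in $\|\cdot\|$ or $\|\cdot\|_v$ (since $v\in\Cint$ makes these equivalent norms on $X_v=X$), a point the paper leaves implicit.
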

\begin{proof}
By the proof of Theorem~\ref{theo00d}, $(\interior C,\tho)$ satisfies~\ref{h-1}.
By  Proposition~\ref{prop-sc-a1hold}, 
$f'_v$ is nonexpansive, and, a fortiori, uniformly continuous on bounded
sets of $C$. Therefore, Proposition~\ref{prop1} implies 
that Assumptions~\ref{h00d5} and~\ref{h00d6} of Theorem~\ref{theo00d}
are satisfied. Since the other assumptions of Theorem~\ref{theo62}
imply the other assumptions of Theorem~\ref{theo00d}, we get
the first assertion of Theorem~\ref{theo62}. (See also Remark~\ref{rem-r1}.)
The last one is
a direct corollary of Theorem~\ref{th-geom}.
\end{proof}

\begin{theorem}[Uniqueness of fixed points in $\Sigma$]\label{theo00}
\sloppy
Let $C$ be a normal cone with nonempty interior  in 
a Banach space $(X,\|\cdot\|)$, let $\psi\in C^*\setminus\{0\}$, and 
$\Sigma=\set{x\in \Cint}{\psi(x)=1}$.
Let $G$ be a relatively open subset of $\Sigma$ and
$g : G \rightarrow \Sigma$ be a nonexpansive map with 
respect to Hilbert's projective metric $d$
or Thompson's metric $\tho$.
Let $v\in  G$ be a fixed point of $g$: $g(v)=v$.
Make the following assumptions:
\begin{enumerate}
\renewcommand{\theenumi}{\rm (A\arabic{enumi})}
\renewcommand{\labelenumi}{\theenumi}
\item\label{h004} $g$ is semidifferentiable at $v$ with respect to 
$\psi^{-1}(0)$; 
\item\label{h005} The map $\id-g'_v: \psi^{-1}(0)\to \psi^{-1}(0)$
 has Property \PF;
\item\label{h006} The fixed point of $g'_v:\psi^{-1}(0)\to \psi^{-1}(0)$ is unique:
$g'_v(x) = x,\, x \in \psi^{-1}(0)\Rightarrow x =0$.
\end{enumerate}
Then, the fixed point of $g$ in $G$ is unique: 
$g(w) = w,\, w \in G\Rightarrow w =v$.
\end{theorem}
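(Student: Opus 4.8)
The plan is to derive this from the general uniqueness result Theorem~\ref{theo0}, applied to the metric space $(V,\delta)$ with $V=\Sigma$ and $\delta$ equal to $d$ or $\tho$ (whichever $g$ contracts), and with local Banach space $E_v=\psi^{-1}(0)$. Since $\Sigma$ lies in the affine hyperplane $v+\psi^{-1}(0)$ and not in the linear space $\psi^{-1}(0)$, I would first translate by $-v$, replacing $g$ by $x\mapsto g(x+v)-v$ on $G-v$, which is relatively open in $\psi^{-1}(0)$; this is an affine isometry, so it preserves $\delta$, convexity of subsets, and semidifferentiability, carries the fixed point to $0$, and has semiderivative $g'_v$. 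As norm on $E_v$ I would take $\|\cdot\|_v$ in the Thompson case and the oscillation $\omega_v$ in the Hilbert case; by~\eqref{equiv-omega0}--\eqref{equiv-omega1} these are equivalent on $\psi^{-1}(0)$, so $(\psi^{-1}(0),\|\cdot\|_v)$ is the relevant Banach space and the choice is immaterial for Assumptions~\ref{h-5}--\ref{h-6}.

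Most hypotheses are then routine. Completeness of $(\Sigma,d)$ and $(\Sigma,\tho)$ is Proposition~\ref{prop-birkhoff} with $q=\psi$ (homogeneous, order preserving, and positive on $C_v=\Cint$ since $\psi(v)=1$). Assumption~\ref{h-1} follows from the local equivalences~\eqref{e-normequivdbar} and~\eqref{e-normequivosc} together with the fact that $\|\cdot\|_v$, $\tho$ and $d$ induce one and the same topology on $\Cint\supset\Sigma$. Assumption~\ref{h-2} holds because the relevant balls are convex: a Thompson ball equals $\{z:e^{-r}x_0\le z\le e^{r}x_0\}\cap\Sigma$, an intersection of two convex order conditions, and a Hilbert ball is convex because the constraints $a_ix_0\le z_i\le b_ix_0$ with $b_i\le e^{r}a_i$ pass, with the same ratio, to convex combinations of the $z_i$. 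Finally Assumptions~\ref{h004}--\ref{h006} are exactly~\ref{h-4}--\ref{h-6}, since semidifferentiability of $g$ with respect to $\psi^{-1}(0)$ is precisely what produces the self-map $g'_v$ of $E_v$ appearing there.

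The delicate assumption is~\ref{h-3}, the nonemptiness of the slices $\Gamma_s$, which is the existence of geodesics. For Hilbert's metric this is clean: the segment $z_s=(1-s)v+sw$ stays in $\Sigma$, and computing $M(z_s/v)$, $m(z_s/v)$, $M(w/z_s)$, $m(w/z_s)$ shows $d(v,z_s)+d(z_s,w)=d(v,w)$, so $(\Sigma,d)$ is geodesic and~\ref{h-3} holds. The main obstacle is Thompson's metric, for which $(\Sigma,\tho)$ need not be geodesic—already for $C=\R^2_{\ge0}$, $\psi=(1,1)$ and $v,w$ symmetric about the barycentre one checks that no point strictly between them is $\tho$-additive—so Theorem~\ref{theo0} cannot be invoked verbatim. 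I would instead re-run its contraction argument with $\Gamma_s$ replaced by the fattened sets $\{z\in\Sigma:\tho(z,v)\le\tho(z_s,v),\ \tho(z,w)\le\tho(z_s,w)\}$ taken along the same segment: these contain $z_s$, are convex (intersections of Thompson balls), lie in $G$ for small $s$, and are $g$-invariant because $g$ fixes $v,w$ and is $\tho$-nonexpansive. Writing $\alpha v\le w\le\beta v$ with $\alpha\le1\le\beta$, one finds $\tho(z_s,v)$ of order $s$ and, when $\alpha<1<\beta$, $\tho(z_s,w)<\tho(v,w)$ for small $s$; hence every $z\in\Gamma_s$ satisfies $\tho(v,w)-\tho(z_s,w)\le\tho(z,v)\le\tho(z_s,v)$, both bounds of order $s$. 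This pins the fixed point of the contraction $f_{s,t}(x)=(1-t)g(x)+tz_s$ at distance of order $s$ from $v$ while forcing it to $v$ as $s\to0$, producing the sequence that contradicts the local uniqueness Lemma~\ref{uni-local}. The degenerate configurations $\alpha=1$ or $\beta=1$—which can occur only when $\psi$ annihilates the ray through $v-w$—escape this pinning, and settling that last case is where I expect the genuine difficulty to lie.
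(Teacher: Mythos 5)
Your setup and your treatment of the Hilbert case coincide with the paper's own proof: translate by $-v$, take $E_v=\psi^{-1}(0)$ with the equivalent norms $\|\cdot\|_v$ or $\omega_v$, get completeness from Proposition~\ref{prop-birkhoff}, Assumption~\ref{h-1} from \eqref{e-normequivdbar}--\eqref{e-normequivosc}, Assumption~\ref{h-2} from convexity of the balls, and Assumption~\ref{h-3} for $d$ from the fact that straight segments of $\Sigma$ are $d$-geodesics (the paper cites \cite[Proposition~1.9, page~25]{nussbaum88} for this; your segment computation proves the same thing).

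For the Thompson alternative the two arguments genuinely part ways, and your skepticism is well founded. The paper applies Theorem~\ref{theo0} verbatim there too, justifying Assumption~\ref{h-3} for $(\Sigma,\tho)$ by citing \cite[Proposition~1.12, page~34]{nussbaum88} for a minimal $\tho$-geodesic between any two points of $\Sigma_v$. But minimal $\tho$-geodesics live in $\Cint$ and in general leave the slice, and your example is correct: for $C=\R^2_+$, $\psi=(1,1)$, $v=(a,1-a)$, $w=(1-a,a)$ with $0<a<1/2$, the set of $z$ in the cone satisfying $\tho(v,z)\le\frac12\tho(v,w)$ and $\tho(z,w)\le\frac12\tho(v,w)$ reduces to the single point $\bigl(\sqrt{a(1-a)},\sqrt{a(1-a)}\bigr)$, on which $\psi<1$; hence the set $\Gamma_{1/2}$ of \eqref{gammas} is empty in $(\Sigma,\tho)$ and \ref{h-3} fails. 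So either the citation in the paper is being over-read or the Thompson half of the paper's proof has a gap; in any case, a complete proof of that half does require re-running the contraction argument of Theorem~\ref{theo0} on relaxed sets, exactly as you propose. Your relaxed sets have every property that argument needs: they contain $z_s$, are convex and closed (hence complete, by Proposition~\ref{prop-birkhoff}), are $g$-invariant because $g$ is $\tho$-nonexpansive and fixes $v$ and $w$, and lie in the ball $B_\tho(v,\tho(z_s,v))$ with $\tho(z_s,v)=O(s)$.

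The one place your proof stops -- the ``degenerate'' configurations $\alpha=1$ or $\beta=1$ -- is, however, not a real obstruction, and as written your Thompson case is incomplete precisely there. With $z_s=(1-s)v+sw$, $\alpha=m(w/v)$, $\beta=M(w/v)$, one has the exact formulas, valid for every $s\in(0,1)$:
\[
M(w/z_s)=\frac{\beta}{1+(\beta-1)s}\,,
\qquad
M(z_s/w)=\frac{1-(1-\alpha)s}{\alpha}\enspace.
\]
Suppose $\alpha=1$, i.e.\ $v\le w$; then $\beta>1$ (since $w\neq v$) and $\tho(v,w)=\log\beta$. The second formula gives $M(z_s/w)=1$, which never exceeds the first because $\beta\geq 1+(\beta-1)s$; hence
\[
\tho(z_s,w)=\log\frac{\beta}{1+(\beta-1)s}=\tho(v,w)-\log\bigl(1+(\beta-1)s\bigr)\enspace,
\]
so the pinning gap $\tho(v,w)-\tho(z_s,w)$ is of order $s$, exactly as when $\alpha<1<\beta$; the case $\beta=1$ is symmetric. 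What you missed is that in these configurations the frozen quantity ($M(z_s/w)$, resp.\ $M(w/z_s)$, equal to $1$) never realizes the maximum defining $\tho(z_s,w)$, so it cannot destroy the gap. Inserting this short computation, your argument closes in all cases and produces the sequence contradicting Lemma~\ref{uni-local}; without it, the Thompson case of your proposal is left unfinished.
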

\begin{proof}
We apply Theorem~\ref{theo0}.
Let $V=-v+\Sigma$. We consider on $V$ the two following metrics
$d_v(x,y) :=d(v+x,v+y)$ and $\tho_v(x,y) :=\tho(v+x,v+y)$.
Let $G_v=-v+G$ and consider 
the map $f:G_v\to V$, $f(x)=g(v+x)-v$.
If $g$ is nonexpansive with respect to $d$ (resp.\ $\tho$), then
$f$ is nonexpansive with respect to
$d_v$ (resp.\ $\tho_v$), and satisfies $f(0)=0$. 

We know, by Proposition~\ref{prop-birkhoff}, that $(\Sigma, d)$ and
$(\Sigma,\tho)$, hence $(V,d_v)$ and $(V,\tho_v)$ are complete metric spaces.
Since $v\in \Cint$, Proposition~\ref{prop-normequiv}, together with
\eqref{e-equivm}, \eqref{equiv-omega0}, and~\eqref{equiv-omega1}
show that $\|\cdot\|$, $\omega_v$,  $\|\cdot\|_v$,
$d$ and $\tho$ define the same topology on $\Sigma$.
Hence, $\|\cdot\|$, $\omega_v$, $\|\cdot\|_v$, $d_v$ and $\tho_v$ define
the same topology on $V$.
Since the equivalence~\eqref{e-normequivosc}
(resp.~\eqref{e-normequivdbar}) holds, $(V,d_v)$
(resp.\ $(V, \tho_v)$) satisfies~\ref{h-1} with $\Ev=\psi^{-1}(0)$
and $\nmv=\omega_v$ (resp.\ $\nmv=\|\cdot\|_v$).
Moreover, the set $G_v$ is open in $(V,d_v)$
and $(V,\tho_v)$.
{From} the definition of $d$, $\tho$ and the convexity of $C$,
it follows that the metric spaces $(V,d_v)$ and $(V,\tho_v)$ both satisfy 
Assumption~\ref{h-2}.
Assumption~\ref{h-3} for $(V,d_v)$ (resp.\ $(V,\tho_v)$)
 follows from the existence
of a (minimal) geodesic for $d$ (resp.\ $\tho$) 
between any two points of $\Sigma_v$, which is
proved in~\cite[Proposition~1.9, page 25]{nussbaum88}
(resp.~\cite[Proposition~1.12, page 34]{nussbaum88}).
Then, Assumptions~\ref{h004}--\ref{h006}
of Theorem~\ref{theo00} correspond to
Assumptions~\ref{h-4}--\ref{h-6} of Theorem~\ref{theo0}, which
yields the conclusion of Theorem~\ref{theo00}.
\end{proof}

\begin{remark}
The proof of Theorem~\ref{theo00d} 
remains valid 
if one replaces $\Cint$ by the cone $C_u$ for some $u\in C\setminus\{0\}$
where $C$ is still a normal cone,
while assuming that $G$ is an open subset
of $(C_u,\tho)$, and replacing
$\|\cdot\|$ by $\|\cdot\|_u$. Similarly, the statements
of Theorem~\ref{theo00},
and also of Theorem~\ref{theo01} and Corollaries~\ref{theo1}, \ref{coro1},
and~\ref{coro2} below, have obvious extensions
applying to $C_u$ or $\Sigma_u$.
\end{remark}

In order to obtain
a uniqueness result for an eigenvector of $f$,
we shall apply Theorem~\ref{theo00} 
to the map $g$ introduced in Lemma~\ref{ftog}.
Natural assumptions on $f$ which ensure
the nonexpansiveness of $g$ together with
Assumptions~\ref{h004}--\ref{h006}
in Theorem~\ref{theo00}, are captured in the following result.
\begin{theorem}[Uniqueness of eigenvectors in $\Sigma$]\label{theo01}
\sloppy
Let $C$ be a normal cone with nonempty interior  in 
a Banach space $(X,\|\cdot\|)$, let $\psi\in C^*\setminus\{0\}$, and denote
$\Sigma=\set{x\in \Cint}{\psi(x)=1}$.
Let $G$ be an open subset of $(X,\|\cdot\|)$ included in $C$ and
$f : G  \rightarrow \Cint$ be a map such that
$f|_{G\cap \Sigma}$ is nonexpansive with respect to Hilbert's projective 
metric $d$.
Assume that $v\in  G\cap \Sigma$ is a fixed point of $f$: $f(v)=v$.
Make the following assumptions:
\begin{enumerate}
\renewcommand{\theenumi}{\rm (A\arabic{enumi})}
\renewcommand{\labelenumi}{\theenumi}
\item\label{h04} $f$ is semidifferentiable at $v$;
\item\label{h05} 
The map $(\id-f'_v)|_{\psi^{-1}(0)}: \psi^{-1}(0)\to X$ has Property \PF;
\item\label{h06} The fixed point of $f'_v$ in $\psi^{-1}(0)$ is unique:
$f'_v(x) = x,\, x \in \psi^{-1}(0) \Rightarrow x =0$;
\item\label{h07} $f'_v$ is order preserving;
\item\label{h08} $f'_v$ is additively subhomogeneous with respect to $v$.
\end{enumerate}
Then, the eigenvector of $f$ in $G\cap \Sigma$ is unique: 
$\exists \lambda>0,\; f(w) = \lambda w,\, w \in G\cap \Sigma\Rightarrow w =v$.
\end{theorem}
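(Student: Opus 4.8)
The plan is to apply Theorem~\ref{theo00} to the normalized map $g=\tilde f|_{G\cap\Sigma}$ attached to $f$ in Lemma~\ref{ftog}, exploiting that the eigenvectors of $f$ lying in $\Sigma$ are exactly the fixed points of $g$. Indeed, if $w\in G\cap\Sigma$ satisfies $f(w)=\lambda w$ with $\lambda>0$, then $\psi(f(w))=\lambda\psi(w)=\lambda$, so $g(w)=f(w)/\psi(f(w))=w$; conversely a fixed point of $g$ in $G\cap\Sigma$ is such an eigenvector. Since $f(v)=v$ and $\psi(v)=1$, the point $v$ is a fixed point of $g$; as $G$ is open in $X$, the set $G\cap\Sigma$ is relatively open in $\Sigma$; and by Lemma~\ref{ftog} the hypothesis that $f|_{G\cap\Sigma}$ is nonexpansive in $d$ makes $g$ nonexpansive in $d$. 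Thus it suffices to verify Assumptions~\ref{h004}--\ref{h006} of Theorem~\ref{theo00} for $g$, since the resulting uniqueness of the fixed point of $g$ will give the uniqueness of the eigenvector of $f$.

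First I would record the semidifferential data. By Assumption~\ref{h04} and Lemma~\ref{ftog}, $g$ is semidifferentiable at $v$ with respect to $\psi^{-1}(0)$, with $g'_v=\tilde f'_v|_{\psi^{-1}(0)}$ and, by~\eqref{gprimv}, $\tilde f'_v(x)=f'_v(x)-\psi(f'_v(x))v$; this is Assumption~\ref{h004}. Next I would set $h:=f'_v$, which is order preserving by~\ref{h07}, additively subhomogeneous with respect to $v$ by~\ref{h08}, and homogeneous, so that $h(0)=0$ and $0$ lies in its fixed point set $S$. Lemma~\ref{lemma-ess} then applies, and taking $y=0$ in~\eqref{cw1}--\eqref{cw2}, together with the elementary fact that $\sbf{v}{z}\le 0\le\stf{v}{z}$ for every $z\in\psi^{-1}(0)$ (apply $\psi$ to the inequalities defining $M$ and $m$, using $\psi(v)=1$), yields the two key bounds
\[
\stf{v}{z-f'_v(z)}\ge 0,\qquad \sbf{v}{z-f'_v(z)}\le 0,\qquad z\in\psi^{-1}(0).
\]
I expect these order-theoretic bounds to be the engine of the argument.

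To obtain Assumption~\ref{h006}, suppose $z\in\psi^{-1}(0)$ is a fixed point of $g'_v$. Then $z-f'_v(z)=-\mu v$ with $\mu:=\psi(f'_v(z))$, and the two bounds force $-\mu\ge 0$ and $-\mu\le 0$, i.e.\ $\mu=0$; hence $f'_v(z)=z$ and Assumption~\ref{h06} gives $z=0$. To obtain Assumption~\ref{h005}, I would take a bounded sequence $\seq{x_j\in\psi^{-1}(0)}{j\ge 1}$ with $w_j:=(\id-g'_v)(x_j)\to 0$. Writing $c_j:=\psi(f'_v(x_j))$, one has $x_j-f'_v(x_j)=w_j-c_jv$, so the translation identity $\stf{v}{z+sv}=\stf{v}{z}+s$ turns the two bounds into $\sbf{v}{w_j}\le c_j\le\stf{v}{w_j}$. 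Since $v\in\Cint$, Proposition~\ref{prop-normequiv} gives $X_v=X$ with $\|\cdot\|_v$ equivalent to $\|\cdot\|$, whence $w_j\to 0$ in $\|\cdot\|_v$ and both $\stf{v}{w_j},\sbf{v}{w_j}\to 0$; therefore $c_j\to 0$ and $(\id-f'_v)(x_j)=w_j-c_jv\to 0$. Assumption~\ref{h05} then furnishes a convergent subsequence of $\seq{x_j}{j\ge 1}$, which is exactly Property~\PF\ for $\id-g'_v$.

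The main obstacle is precisely Assumption~\ref{h005}: since $\id-g'_v$ is the composition of $(\id-f'_v)|_{\psi^{-1}(0)}$ with the projection $z\mapsto z-\psi(z)v$ onto $\psi^{-1}(0)$, the hypothesis $(\id-g'_v)(x_j)\to 0$ is a priori strictly weaker than $(\id-f'_v)(x_j)\to 0$, and controlling the lost component $c_jv$ is the crux of the matter; it is exactly the order-theoretic sandwiching $\sbf{v}{w_j}\le c_j\le\stf{v}{w_j}$ that forces $c_j\to 0$ and closes this gap. Once Assumptions~\ref{h004}--\ref{h006} are established, Theorem~\ref{theo00} shows that $v$ is the unique fixed point of $g$ in $G\cap\Sigma$, and by the correspondence of the first paragraph $v$ is the unique eigenvector of $f$ in $G\cap\Sigma$, which completes the proof.
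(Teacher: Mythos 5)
Your proposal is correct and follows essentially the same route as the paper: reduce to Theorem~\ref{theo00} applied to the normalized map $g=\tilde f|_{G\cap\Sigma}$ of Lemma~\ref{ftog}, use Lemma~\ref{lemma-ess} with $h=f'_v$ (and $0$ in its fixed-point set) to get $\sbf{v}{x-f'_v(x)}\leq 0\leq\stf{v}{x-f'_v(x)}$ on $\psi^{-1}(0)$, and then exploit~\eqref{gprimv} to sandwich $\psi(f'_v(x))$ between $\sbf{v}{x-g'_v(x)}$ and $\stf{v}{x-g'_v(x)}$, which yields both Assumptions~\ref{h005} and~\ref{h006}. The only cosmetic difference is that the paper packages this sandwich as the single norm inequality $\|x-f'_v(x)\|_v\leq 2\|x-g'_v(x)\|_v$ and deduces both assumptions from it, whereas you apply the same estimate separately in each verification.
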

\begin{proof}
Let $\tilde{f}:G \rightarrow \Sigma$ be defined by~\eqref{defg},
$\tilde{f}(x)=\frac{f(x)}{\psi(f(x))}$,  and 
$g=\tilde{f}|_{G\cap \Sigma}:G\cap \Sigma\to \Sigma$.
We shall prove that $g$ satisfies the assumptions of Theorem~\ref{theo00}
with $G$ replaced by $G\cap\Sigma$.
First, since $f|_{G\cap \Sigma}$ is nonexpansive with respect to $d$, 
so is $g$ (by Lemma~\ref{ftog}).
Since, $f(v)=v$ and $v\in G\cap \Sigma$,  $g(v)=v$.
{From} Assumption~\ref{h04} of Theorem~\ref{theo01}
and Lemma~\ref{ftog}, we get Assumption~\ref{h004}
of Theorem~\ref{theo00}.
It remains to check Assumptions~\ref{h005} and \ref{h006}
of Theorem~\ref{theo00}.

Let us first show that
\begin{equation}\label{eqth3}
\| x-f'_v(x)\|_v\leq 2 \, \| x-g'_v(x)\|_v \quad \forall x\in \psi^{-1}(0)
\enspace .
\end{equation}
Since $f'_v(0)=0$ and, by~\ref{h07} and \ref{h08}, $f'_v$ is
order preserving and additively subhomogeneous with respect to $v$,
$h=f'_v$ satisfies the assumptions of Lemma~\ref{lemma-ess} with $S\ni 0$.
Hence, from~\eqref{cw1} and \eqref{cw2}, we obtain, for all $x\in X$,
\begin{align}
\label{cw1p}
\sbf{v}{x-f'_v(x)} &\leq \sbf{v}{x}\vee 0\\
\stf{v}{x-f'_v(x)} &\geq \stf{v}{x}\wedge 0 \enspace . \label{cw2p}
\end{align}
Since $\psi\in C^*$, $v\in \Sigma$ and $\sbf{v}{x} v\leq x\leq \stf{v}{x} v$,
we get $\sbf{v}{x} \leq \psi(x)\leq \stf{v}{x}$. Hence, 
for all $x\in \psi^{-1}(0)$,  $\sbf{v}{x} \leq 0\leq \stf{v}{x}$
which with \eqref{cw1p} and \eqref{cw2p} leads to 
\begin{equation}\label{eqth1}
\sbf{v}{x-f'_v(x)} \leq 0\leq \stf{v}{x-f'_v(x)}\enspace .
\end{equation}
Denote $y=x-g'_v(x)$. By~\eqref{gprimv}
 and  $g'_v=\tilde{f}'_v|_{\psi^{-1}(0)}$,
we get 
\begin{equation}\label{eqth5}
x-f'_v(x)=y-\psi(f'_v(x)) v\enspace ,
\end{equation}
thus
$\sbf{v}{x-f'_v(x)}=\sbf{v}{y}-\psi(f'_v(x))$ and
$\stf{v}{x-f'_v(x)}=\stf{v}{y}-\psi(f'_v(x))$.
Using~\eqref{eqth1}, we get
$\sbf{v}{y}\leq \psi(f'_v(x))\leq \stf{v}{y}$, hence
\begin{equation}\label{eqth4}
|\psi(f'_v(x))|\leq \|y\|_v\enspace .
\end{equation}
Gathering~\eqref{eqth5} and \eqref{eqth4}, we get
\[ \|x-f'_v(x)\|_v\leq\| y\|_v +|\psi(f'_v(x))|\leq 2\| y\|_v \enspace ,\]
which shows~\eqref{eqth3}.

Using~\eqref{eqth3}, we get that, if $g'_v(x)=x$ and $x\in \psi^{-1}(0)$, 
then $f'_v(x)=x$, whence by Assumption~\ref{h06} in Theorem~\ref{theo01},
$x=0$. This shows Assumption~\ref{h006} in Theorem~\ref{theo00}.
Let now $\seq{x_j}{j\geq 1}$ be a bounded sequence in
$\psi^{-1}(0)$ such that 
$x_j-g'_v(x_j)\rightarrow_{j\to \infty} 0$.
By~\eqref{eqth3}, this implies that 
$x_j-f'_v(x_j)\rightarrow_{j\to \infty} 0$,
whence,  by Assumption~\ref{h05} of Theorem~\ref{theo01},
$\seq{x_j}{j\geq 1}$ admits
a convergent subsequence. This shows Assumption~\ref{h005}
of Theorem~\ref{theo00},
and completes the verification of the assumptions of this theorem.

If $w\in G\cap \Sigma$ satisfies $f(w) = \lambda w$ for some $\lambda>0$,
then $g(w)=w$, hence, by Theorem~\ref{theo00}, $w=v$.
\end{proof}

\begin{remark}
As a consequence of Lemma~\ref{preli-fpv}, Assumption~\ref{h07} 
of Theorem~\ref{theo01} is fulfilled
as soon as $f$ is order preserving in a neighborhood of $v$,
and Assumption~\ref{h08} is fulfilled as soon as $f$ is subhomogeneous in
a neighborhood of $v$.
These properties also imply, by Lemma~\ref{lemma-nonexpan}, 
that $f|_{G\cap \Sigma}$ is nonexpansive with respect to Hilbert's
projective metric $d$.
They will be used in particular in Corollary~\ref{theo1}.
Another way to ensure~\ref{h08} is given by 
Lemma~\ref{preli-fpv33}.
In particular, \ref{h08}  is fulfilled when 
$f'_v$ is linear (that is $f$ is differentiable at $v$)
and $f'_v(v)\leq v$.
Moreover, by  Lemma~\ref{preli-fpv},~\ref{preli-fpv12}, 
Assumption~\ref{h08} 
is fulfilled when $f$ is convex and $f'_v(v)\leq v$.
\end{remark}

\begin{corollary}[Uniqueness of eigenvectors]\label{theo1}
Let $C$ be a normal cone with nonempty interior  in 
a Banach space 
$(X,\|\cdot\|)$. Let $f : \Cint \rightarrow \Cint$ be 
homogeneous and order-preserving.
Let $S = \{x \in \Cint \mid f(x) = x \}$, and assume
that $v \in S$. Make the following assumptions:
\begin{enumerate}
\renewcommand{\theenumi}{\rm (A\arabic{enumi})}
\renewcommand{\labelenumi}{\theenumi}
\item\label{h4} $f$ is semidifferentiable at $v$; 
\item\label{h5} The map $\id-f'_v: X \to X$ has Property \PF;
\item\label{h6} if $f'_v(x) = x$ for some $x \in X$, then $x \in 
\set{ \lambda v}{ \lambda \in \R}$;
\end{enumerate}
Then, $S = \{\lambda v \mid \lambda > 0 \}$.
\end{corollary}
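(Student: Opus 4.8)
The plan is to deduce this corollary from Theorem~\ref{theo01} by transporting the cone fixed points onto a section of the interior. One inclusion is immediate: since $f$ is homogeneous and $f(v)=v$, we have $f(\lambda v)=\lambda f(v)=\lambda v$ for every $\lambda>0$, so $\{\lambda v\mid\lambda>0\}\subset S$. All the content lies in the reverse inclusion.

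To set up Theorem~\ref{theo01}, I would first pick $\psi\in C^*\setminus\{0\}$ with $\psi(v)>0$, which exists by the Hahn--Banach argument recalled in~\S\ref{sec-intro-hilbert} since $v\in\Cint\subset C\setminus\{0\}$, and rescale $\psi$ (harmless since $C^*$ is a cone) so that $\psi(v)=1$. Put $G=\Cint$ and $\Sigma=\set{x\in\Cint}{\psi(x)=1}$, so that $v\in G\cap\Sigma$. I would then verify the hypotheses of Theorem~\ref{theo01} one by one. The nonexpansiveness of $f|_{G\cap\Sigma}$ for Hilbert's metric $d$ follows from Lemma~\ref{lemma-nonexpan}\,(i), since $f$ is homogeneous and order preserving on $\Cint$. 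Semidifferentiability at $v$ is hypothesis~\ref{h4}. The requirement that $(\id-f'_v)|_{\psi^{-1}(0)}$ have Property~\PF\ follows at once from~\ref{h5}: a bounded sequence in $\psi^{-1}(0)\subset X$ along which $\id-f'_v\to0$ already has a convergent subsequence, because $\id-f'_v$ has Property~\PF\ on all of $X$. The uniqueness of the fixed point of $f'_v$ in $\psi^{-1}(0)$ follows from~\ref{h6}: if $f'_v(x)=x$ with $x\in\psi^{-1}(0)$, then $x=\lambda v$, and $\psi(x)=0$ with $\psi(v)=1$ forces $\lambda=0$, so $x=0$. Finally, since $f$ is order preserving and homogeneous on a neighborhood of $v$, Lemma~\ref{preli-fpv},~\ref{preli-fpv1} gives that $f'_v$ is order preserving, while Lemma~\ref{preli-fpv},~\ref{preli-fpv2} gives that $f'_v$ is additively homogeneous, hence additively subhomogeneous, with respect to $v$. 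Here $f'_v$ denotes the semiderivative with respect to $C=X$, which is legitimate because $v$ lies in the interior of $G=\Cint$.

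With Theorem~\ref{theo01} now applicable, its conclusion is that any $w\in G\cap\Sigma=\Sigma$ satisfying $f(w)=\lambda w$ for some $\lambda>0$ must equal $v$. To finish, take an arbitrary $w\in S$, so $w\in\Cint$ and $f(w)=w$. Since $\psi(w)>0$, the normalized vector $\hat w:=w/\psi(w)$ lies in $\Sigma$, and by homogeneity $f(\hat w)=\hat w$, i.e.\ $\hat w$ is an eigenvector of $f$ in $\Sigma$ with eigenvalue $1$. Theorem~\ref{theo01} then gives $\hat w=v$, whence $w=\psi(w)\,v\in\{\lambda v\mid\lambda>0\}$. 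Combining the two inclusions gives $S=\{\lambda v\mid\lambda>0\}$.

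I expect no serious mathematical obstacle here, since the statement is essentially a packaged instance of Theorem~\ref{theo01}, and every transfer of structure from $f$ to its semiderivative $f'_v$ is supplied by the earlier lemmas. The points needing a little care are purely organizational: that the normalization $\psi(v)=1$ costs nothing, and that the ``projective'' uniqueness furnished by Theorem~\ref{theo01} (uniqueness within the single section $\Sigma$) upgrades, through homogeneity, to uniqueness of the entire eigenline in $\Cint$.
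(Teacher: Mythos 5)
Your proposal is correct and follows essentially the same route as the paper's own proof: both verify the hypotheses of Theorem~\ref{theo01} with $G=\Cint$, $\psi\in C^*\setminus\{0\}$ normalized so that $\psi(v)=1$ (nonexpansiveness on $\Sigma$ via Lemma~\ref{lemma-nonexpan}, Property~\PF\ by restriction, the fixed-point condition in $\psi^{-1}(0)$ via \ref{h6}, and \ref{h07}--\ref{h08} via Lemma~\ref{preli-fpv}, parts~\ref{preli-fpv1} and~\ref{preli-fpv2}), and both conclude by normalizing an arbitrary $w\in S$ to the section $\Sigma$ and invoking the theorem. No gaps.
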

\begin{proof}
Let us first check that $f$ satisfies the assumptions of Theorem~\ref{theo01}.
Consider $G=\Cint$ which is clearly open.
Since $v\in C\setminus\{0\}$, one can choose $\psi\in C^*\setminus\{0\}$
such that $\psi(v)=1$. In that case, $v\in \Sigma=G\cap \Sigma$ and $f(v)=v$.
Since $f$ is order preserving and homogeneous, $f|_\Sigma$ is
nonexpansive with respect to $d$, by Lemma~\ref{lemma-nonexpan},\ref{lemma-nonexpan-1}.
Assumption~\ref{h4} of Corollary~\ref{theo1} corresponds to
Assumption~\ref{h04} of Theorem~\ref{theo01}.
Assumption~\ref{h5} of Corollary~\ref{theo1} implies
Assumption~\ref{h05} of Theorem~\ref{theo01}.
In order to show Assumption~\ref{h06} of Theorem~\ref{theo01},
let us consider $x\in X$ such that
 $f'_v(x)=x$ and $\psi(x)=0$. By Assumption~\ref{h6}
of Corollary~\ref{theo1},
$x=\lambda v$ for some $\lambda\in\R$, and since $\psi(x)=0$,
we get $\lambda=0$, thus $x=0$, which shows Assumption~\ref{h06}
of Theorem~\ref{theo01}. 
Assumptions~\ref{h07} and \ref{h08} of Theorem~\ref{theo01} are deduced from
Lemma~\ref{preli-fpv}, \ref{preli-fpv1} and \ref{preli-fpv2}
 respectively, using the fact that $f$ is order preserving and homogeneous.
This completes the proof of the assumptions of Theorem~\ref{theo01}.

Let $x\in S$, then $\lambda=\psi(x)>0$, and,
since $f$ is homogeneous,  $y=\frac{x}{\lambda}$ satisfies $f(y)=y$
and $y\in \Sigma$.
{From} Theorem~\ref{theo01}, this implies $y=v$, hence $x=\lambda v$.
Conversely, if $x=\lambda v$ for some $\lambda>0$, then $x\in S$,
since $f$ is homogeneous.
\end{proof}

If $f$ is homogeneous, $f(v)=v$ and 
$f$ satisfies Assumption~\ref{h4} of Corollary~\ref{theo1},
we have trivially $f'_v(\lambda v)= \lambda v$, 
for all $\lambda\in\R$. Therefore, Assumption~\ref{h6}
of Corollary~\ref{theo1} 
can be thought of as a uniqueness assumption
for the eigenvector of $f'_v$, and
Corollary~\ref{theo1} states in essence that 
the uniqueness of the eigenvector of $f'_v$
implies the uniqueness of the eigenvector
of $f$.

Let us state a corollary of Theorem \ref{th-geom} 
in the framework of Corollary \ref{theo1}.
If $X$ is a vector space endowed with a 
seminorm $\omega$ and $h$ is a homogeneous self-map of a cone 
$C$ of $X$, we generalize \eqref{definormh} and \eqref{eq31-bonsallcspr} by
setting:
\begin{equation}\label{6*}
\omega_C(h) = \sup_{\substack{x \in C \\ \omega(x) \neq 0}}
 \frac{\omega(h(x))}{\omega(x)} = 
\sup_{\substack{x \in C \\ \omega(x) = 1}} \omega(h(x)) \in [0, + \infty]
\end{equation}
and
\begin{equation}\label{6**}
\bonsall{C} (h) = \lim_{k \rightarrow \infty} \omega_C (h^k)^{1/k}.
\end{equation}
Again, when $C=X$, we omit $C$ in these notations.

\begin{theorem}\label{th-6.8}
Let $C$ be a normal cone with nonempty interior  in 
a Banach space $(X,\|\cdot\|)$.
Let $f : \Cint \rightarrow \Cint$ be homogeneous and order-preserving.
Let $S =\set{x \in \Cint}{f(x) = x }$, and assume
that $v \in S$.
Assume that $f$ is semidifferentiable at $v$ with $\bonsall{} (f'_v) < 1$,
where $\bonsall{}$ is defined as in {\rm (\ref{6*},\ref{6**})} with respect to
the seminorm $\omega_v$. Then,
for all $x \in \Cint$, 
\begin{equation}\label{6*3}
\quad \limsup_{k \rightarrow \infty} d(f^k(x), v)^{1/k} \leq \bonsall{}(f'_v)
\enspace,
\end{equation}
where $d$ is the Hilbert's projective metric, and
there is a scalar $\lambda > 0$ (depending on $x$)
such that
\begin{equation}\label{6*4}
\limsup_{k\to\infty} \tho(f^k(x), \lambda v)^{1/k} 
\leq \bonsall{}(f'_v) \enspace,
\end{equation}
where $\tho$ is the Thompson's metric.
\end{theorem}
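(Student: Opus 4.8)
The plan is to reduce everything to Theorem~\ref{th-geom} applied to the normalized map on the slice $\Sigma$, exactly as in the proof of Theorem~\ref{theo00}, and then to transfer the resulting Hilbert-metric convergence back to the homogeneous map $f$ using the projective invariance of $d$. First I would pick $\psi\in C^*\setminus\{0\}$ with $\psi(v)=1$, set $\Sigma=\set{x\in\Cint}{\psi(x)=1}$, introduce $\tilde f(x)=f(x)/\psi(f(x))$, and put $g=\tilde f|_\Sigma:\Sigma\to\Sigma$. By Lemma~\ref{ftog}, $g$ is $d$-nonexpansive, fixes $v$, is semidifferentiable at $v$, and $g'_v=\tilde f'_v|_{\psi^{-1}(0)}$ is given by~\eqref{gprimv}. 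Since $\Cint$ is convex, $\Sigma$ is convex, hence connected, and $(\Sigma,d)$ is complete by Proposition~\ref{prop-birkhoff}; after the translation $V=-v+\Sigma$ used in the proof of Theorem~\ref{theo00}, the pair $(V,d_v)$ satisfies~\ref{h-1} with $E_0=\psi^{-1}(0)$ and norm $\omega_v$.

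The quantitative heart of the argument is that normalization does not change the Bonsall radius: I claim $\bonsall{}(g'_v)=\bonsall{}(f'_v)$, both computed with $\omega_v$. Since $f$ is homogeneous and $f(v)=v$, Lemma~\ref{preli-fpv},~\ref{preli-fpv2} shows $f'_v$ is additively homogeneous with respect to $v$, i.e.\ $f'_v(x+sv)=f'_v(x)+sv$; by induction each power $(f'_v)^k$ has the same property. Combining this with $g'_v(x)=f'_v(x)-\psi(f'_v(x))v$, a short induction gives that, for every $x\in\psi^{-1}(0)$, the vectors $(g'_v)^k(x)$ and $(f'_v)^k(x)$ differ by a multiple of $v$, hence have equal $\omega_v$-value (recall $\omega_v$ vanishes exactly on $\R v$). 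Writing any $x\in X$ as $x=(x-\psi(x)v)+\psi(x)v$ with $x-\psi(x)v\in\psi^{-1}(0)$ and $\omega_v(x)=\omega_v(x-\psi(x)v)$, additive homogeneity reduces the $\omega_v$-operator norm of $(f'_v)^k$ over $X$ to the one over $\psi^{-1}(0)$, which in turn equals that of $(g'_v)^k$; thus $\omega_X((f'_v)^k)=\omega_{\psi^{-1}(0)}((g'_v)^k)$ for all $k$, and the claim follows on taking $k$-th roots. In particular $\bonsall{}(g'_v)<1$, so Theorem~\ref{th-geom} applies to $g$ on $(\Sigma,d)$ and yields $\limsup_k d(g^k(y),v)^{1/k}\le\bonsall{}(f'_v)$ for all $y\in\Sigma$. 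As $d$ is projective and $f$ is homogeneous, $f^k(x)$ is a positive multiple of $g^k(x/\psi(x))$, whence $d(f^k(x),v)=d(g^k(x/\psi(x)),v)$ for every $x\in\Cint$; this proves~\eqref{6*3}.

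For the Thompson estimate~\eqref{6*4} the direction alone is insufficient, since $\tho$ also records the scale. Writing $z_k=f^k(x)/\psi(f^k(x))=g^k(x/\psi(x))\in\Sigma$ and $\alpha_k=\psi(f^k(x))>0$, so that $f^k(x)=\alpha_k z_k$, the equivalence~\eqref{e-equivm} on $\Sigma$ gives $\tho(z_k,v)\le d(z_k,v)$, hence $z_k\to v$ geometrically in $\tho$ with rate bounded by $\bonsall{}(f'_v)$. The remaining task is to show $\alpha_k$ converges geometrically to some $\lambda>0$. Homogeneity gives $\alpha_{k+1}/\alpha_k=\psi(f(z_k))$, so $\log\alpha_k=\log\alpha_0+\sum_{j<k}\log\psi(f(z_j))$. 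Applying $\psi$ to $e^{-\tho(a,v)}v\le a\le e^{\tho(a,v)}v$ (valid since $\psi(v)=1$) yields $|\log\psi(a)|\le\tho(a,v)$, and since $f$ is order-preserving and homogeneous it is $\tho$-nonexpansive by Lemma~\ref{lemma-nonexpan}\,(i), so $|\log\psi(f(z_j))|\le\tho(f(z_j),v)\le\tho(z_j,v)$. Because $\sum_j\tho(z_j,v)<\infty$ by geometric convergence, the series converges to some $L$; setting $\lambda=e^L$, the tail bound $|\log(\alpha_k/\lambda)|\le\sum_{j\ge k}\tho(z_j,v)$ decays geometrically. Finally, by scale invariance of $\tho$ and the triangle inequality,
\[
\tho(f^k(x),\lambda v)=\tho(\alpha_k z_k,\lambda v)\le \tho(z_k,v)+|\log(\alpha_k/\lambda)|,
\]
where both terms decay geometrically with rate arbitrarily close to $\bonsall{}(f'_v)$, giving~\eqref{6*4}.

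The step I expect to be the main obstacle is precisely this geometric control of the scaling factor $\alpha_k$: one must convert the projective ($d$-metric) convergence of the directions into a genuinely summable bound on $\log\psi(f(z_k))$, which is where the elementary inequality $|\log\psi(a)|\le\tho(a,v)$ and the $\tho$-nonexpansiveness of $f$ are essential. The identity $\bonsall{}(g'_v)=\bonsall{}(f'_v)$ is routine but must be verified for all powers, not merely for $k=1$, and this is exactly where additive homogeneity of $f'_v$ is used.
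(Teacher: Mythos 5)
Your proposal is correct and follows essentially the same route as the paper's proof: normalize to $g=\tilde f|_\Sigma$, establish $\bonsall{}(g'_v)=\bonsall{}(f'_v)$ via the additive homogeneity of $f'_v$ and~\eqref{gprimv}, apply Theorem~\ref{th-geom} on the slice to get~\eqref{6*3}, and then control the scaling factor through the recursion $\alpha_{k+1}=\alpha_k\,\psi(f(z_k))$ with the bound $|\log\psi(f(z_k))|\leq\tho(z_k,v)$ — which is exactly the paper's $1$-cocycle argument~\eqref{e-cocycle}, with the $\tho$-nonexpansiveness of $\psi\comp f$ merely split into the two elementary steps $|\log\psi(a)|\leq\tho(a,v)$ and $\tho$-nonexpansiveness of $f$. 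There is no gap; the final triangle-inequality estimate and the geometric summability of the tail match the paper's conclusion of~\eqref{6*4}.
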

\begin{proof}
Consider as in the proof of Corollary \ref{theo1},
$G =\Cint$ and $\psi \in C^*\backslash \{0\}$ such that $\psi(v)=1$.
Denote again $\Sigma = \set{x \in \Cint}{\psi(x) = 1}$.
Since $f$ is homogeneous, the functions
$\tilde{f}$ of \eqref{defg} and $g = \tilde{f} \mid_\Sigma$ satisfy :
\[f^k(x) = \psi (f^k(x)) \tilde{f}^k(x) \text{ and } \tilde{f}^k(x) = g^k \left(\frac{x}{\psi(x)} \right)
\enspace .
\]
Hence
\begin{equation}\label{6*5}
d(f^k(x),v) = d \left( g^k \left(\frac{x}{\psi(x)} \right), v \right).
\end{equation}
Moreover, by Lemma~\ref{preli-fpv},\ref{preli-fpv2},
$f'_v$ is additively homogeneous with respect to $v$,
and using~\eqref{gprimv}, we get by 
induction on $k$:
\begin{align}\label{induc-imme}
(\tilde{f}'_v)^k (x) & = (f'_v)^{k}(x) - \psi ((f'_v)^k(x))v 
 = (g'_v)^{k}(x - \psi(x)v)\enspace.
\end{align}
Indeed, for any $y\in G$, $\tilde{f}'_v(y)$ is 
the unique vector in $\psi^{-1}(0)$ of the form $f'_v(y)-\lambda v$ with
$\lambda\in\R$. In particular $(\tilde{f}'_v)^{k+1} (x)$ is the unique element of
$\psi^{-1}(0)$ of the form $f'_v((\tilde{f}'_v)^k (x))-\lambda v$.
If~\eqref{induc-imme} holds for $k$ then using the
property that $f'_v$ is additively homogeneous with respect to $v$,
we get that $(\tilde{f}'_v)^{k+1} (x)=(f'_v)^{k+1} (x)-\lambda' v$
for some $\lambda'\in\R$ and since it belongs to $\Sigma$ we 
deduce~\eqref{induc-imme} for $k+1$.
Hence
\begin{align*}
\omega_v ((f'_v)^k(x)) & = \omega_v((\tilde{f}'_v)^k(x)) 
 = \omega_v ((g'_v)^k (x - \psi(x) v)) \enspace .
\end{align*}
Therefore,
\begin{align*}
\omega((f'_v)^k) & = \sup_{\substack{x \in X \\ \omega_v(x) \neq 0}}
\frac{\omega_v((g'_v)^k(x - \psi(x)v))}{\omega_v(x - \psi(x)v)} 
 =  \sup_{\substack{x \in \psi^{-1}(0) \\ \omega_v(x) \neq 0}}
\frac{\omega_v( (g'_v)^k(x))}{\omega_v(x)} = \omega((g'_v)^k)
\end{align*}
which shows that
\begin{equation}\label{6*6}
\bonsall{}(f'_v) =\bonsall{}(g'_v)\enspace.
\end{equation}
{From}~\eqref{6*5} and~\eqref{6*6}, \eqref{6*3} is equivalent to
\begin{equation}\label{6*7}
\forall x\in \Sigma,\qquad
\limsup_{k \rightarrow \infty} d(g^k(x),v)^{1/k} \leq \bonsall{}(g'_v)
\enspace.
\end{equation}
Since $\omega_v$ is a norm on $\psi^{-1}(0)$
and $g:\Sigma\to\Sigma$ 
is nonexpansive with respect to $d$ (by Lemma \ref{ftog}),
\eqref{6*7} is obtained by applying Theorem~\ref{th-geom},
using the same transformations
as in the first paragraph of the proof of Theorem~\ref{theo00}.

We now prove~\eqref{6*4}. By homogeneity of $f$, it is enough
to consider the case when $x\in \Sigma$.
Then, since $d$ and $\tho$ are equivalent on $\Sigma$,
we get by \eqref{6*7} and~\eqref{6*6},
\begin{align}
\limsup_{k\to\infty} \tho (g^k(x),v)^{1/k} \leq \bonsall{}(g'_v) =\bonsall{}(f'_v)< 1 \enspace .
\label{thocvg}
\end{align} 
To derive~\eqref{6*4} from~\eqref{thocvg},
we write $f^k(x)$ as a function of the orbit of $x$ under $g$,
\begin{align}
f^k(x)= \psi(f\comp g^{k-1}(x))
\cdots \psi (f\comp g^0(x)) g^k (x) 
\label{e-cocycle} 
\end{align}
(this formula is readily checked by induction on $k\geq 1$,
using the homogeneity of $f$ and the definition
of $g$; this is an instance of the well known
``1-cocycle'' representation of iterates of homogeneous maps,
see e.g.~\cite{furstenberg63}).
We still denote by $\tho$ the Thompson's metric
on the open cone of strictly positive real numbers:
\[
\tho(\nu,\mu)=|\log \nu -\log \mu| ,\qquad \forall \nu,\mu>0 \enspace .
\]
Since $f$ is order preserving and homogeneous,
and since $x \leq y \implies \psi(x)\leq \psi(y)$,
and $\psi$ is linear, we have:
\begin{align}
\label{psicompf-nonexp}
\forall x,y\in \interior C,\qquad 
\tho(\psi\comp f(x),\psi\comp f(y)) \leq \tho(x,y) \enspace .
\end{align}
(This follows from the standard argument
of the proof of Lemma~\ref{lemma-nonexpan}:
we have  $\exp(-\tho(x,y))y \leq x\leq\exp(\tho(x,y))y$,
by definition of Thompson's metric, and applying
$\psi\comp f$, we get~\eqref{psicompf-nonexp}.)
Let $\mu_k= \psi( f\comp g^k(x))$. 
Since $f(v)=v$ and $\psi(v)=1$, 
it follows from~\eqref{psicompf-nonexp}
that
\begin{align*}
|\log \mu_k| = \tho(\mu_k, 1)
= \tho(\psi\comp f(g^k(x)),\psi\comp f(v))
\leq \tho(g^k(x),v) \enspace .
\end{align*}
Using~\eqref{thocvg},
we deduce:
\begin{align}
\limsup_{k\to\infty} |\log\mu_k|^{1/k} \leq \bonsall{}(g'_v) <1 \enspace .
\label{e-mukcvg}
\end{align}
Hence, the series $\sum_{k\geq 0} \log \mu_k$
is absolutely convergent, which implies
that the infinite product $\lambda =\prod_{k\geq 0} \mu_k$
is convergent (with $\infty>\lambda>0$). We get
from~\eqref{e-cocycle}:
\begin{align*}
\tho(f^k(x),\lambda v)&=
\tho\Big( (\prod_{0\leq m\leq k-1}\mu_m )g^k(x),\lambda v\Big)\\
&\leq 
\tho\Big( (\prod_{0\leq m\leq k-1}\mu_m) g^k(x),\lambda g^k(x)\Big)
+\tho(\lambda g^k(x),\lambda v)\\
&\leq (\sum_{m\geq k} |\log \mu_m|)
+ \tho(g^k(x),v) \enspace,
\end{align*}
and combining~\eqref{e-mukcvg} with~\eqref{thocvg},
we get~\eqref{6*4}.
\end{proof}

\section{Fixed points and eigenvectors of semidifferentiable nonexpansive maps
over proper cones}\label{sec-fix-proper}

As pointed out in Remark~\ref{remark63}, the map
$f:\continuous_0([0,1])\to\continuous_0([0,1])$ defined by $f(x)(t)=x(\frac{t}{2}) 
\vee (x(t)-1)$ and which has $v\equiv 0$ as a unique fixed point,
is such that $\id -f'_v$ does not have Property~\PF\ on $\continuous_0([0,1])$.
One can show however that $\id -f'_v$ has Property~\PF\ on 
the Banach space $\continuous_0^\gamma ([0,1])$ of H\"older continuous functions
$x:[0,1]\to\R$ with exponent $\gamma$, such that $x(0)=0$.
Indeed, since $f'_v$ is linear in this case, it suffices to prove,
as is noted in Section~\ref{sec-fredholm}, that $f'_v$ such that
$f'_v(x)(t)=x(\frac{t}{2})$ is Fredholm of index $0$. 
However, this is a special case of results which are proved for a 
much more general class of linear maps in \cite[Section 5]{nussbaum-period}.
Alternately, in this simple case, the reader can try a direct proof.
Since the positive cone of  $\continuous_0^\gamma([0,1])$ is not a normal
cone, but only a proper cone,
the results of Section~\ref{sec-eig-nonexp} cannot be used.
In order to treat this case, we thus need to prove a result
similar to that of Section~\ref{sec-eig-nonexp} 
but under the less restrictive condition that $C$ is proper.
The results of Section~\ref{sec-main} cannot be applied.
We are using rather Condition~\ref{cond2} below,
which will allow degree theory arguments (see~\cite{Nuss-Mallet}).

\begin{theorem}\label{th-nouv1}
Let $C$ be a proper cone with nonempty interior in
a  Banach space $(X, \| \cdot \|)$. Let $\psi \in C^*\setminus \{0\}$,
and denote $\Sigma = \set{x \in \Cint}{\psi(x) = 1}$.
Let $G$ be a relatively 
 open subset of $\Sigma$, and $g : G \rightarrow \Sigma$
be a continuous map,
that is nonexpansive with respect to Hilbert's projective metric $d$. 
Assume that $v \in G$ is a fixed point of $g$. 
Make the following assumptions:
\begin{enumerate}
\renewcommand{\theenumi}{\rm (A\arabic{enumi})}
\renewcommand{\labelenumi}{\theenumi}
\item \label{cond1} $g$ is semidifferentiable at $v$;
\item \label{cond2} There exists a relatively open neighborhood 
$U$ of $v$ in $\Sigma$, $U \subset G$, 
and a homogeneous generalized measure of noncompactness $\nu$
on $X$, such that 
$g|_{U}$ is a $k$-set contraction with $k <1$ with respect to $\nu$;
\item \label{cond3} The fixed point of $g'_v:\psi^{-1}(0)\to\psi^{-1}(0)$
is unique: $x = g'_v(x),\; x \in \psi^{-1}(0)\Rightarrow x=0$.
\end{enumerate}
Then, $v$ is the only fixed point of $g$ in $G$:
$g(w)=w,\; w\in G\;\implies w=v$.
\end{theorem}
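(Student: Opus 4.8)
The plan is to mimic, step for step, the proof of Theorem~\ref{theo0}: the only change is that the Banach contraction principle used there---which rested on completeness of the underlying metric space, hence ultimately on normality of $C$---must be replaced by an existence statement coming from the degree theory of $k$-set contractions of~\cite{Nuss-Mallet}, with Assumption~\ref{cond2} playing the role that completeness played in the normal case. I would begin with the \emph{local} uniqueness of $v$. Translating as in the proof of Theorem~\ref{theo00} (work in $W:=\psi^{-1}(0)$ and replace $g$ by $x\mapsto g(v+x)-v$, so that $0$ becomes a fixed point and $g'_v$ its semiderivative on $W$), I would combine Assumption~\ref{cond2} with Proposition~\ref{prop2} to get $\nu_W(g'_v)\leq\nu_U(g)<1$, and then Lemma~\ref{lem-ksetcontract} to conclude that $\id-g'_v$ has Property~\PF\ on $W$. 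Together with Assumptions~\ref{cond1} and~\ref{cond3}, this places us in the hypotheses of Lemma~\ref{uni-local}, which gives that $v$ is isolated among the fixed points of $g$, and more precisely that no sequence $v_n\to v$ with $v_n\neq v$ satisfies $\|g(v_n)-v_n\|/\|v_n-v\|\to 0$.

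Next, arguing by contradiction, I would assume a second fixed point $w\in G$, set $R=d(v,w)>0$, and reintroduce the sets $\Gamma_s$ of~\eqref{gammas} for small $s\in(0,1)$. Exactly as in Theorem~\ref{theo0} they are nonempty (geodesics for $d$ join points of $\Sigma$, by~\cite[Prop.~1.9]{nussbaum88}), convex (by the description~\eqref{gammas2} and the convexity of $d$-balls), and invariant under the $d$-nonexpansive map $g$. The new and crucial observation is that each $\Gamma_s$ is closed in $(X,\|\cdot\|)$: applying $\psi$ to $m(z/v)\,v\leq z\leq M(z/v)\,v$ and using $\psi(v)=\psi(z)=1$ gives $m(z/v)\leq 1\leq M(z/v)$, so that every $z\in\Gamma_s$ is trapped in the order interval $e^{-sR}v\leq z\leq e^{sR}v$; since $C$ is closed this interval is norm-closed and stays inside $\Cint$, and the constraints cutting out $\Gamma_s$ pass to norm limits by semicontinuity of $M(\cdot/v)$ and $m(\cdot/v)$. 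For $s$ small enough $\Gamma_s\subset U$, so $g|_{\Gamma_s}$ is a $k$-set contraction, and so is each map $f_{s,t}(x)=(1-t)g(x)+tz_s$ of~\eqref{fst}, which sends the closed convex set $\Gamma_s$ into itself.

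Here, where Theorem~\ref{theo0} invoked the contraction principle, I would instead invoke the fixed point index theory for $k$-set contractions of~\cite{Nuss-Mallet} on the closed convex invariant set $\Gamma_s$ to produce a fixed point $v_{s,t}\in\Gamma_s$ of $f_{s,t}$. From the resulting identity $g(v_{s,t})-v_{s,t}=t(g(v_{s,t})-z_s)$ together with the order bounds above, the estimates of the proof of Theorem~\ref{theo0} leading to~\eqref{contra} carry over, and letting $s,t\to 0$ along suitable sequences yields points that contradict the local uniqueness of the first step.

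I expect the genuine difficulty to lie precisely in this existence step. In a proper but non-normal cone the order-unit norm $\|\cdot\|_v$ is strictly coarser than $\|\cdot\|$, so the order interval $[e^{-sR}v,e^{sR}v]$, and hence $\Gamma_s$, need not be bounded in $\|\cdot\|$; one cannot simply quote Darbo's theorem, and moreover the estimates controlling $v_{s,t}$ naturally live in $\|\cdot\|_v$ while Property~\PF\ and the local uniqueness live in $\|\cdot\|$. Reconciling the measure of noncompactness on $(X,\|\cdot\|)$ with the Hilbert-metric geometry that drives~\eqref{contra}---in the absence of the norm equivalence that normality would provide---is the heart of the matter, and is exactly what the index theory of~\cite{Nuss-Mallet} is designed to supply.
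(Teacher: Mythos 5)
Your first step is sound: translating to $\psi^{-1}(0)$, combining Assumption~\ref{cond2} with Proposition~\ref{prop2} and Lemma~\ref{lem-ksetcontract} to get Property~\PF\ for $\id-g'_v$, and invoking Lemma~\ref{uni-local}, is essentially the first ingredient of the paper's argument (which uses its quantitative form, $\|x-g'_v(x)\|\geq c\|x\|$ on $\psi^{-1}(0)$). The gap is in the global step, and it is not repaired by citing the index theory of~\cite{Nuss-Mallet}. Three things go wrong with the sets $\Gamma_s$ in a proper, non-normal cone. First, $\Gamma_s\subset U$ for small $s$ is unjustified: $\Gamma_s$ is small only in Hilbert's metric, hence in $\omega_v$ (Lemma~\ref{lem-allineq}), while $U$ is a norm-neighborhood, and without normality $\omega_v$-smallness does not imply $\|\cdot\|$-smallness. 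Second, as you yourself note, $\Gamma_s$ sits in the order interval $[e^{-sR}v,e^{sR}v]$, which need not be norm-bounded precisely because $C$ is not normal; but the fixed point index for $k$-set contractions is a theory on \emph{bounded} sets (measures of noncompactness are only defined there), so it cannot produce the fixed point $v_{s,t}$ on a possibly unbounded $\Gamma_s$ --- the theory is not "designed to supply" this, and asserting so is where your proof stops being a proof. Third, even granting $v_{s,t}$, the estimates~\eqref{xbound}--\eqref{contra} yield smallness of $v_n-v$ and of the ratio $\|g(v_n)-v_n\|/\|v_n-v\|$ only in $\omega_v$ (equivalently $\|\cdot\|_v$), which is strictly coarser than $\|\cdot\|$; your local uniqueness statement from Lemma~\ref{uni-local} lives in $\|\cdot\|$, so no contradiction follows, and applying Lemma~\ref{uni-local} in $(\psi^{-1}(0),\omega_v)$ instead would require Property~\PF\ and semidifferentiability with respect to $\omega_v$, which do not follow from \ref{cond1}--\ref{cond2}.

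The paper's proof avoids $\Gamma_s$ entirely and reverses the roles of the two tools. Assuming a second fixed point $w$, one perturbs toward $w$ itself, $g_t(y)=(1-t)g(y)+tw$, and sets $h_t(x)=g_t(v+x)-v$. Degree theory for $k$-set contractions is applied only \emph{locally}, on a small norm-ball $H$ in $\psi^{-1}(0)$ around $0$ (a bounded set inside $-v+U$, where \ref{cond2} makes sense): the homotopies $x\mapsto x-(1-\lambda)h_t(x)-\lambda(h_t)'_0(x)$ and then $x\mapsto x-\sigma(1-t)g'_v(x)$ are nondegenerate on $\partial H$ for $t$ small, thanks to the boundary estimate $\|x-g'_v(x)\|\geq c\|x\|$, the semidifferentiability remainder bound, and the $\omega_v$-nonexpansiveness of $g'_v$; hence $\deg(\id-h_t,H,0)=1$ and $g_t$ has a fixed point $y_t$ with $\|y_t-v\|$ small (Lemma~\ref{lem-nouv4}). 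The contradiction is then purely Hilbert-geometric and needs neither completeness nor any transfer of estimates between norms: $w$ is also a fixed point of $g_t$, and for $t>0$ the map $g_t$ \emph{strictly} decreases Hilbert's distance between distinct points (\cite[Lemma~2.1, p.~45]{nussbaum88}), so $y_t=w$, which is impossible since $\|y_t-v\|$ is small while $\|w-v\|>r$. In short, the missing idea is to use degree theory near $v$ on norm-balls, and to get the global conclusion from the strict $d$-contractivity of the perturbed map, rather than to manufacture fixed points on geodesic segments as in Theorem~\ref{theo0}.
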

The following lemma is the key result needed to prove Theorem~\ref{th-nouv1}.
\begin{lemma}\label{lem-nouv4}
Let assumptions and notations be as in Theorem \ref{th-nouv1}. 
If $w \in \Sigma$, define, for $0 \leq t \leq 1$, $g_t:G\to\Sigma$ by
$g_t(y) = (1-t) g(y) + tw$. Given $\epsilon > 0$, there exists 
$\delta > 0$ such that for $0 \leq t \leq \delta$, there exists 
$y_t \in G$ with $\|y_t - v\| \leq \epsilon$ and $g_t(y_t) = y_t$.
\end{lemma}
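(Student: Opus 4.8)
The plan is to solve the fixed-point equation for $g_t$ by a degree-theoretic continuation argument based on the Leray--Schauder--Nussbaum fixed point index $i$ for $k$-set contractions. Working in the closed hyperplane $E=\psi^{-1}(0)$, I set $\phi(x)=g(v+x)-v$ and $w_0=w-v\in E$, so that $\phi_t(x):=g_t(v+x)-v=(1-t)\phi(x)+tw_0$, and a vector $y_t=v+x_t$ is a fixed point of $g_t$ with $\|y_t-v\|<\varepsilon$ precisely when $x_t\in E$ solves $\phi_t(x_t)=x_t$ with $\|x_t\|<\varepsilon$. Here $\phi(0)=0$; by~\ref{cond1}, $\phi$ is semidifferentiable at $0$ with respect to $E$, with positively homogeneous semiderivative $L:=\phi'_0=g'_v:E\to E$; and by~\ref{cond2}, after passing to a ball with $v+\bar B_r\subset U$ (writing $B_r=\set{x\in E}{\|x\|<r}$), both $\phi$ and every $\phi_t$ are $k$-set contractions on $\bar B_r$ with $k<1$, the constant term $tw_0$ contributing nothing to the measure of noncompactness.

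First I would show that $v$ is an isolated fixed point of $g$. By Proposition~\ref{prop2}, $L$ is itself a $k$-set contraction with $k<1$, so $\id-L$ has Property~\PF\ on $E$ by Lemma~\ref{lem-ksetcontract}, while~\ref{cond3} says $0$ is its only fixed point. Lemma~\ref{uni-local} applied to $\phi$ at $0$ then shows that $0$ is isolated in the fixed-point set of $\phi$. I fix, once and for all, a radius $r\in(0,\varepsilon]$ with $v+\bar B_r\subset U$ for which $\phi$ has no fixed point in $\bar B_r\setminus\{0\}$.

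The crux is to prove $i(\phi,B_r)\neq 0$. I would first connect $\phi$ to $L$ by the blow-up homotopy $H(s,x)=\phi(sx)/s$ for $s\in(0,1]$ and $H(0,x)=L(x)$: semidifferentiability makes $H$ continuous on $[0,1]\times\bar B_r$, each slice being a $k$-set contraction. For $s\in(0,1]$ a fixed point of $H(s,\cdot)$ on $\partial B_r$ would be a nonzero fixed point $sx$ of $\phi$ in $\bar B_r$, excluded by the choice of $r$, and the case $s=0$ is excluded by~\ref{cond3}; hence $i(\phi,B_r)=i(L,B_r)$. Next I would contract $L$ to the zero map through $sL$, $s\in[0,1]$: a fixed point on $\partial B_r$ would force $L(x)=x/s$ with $x\in E\setminus\{0\}$ and eigenvalue $1/s\ge 1$, the value $1$ being ruled out by~\ref{cond3} and any value $>1$ by the fact that $L=g'_v$ is nonexpansive for the Hilbert seminorm $\omega_v$ — which follows from the argument of item~\ref{preli-fpv5} of Lemma~\ref{preli-fpv} together with the asymptotic equivalence~\eqref{e-normequivosc}, valid for proper cones by Lemma~\ref{lem-allineq} — so that $\omega_v(L(x))\le\omega_v(x)$ while $\omega_v(x)>0$ because $\omega_v$ is a norm on $E$. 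Thus $i(\phi,B_r)=i(L,B_r)=i(0,B_r)=1$.

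Finally I would continue in $t$. There is $\delta>0$ with $\phi_t(x)\neq x$ for all $\|x\|=r$ and $t\in[0,\delta]$: otherwise one finds $t_n\to 0$ and $x_n$ with $\|x_n\|=r$ and $\phi_{t_n}(x_n)=x_n$, whence $x_n-\phi(x_n)=t_n(w_0-\phi(x_n))\to 0$; since $\id-\phi$ has Property~\PF\ by Lemma~\ref{lem-ksetcontract}, a subsequence of $x_n$ converges to a fixed point of $\phi$ on $\partial B_r$, contradicting the choice of $r$. As $\phi_t$ is an admissible $k$-set-contraction homotopy free of fixed points on $\partial B_r$, homotopy invariance gives $i(\phi_t,B_r)=i(\phi,B_r)=1\neq 0$ for all $t\in[0,\delta]$, and the solution property of the index yields a fixed point $x_t\in B_r$ of $\phi_t$; then $y_t=v+x_t$ satisfies $g_t(y_t)=y_t$ and $\|y_t-v\|<r\le\varepsilon$. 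I expect the main obstacle to be the nonvanishing of $i(\phi,B_r)$: since $L$ is only positively homogeneous, the classical linearization formula is unavailable and triviality of $\ker(\id-L)$ alone does not force a nonzero index, so it is essential to exploit the $\omega_v$-nonexpansiveness of $g'_v$ (excluding eigenvalues $\ge 1$) to keep both homotopies boundary-free; the secondary technical point is to verify that these homotopies are genuinely admissible $k$-set-contraction homotopies on $[0,1]\times\bar B_r$.
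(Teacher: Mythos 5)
Your overall architecture coincides with the paper's: pass to the hyperplane $\psi^{-1}(0)$, use the fixed point index for $k$-set contractions, transfer the index of $\phi_t$ to that of the semiderivative $g'_v$, and compute the latter to be $1$ by contracting $g'_v$ to zero, the boundary-freeness of that last homotopy coming, exactly as in the paper, from Assumption~\ref{cond3} together with the $\omega_v$-nonexpansiveness of $g'_v$ (Lemma~\ref{preli-fpv},~\ref{preli-fpv5}; and you are right that the needed local equivalences~\eqref{e-f1}--\eqref{e-f2} only require $C$ proper). Your step isolating the fixed point via Lemma~\ref{uni-local}, your homotopy $s\,g'_v$, and your continuation in $t$ using Property~\PF\ and Lemma~\ref{lem-ksetcontract} to get a boundary gap are all correct.

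The gap is in what you call the crux, the blow-up homotopy $H(s,x)=\phi(sx)/s$, $H(0,\cdot)=g'_v$, and it is not a secondary verification. For the index of $k$-set contractions, homotopy invariance does \emph{not} follow from joint continuity plus the fact that each slice $H(s,\cdot)$ is a $k$-set contraction: one needs the homotopy itself to be set-contractive, i.e.\ $\nu(H([0,1]\times A))\leq k'\nu(A)$ with $k'<1$ for all $A\subset \bar B_r$ (or a condensing variant), or else continuity of $s\mapsto H(s,\cdot)$ uniformly on $\bar B_r$, which would allow chaining of convex homotopies. Neither is available here. Writing $\phi=g'_v+R$ with $\|R(x)\|\leq\eta(\|x\|)\|x\|$, $\eta(\sigma)\to 0$, the union over the blow-up parameter destroys the measure estimate: $\bigcup_{s\geq s_0}s^{-1}\phi(sA)$ is only controlled by $s_0^{-1}k\,\nu(A)$, while the part $s\leq s_0$ contributes an additive error of order $r\sup_{\sigma\leq s_0 r}\eta(\sigma)$ that does not scale with $\nu(A)$; and uniform-in-$x$ continuity of $s\mapsto\phi(s\,\cdot)/s$ away from $s=0$ would require uniform norm-continuity of $\phi$ on $\bar B_r$, which cannot be extracted from $d$-nonexpansiveness of $g$ when $C$ is proper but not normal ($d$-nonexpansiveness gives local Lipschitz continuity into $\|\cdot\|_v$ only, and an inequality $\|z\|\leq M\|z\|_v$ needs normality). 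The paper's proof avoids this entirely: for each fixed small $t$ it uses the affine homotopy $x-(1-\lambda)h_t(x)-\lambda(h_t)'_0(x)$, which is automatically admissible because convex combinations of two $k$-set contractions have images inside $\con(h_t(A)\cup(h_t)'_0(A))$, whose measure is at most $k\nu(A)$; boundary-freeness is then obtained quantitatively from the lower bound $\|x-g'_v(x)\|\geq c\|x\|$ (Property~\PF\ plus Assumption~\ref{cond3} plus homogeneity — ingredients you already have), after shrinking the radius so that $\eta\leq c/2$ on it and taking $t$ small. Replacing your blow-up homotopy by this convex homotopy, with the radius chosen by this quantitative estimate rather than merely by isolation of the fixed point, repairs your proof and essentially reproduces the paper's argument.
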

\begin{proof}
Define $h_t(x) := g_t(v + x) - v$ for $x \in G_0:=-v+G \subset \psi^{-1}(0)$
and $t\in [0,1]$. 
By Assumption~\ref{cond2} of Theorem \ref{th-nouv1}, 
$g|_{U}$ is a $k$-set contraction with respect to $\nu$,
so is $(g_t)|_{U}$ for $0\leq t\leq 1$
(indeed from~\eqref{gmnc7}, any translation of a $k$-set contraction 
is a $k$-set contraction, and from~\eqref{gmnc5} any homothetic transformation
with factor $\leq 1$ of a $k$-set contraction is a $k$-set contraction).
Then, $h_t$ is a $k$-set contraction when restricted to the
open neighborhood $-v+ U$ of $0$ in $\psi^{-1}(0)$. So
$(h_t)'_0 = (g_t)'_v= (1 - t)g'_v$, the semidifferential of $h_t$ at $0$,
is a $k$-set contraction
on $(\psi^{-1}(0), \| \cdot \|)$ (this follows from~\eqref{e-ftofp1}). 

Since $g_0(v)=v$, we
need to show that for $t  >0$, $t$ small, $h_t(x) = x$ has 
a solution in $B_\epsilon(0) := 
\set{z \in \psi^{-1}(0)}{\|z\| \leq \epsilon}$
(one may assume that $\epsilon$ is such that $B_\epsilon(0) 
\subset G_0$).
For any fixed $t > 0$, we consider the homotopy:
\[ \id - f_{t,\lambda}, \quad 0 \leq \lambda \leq 1,\]
with
\[ f_{t,\lambda}(x) =(1 - \lambda) h_t (x) + \lambda(h_t)'_0(x), \quad x \in B_\epsilon(0),\; 0\leq t\leq 1,\; 0 \leq \lambda \leq 1 \enspace .
\]
Recall that, since $g$ is semidifferentiable at $v$, with semidifferential 
$g'_v$:
\[
g(v+x) = v + g'_v(x) + R(x)
\]
where $\|R(x)\| \leq \eta (\|x\|) \|x\| \text{ and } \lim_{s \rightarrow 0^+} \eta(s) = 0.$
It follows that
\begin{eqnarray*}
\lefteqn{x - f_{t,\lambda}(x) }\\
&=&  x - (1 - \lambda) (g'_v(x) + R(x) + tw - tg(v +x)) -
 \lambda (1-t)g'_v(x) \\
&=& x - g'_v(x) - (1 - \lambda)R(x) - 
(1 - \lambda) t(w - g(v + x)) + \lambda t g'_v(x) \enspace .
\end{eqnarray*}
For $\|x\| = 1$, $x \in \psi^{-1}(0)$, $x - g'_v(x) \neq 0$
by Assumption~\ref{cond3}.
Because $g'_v=h'_0$ is a $k$-set contraction on $(\psi^{-1}(0), \| \cdot\|)$, 
it follows that there exists $c > 0$ such that
\[
\|x - g'_v(x) \| \geq c \text{ for } \|x \| = 1, \ x \in \psi^{-1}(0)
\enspace.
\]
Indeed, by Lemma~\ref{lem-ksetcontract}, $\id -g'_v$ has
Property~\PF, hence if, by way of contradiction, there exists a sequence
$\seq{x_n }{n \geq 1}$ of $\psi^{-1}(0)$ such that $\|x_n\|=1$ and 
$\lim_{n\to\infty} x_n-g'_v(x_n)=0$, there exists a convergent subsequence.
Thus, the limit satisfies $x-g'_v(x)=0$ and $\|x\|=1$, which contradicts
Assumption~\ref{cond3}.

By the homogeneity of $g'_v$, it follows that
\[
\|x - g'_v(x)\| \geq c\|x\|,
\text{ for } x \in \psi^{-1}(0)\enspace .
\]
Select $\epsilon_1 \leq \epsilon$, $\epsilon_1 > 0$,
so that $\eta(s) \leq c/2$ for $0 \leq s \leq \epsilon_1$.
It follows that
\[
\|x - g'_v(x) - (1 - \lambda) R(x)\| \geq \frac{c}{2} \|x\|,
\text{ for } 0 \leq \|x\| \leq \epsilon_1, \; x \in \psi^{-1}(0)
\enspace .
\]
Since $g(v+x)$ and $g'_v(x)$ are bounded in norm on 
$B_{\epsilon_1}(0)$, it follows that there exists $\delta > 0$ so that
\[
t \| w - g(v+x)\| + t\|g'_v(x)\| < \frac{c\epsilon_1}{2}, 
\text{ for } \|x\| = \epsilon_1, \; 0 \leq t \leq \delta
\enspace .
\]
Using this estimate, we see that, for $0 \leq t \leq \delta$,
$\|x\| = \epsilon_1$ and $0 \leq \lambda \leq 1$,
\[
x - f_{t,\lambda}(x)\neq 0
\enspace .
\]
Since $h_t$ and $(h_t)'_0$ are both continuous and $k$-set contractions,
so is $f_{t,\lambda}$ for $0\leq \lambda\leq 1$
(indeed, using~\eqref{gmnc2} and~\eqref{gmnc5},
one shows easily that the convex combination of $k$-set contractions
is a $k$-set contraction).
Hence, by the homotopy property for degree theory for $k$-set contractions
(see~\cite{MR0312341,nussbaumdegree72}
for the case where $\nu=\alpha$ the 
Kuratowski-Darbo generalized measure of noncompactness,
and~\cite{MR791918,Nuss-Mallet,nussbaummp10,nussbaummp11} for any $\nu$),
we have
\[
\deg (\id - h_t, H, 0) = \deg(\id -(h_t)'_0,H, 0) \enspace, \]
for all $0\leq t\leq \delta$,
where $H = \set{ z \in \psi^{-1}(0)}{\|z\| < \epsilon_1}$.

Because $g$ is nonexpansive with respect to $d$,
by Lemma~\ref{preli-fpv},\ref{preli-fpv5},
we know that $g'_v$ is nonexpansive on $\psi^{-1}(0)$,
with respect to the seminorm $\omega_v$ defined by~\eqref{def-omega}
(which is a norm on $\psi^{-1}(0)$).
It follows that $x-\sigma (h_t)'_0(x)=x - \sigma (1-t) g'_v(x) \neq 0$
for $0<t\leq \delta$,
$0 \leq \sigma \leq 1$ and $\|x\| = \epsilon_1$, $x \in \psi^{-1}(0)$.

By the homotopy property of degree theory again,
\begin{align*}
\deg(\id - (h_t)'_0, H, 0) & = \deg (\id - \sigma (h_t)'_0, H, 0) \qquad
\mrm{ for } 0 \leq \sigma \leq 1\\
 & = \deg (\id, H, 0) = 1 \enspace .
\end{align*}
It follows that,  for $0 < t \leq \delta$,  $\deg (\id - h_t, H, 0) = 1$
 so there exists $x_t \in H$ with $x_t = h_t(x_t)$.
\end{proof}

\begin{proof}[Proof of Theorem~\ref{th-nouv1}] 
Suppose, by way of contradiction, 
that there exists $w \in G$ with $w \neq v$ and $g(w) = w$. 
Select $r > 0$ so that $\|w - v\| > r$.
Define $g_t:G\to\Sigma$ by $g_t(y) = (1-t) g(y) + tw$. Take 
$\delta > 0$ (by Lemma \ref{lem-nouv4}) so that,  for $0\leq  t \leq \delta$, 
there exists $y_t \in G$ such that $\|y_t - v\| \leq r$ and $g_t(y_t) = y_t$. 
Note that $g_t(w) = w$ too,
and we know (see~\cite[Lemma~2.1, p.~45]{nussbaum88}) that
\[
d(g_t(z_1), g_t(z_2)) < d(z_1, z_2)
\]
if $0 < t \leq 1$ and $z_1,z_2 \in G$, $z_1 \neq z_2$. 
Taking $z_1 = y_t$, $z_2 = w$, $0 < t \leq \delta$, we obtain a contradiction.
\end{proof}

\begin{remark}
In our definition of a $k$-set contraction $f:D\to X$, $k<1$,
with respect to a homogeneous generalized measure of noncompactness $\nu$,
we do not assume that $f$ is continuous in the norm topology of $X$.
However in Theorem~\ref{th-nouv1}, we have added the hypothesis that $g$ is
continuous in the norm topology.
The reason for this restriction is that we use the theory of 
the fixed point index for $k$-set contractions, $k<1$, and the 
standard development of that theory requires maps to be continuous in the
norm topology. 
If the map $g$ in Theorem~\ref{th-nouv1} was only 
assumed to be nonexpansive with respect to Hilbert's projective metric $d$,
then the continuity of $g$ in the norm topology of $X$ would have followed
easily for a normal cone $C$ (from Proposition~\ref{prop-normequiv})
but not for a general proper cone $C$.
In fact, the assumption that $g$ in Theorem~\ref{th-nouv1} is continuous 
in the norm topology is not necessary. We sketch the argument below, but 
for reasons of length, details are omitted. 

In the following we 
shall always assume that $C$, $(X,\|\cdot\|)$, $\psi$, and  $\Sigma$
are as in the first two sentences of Theorem~\ref{th-nouv1}
and as usual $d$ will denote Hilbert's projective metric on $\Sigma$.
If $\seq{y_n }{n \geq 1}$ is a sequence of points in $\Sigma$
and $\lim_{n\to\infty} \|y_n-y\|=0$, where $y\in \Sigma$, it is not hard 
to prove that $\lim_{n\to\infty} d(y_n,y)=0$.
Conversely, if $T=\set{y_n}{n\geq 1}\subset \Sigma$ has a 
compact closure in the norm topology and there exists $y\in \Sigma$ such that
$\lim_{n\to\infty} d(y_n,y)=0$, one deduces that $\lim_{n\to\infty} \|y_n-y\|=0$.
Working in the norm topology, let $U$ be a relatively open subset of $\Sigma$
such that $\clo{U}\subset \Sigma$. Let $g: \clo{U}\to \Sigma$ be a map
which is continuous with respect to Hilbert's projective metric $d$
on $\Sigma$, and is also a $k$-set contraction, $k<1$, with respect
to a homogeneous generalized measure of noncompactness $\nu$.
Under these assumptions, we claim that $g$ is continuous in the norm
topology, so the assumption of norm continuity in Theorem~\ref{th-nouv1}
is superfluous.
Indeed, suppose $\seq{x_n }{n \geq 1}$ is a sequence in $\clo{U}$ 
and that $\lim_{n\to\infty} \|x_n-x\|=0$. By the remarks above,
$\lim_{n\to\infty} d(x_n,x)=0$. Since $g$ is continuous with respect to 
Hilbert's projective metric $d$, $\lim_{n\to\infty} d(g(x_n),g(x))=0$. 
In particular, there exists $n_0\geq 1$ such that, for all
$n\geq n_0$, $g(x_n)\in\set{y\in \Sigma}{\frac{1}{2} g(x)\leq y\leq 2 g(x)}$,
which is a norm closed subset of $\Sigma$. Let
$S=\set{x_n}{n\geq 1}\cup\{x\}$, then $S$ is compact in the norm topology,
thus $\nu(S)=0$, so $\nu(g(S))=0$. Consider $T:=\clo{(g(S))}$,
we get that $T$ is compact in the norm topology and $T\subset \Sigma$.
Assume by contradiction that $\lim_{n\to\infty} \|g(x_n)-g(x)\|\neq 0$.
Then, there exists $\epsilon>0$ and a 
subsequence $n_i\to\infty$ such that $\|g(x_{n_i})-g(x)\|\geq \epsilon$ for
all $i\geq 1$. By the norm compactness of $T$, we can also assume
that there exists $y\in \Sigma$ with $\|y-g(x)\|\geq \epsilon$ such that 
$\lim_{n\to\infty} \|g(x_{n_i})-y\|=0$. But our earlier remarks now 
imply that $\lim_{i\to\infty} d(g(x_{n_i}),y)=0$, a contradiction.
Hence  $\lim_{n\to\infty} \|g(x_n)-g(x)\|= 0$, which shows that 
$g$ is continuous in the norm topology.
\end{remark}

\begin{theorem}\label{th-nouv5}
Let $C$ be a proper cone with nonempty interior in
a  Banach space $(X, \| \cdot \|)$. Let $\psi \in C^*\setminus \{0\}$,
and denote $\Sigma = \set{x \in \Cint}{\psi(x) = 1}$.
Let $G$ be a relatively 
 open subset of $\Sigma$, and $g : G \rightarrow \Sigma$
be a continuous map,
that is nonexpansive with respect to Hilbert's projective metric $d$. 
Assume that $v \in G$ is a fixed point of $g$. 
Make the following assumptions:
\begin{enumerate}
\renewcommand{\theenumi}{\rm (A\arabic{enumi})}
\renewcommand{\labelenumi}{\theenumi}
\item \label{cond1'} $g$ is semidifferentiable at $v$;
\item \label{cond2'} There exists a relatively open neighborhood 
$U$ of $v$ in $\Sigma$, $U \subset G$, and an integer $n \geq 1$ 
such that $g^n|_U$ is a $k$-set contraction with $k <1$. 
If $n > 1$, assume also that $g'_v$ is uniformly continuous on 
bounded sets of $\psi^{-1}(0)$.
\item \label{cond3'} The fixed point of $(g'_v)^n:\psi^{-1}(0)\to\psi^{-1}(0)$
is unique: $x = (g'_v)^n(x),\; x \in \psi^{-1}(0)\Rightarrow x=0$.
\end{enumerate}
Then, $v$ is the only fixed point of $g^n$ in $G$ 
(and hence the only fixed point of $g$ in $G$):
$g^n(w)=w,\; w\in G\;\implies w=v$.
\end{theorem}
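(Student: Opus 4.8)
The plan is to deduce the theorem from Theorem~\ref{th-nouv1} applied to the iterate $g^n$ in place of $g$. First I would record that $g$ is continuous: on $\Sigma$ the Hilbert metric $d$ and the norm induce the same topology (Proposition~\ref{prop-normequiv}), so a $d$-nonexpansive map is norm-continuous. Since $g(v)=v$, the set $G_n:=\set{x\in G}{g^i(x)\in G \text{ for } 0\le i\le n-1}$ is then relatively open in $\Sigma$ and contains $v$, the map $g^n:G_n\to\Sigma$ is well defined, and, as a composite of $d$-nonexpansive maps, $g^n$ is itself nonexpansive with respect to $d$. Moreover any $w\in G$ with $g^n(w)=w$ has its whole orbit $w,g(w),\dots,g^{n-1}(w)$ inside $G$, hence lies in $G_n$; thus the fixed points of $g^n$ in $G$ are exactly those in $G_n$, which is what lets me transfer the conclusion back to $G$, and the final assertion for $g$ is immediate since $g(w)=w$ forces $g^n(w)=w$.

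The heart of the argument is to verify the three hypotheses of Theorem~\ref{th-nouv1} for $g^n$ on $G_n$. Hypothesis~\ref{cond1} asks that $g^n$ be semidifferentiable at $v$; I would prove, by induction on $n$ using the chain rule (Lemma~\ref{lemma-chain}), that $g^n$ is semidifferentiable at $v$ with $(g^n)'_v=(g'_v)^n$. Here I work in the affine chart at $v$, so the ambient cone is the whole subspace $\psi^{-1}(0)$, on which $\omega_v$, $\|\cdot\|_v$ and $\|\cdot\|$ are equivalent. The nonexpansiveness of $g$ for $d$ gives, via Proposition~\ref{prop-sc-a1hold} (in the shifted coordinates used in the proof of Theorem~\ref{theo00}; compare Lemma~\ref{preli-fpv},\ref{preli-fpv5}), that $g'_v$ is nonexpansive on $(\psi^{-1}(0),\omega_v)$, hence Lipschitz and so uniformly continuous on bounded sets. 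At the inductive step I apply Lemma~\ref{lemma-chain} to $g^n=g\comp g^{n-1}$: assumption (A1) is the inductive semidifferentiability of $g^{n-1}$, (A2) is Assumption~\ref{cond1'} at $g^{n-1}(v)=v$, (A3) holds because both semiderivatives preserve $\psi^{-1}(0)$ and $g$ takes values in $\Sigma=v+\psi^{-1}(0)$, and (A4) is the uniform continuity of $g'_v$ just noted; this yields $(g^n)'_v=(g'_v)^n$. The uniform continuity of $g$ assumed when $n>1$ is used to control the successive composites on a common neighborhood of $v$, so that the chain-rule hypotheses can be checked there and the $k$-set-contraction neighborhood $U$ of Assumption~\ref{cond2'} can be used uniformly; reconciling these local domains with the chain-rule bookkeeping is, I expect, the main technical obstacle.

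It then remains to check Hypotheses~\ref{cond2} and~\ref{cond3} of Theorem~\ref{th-nouv1} for $g^n$. Hypothesis~\ref{cond2} is exactly Assumption~\ref{cond2'}: $g^n$ is a $k$-set contraction with $k<1$ on the relatively open neighborhood $U$ of $v$, where necessarily $U\subset G_n$ for $g^n|_U$ to be defined. Hypothesis~\ref{cond3} is exactly Assumption~\ref{cond3'}, since $(g^n)'_v=(g'_v)^n$, so the fixed point of $(g^n)'_v$ in $\psi^{-1}(0)$ is unique. (Internally, Theorem~\ref{th-nouv1} uses Proposition~\ref{prop2},\eqref{e-ftofp1} and Lemma~\ref{lem-ksetcontract} to turn the $k$-set-contraction bound on $U$ into a bound $\|x-(g^n)'_v(x)\|\ge c\|x\|$ on $\psi^{-1}(0)$ feeding the degree-theory homotopy, and this needs no extra hypotheses beyond those already verified.) Applying Theorem~\ref{th-nouv1} to $g^n:G_n\to\Sigma$ then shows that $v$ is its only fixed point in $G_n$, and by the orbit remark of the first paragraph $v$ is the only fixed point of $g^n$ in $G$; since $g(w)=w$ implies $g^n(w)=w$, $v$ is also the only fixed point of $g$ in $G$.
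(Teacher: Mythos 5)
Your proposal follows the paper's proof exactly in structure: the paper's own argument is precisely to set $G_n=G\cap g^{-1}(G)\cap\cdots\cap g^{-(n-1)}(G)$, observe that $g^n$ is semidifferentiable at $v$ with $(g^n)'_v=(g'_v)^n$ by the chain rule, and apply Theorem~\ref{th-nouv1} to $g^n$; your bookkeeping for converting Assumptions~\ref{cond2'} and~\ref{cond3'} into the hypotheses of Theorem~\ref{th-nouv1}, and for transferring the conclusion back to $G$, is the intended reading.

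There is, however, a genuine defect in the way you justify the chain-rule step, and it matters exactly in the generality this section was written for. You claim that $\omega_v$, $\|\cdot\|_v$ and $\|\cdot\|$ are equivalent on $\psi^{-1}(0)$, and you cite Proposition~\ref{prop-normequiv} for the equality of the $d$-topology and the norm topology on $\Sigma$. Both assertions require $C$ to be \emph{normal}: Proposition~\ref{prop-normequiv} is stated for normal cones, and for a merely proper cone one only has the one-sided estimates $\omega_v(x)\le 2\|x\|_v\le c\,\|x\|$ (using $v\in\Cint$); the reverse bound $\|x\|\le c'\,\omega_v(x)$ on $\psi^{-1}(0)$ is precisely what normality buys, and it fails in general --- the opening of Section~\ref{sec-fix-proper} (the H\"older-cone example) is exactly a case where it fails. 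Consequently, the $d$-nonexpansiveness of $g$ gives only that $g'_v$ is nonexpansive for $\omega_v$ (Lemma~\ref{preli-fpv}); this does \emph{not} imply that $g'_v$ is uniformly continuous on $\|\cdot\|$-bounded sets with respect to $\|\cdot\|$, which is what Assumption~\ref{as4} of Lemma~\ref{lemma-chain} demands when semidifferentiability is taken, as in Assumption~\ref{cond1'}, relative to the ambient norm. For the same reason, $d$-continuity of $g$ does not yield norm continuity, so your argument that $G_n$ is relatively open is likewise unjustified. The extra hypothesis in Assumption~\ref{cond2'} for $n>1$ --- that $g$ itself be uniformly continuous in norm near $v$ --- is the datum the authors introduce to handle this composition step; in your write-up it is mentioned only as vague ``bookkeeping'', while the actual burden of Assumption~\ref{as4} is placed on a norm equivalence that is false here. (The paper's one-line proof is silent on this point as well, but your explicit justification is the part that breaks.) In the normal-cone setting of Section~\ref{sec-eig-nonexp} your verification would be correct; as a proof of Theorem~\ref{th-nouv5} as stated, it has a hole at exactly this step.
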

\begin{proof}
Let $n\geq 1$ be as in the theorem.
When $n>1$, 
the map $g^n$ is defined on some relatively open subset $G_n$
of $\Sigma$ (take $G_n=G\cap g^{-1}(G)\cap\cdots \cap
g^{-(n-1)}(G)$), and since $g$ is semidifferentiable at $v$, $g(v)=v$ and
$g'_v$ is uniformly continuous on  bounded sets of $\psi^{-1}(0)$,
applying the chain rule (Lemma~\ref{lemma-chain}) to $C_1=C_2=\psi^{-1}(0)$,
we get that $g^n$ is semidifferentiable at $v$, with
$(g^n)'_v = (g'_v)^n$.
Applying Theorem~\ref{th-nouv1} to $g^n$, we get the conclusion
of Theorem~\ref{th-nouv5}.
\end{proof}
\section{Eigenvectors of differentiable nonexpansive maps on cones}
\label{sec-eig-nonexp-diff}
In this section, we specialize the previous results to the case
where $f$ is differentiable at the fixed point, and compare
the results obtained in this way with the ones of~\cite{nussbaum88}.
\begin{corollary}[Uniqueness of eigenvectors in $\Sigma$, differentiable case]\label{coro1}
Let $C$ be a normal cone with nonempty interior  in 
a Banach space $(X,\|\cdot\|)$, let $\psi\in C^*\setminus\{0\}$, and denote
$\Sigma=\set{x\in \Cint}{\psi(x)=1}$.
Let $G$ be an open subset of $C$ and
$f : G  \rightarrow \Cint$ be a map such that
$f|_{G\cap \Sigma}$ is nonexpansive with respect to Hilbert's projective 
metric $d$.
Assume that $v\in  G\cap \Sigma$ is a fixed point of $f$: $f(v)=v$.
Make the following assumptions:
\begin{enumerate}
\renewcommand{\theenumi}{(A\arabic{enumi})}
\renewcommand{\labelenumi}{\theenumi}
\item\label{h0c4} $f$ is differentiable at $v$;
\item\label{h0c5} 
The linear  operator $(\id-f'_v)|_{\psi^{-1}(0)}: \psi^{-1}(0)\to X$ 
is semi-Fredholm with index in 
$\Z\cup\{-\infty\}$;
\item\label{h0c6} The fixed point of $f'_v$ in $\psi^{-1}(0)$ is unique:
$f'_v(x) = x,\, x \in \psi^{-1}(0) \Rightarrow x =0$;
\item\label{h0c7} $f'_v(C)\subset C$;
\item\label{h0c8} $f'_v(v)\leq v$.
\end{enumerate}
Then, the eigenvector of $f$ in $G\cap \Sigma$ is unique: 
$\exists \lambda>0,\; f(w) = \lambda w,\, w \in G\cap \Sigma\Rightarrow w =v$.
\end{corollary}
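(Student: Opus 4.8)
The plan is to obtain Corollary~\ref{coro1} as the linear specialization of Theorem~\ref{theo01}: the hypotheses on $C$, $\psi$, $\Sigma$, $G$, on the map $f$ (with $f|_{G\cap\Sigma}$ nonexpansive for $d$) and on the fixed point $v\in G\cap\Sigma$ are exactly those of Theorem~\ref{theo01}, so the only thing to do is to check that the five differentiability assumptions \ref{h0c4}--\ref{h0c8} imply the five assumptions \ref{h04}--\ref{h08} of that theorem. The single observation driving every step is that differentiability at $v$ forces the semiderivative $f'_v$ to be a \emph{continuous linear} map $X\to X$. First I would note that the Fréchet derivative is the special case of the semiderivative obtained by taking $C=X$ and requiring the directional derivative to be linear (see the discussion following~\eqref{eq2'}); hence Assumption~\ref{h0c4} gives Assumption~\ref{h04}, with $f'_v$ linear. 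Assumption~\ref{h0c6} is literally Assumption~\ref{h06}.

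Next I would translate the Fredholm hypothesis. Since $\psi$ is continuous, $\psi^{-1}(0)$ is a closed subspace of $X$, hence itself a Banach space, and $(\id-f'_v)|_{\psi^{-1}(0)}\colon\psi^{-1}(0)\to X$ is a \emph{linear} operator between Banach spaces. By the characterization recalled in Section~\ref{sec-spec}, a linear operator has Property~\PF{} if and only if it is semi-Fredholm with index in $\Z\cup\{-\infty\}$ (finite-dimensional kernel and closed range); thus Assumption~\ref{h0c5} is exactly Assumption~\ref{h05}. For the order-theoretic assumptions, linearity turns positivity into monotonicity: if $x\leq_C y$ then $y-x\in C$, so by \ref{h0c7} we get $f'_v(y)-f'_v(x)=f'_v(y-x)\in C$, i.e.\ $f'_v(x)\leq f'_v(y)$, which is Assumption~\ref{h07}. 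Finally, for Assumption~\ref{h08} I would invoke the remark following Theorem~\ref{theo01}: a linear $f'_v$ satisfying \ref{h0c8}, namely $f'_v(v)\leq v$, is additively subhomogeneous with respect to $v$, since for all $x\in X$ and $t\geq 0$,
\[
f'_v(x+tv)=f'_v(x)+t\,f'_v(v)\leq f'_v(x)+tv\enspace .
\]
With \ref{h04}--\ref{h08} all verified, Theorem~\ref{theo01} yields the uniqueness of the eigenvector of $f$ in $G\cap\Sigma$.

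There is no genuine analytic difficulty here: the entire argument is a dictionary between the nonlinear hypotheses of Theorem~\ref{theo01} and their linear counterparts, and the only points that require a moment's care are the linear characterization of Property~\PF{} through the semi-Fredholm condition and the elementary check that $\psi^{-1}(0)$ is a Banach space so that this characterization applies to $(\id-f'_v)|_{\psi^{-1}(0)}$. The value of the corollary is precisely that, in the differentiable case, the order-preservation and subhomogeneity hypotheses of Theorem~\ref{theo01} reduce to the transparent linear conditions $f'_v(C)\subset C$ and $f'_v(v)\leq v$, which is what makes the comparison with the results of~\cite{nussbaum88} possible.
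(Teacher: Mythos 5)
Your proposal is correct and follows essentially the same route as the paper's own proof: reduce to Theorem~\ref{theo01} by using linearity of $f'_v$ to translate each hypothesis, with Property~\PF{} for the linear operator $(\id-f'_v)|_{\psi^{-1}(0)}$ identified with the semi-Fredholm condition. The only cosmetic difference is that for Assumption~\ref{h08} the paper invokes Lemma~\ref{preli-fpv33} (linear maps being convex and homogeneous), whereas you verify additive subhomogeneity by the one-line computation $f'_v(x+tv)=f'_v(x)+tf'_v(v)\leq f'_v(x)+tv$, which is an equally valid, slightly more direct check.
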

\begin{proof}
We only need to verify that $f$ satisfies Assumptions~\ref{h04}--\ref{h08}
of Theorem~\ref{theo01}. 
Clearly, Assumption~\ref{h0c4} of Corollary~\ref{coro1} implies
Assumption~\ref{h04} of Theorem~\ref{theo01}.
Under this condition,  $f'_v$ is linear, hence, Assumption~\ref{h05}
of Theorem~\ref{theo01} is equivalent to
Assumption~\ref{h0c5}
of Corollary~\ref{coro1} (see~\cite[Proposition~19.1.3]{hormander}
or~\cite[Chapter IV, Theorems~5.10 and~5.11]{kato}).
Assumption~\ref{h06} of Theorem~\ref{theo01} is
identical to Assumption~\ref{h0c6} of Corollary~\ref{coro1}.
Since $f'_v$ is linear, $f'_v$ is order preserving if,
and only if, $f'_v(x)\geq 0$ when $x\geq 0$, that is $f'_v(C)\subset C$,
which shows that Assumption~\ref{h07} of Theorem~\ref{theo01}
is equivalent to Assumption~\ref{h0c7} of Corollary~\ref{coro1}
when $f'_v$ is linear.
Also, since $f'_v$ is linear, 
Assumption~\ref{h0c8} of Corollary~\ref{coro1}
implies easily Assumption~\ref{h08} of Theorem~\ref{theo01}
(one can also use Lemma~\ref{preli-fpv33}).
\end{proof}

\begin{remark}\label{rk-roger1}
The assumptions of Corollary~\ref{coro1} arise when
specializing the ones of Theorem~\ref{theo01} to the case in
which $f'_v$ is linear. However Assumption~\ref{h0c5} can be 
replaced by the apparently more restrictive assumption
that $\id-f'_v$ is Fredholm of index $0$, or 
that $(\id-f'_v)|_{\psi^{-1}(0)}$ is Fredholm of index $-1$, 
without loss of generality.

Indeed, under the other assumptions, in particular when
\ref{h0c4},~\ref{h0c7} and \ref{h0c8} hold, and when $v$ is in the
interior of $C$, we get that $f'_v$ is linear continuous,
order preserving and satisfies $ \supeigen{C} (f'_v)\leq 1$,
where $ \supeigen{C}(f'_v)$ is defined  in~\eqref{def-collatz-number}
($\supeigen  C (f'_v) :=\inf\, \set{ \lambda >0}{\exists x \in \Cint, \; f'_v(x) \leq \lambda x }$).
From Lemma~\ref{collatz-lem72}, it follows that 
$\bonsall C (f'_v)\leq 1 $
and since $f'_v$ is linear, we get that $r(f'_v)\leq 1$.
Then for all $0<t<1$, $\id-tf'_v$ is one-one, and thus Fredholm
of index $0$. 

Moreover, since  $\ker \psi$ is closed and of codimension one,
it follows from standard properties of Fredholm operators
(see~\cite[\S 19.1]{hormander}) that $\id-f'_v: X\to X$  is semi-Fredholm
 if and only if 
 $(\id-f'_v)|_{\psi^{-1}(0)}: \psi^{-1}(0)\to X$ 
is semi-Fredholm, and the index of
$\id-f'_v$ is equal to the one of $(\id-f'_v)|_{\psi^{-1}(0)}$ plus $1$.
This implies that, under Assumption~\ref{h0c5},  $\id-f'_v$ is semi-Fredholm.

From the latter property,
the fact that all operators $\id-tf'_v$ with $t<1$ are Fredholm
of index $0$, and the continuity of the Fredholm index on the set of semi-Fredholm operators,
we deduce that $\id-f'_v$ is Fredholm of index $0$,
or equivalently that $(\id-f'_v)|_{\psi^{-1}(0)}$ is Fredholm of index $-1$.
Conversely, if  $(\id-f'_v)|_{\psi^{-1}(0)}$ is Fredholm of index $-1$, 
then~\ref{h0c5} holds trivially.
\end{remark}

We shall give now a corollary of Corollary~\ref{coro1}, which extends 
partially~\cite[Theorem~2.5]{nussbaum88}.
For a bounded linear operator $L:X\to X$ on a Banach space $(X,\|\cdot\|)$, we 
denote by $r(L)$ the spectral radius of $L$, which coincides
with the Bonsall spectral radius $\bonsall{X} (L)$
in~\eqref{eq31-bonsallcspr}. We
denote by $N(L)$ the null space of $L$.
Let $C$ be a proper cone of $X$. We say that $L$ satisfies
 the {\em weak Krein-Rutman} (\WKR) condition with respect to $C$ if 
$L(C)\subset C$, and either $r(L)=0$, or $r:=r(L)>0$ and
(i) there exists $u\in C\setminus\{0\}$ such that 
$N(r \, \id -L)=\set{\lambda u}{\lambda\in \R}$,
and (ii) $r \, \id -L$ is a semi-Fredholm operator.
Condition (\WKR) is easier to check
than the Krein-Rutman (KR) condition used in 
\cite[Definition~2.1]{nussbaum88}, where the condition (ii)
is replaced by the condition that $r \, \id -L$ is a Fredholm operator
with index $0$, and where it is also assumed
that, 
when $r>0$,
$L^*$ has an eigenvector $u^*\in C^*\setminus\{0\}$
with eigenvalue $r$ such that $u^*(u)>0$. 
However, if (\WKR) holds, then, by the same arguments as in Remark~\ref{rk-roger1}, if $r:=r(L)>0$, then, for all $0<t<1$, $r\id - tL$,
and so $r\id -L$, are necessarily Fredholm of index $0$, as requested
by the (KR) condition. 
Furthermore, a refinement of a theorem of Krein and Rutman (see~\cite{schaefer2}) implies that $L^*$ has an eigenvector $u^*$ in
$C^*$ with eigenvalue $r$.
Thus, the only difference in generality between condition (WKR) and condition (KR) is the requirement that $u^*(u)>0$. Even
if a variety of conditions imply that $u^*(u)>0$, it may happen that
$u^*(u)=0$.

\begin{corollary}[Uniqueness of eigenvectors, differentiable case]\label{coro2}
Let $C$ be a normal cone with nonempty interior  in 
a Banach space $(X,\|\cdot\|)$, let $\psi\in C^*\setminus\{0\}$, and denote
$\Sigma=\set{x\in \Cint}{\psi(x)=1}$.
Let $G$ be an open subset of $(X,\|\cdot\|)$ included in $C$ and
$f : G  \rightarrow \Cint$ be a map such that
$f|_{G\cap \Sigma}$ is nonexpansive with respect to Hilbert's projective 
metric $d$.
Assume that $v\in  G\cap \Sigma$ is a fixed point of $f$: $f(v)=v$.
Make the following assumptions:
\begin{enumerate}
\renewcommand{\theenumi}{\rm (A\arabic{enumi})}
\renewcommand{\labelenumi}{\theenumi}
\item\label{h0n4} $f$ is differentiable at $v$;
\item\label{h0n5} 
The linear operator $f'_v$ satisfies the \WKR\ condition
with respect to $C$;
\item\label{h0n6} If $u\in C\setminus\{0\}$ is an eigenvector of
$f'_v$ with eigenvalue $r(f'_v)>0$, then $\psi(u)>0$;
\item\label{h0n8} There exist $\delta>0$  such that  $\delta\leq 1$ and 
$t f(v)\leq f(tv)$ for all $1-\delta\leq  t\leq 1$.
\end{enumerate}
Then, the eigenvector of $f$ in $G\cap \Sigma$ is unique: 
$\exists \lambda>0,\; f(w) = \lambda w,\, w \in G\cap \Sigma\Rightarrow w =v$.
\end{corollary}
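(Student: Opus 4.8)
The plan is to deduce Corollary~\ref{coro2} from Corollary~\ref{coro1} by showing that, since $f$ is differentiable at $v$ so that $f'_v$ is linear, Assumptions~\ref{h0n4}--\ref{h0n8} imply Assumptions~\ref{h0c4}--\ref{h0c8}. Write $L:=f'_v$, a bounded linear operator by~\ref{h0n4}; this gives~\ref{h0c4} at once. The $\WKR$ condition~\ref{h0n5} includes $L(C)\subset C$, which is exactly~\ref{h0c7}. For~\ref{h0c8}, I would apply Lemma~\ref{preli-fpv},~\ref{preli-fpv32} to Assumption~\ref{h0n8}, obtaining $f'_v(-v)\geq -v$; since $L$ is linear this reads $L(v)\leq v$, which is~\ref{h0c8}.

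The crux is then to extract~\ref{h0c5} and~\ref{h0c6} from the $\WKR$ condition, and the first step is to establish that $r(L)\leq 1$. Iterating $L(v)\leq v$ together with the positivity of $L$ gives $L^k(v)\leq v$ for all $k$; hence for $x\in C$ one has $0\leq L^k(x)\leq M(x/v)\,L^k(v)\leq M(x/v)\,v$, so the normality of $C$ and the equivalence of $\|\cdot\|$ with $\|\cdot\|_v$ (Proposition~\ref{prop-normequiv}) bound $\|L^k(x)\|$ by a constant multiple of $\|x\|$ on $C$. Decomposing an arbitrary $x\in X$ as a difference of two elements of $C$ of controlled norm (possible since $\Cint\neq\emptyset$) then yields $\sup_k\|L^k\|<\infty$, whence $r(L)=\lim_k\|L^k\|^{1/k}\leq 1$.

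With $r(L)\leq 1$ in hand, I would split into two cases. If $r(L)<1$, then $1$ lies in the resolvent set of $L$, so $\id-L$ is invertible, hence Fredholm of index $0$ (in particular semi-Fredholm) and $N(\id-L)=\{0\}$. If $r(L)=1$, then $r(L)>0$, so the $\WKR$ condition provides $u\in C\setminus\{0\}$ with $N(\id-L)=N(r(L)\id-L)=\R u$ and the semi-Fredholmness of $\id-L=r(L)\id-L$; moreover Assumption~\ref{h0n6} forces $\psi(u)>0$. In both cases $\id-L:X\to X$ is semi-Fredholm, and $N(\id-L)\cap\psi^{-1}(0)=\{0\}$: this is immediate when $N(\id-L)=\{0\}$, and when $N(\id-L)=\R u$ it follows from $\psi(\lambda u)=\lambda\psi(u)$ together with $\psi(u)>0$. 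The latter is precisely~\ref{h0c6}.

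Finally, to obtain~\ref{h0c5}, I would invoke the restriction principle for semi-Fredholm operators recorded in Remark~\ref{rk-roger1} (standard Fredholm theory, see~\cite[\S~19.1]{hormander}): since $\ker\psi$ is closed of codimension one, $\id-L$ is semi-Fredholm on $X$ if and only if $(\id-L)|_{\psi^{-1}(0)}$ is semi-Fredholm into $X$, their indices differing by $1$. Thus $(\id-L)|_{\psi^{-1}(0)}$ is semi-Fredholm with index in $\Z\cup\{-\infty\}$, which is~\ref{h0c5}. All hypotheses of Corollary~\ref{coro1} being verified, its conclusion gives the uniqueness of the eigenvector of $f$ in $G\cap\Sigma$. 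The only genuinely delicate step is the bound $r(L)\leq 1$, which rules out the case $r(L)>1$ (where the $\WKR$ data would give no control on the eigenvalue $1$); everything else is bookkeeping combined with the cited Fredholm restriction result.
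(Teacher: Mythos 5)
Your proof is correct, and its skeleton is the same as the paper's: reduce to Corollary~\ref{coro1} by matching assumptions (differentiability gives~\ref{h0c4}; $f'_v(C)\subset C$ from the \WKR\ condition gives~\ref{h0c7}; Lemma~\ref{preli-fpv},~\ref{preli-fpv32} plus linearity of $f'_v$ gives~\ref{h0c8}), prove $r(f'_v)\leq 1$, then split into the cases $r(f'_v)<1$ and $r(f'_v)=1$, using in the latter case the \WKR\ kernel description $N(\id-f'_v)=\set{\lambda u}{\lambda\in\R}$ together with~\ref{h0n6} to rule out nonzero fixed points of $f'_v$ in $\psi^{-1}(0)$, and the restriction principle of Remark~\ref{rk-roger1} to pass from $X$ to $\psi^{-1}(0)$ (a step the paper leaves implicit in the $r(f'_v)=1$ case). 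Where you genuinely depart from the paper is in the two spectral-radius steps, and in both instances your route is more elementary because it exploits linearity. For the bound $r(f'_v)\leq 1$, the paper observes that $\supeigen{C}(f'_v)\leq 1$ and cites a lemma of the companion work~\cite{AGN-collatz} to deduce $\cspr{C}(f'_v)\leq 1$, hence $r(f'_v)\leq 1$; you instead iterate $f'_v(v)\leq v$ with positivity, use normality and the reproducing property of $C$ (which has nonempty interior) to obtain $\sup_k\|(f'_v)^k\|<\infty$, and conclude by Gelfand's formula --- in effect a self-contained proof, in the linear case, of the cited lemma. For the case $r(f'_v)<1$, the paper invokes Proposition~\ref{prop1} (Property~\PF\ from a Bonsall spectral radius less than one), while you simply note that $1$ lies in the resolvent set, so that $\id-f'_v$ is invertible and hence Fredholm of index $0$. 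Both substitutions are sound; what they buy is independence of the corollary from~\cite{AGN-collatz} and from the nonlinear machinery of Proposition~\ref{prop1}, at the price of redoing, for linear maps, work already done there.
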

\begin{proof}
\newcommand{\ofcoro}{of Corollary~\ref{coro1}}
\newcommand{\ofcoron}{of Corollary~\ref{coro2}}
We only need to verify that $f$ satisfies 
Assumptions~\ref{h0c4}--\ref{h0c8}\ \ofcoro.
Assumption \ref{h0n4}\ \ofcoron\ corresponds to \ref{h0c4}\ \ofcoro.
Assumption~\ref{h0n5}\ \ofcoron\ implies that $f'_v(C)\subset C$,
by definition of the \WKR\ condition, that is
Assumption~\ref{h0c7}\ \ofcoro.
By Lemma~\ref{preli-fpv},~\ref{preli-fpv32}, 
we get that Assumption~\ref{h0n8}\ \ofcoron\ implies
Assumption~\ref{h0c8}\ \ofcoro, since $f'_v$ is linear.
It remains to show Asssumptions~\ref{h0c5} and \ref{h0c6}\ \ofcoro.
{From} Assumptions~\ref{h0c7} and \ref{h0c8}\ \ofcoro, 
$f'_v$ is homogeneous, order preserving and 
satisfies $ \supeigen{C} (f'_v)\leq 1$,
with $ \supeigen{C}(f'_v)$ as in~\eqref{def-collatz-number}.
From Lemma~\ref{collatz-lem72}, it follows that 
$\bonsall C (f'_v)\leq 1 $
and since $f'_v$ is linear, we get that $r(f'_v)\leq 1$.
Consider first the case where $r(f'_v)<1$. Then,
by Proposition~\ref{prop1}, $\id -f'_v$ has Property~\PF,
or is a semi-Fredholm operator
with index in $\Z\cup\{-\infty\}$, which shows
Assumption~\ref{h0c5}\ \ofcoro, and
$N(\id-f'_v)=\{0\}$, which implies Assumption~\ref{h0c6}\ \ofcoro.
Consider now the case where $r(f'_v)=1$. 
Then, by Assumption~\ref{h0n5}\ \ofcoron,
$\id -f'_v$ is a semi-Fredholm operator,
and since the dimension of $N(\id -f'_v)$ is finite (equal to $1$),
the index of  $\id -f'_v$ is in $\Z\cup\{-\infty\}$, hence
Assumption~\ref{h0c5} \ofcoro\ holds.
Let $x$ be a fixed point of $f'_v$ in $\psi^{-1}(0)$, that is
$x\in N(\id-f'_v)\cap \psi^{-1}(0)$.
By Assumption~\ref{h0n6}\ \ofcoron, we deduce that $x=0$.
This shows that Assumption~\ref{h0c6}\ \ofcoro\ holds.
\end{proof}

Corollary~\ref{coro2} extends partially~\cite[Theorem~2.5]{nussbaum88}.
We obtain the same conclusion with different assumptions: the
condition~\WKR\ is replaced by the stronger KR condition;
the map $f$ is assumed to be $\continuous_1$, whereas we only assume
$f$ to be differentiable at $v$; Condition~\ref{h0n8} is assumed
for all $v\in \Sigma\cap G$, whereas we only require it for the
fixed point $v$;
but we require the cone $C$ to be normal, whereas the result of
\cite{nussbaum88} is valid for a general proper cone $C$.

\section{Application to nonexpansive self-maps of AM-spaces with unit}\label{sec-order}
In this section, we give additive versions of the results of 
Section~\ref{sec-eig-nonexp}, motivated by the case of Shapley operators
of zero-sum games with a compact state space $K$.
The latter operators are order preserving and sup-norm nonexpansive maps
acting on a Banach space $\continuous(K)$. 

Since for any Banach space $(X,\|\cdot\|)$,
the corresponding metric space $(V,d)$ (with $V=X$ and $d(x,y)=\|x-y\|$)
satisfies Assumptions~\ref{h-1}, \ref{h-2} and \ref{h-3}, the
following additive versions of Theorems~\ref{theo00d} and~\ref{theo62} are 
obtained directly from Theorems~\ref{theo0} and~\ref{th-geom}
together with Remark~\ref{rem-r1}.

\begin{corollary}\label{cor-801}
Let $(X,\|\cdot\|)$ be an AM-space with unit, let $G$ be an open subset
of $X$ and let $F:G\to X$ be a nonexpansive map with respect to $\|\cdot\|$. 
Let $v\in G$ be a fixed point of $F$: $F(v)=v$. 
Make the following assumptions:
\begin{enumerate}
\renewcommand{\theenumi}{\rm (A\arabic{enumi})}
\renewcommand{\labelenumi}{\theenumi}
\item\label{h801-4} $F$ is semidifferentiable at $v$;
\item\label{h801-5} The map $\id-F'_v: X\to X$  has Property \PF;
\item\label{h801-6} The fixed point of $F'_v:X\to X$ is unique:
$F'_v(x) = x,\, x \in X\Rightarrow x =0$.
\end{enumerate}
Then, the fixed point of $F$ in $G$ is unique: 
$F(w) = w,\, w \in G\Rightarrow w =v$.
\end{corollary}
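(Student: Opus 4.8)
The plan is to obtain this corollary as a direct specialization of Theorem~\ref{theo0} to the case in which the underlying metric space is a normed space equipped with the metric induced by its norm. Recall that an AM-space with unit is in particular a Banach space, so I would take $V = X$ endowed with $d(x,y) = \|x-y\|$, and I would choose the ambient Banach space of assumption~\ref{h-1} to be $E_v = X$ itself, with $\|\cdot\|_v = \|\cdot\|$. Since $X$ is complete, $(V,d)$ is a complete metric space, and $G$ is open in $(V,d)$ because the $d$-topology is exactly the norm topology. This is precisely the situation recorded in the sentence preceding Theorem~\ref{theo0}, where it is observed that assumptions~\ref{h-1}--\ref{h-3} hold trivially for a Banach space with its norm metric.

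The first step is to verify the three structural hypotheses~\ref{h-1}--\ref{h-3} on $(V,d)$. Assumption~\ref{h-1} is immediate, since $d(x,y) = \|x-y\|_v$ holds identically (not merely asymptotically): the two topologies coincide and the equivalence~\eqref{eq-h-1} is trivial with any $\lambda>1$ and any neighborhood $U$. Assumption~\ref{h-2} holds because the balls of $(V,d)$ are precisely the norm balls, which are convex. For assumption~\ref{h-3}, given $w \in V$ and $s \in (0,1)$, the point $z = (1-s)v + sw$ satisfies $d(z,v) = \|s(w-v)\| = s\,d(v,w)$ and $d(z,w) = \|(1-s)(v-w)\| = (1-s)\,d(v,w)$, so $\Gamma_s \neq \emptyset$.

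With these choices, assumptions~\ref{h801-4}, \ref{h801-5} and~\ref{h801-6} of the corollary are literally assumptions~\ref{h-4}, \ref{h-5} and~\ref{h-6} of Theorem~\ref{theo0}, since $E_v = X$ and $F'_v$ is the semiderivative of $F$ at $v$. Applying Theorem~\ref{theo0} to $f = F$ then yields that $v$ is the unique fixed point of $F$ in $G$, which is the desired conclusion.

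I expect no genuine obstacle here: the AM-space structure plays no role beyond supplying completeness, and the whole content reduces to checking that the norm metric fulfils the convexity and geodesic-type conditions~\ref{h-2}--\ref{h-3}, both elementary in a normed space. All the substantive work has already been carried out in the proof of Theorem~\ref{theo0} and in the local uniqueness Lemma~\ref{uni-local} underlying it; this corollary is merely a translation of those results into the additive (sup-norm) setting.
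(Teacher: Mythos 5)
Your proposal is correct and is exactly the paper's argument: the paper derives Corollary~\ref{cor-801} by applying Theorem~\ref{theo0} to $V=X$ with $d(x,y)=\|x-y\|$ and $E_v=X$, noting (as you do) that Assumptions~\ref{h-1}--\ref{h-3} are trivially satisfied for a Banach space with its norm metric. Nothing is missing; the verification of \ref{h-2} and \ref{h-3} via convexity of norm balls and the segment $z=(1-s)v+sw$ is the standard elementary check the paper leaves implicit.
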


\begin{corollary}\label{cor-802}
Let $(X,\|\cdot\|)$ be an AM-space with unit, let $G$ be an open subset of $X$
 and let $F:G\to G$ be a nonexpansive map with respect to $\|\cdot\|$. 
If $F$ has a fixed point $v\in G$ and if  $F$ is semidifferentiable at $v$,
with $\bonsall{}(F'_v)<1$, the fixed point of $F$ in $G$ is unique.
Moreover, if $G$ is connected, we have:
\begin{align}
\label{e-geom-802}
\forall x\in G,\;\; \limsup_{k\to \infty} \|F^k(x)-v\|^{1/k}\leq
\bonsall{}(F'_v)
\enspace .
\end{align}
\end{corollary}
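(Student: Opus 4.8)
The plan is to invoke Theorems~\ref{theo0} and~\ref{th-geom} with the metric space $(V,d)$ obtained by setting $V=X$ and $d(x,y)=\|x-y\|$, taking $E_v=X$ and $\|\cdot\|_v=\|\cdot\|$. First I would check Assumptions~\ref{h-1}--\ref{h-3} for this $(V,d)$. Assumption~\ref{h-1} holds trivially: the norm topology and the $d$-topology coincide on every subset of $V$, and the equivalence~\eqref{eq-h-1} is an identity, since $d(x,y)=\|x-y\|_v$ for all $x,y$. Assumption~\ref{h-2} holds because the balls of $(V,d)$ are the norm balls, which are convex. For Assumption~\ref{h-3}, given $w\in V$ and $s\in(0,1)$, the point $z_s=(1-s)v+sw$ satisfies $d(z_s,v)=s\,d(v,w)$ and $d(z_s,w)=(1-s)\,d(v,w)$, so $z_s\in\Gamma_s$ and in particular $\Gamma_s\neq\emptyset$; thus $(V,d)$ is even a geodesic space. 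Finally, $(V,d)$ is complete because $X$ is a Banach space.

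Next I would record that the two genuine hypotheses of the corollary match the remaining hypotheses of Theorem~\ref{th-geom}: semidifferentiability of $F$ at $v$ is Assumption~\ref{h-4}, and $\bonsall{}(F'_v)<1$ is exactly the spectral-radius condition, with $\bonsall{}=\bonsall{X}$ computed for $\|\cdot\|=\|\cdot\|_v$ as required. For the uniqueness statement, which is asserted without assuming $G$ connected, I would argue as in Remark~\ref{rem-r1}: since $F$ is nonexpansive and semidifferentiable at $v$, Proposition~\ref{prop-sc-a1hold} gives that $F'_v$ is nonexpansive, hence homogeneous and uniformly continuous on bounded sets; then $\bonsall{}(F'_v)<1$ lets me apply Proposition~\ref{prop1}, which yields that $\id-F'_v$ has Property~\PF\ (Assumption~\ref{h-5}) and that $0$ is the unique fixed point of $F'_v$ (Assumption~\ref{h-6}). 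With \ref{h-1}--\ref{h-6} in hand and \ref{h-2},~\ref{h-3} verified above, Theorem~\ref{theo0} applies and gives that $v$ is the unique fixed point of $F$ in $G$.

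For the geometric convergence under the additional hypothesis that $G$ is connected, I would apply Theorem~\ref{th-geom} directly to $f=F$: all of its hypotheses (completeness of $(V,d)$, connectedness and openness of $G$, $F:G\to G$ nonexpansive with fixed point $v$, Assumptions~\ref{h-1} and~\ref{h-4}, and $\bonsall{}(F'_v)<1$) have been established, so $\limsup_{k\to\infty} d(F^k(x),v)^{1/k}\leq\bonsall{}(F'_v)$ for all $x\in G$. Since $d(F^k(x),v)=\|F^k(x)-v\|$, this is precisely~\eqref{e-geom-802}. I expect no real obstacle here: the entire content is the observation that the AM-space structure is used only through the completeness of the underlying Banach space, so that $(V,d)$ satisfies \ref{h-1}--\ref{h-3} automatically. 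The one point requiring care is that uniqueness is claimed for all of $G$ rather than per connected component, which is why I route it through Theorem~\ref{theo0} (via Remark~\ref{rem-r1}) instead of merely quoting the uniqueness clause of Theorem~\ref{th-geom}.
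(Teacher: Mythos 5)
Your proposal is correct and follows exactly the paper's route: the paper obtains Corollary~\ref{cor-802} by observing that $(V,d)$ with $V=X$ and $d(x,y)=\|x-y\|$ trivially satisfies Assumptions~\ref{h-1}--\ref{h-3}, then invoking Theorem~\ref{th-geom} for the geometric convergence and Theorem~\ref{theo0} together with Remark~\ref{rem-r1} (i.e., Proposition~\ref{prop-sc-a1hold} plus Proposition~\ref{prop1} to get Assumptions~\ref{h-5} and~\ref{h-6}) for uniqueness without connectedness. Your write-up simply makes these verifications explicit, including the correct observation that only the Banach-space structure of $X$ is used.
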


Recall that when $(X,\|\cdot\|)$ is an AM-space with unit, denoted by $e$, 
we denote by $\omega$ the seminorm $\omega_e$
defined in~\eqref{def-omega}.
Applying Theorem~\ref{theo0} to the metric spaces $(V,d)$ corresponding to the
Banach spaces $(\psi^{-1}(0),\|\cdot\|)$ and $(\psi^{-1}(0),\omega )$
with $\psi\in (X^+)^*\setminus\{0\}$, or replacing $X$ by $\psi^{-1}(0)$
and perhaps $\|\cdot\|$ by $\omega$ in Corollary~\ref{cor-801}, 
one would have obtained an additive version of Theorem~\ref{theo00}.
The following additive version of Theorem~\ref{theo01}, will be
derived from Corollary~\ref{cor-801}.

\begin{corollary}\label{cor-804}
Let $(X,\|\cdot\|)$ be an AM-space with unit, denoted by $e$, and
let $\psi\in (X^+)^*\setminus\{0\}$, and denote by
$\omega$ the seminorm $\omega_e$ defined in~\eqref{def-omega}.
Let $G$ be an open subset of $X$ and let $F:G\to X$
be a map such that $F|_{G\cap \psi^{-1}(0)}$ is
nonexpansive with respect to $\omega$. 
Let $v\in G\cap\psi^{-1}(0)$ be a fixed point of $F$: $F(v)=v$. 
Make the following assumptions:
\begin{enumerate}
\renewcommand{\theenumi}{\rm (A\arabic{enumi})}
\renewcommand{\labelenumi}{\theenumi}
\item\label{h804-4} $F$ is semidifferentiable at $v$;
\item\label{h804-5} 
The map $(\id-F'_v)|_{\psi^{-1}(0)}: \psi^{-1}(0)\to X$ has Property \PF;
\item\label{h804-6} The fixed point of $F'_v$ in $\psi^{-1}(0)$ is unique:
$F'_v(x) = x,\, x \in \psi^{-1}(0) \Rightarrow x =0$;
\item\label{h804-7} $F'_v$ is order preserving;
\item\label{h804-8} $F'_v$ is additively subhomogeneous.
\end{enumerate}
Then, the additive eigenvector of $F$ in $G\cap \psi^{-1}(0)$ is unique: 
$\exists \lambda\in \R,\; F(w) = \lambda e+ w,\, w \in G\cap \psi^{-1}(0)
\Rightarrow w =v$.
\end{corollary}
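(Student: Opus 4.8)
The plan is to transpose the proof of Theorem~\ref{theo01} to the additive setting, replacing the multiplicative renormalization $x\mapsto f(x)/\psi(f(x))$ used in Lemma~\ref{ftog} by the additive projection onto $\psi^{-1}(0)$, and then to apply Corollary~\ref{cor-801} on the Banach space $(\psi^{-1}(0),\omega)$ rather than on the whole AM-space. First I would normalize $\psi(e)=1$; this is legitimate because $\psi\in(X^+)^*\setminus\{0\}$ forces $\psi(e)>0$ (otherwise $|x|\leq\|x\|\,e$ would give $\psi(|x|)=0$ for all $x$, hence $\psi\equiv 0$). I then set $\tilde F(x)=F(x)-\psi(F(x))\,e$ and $g=\tilde F|_{G\cap\psi^{-1}(0)}$, so that $\psi(\tilde F(x))=0$ and $g$ maps $G\cap\psi^{-1}(0)$ into $\psi^{-1}(0)$. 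Since $\tilde F=R\circ F$ with $R(y)=y-\psi(y)\,e$ linear (the projection of $X$ onto $\psi^{-1}(0)$ along $\R e$), Lemma~\ref{lemma-chain} shows that $g$ is semidifferentiable at $v$ with
\[
g'_v(x)=F'_v(x)-\psi(F'_v(x))\,e ,
\]
the additive analogue of~\eqref{gprimv}. Because $x-F'_v(x)$ and $x-g'_v(x)$ differ by a multiple of $e$ and $\omega=\omega_e$ is invariant under adding multiples of $e$, the map $g$ inherits nonexpansiveness with respect to $\omega$, and $g(v)=v$ (using $\psi(v)=0$ and $F(v)=v$). I would also record that $(\psi^{-1}(0),\omega)$ is a Banach space: $\psi^{-1}(0)$ is closed in $X$, and by~\eqref{equiv-omega0}--\eqref{equiv-omega1} the seminorm $\omega$ restricts to a norm on $\psi^{-1}(0)$ equivalent to $\|\cdot\|$.

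The heart of the argument is the inequality
\[
\|x-F'_v(x)\|\leq 2\,\|x-g'_v(x)\|\qquad\forall x\in\psi^{-1}(0),
\]
the additive counterpart of~\eqref{eqth3}. To obtain it I apply Lemma~\ref{lemma-ess} to $h=F'_v$ with the interior point $e$ of $X^+$: by~\ref{h804-7} and~\ref{h804-8} the map $F'_v$ is order preserving and additively subhomogeneous, and $F'_v(0)=0$ by homogeneity, so $0$ lies in its fixed-point set. Taking $y=0$ in~\eqref{cw1}--\eqref{cw2}, and using that $\sbf{e}{x}\leq\psi(x)\leq\stf{e}{x}$ gives $\sbf{e}{x}\leq 0\leq\stf{e}{x}$ on $\psi^{-1}(0)$, yields $\sbf{e}{x-F'_v(x)}\leq 0\leq\stf{e}{x-F'_v(x)}$. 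Writing $x-F'_v(x)=(x-g'_v(x))-\psi(F'_v(x))\,e$ with $y:=x-g'_v(x)\in\psi^{-1}(0)$, and translating the last pair of inequalities by $\psi(F'_v(x))$, one finds $\sbf{e}{y}\leq\psi(F'_v(x))\leq\stf{e}{y}$, whence $|\psi(F'_v(x))|\leq\|y\|$ by~\eqref{eq6}; combined with $\|e\|=1$ this produces the factor-$2$ bound.

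From this inequality the two hypotheses still required to apply Corollary~\ref{cor-801} on $(\psi^{-1}(0),\omega)$ follow exactly as in Theorem~\ref{theo01}: if $g'_v(x)=x$ with $x\in\psi^{-1}(0)$ then $F'_v(x)=x$, so $x=0$ by~\ref{h804-6}; and if $(x_j)$ is bounded in $\psi^{-1}(0)$ with $x_j-g'_v(x_j)\to 0$ then $x_j-F'_v(x_j)\to 0$, so~\ref{h804-5} furnishes a convergent subsequence, giving Property~\PF\ for $\id-g'_v$ (the limit lies in $\psi^{-1}(0)$ since that subspace is closed). As noted in the text, the proof of Corollary~\ref{cor-801} (via its derivation from Theorem~\ref{theo0}) uses only that the underlying space is a Banach space, so that~\ref{h-1}--\ref{h-3} hold automatically; hence I may apply it with $X$ replaced by $\psi^{-1}(0)$ and $\|\cdot\|$ by $\omega$ to conclude that $v$ is the unique fixed point of $g$ in $G\cap\psi^{-1}(0)$. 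Finally I translate back: if $F(w)=\lambda e+w$ with $w\in G\cap\psi^{-1}(0)$, then $\psi(F(w))=\lambda$, so $g(w)=F(w)-\lambda e=w$, forcing $w=v$. The main obstacle is the order-theoretic inequality of the middle paragraph; once Lemma~\ref{lemma-ess} is invoked the remaining steps are routine bookkeeping with the seminorm $\omega$.
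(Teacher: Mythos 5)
Your proof is correct and follows essentially the same route as the paper's: normalize $\psi(e)=1$, pass to the additively renormalized map $\tilde F(x)=F(x)-\psi(F(x))e$ via the chain rule with the linear projection $R$, establish the factor-$2$ inequality $\|x-F'_v(x)\|\leq 2\|x-g'_v(x)\|$ on $\psi^{-1}(0)$ through Lemma~\ref{lemma-ess} applied to $h=F'_v$ with reference point $e$, and then invoke Corollary~\ref{cor-801} on the Banach space $(\psi^{-1}(0),\omega)$. The only differences are that you spell out steps the paper compresses (the positivity $\psi(e)>0$, the explicit derivation of the inequality ``by the same arguments as in Theorem~\ref{theo01}'', and the norm equivalence on $\psi^{-1}(0)$), which is fine.
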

\begin{proof}
Since the assumptions and conclusions of the corollary depend only on 
$\psi^{-1}(0)$, we may assume that $\psi(e)=1$.
Let $\tilde{F}:G\to \psi^{-1}(0),\, x\mapsto F(x)-\psi(F(x)) e$ and
denote $H=\tilde{F}|_{G\cap \psi^{-1}(0)}$. We shall prove that $H$
satisfies the assumptions of Corollary~\ref{cor-801} for the 
Banach space $(\psi^{-1}(0),\omega)$.
Since $F|_{G\cap \psi^{-1}(0)}$ is
nonexpansive with respect to $\omega$, and
$\omega(x+a e)=\omega(x)$ for all $x\in G$, $H$ is
nonexpansive with respect to $\omega$. 
Since $F(v)=v$ and $v\in G\cap \psi^{-1}(0)$, $H(v)=v$.
Since $F$ is semidifferentiable at $v$ and the map 
$R:X\to \psi^{-1}(0),\, x\mapsto x-\psi(x) e$ is linear, thus
differentiable at any point, $\tilde{F}=R\circ F$ is semidifferentiable
at $v$, by Lemma~\ref{lemma-chain}, and $\tilde{F}'_v=R\circ F'_v$.
This implies that $H$ is semidifferentiable at $v$ with
$H'_v=\tilde{F}'_v|_{\psi^{-1}(0)}:\psi^{-1}(0)\to \psi^{-1}(0)$.
We thus get Assumption~\ref{h801-4} of Corollary~\ref{cor-801}.
Taking $C=X^+$, and using Assumptions~\ref{h804-7} and~\ref{h804-8}
of the Corollary~\ref{cor-804}, we obtain by the same arguments as in the
proof of Theorem~\ref{theo01}, the following inequality:
\begin{equation}\label{eq-cor804}
\| x-F'_v(x)\|\leq 2 \, \| x-H'_v(x)\| \quad \forall x\in \psi^{-1}(0)
\enspace .
\end{equation}
Then, using $H'_v(x)=F'_v(x)-\psi(F'_v(x)) e$ and~\eqref{eq-cor804},
we obtain, by the same arguments as in the
proof of Theorem~\ref{theo01}, Assumptions~\ref{h801-5} and~\ref{h801-6}
of Corollary~\ref{cor-801} with $X$ replaced by $\psi^{-1}(0)$.
If $w\in G\cap \psi^{-1}(0)$ satisfies $F(w) = \lambda e +w$ for some 
$\lambda\in\R$, we get $H(w)=w$, and by Corollary~\ref{cor-801}, we 
obtain $w=v$.
\end{proof}

\begin{remark}
As for Theorem~\ref{theo01}, Corollary~\ref{cor-804} can be applied in 
the following situations.
{From} Lemma~\ref{preli-fpv-add}, Assumption~\ref{h804-7} 
of Corollary~\ref{cor-804} is fulfilled
as soon as $F$ is order preserving in a neighborhood of $v$,
and Assumption~\ref{h804-8} is fulfilled as soon as $F$ is 
additively subhomogeneous in a neighborhood of $v$.
Moreover, these properties imply, by Lemma~\ref{lemma-nonexpan-add}, 
that $F|_{G\cap \psi^{-1}(0)}$ is nonexpansive with respect to $\omega$.
\end{remark}

The following additive version of Corollary~\ref{theo1} will be
derived from Corollary~\ref{cor-804}.

\begin{corollary}\label{cor-add}
Let $(X,\|\cdot\|)$ be an AM-space with unit, denoted by $e$,
and let $F:X\to X$ be
an additively homogeneous and order preserving map. 
Let $S=\set{x\in X}{F(x)=x}$, and assume that $v\in S$.
Make the following assumptions:
\begin{enumerate}
\renewcommand{\theenumi}{\rm (A\arabic{enumi})}
\renewcommand{\labelenumi}{\theenumi}
\item\label{h4p} $F$ is semidifferentiable at $v$;
\item\label{h5p} The map $\id-F'_v: X \to X$ has Property \PF;
\item\label{h6p} if $F'_v(x) = x$ for some $x \in X$, then 
$x \in \{ \lambda e \mid \lambda \in \R\}$;
\end{enumerate}
Then, $S=\set{v+\lambda e}{\lambda \in \R}$.
\end{corollary}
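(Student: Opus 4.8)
The plan is to derive Corollary~\ref{cor-add} from Corollary~\ref{cor-804}, in exact analogy with the way Corollary~\ref{theo1} is obtained from Theorem~\ref{theo01} in the multiplicative setting. Since $F$ is additively homogeneous and order preserving, Lemma~\ref{lemma-nonexpan-add} shows that $F$ is nonexpansive with respect to the seminorm $\omega=\omega_e$, so the nonexpansiveness hypothesis of Corollary~\ref{cor-804} is automatic; the real content is to match up the infinitesimal assumptions and, above all, to relocate the base point onto the hyperplane $\psi^{-1}(0)$.

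First I would fix $\psi\in(X^+)^*\setminus\{0\}$ and normalize it so that $\psi(e)=1$; this is possible because $e$ is an order unit, which forces $\psi(e)>0$ (if $\psi(e)=0$ then $0\leq\psi(x)\leq\|x\|\psi(e)=0$ for $x\in X^+$, so $\psi$ vanishes on the reproducing cone $X^+$, a contradiction). Take $G=X$, which is open. Because the given fixed point $v$ need not lie on $\psi^{-1}(0)$, I would pass to $v':=v-\psi(v)e$, which lies in $\psi^{-1}(0)$ and is again a fixed point since $F(v')=F(v)-\psi(v)e=v-\psi(v)e=v'$ by additive homogeneity. The crucial observation, again from additive homogeneity, is that the semiderivative is unchanged by this shift: $F(v'+x)=F(v+x)-\psi(v)e$, so $F$ is semidifferentiable at $v'$ with $F'_{v'}=F'_v$.

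I would then verify the five hypotheses of Corollary~\ref{cor-804} at the base point $v'$. Hypothesis (A1) holds by the previous observation. Hypothesis (A2), Property~\PF\ for $(\id-F'_v)|_{\psi^{-1}(0)}$, is immediate from the assumption that $\id-F'_v$ has Property~\PF\ on all of $X$, since a bounded sequence in $\psi^{-1}(0)$ is a bounded sequence in $X$ and $\psi^{-1}(0)$ is closed. Hypothesis (A3) is the translation of assumption (A3) of the corollary: if $F'_v(x)=x$ with $x\in\psi^{-1}(0)$, then $x=\mu e$ for some $\mu\in\R$, and applying $\psi$ gives $\mu=\psi(x)=0$, so $x=0$. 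Hypotheses (A4) and (A5) follow from Lemma~\ref{preli-fpv-add}, parts~(i) and~(iv) (or~(iii)), using that $F$ is order preserving and additively homogeneous, hence additively subhomogeneous.

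Finally I would translate the conclusion back. If $w\in S$, that is $F(w)=w$, then $w':=w-\psi(w)e$ lies in $\psi^{-1}(0)$ and satisfies $F(w')=w'$, so it is an additive eigenvector with eigenvalue $0$; Corollary~\ref{cor-804} forces $w'=v'$, whence $w=v+(\psi(w)-\psi(v))e\in v+\R e$. The reverse inclusion $v+\R e\subseteq S$ is immediate from additive homogeneity, which gives $S=\set{v+\lambda e}{\lambda\in\R}$. The only point requiring genuine care — and the main, if modest, obstacle — is this reduction to the hyperplane: one must notice that the given $v$ is not $\psi$-normalized and exploit additive homogeneity both to move the base point into $\psi^{-1}(0)$ and to identify $F'_{v'}$ with $F'_v$; everything else is a routine transcription of the hypotheses through Lemmas~\ref{lemma-nonexpan-add} and~\ref{preli-fpv-add}.
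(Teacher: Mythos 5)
Your proof is correct and takes essentially the same route as the paper's: both deduce the result from Corollary~\ref{cor-804} by using additive homogeneity to shift the fixed point onto $\psi^{-1}(0)$ (observing that the semiderivative is unchanged, $F'_{v'}=F'_v$), verify the hypotheses via Lemmas~\ref{lemma-nonexpan-add} and~\ref{preli-fpv-add} and the application of $\psi$ to handle the uniqueness assumption, and then translate any fixed point $w\in S$ back through $w-\psi(w)e$. Your only additions — the explicit check that $\psi(e)>0$ permits the normalization, and the explicit naming of $v'$ rather than the paper's ``we may assume $\psi(v)=0$'' — are harmless refinements of the same argument.
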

\begin{proof}
Let us check that $F$ satisfies the assumptions of 
Corollary~\ref{cor-804}.
Consider $G=X$, $\psi\in (X^+)^*\setminus\{0\}$ such that $\psi(e)=1$.
Since $F$ is additively homogeneous and $v\in S$, $v+\lambda e\in S$
for all $\lambda\in\R$. Moreover, since $F$ is semidifferentiable at $v$,
$F$ is semidifferentiable at $v+\lambda e$ for all $\lambda \in \R$,
with $F'_{v+\lambda e}= F'_v$. Taking $\lambda=-\psi(v)$, we
get $\psi(v+\lambda e)= 0$ and $v+\lambda e$ satisfies all the
assumptions of the corollary. Hence, replacing $v$ by $v+\lambda e$, we may
assume that $\psi(v)=0$.

We have : $v\in G\cap \psi^{-1}(0)$ and $F(v)=v$.
Since $F$ is order preserving and additively homogeneous, 
$F|_{ \psi^{-1}(0)}$ is
nonexpansive with respect to $\omega$ (see Lemma~\ref{lemma-nonexpan-add}).
Assumptions~\ref{h804-4} and~\ref{h804-5}  of Corollary~\ref{cor-804} are
implied by Assumptions~\ref{h4p} and~\ref{h5p}  
 of Corollary~\ref{cor-add}.
Assumptions~\ref{h804-7} and~\ref{h804-8} of Corollary~\ref{cor-804}
are deduced from
Lemma~\ref{preli-fpv-add}, \ref{preli-fpv-add1}
and~\ref{preli-fpv-add2}, using the fact that $F$
 is order preserving and additively homogeneous.
This completes the proof of the assumptions of Corollary~\ref{cor-804}.

Let $x\in S$, and denote $\lambda=\psi(x)$.
Since $F$ is additively homogeneous,  $y=x-\lambda e$ satisfies 
$F(y)=y$ and $y\in \psi^{-1}(0)$.
{From} Corollary~\ref{cor-804}, this implies $y=v$, hence $x=\lambda e+ v$.
Since we already proved above the converse implication, this 
yields the conclusion of the corollary.
\end{proof}

The following is the additive version of Theorem~\ref{th-6.8}. 
\begin{corollary}\label{cor-addnew}
Let $(X,\|\cdot\|)$ be an AM-space with unit, denoted by $e$,
and denote by
$\omega$ the seminorm $\omega_e$ defined in~\eqref{def-omega}.
Let $F:X\to X$ be an additively homogeneous and order preserving map,
denote by $S=\set{x\in X}{F(x)=x}$, and assume
that $v\in S$. Make the following assumptions:
\begin{enumerate}
\renewcommand{\theenumi}{\rm (A\arabic{enumi})}
\renewcommand{\labelenumi}{\theenumi}
\item\label{h4pnew} $F$ is semidifferentiable at $v$;
\item\label{h6pnew} $\bonsall{}(F'_v) <1$, where $\bonsall{}$ is defined
with respect to the seminorm $\omega$, as in~\eqref{6*}, \eqref{6**}
(with $C=X$).
\end{enumerate}
Then, for all $x\in X$, 
\begin{align}
\limsup_{k\to \infty} \omega(F^k(x)-v)^{1/k} \leq \bonsall{}(F'_v)
\enspace ,
\label{e-geom-hilbertadditive}
\end{align}
and there is a scalar $\lambda$ (depending on $x$),
such that 
\begin{align}
\limsup_{k\to \infty} \|F^k(x)-\lambda e -v \|^{1/k} \leq \bonsall{}(F'_v)
\enspace .
\label{e-geom-additive}
\end{align}
\end{corollary}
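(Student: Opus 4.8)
The plan is to mirror the proof of Theorem~\ref{th-6.8}, transporting each multiplicative construction to its additive counterpart: the projective metric $d$ becomes the seminorm $\omega_e$, Thompson's metric becomes $\|\cdot\|$, positive homogeneity becomes additive homogeneity, and the normalization $y\mapsto y/\psi(y)$ becomes $y\mapsto y-\psi(y)e$. First I would fix $\psi\in(X^+)^*\setminus\{0\}$; since $e$ is an order unit one has $\psi(e)>0$, so we may normalize $\psi(e)=1$, and since every conclusion is unchanged when $v$ is replaced by $v-\psi(v)e\in S$ (the shift being absorbed into $\lambda$ in~\eqref{e-geom-additive}), we may assume $\psi(v)=0$. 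Set $\tilde F(x)=F(x)-\psi(F(x))e$ and $H=\tilde F|_{\psi^{-1}(0)}$, exactly as in the proof of Corollary~\ref{cor-804}. Then $H$ maps $\psi^{-1}(0)$ into itself and fixes $v$; by Lemma~\ref{lemma-nonexpan-add} together with the invariance of $\omega_e$ under adding multiples of $e$, $H$ is nonexpansive for $\omega_e$; and by the chain rule (Lemma~\ref{lemma-chain}) $H$ is semidifferentiable at $v$ with $H'_v=\tilde F'_v|_{\psi^{-1}(0)}$. Since $F'_v$ is additively homogeneous (Lemma~\ref{preli-fpv-add}), the induction used in the proof of Theorem~\ref{th-6.8} gives $(\tilde F'_v)^k(y)=(F'_v)^k(y)-\psi((F'_v)^k(y))e=(H'_v)^k(y-\psi(y)e)$, whence $\omega_e((F'_v)^k(y))=\omega_e((H'_v)^k(y-\psi(y)e))$ and therefore $\bonsall{}(H'_v)=\bonsall{}(F'_v)<1$.

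On $\psi^{-1}(0)$ the seminorm $\omega_e$ is in fact a norm equivalent to $\|\cdot\|$, by~\eqref{equiv-omega0} and~\eqref{equiv-omega1} with $u=e$ (recall $\|\cdot\|_e=\|\cdot\|$), so $(\psi^{-1}(0),\omega_e)$ is a Banach space; being a normed space, it satisfies Assumptions~\ref{h-1}--\ref{h-3} trivially. Applying Theorem~\ref{th-geom} to $H$ on this space (which is its own connected open subset) then yields $\limsup_k\omega_e(H^k(z)-v)^{1/k}\le\bonsall{}(F'_v)$ for all $z\in\psi^{-1}(0)$. Next I would record the additive cocycle: by induction on $k$, using additive homogeneity of $F$ and the definition of $\tilde F$, one checks that $F^k(x)=H^k(z)+c_k e$, where $z:=x-\psi(x)e$, $\mu_m:=\psi(F(H^m(z)))$ and $c_k=\psi(x)+\sum_{m=0}^{k-1}\mu_m$. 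Since $\omega_e$ annihilates $\R e$ and $\psi(v)=0$, we obtain $\omega_e(F^k(x)-v)=\omega_e(H^k(z)-v)$, which is exactly~\eqref{e-geom-hilbertadditive}.

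The hard part will be to upgrade this to~\eqref{e-geom-additive}, i.e.\ to show that $c_k$ converges geometrically fast; this is the additive analogue of the $1$-cocycle argument in the proof of Theorem~\ref{th-6.8}. The key observation is that $\psi\circ F:X\to\R$ is order preserving and additively homogeneous, hence nonexpansive for $\|\cdot\|$: from $x-y\le\|x-y\|e$ one gets $\psi(F(x))\le\psi(F(y))+\|x-y\|$, and symmetrically, so $|\psi(F(x))-\psi(F(y))|\le\|x-y\|$ (the additive counterpart of~\eqref{psicompf-nonexp}). Since $\psi(F(v))=\psi(v)=0$, this gives $|\mu_m|\le\|H^m(z)-v\|\le\omega_e(H^m(z)-v)$, the last step by~\eqref{equiv-omega1} as $H^m(z)-v\in\psi^{-1}(0)$. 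Hence $\limsup_m|\mu_m|^{1/m}\le\bonsall{}(F'_v)<1$, so $\sum_m\mu_m$ converges absolutely; putting $\lambda:=\psi(x)+\sum_{m\ge0}\mu_m$ one has $|c_k-\lambda|\le\sum_{m\ge k}|\mu_m|$, whence $\limsup_k|c_k-\lambda|^{1/k}\le\bonsall{}(F'_v)$. Finally, using $\|e\|=1$ and~\eqref{equiv-omega1},
\[
\|F^k(x)-\lambda e-v\|\le\|H^k(z)-v\|+|c_k-\lambda|\,\|e\|\le\omega_e(H^k(z)-v)+|c_k-\lambda| \enspace ,
\]
and since both terms on the right decay geometrically with rate at most $\bonsall{}(F'_v)$, we obtain~\eqref{e-geom-additive}. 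I expect the only genuinely delicate point to be the bound $|\mu_m|\le\omega_e(H^m(z)-v)$: $\psi$ itself is \emph{not} controlled by $\omega_e$ on all of $X$, and the argument succeeds only because $\psi\circ F$ is $\|\cdot\|$-nonexpansive and $\|\cdot\|$ is equivalent to $\omega_e$ on the hyperplane $\psi^{-1}(0)$ to which the differences $H^m(z)-v$ belong.
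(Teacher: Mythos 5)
Your proposal is correct and takes essentially the same route as the paper: define $\psi$ and $H=\tilde F|_{\psi^{-1}(0)}$ as in Corollary~\ref{cor-804}, obtain \eqref{e-geom-hilbertadditive} by applying the geometric convergence result to $H$ on the Banach space $(\psi^{-1}(0),\omega_e)$ (the paper quotes Corollary~\ref{cor-802}, you invoke Theorem~\ref{th-geom} directly, which is what that corollary rests on), and deduce \eqref{e-geom-additive} from the additive $1$-cocycle formula. The paper leaves this last step as ``a straightforward adaptation of the second part of the proof of Theorem~\ref{th-6.8}'', and your argument --- nonexpansiveness of $\psi\circ F$, the bound $|\mu_m|\leq\|H^m(z)-v\|\leq\omega_e(H^m(z)-v)$, absolute convergence of $\sum_m\mu_m$, and the final triangle inequality --- is precisely that adaptation, carried out correctly.
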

\begin{proof}
We define $H$ and $\psi$
as in the proof of Corollary~\ref{cor-804},
so that $H$ leaves invariant the Banach space $\psi^{-1}(0)$
equipped with the norm $\omega$. 
Moreover, since $F$ is additively homogeneous,
we get that $H^k(x)=F^k(x)-\psi(F^k(x)) e$ for all $x\in X$ and $k\geq 1$.
Then, Corollary~\ref{cor-802} implies
that 
\[
\limsup_{k\to \infty} \omega(F^k(x)-v )^{1/k} 
= \limsup_{k\to \infty} \omega(H^k(x)-v)^{1/k}
\leq \bonsall{} (H'_v) = 
\bonsall{}(F'_v) \enspace 
\]
(recall that $\omega(x+a e)=\omega(x)$ for all $x\in X$ and $a\in\R$),
which shows~\eqref{e-geom-hilbertadditive}.
Now, we use the additive analogue
of the $1$-cocycle formula~\eqref{e-cocycle},
namely
\[
F^k(x)= \big(\psi(F\comp H^{k-1}(x))
+\dots+  \psi (F\comp H^0(x)) \big)e + H^k (x) 
\]
and by a straightforward adaptation of the argument
of the second part of the proof of Theorem~\ref{th-6.8},
we conclude that~\eqref{e-geom-additive} holds
for some scalar $\lambda\in \R$.
\end{proof}

\begin{remark}
One may have derived Corollary~\ref{cor-add} from 
Corollary~\ref{theo1}
by using the Kakutani-Krein theorem
and exp-log transformations as follows (see Section~\ref{am-spaces-def}).
Let $\imath: X\to \continuous(K)$ (where $K$ is  a compact set),
$C=\continuous^+(K)$, $\Cint$ and $\log: \Cint \to \continuous(K)$ be defined as in
Section~\ref{am-spaces-def}. Denote $\jmath=\imath^{-1} \comp \log
: \Cint\to X$. Then, $\jmath^{-1}=\exp\comp \imath$, where
$\exp=\log^{-1}$.
To a  map $F:X\to X$, one associates the map $f:\Cint \to \Cint$,
$f= \jmath^{-1} \comp  F \comp \jmath$.
Any additive property (homogeneity, subhomogeneity, order preserving
property) of $F$ is transformed into its multiplicative version for $f$.
Moreover, since $\imath$ is an isometry, $F$ is Lipschitz continuous
for the norm of $X$ if, and only if, $f$ is  Lipschitz continuous for the
Thompson metric of $C$ and this implies that $f$ is locally
Lipschitz continuous for the sup-norm of $\continuous(K)$.
Hence, the differentiability of the $\exp$, $\log$, $\imath$ 
and $\imath^{-1}$ transformations, implies, from Lemma~\ref{lemma-chain},
that, when $F$ is Lipschitz continuous,
the semidifferentiability of $f$ is equivalent to that of $F$.
This allows us to derive Corollary~\ref{cor-add} from Corollary~\ref{theo1},
since in this case $F$ is nonexpansive (by Lemma~\ref{lemma-nonexpan-add}),
hence Lipschitz continuous with respect to $\|\cdot\|$.
However, in order to derive Corollary~\ref{cor-804} from Theorem~\ref{theo01},
and Corollary~\ref{cor-addnew} from Theorem~\ref{th-6.8} 
one should have generalized first Theorem~\ref{theo01}
and Theorem~\ref{th-6.8} to the case where 
$\psi$ is a nonlinear order preserving homogeneous
map from $\Cint$ to $\R^+$ (the linearity of $\psi$ is not
preserved by taking ``log-glasses'').
\end{remark}

\section{An example of stochastic game}\label{subsec-games}
As a simple illustration of the present results, consider
the following zero-sum two player game, which may be
thought of as a variant (with additive rewards)
of the Richman games~\cite{loeb} or of the stochastic tug-of-war games~\cite{peres} arising in the discretization of the infinity Laplacian~\cite{oberman}.

Let $G=(V,E)$ denote a (finite) directed graph
with set of nodes $V$ and set of arcs $E\subset V\times V$. 
Loops, i.e., arcs of the form $(i,i)$ are allowed. We
assume that every node has at least one successor
(for every $i\in V$, there is at least one $j\in V$ such that $(i,j)\in E$).
We associate to every arc $(i,j)\in E$ a payment $A_{ij}\in \R$. 
Two players, called ``Max'' and ``Min'', will move a token on this digraph,
tossing an unbiased coin at each turn, to decide who plays the turn. 
The player (Max or Min) who just won the right to play
the turn must choose a successor node $j$ (so that $(i,j)\in E$)
and move the token to this node,
Then, Player Max receives the payment $A_{ij}$ from Player Min.
We denote by $v_i(k)$ the {\em value} of this game in $k$ turns,
provided the initial state is $i\in V$.
We refer the reader to~\cite{sudderth,FilarVrieze,sorinneymanbook} for
background on zero-sum games, including the definition and
properties of the value.
In particular, standard dynamic programming arguments 
show that the value of the game in $k$ turns does exist, 
and that the {\em value vector} $v(k):=(v_i(k))\in \R^V$
satisfies
\[
v(k)=F(v(k-1)),\qquad v(0)=0. 
\]
where $F$ (the Shapley operator) is the map $\R^V\to \R^V$ given by 
\begin{align}
F_i(x)= \frac 1 2\big( \max_{j\in V, \;(i,j)\in E} (A_{ij}+x_j) + 
\min_{j\in V, \;(i,j)\in E} (A_{ij}+x_j)\big),
\qquad \forall i\in V \enspace .
\label{e-attainminmax}
\end{align}
The map $F$ is additively homogeneous and order preserving.
We are interested in the {\em mean payoff} vector
\[
\chi(F) := \lim_{k\to \infty}v(k)/k = \lim_{k\to\infty} F^k(0)/k;
\]
hence, $\chi_i(F)$ represents the mean payoff per time unit
starting from the initial state $i$, when the number of turns $k$ tends
to infinity. We shall consider, for simplicity, the
case in which $F$ has an additive eigenvector $u\in \R^n$
with associated eigenvalue $\mu\in\R$, meaning that
$F(u)=u+\mu e$ where $e$ denotes the unit vector of $\R^n$.
Then,
\[
\chi(F)= \mu e \enspace .
\]
Actually, the generalized Perron-Frobenius theorem in~\cite{arxiv1} implies that the additive
eigenpair $(u,\mu)$ does exist if the graph of the game $G$ is strongly connected. 

The vector $u$, which is sometimes called {\em bias}, or {\em potential}
in the dynamic programming literature, can be interpreted as an
invariant terminal payoff. Indeed, consider the auxiliary
game in which all the rewards $A_{ij}$ are replaced by $A_{ij}-\mu$,
and a terminal payment $u_i$ is paid by Min to Max if the terminal
state is $i$. Then, the equation $F(u)=\mu e+ u$, or $-\mu e+ F(u)=u$ means
that the value of this modified game is independent
of the number of turns (the operator $x\mapsto -\mu e + F(x)$ being
interpreted as the dynamic programming operator of this modified game).
An interest of a bias vector is
that it determines stationary optimal strategies for both players,
by selecting the actions attaining the maximum and minimum
in the expression of $F(u)$.

The map $F$ is semidifferentiable.
To see this, let $E^+_i(x)$ and $E^-_i(x)$ denote the set of nodes $j$ attaining
the maximum and the minimum in~\eqref{e-attainminmax}.
An application of Theorem~\ref{th-semi} with Remark~\ref{rk-finite}
shows that the semidifferential
of $F$ at point $u$ does exist and is given by
\[
(F'_u(x))_i:=   \frac 1 2\big( \max_{j\in E^+_i(u)} x_j + 
\min_{j\in E^-_i(u)} x_j\big).
\]

Consider now as an example the digraph of Figure~\ref{fig-stoch}. The corresponding Shapley operator is given by
\[
F(x)= \left(\begin{array}{c}
\frac{1}{2}\big(
\max(3+x_1,4+x_2,x_3)+
\min(3+x_1,4+x_2,x_3)
\big)\\
\frac{1}{2}\big(
\max(x_1,3+x_2,-7+x_3)+
\min(x_1,3+x_2,-7+x_3)
\big)\\
\frac{1}{2}\big(
\max(3+x_1,2+x_2)+
\min(3+x_1,2+x_2)
\big)
\end{array}\right) \enspace .
\]
The vector $u=(5,0,4)$ can be checked to be an additive eigenvector
of $F$, with additive eigenvalue $\mu=1$, i.e., $F(u)=\mu e +u$,
so that the mean payoff per time unit is equal to $1$ for all initial states
(Max is winning $1$ per time unit).
We get
\[
F'_u(x) = 
\left(\begin{array}{c}
\frac{1}{2}\big(
x_1+ 
\min(x_2,x_3)
\big)\\
\frac{1}{2}\big(
x_1+x_3
\big)\\
\frac{1}{2}\big(
x_1+x_2
\big)
\end{array}\right) \enspace .
\]
Let $\mathsf{t}(x):=\max_{1\leq i\leq n}x_i$, $\mathsf{b}(x):=\min_{1\leq i\leq n} x_i$,
and $\omega(x):=\omega_e(x)=\mathsf{t}(x)-\mathsf{b}(x)$.  One
readily checks that 
\[
\omega(F'_u(x))=\frac{1}{2}\big(x_1+\max(x_2,x_3)\big)
- \frac{1}{2}\big(x_1+\min(x_2,x_3)\big) \leq \frac{1}{2} \omega(x)\enspace.
\]
Hence, $F'_u$ is a contraction of rate $1/2$ in the seminorm $\omega$, which
implies that $\bonsall{}(F'_u)\leq 1/2$.
In particular, every fixed point $x$ of $F'_u$ satisfies $x_1=x_2=x_3$.
Hence,
Corollary~\ref{cor-add} shows that the set of additive eigenvectors
of $F$ is precisely $S=\{u+\lambda e \mid \lambda \in \R\}$
(in other words, the bias vector is unique up to
an additive constant).  
Moreover, Corollary~\ref{cor-addnew}
implies that for all $x\in \R^3$, 
\[
\limsup_{k\to\infty} \omega(F^k(x)-u)^{1/k} \leq 1/2 \enspace ,
\]
and that there is a constant $\lambda\in \R$, depending
on $x$, such that
\[
\limsup_{k\to\infty} \|F^k(x)-\lambda e - u\|^{1/k} \leq 1/2 \enspace .
\]
\begin{figure}[htbp]
\begin{center}
\begin{picture}(0,0)%
\includegraphics{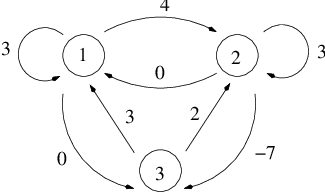}%
\end{picture}%
\setlength{\unitlength}{2072sp}%
\begingroup\makeatletter\ifx\SetFigFont\undefined%
\gdef\SetFigFont#1#2#3#4#5{%
  \reset@font\fontsize{#1}{#2pt}%
  \fontfamily{#3}\fontseries{#4}\fontshape{#5}%
  \selectfont}%
\fi\endgroup%
\begin{picture}(4937,2905)(976,-3491)
\end{picture}%
\end{center}
\caption{An additive version of Richman games}
\label{fig-stoch}
\end{figure}
\bibliographystyle{alpha}
\bibliography{paper}
\end{document}